\begin{document}

\title{Pluripotential-theoretic stability thresholds}
\author{Mingchen Xia}
\date{\today}
\maketitle

\begin{abstract}
Given a compact polarized manifold $(X,L)$, we introduce two new stability thresholds in terms of singularity types of global quasi-plurisubharmonic functions on $X$. We prove that in the Fano setting, the new invariants can effectively detect  K-stability of $X$. We study some functionals of geodesic rays in the space of K\"ahler potentials by means of the corresponding test curves. In particular, we introduce a new entropy functional of quasi-plurisubharmonic functions and relate the radial entropy functional to this new entropy functional.
\end{abstract}


\tableofcontents


\section{Introduction}

Let $X$ be a complex projective manifold of dimension $n$. Let $L$ be an ample line bundle on $X$. Fix a Kähler form $\omega\in c_1(L)$. Let $V=(L^n)$.

A central problem in K\"ahler geometry is to give conditions for the existence of canonical metrics in the K\"ahler class $[\omega]$. The celebrated Yau--Tian--Donaldson conjecture asserts that the existence of K\"ahler--Einstein metrics (or more generally cscK metrics) is equivalent to certain algebro-geometric stability conditions. Classically, dating back to the work of Ding--Tian (\cite{DT92}), Tian (\cite{Tia97}) and Donaldson (\cite{Don02}), the algebro-geometric stability notion, known as \emph{K-stability} has been defined in terms of test configurations. Later on, a stronger condition known as \emph{uniform K-stability} is also introduced and studied in \cite{Der16} and in \cite{BHJ16}, \cite{BHJ17}. It is known that the equivariant version of uniform K-stability gives a characterization of the existence of K\"ahler--Einstein metrics, which even generalizes to the log Fano setting, see \cite{Li22} and references therein.

On the other hand, more recently a valuative approach to K-stability is introduced in \cite{Fuj19}, \cite{FO18}, \cite{BJ20}, which we briefly recall. The $\delta$-invariant of $L$ is defined as
\begin{equation}\label{eq:delta}
\delta(L):=\inf_E\frac{A_X(E)}{S_L(E)}\,,
\end{equation}
where $E$ runs over the set of prime divisors over $X$, $A_X(E)$ denotes the log discrepancy of $E$ and $S_L(E)$ is the expected order of vanishing of $L$ along $E$. It is well-known that uniform twisted K-stability (resp. twisted K-semistability) is equivalent to $\delta(L)>1$ (resp. $\delta(L)\geq 1$). See \cite{Fuj19}, \cite{Li17}, \cite{FO18}, \cite{BJ18} for details.

In this paper, we introduce a different stability threshold in terms of the singularity types of quasi-plurisubharmonic functions on $X$: 
\begin{equation}\label{eq:deltanew}
\delta_{\mathrm{pp}}:=\inf_{[\psi]}\frac{\int_{-\infty}^{\infty}\Ent([\psi^+_{\tau}])\,\mathrm{d}\tau}{nV^{-1}\int_{-\infty}^{\infty} \left(\int_X \omega\wedge \omega_{\psi^+_{\tau}}^{n-1}-\int_X\omega_{\psi^+_{\tau}}^n\right)\,\mathrm{d}\tau}\,,
\end{equation}
  where $[\psi]$ runs over the set of singularity types of quasi-psh functions with some non-zero Lelong numbers on $X$, $\psi^+_{\bullet}$ is a test curve associated to $\psi$.
   Recall that a test curve is the Legendre transform of a geodesic ray as in \cite{RWN14}. The test curve $\psi^+_{\bullet}$ is the maximal extension of the test curve corresponding to deformation to the normal cone (see \cref{subsec:extdef}). The quantity $\Ent[\bullet]$ is an invariant of the singularity types of quasi-psh functions (see \cref{def:naentgeneral}). To the best of the author's knowledge, this invariant has never been studied in the literature. Observe that the quotient in \eqref{eq:deltanew} depends only on the singularity type of $\psi$. Moreover, it is easy to see that the quotient in \eqref{eq:deltanew} does not change under the rescaling $\psi\to c\psi$ for $c\in \mathbb{R}_{>0}$, hence one could restrict $\psi$ to run only in the set of $\omega$-psh functions.

 Now we state the main theorem. 
\begin{theorem}\label{thm:introdelta1}
Let $(X,L)$ be a polarized manifold. Then $\delta_{\mathrm{pp}}\geq \delta$. Further, if $X$ is Fano and $L=-K_X$ and $\delta<\frac{n+1}{n}$, then $\delta=\delta_{\mathrm{pp}}$.
\end{theorem}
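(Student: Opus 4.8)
I will treat the two assertions in turn. Throughout, given a singularity type $[\psi]$ with a non-zero Lelong number, the scaling invariance noted after \eqref{eq:deltanew} lets me assume $\psi\in\mathrm{PSH}(X,\omega)$; I write $\phi^{\psi}$ for the geodesic ray whose test curve is $\psi^{+}_{\bullet}$, and $Q([\psi])$ for the quotient in \eqref{eq:deltanew}. By the construction recalled in \cref{subsec:extdef}, up to taking the maximal extension this test curve is the ``homogeneous'' curve $\tau\mapsto P[\tau\psi]$ for $\tau\ge 0$ (minimal singularities for $\tau\le 0$, and $\equiv-\infty$ past the $\omega$-psh threshold $\sigma=\sigma(\psi)$ of $[\psi]$), and $\phi^{\psi}$ is the corresponding deformation-to-the-normal-cone ray. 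The first observation is that the denominator of $Q([\psi])$ is a radial energy. For $[\psi]$ with analytic singularities, realised on a model $\pi\colon Y\to X$ by an effective divisor $D$ with $\pi^{*}\{\omega\}-[D]$ pseudoeffective, the orthogonality of the divisorial Zariski decomposition $\pi^{*}L-D=P+N$ gives $\int_{X}\omega\wedge\omega^{n-1}_{\psi_D}-\int_{X}\omega^{n}_{\psi_D}=D\cdot\langle(\pi^{*}L-D)^{n-1}\rangle=-\tfrac1n\tfrac{\mathrm d}{\mathrm ds}\big|_{s=1}\mathrm{vol}(\pi^{*}L-sD)$; applying this with $D$ replaced by $\tau D$ and passing to general $[\psi]$ by approximation, the integrand of the denominator of $Q([\psi])$ equals $-\tfrac{\tau}{V}\tfrac{\mathrm d}{\mathrm d\tau}\int_{X}\omega^{n}_{P[\tau\psi]}$, so after integrating by parts in $\tau$ (the boundary terms vanish, the non-pluripolar mass being $0$ at $\tau=\sigma$) the denominator becomes $V^{-1}\int_{0}^{\sigma}\int_{X}\omega^{n}_{P[\tau\psi]}\,\mathrm d\tau$, i.e.\ the radial Monge--Amp\`ere energy $E^{\mathrm{rad}}(\phi^{\psi})$.

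For Part 1 I combine this with the identity announced in the abstract: by the theorem relating the radial entropy functional to $\Ent[\bullet]$, the numerator of $Q([\psi])$ equals the radial entropy $H^{\mathrm{rad}}(\phi^{\psi})$. Thus Part 1 reduces to the inequality $H^{\mathrm{rad}}(\phi^{\psi})\ge\delta\,E^{\mathrm{rad}}(\phi^{\psi})$ for these maximal homogeneous rays. I would prove it by reducing, via approximation of $[\psi]$ by analytic singularity types, to the case where at each level $\tau$ the type $[P[\tau\psi]]$ comes from an effective divisor $D_{\tau}=\sum_{i}a^{\tau}_{i}E_{i}$ on a fixed model, so that the Zariski decomposition $\pi^{*}L-D_{\tau}=P_{\tau}+N_{\tau}$ splits the denominator's integrand as $nV^{-1}\sum_{i}a^{\tau}_{i}\,(E_{i}\cdot\langle P_{\tau}^{\,n-1}\rangle)$ into contributions of the prime divisors $E_{i}$ over $X$; the defining properties of $\Ent$ bound $\Ent([P[\tau\psi]])$ from below by the corresponding weighted sum carrying an extra factor $A_{X}(E_{i})$ in each term, and since $A_{X}(E_{i})\ge\delta\,S_{L}(E_{i})$ for every prime divisor $E_{i}$ over $X$ a term-by-term comparison (after checking that the weights on the two sides match) yields $\Ent([P[\tau\psi]])\ge\delta\, nV^{-1}\big(\int_{X}\omega\wedge\omega^{n-1}_{P[\tau\psi]}-\int_{X}\omega^{n}_{P[\tau\psi]}\big)$ for each $\tau$; integrating in $\tau$ gives $\delta_{\mathrm{pp}}\ge\delta$.

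For Part 2, assume $X$ is Fano, $L=-K_{X}$ and $\delta<\tfrac{n+1}{n}$. By the Blum--Jonsson valuative description of $\delta$, for any $\varepsilon>0$ there is a prime divisor $E$ over $X$ with $A_{X}(E)/S_{L}(E)<\delta+\varepsilon$. Using the bound $S_{L}(E)\ge\tau_{\max}(E)/(n+1)$ together with $\delta<\tfrac{n+1}{n}$ and the anticanonical normalisation, one checks that such $E$ may be chosen so that the maximal extension of its deformation-to-the-normal-cone test curve terminates at $\tau_{\max}(E)$ with the behaviour used above, whence $\psi:=\psi_{E}$ is an admissible competitor in \eqref{eq:deltanew}. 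For this $\psi$ the previous step gives $E^{\mathrm{rad}}(\phi^{\psi_{E}})=V^{-1}\int_{0}^{\tau_{\max}(E)}\mathrm{vol}(\pi^{*}L-\tau E)\,\mathrm d\tau=S_{L}(E)$, while for the numerator one uses the anticanonical structure: the radial entropy of the deformation to the normal cone of $E$ is computed by the non-Archimedean Ding/Mabuchi functionals (the identity $\mathbf{D}^{\mathrm{NA}}(\phi^{\psi_{E}})=A_{X}(E)-S_{L}(E)$, equivalently $\mathbf{L}^{\mathrm{NA}}(\phi^{\psi_{E}})=A_{X}(E)$, transported through the relation between $\Ent$ and the radial entropy) and equals $A_{X}(E)$. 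Hence $Q([\psi_{E}])=A_{X}(E)/S_{L}(E)<\delta+\varepsilon$; letting $\varepsilon\to 0$ and combining with Part 1 gives $\delta=\delta_{\mathrm{pp}}$.

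The step I expect to be hardest is the numerator computation in Part 2 — identifying the radial entropy of the deformation-to-the-normal-cone ray with $A_{X}(E)$ — together with the precise use of $\delta<\tfrac{n+1}{n}$, which is exactly what forces the maximal extension (hence the entropy integral) into the closed form used; above that threshold the maximal extension can genuinely differ and one loses the equality $Q([\psi_{E}])=A_{X}(E)/S_{L}(E)$. The other delicate point is the term-by-term comparison in Part 1, i.e.\ matching the $\Ent$-decomposition with the energy decomposition over divisorial valuations so that the weights agree.
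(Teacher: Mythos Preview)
Your approach diverges from the paper's in both parts, and in Part~1 there is a genuine gap.

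\textbf{Part 1.} The paper does \emph{not} attempt a pointwise divisorial comparison. Instead it uses the non-Archimedean route: for any $\mathscr{I}$-model $\psi$ one lets $\ell$ be the ray with test curve $\psi^{+}_{\bullet}$, invokes \cref{thm:Entcomp2} to get $\Ent^{\NA}(\ell^{\NA})\leq \Ent(\psi^{+}_{\bullet})$ (this is the numerator of $Q$), uses \cref{cor:Jtildeslope} for the denominator $\tilde{J}^{\NA}(\ell^{\NA})=\tilde{\mathbf{J}}(\psi^{+}_{\bullet})$, and then applies the characterisation $\delta=\inf_{\mu}\Ent^{\NA}(\mu)/E^{*}(\mu)$ together with $E^{*}(\MA(\ell^{\NA}))=\tilde{J}^{\NA}(\ell^{\NA})$. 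Your attempted term-by-term comparison does not work as written: with the notation of your proof, you need
\[
\sum_{i} A_{X}(E_{i})\,(E_{i}\cdot\langle P_{\tau}^{\,n-1}\rangle)\ \geq\ \delta\,\sum_{i} a_{i}^{\tau}\,(E_{i}\cdot\langle P_{\tau}^{\,n-1}\rangle),
\]
but the valuative inequality you invoke is $A_{X}(E_{i})\geq \delta\,S_{L}(E_{i})$, and $S_{L}(E_{i})$ has nothing to do with the Lelong number $a_{i}^{\tau}=\nu_{E_{i}}(\psi^{+}_{\tau})$. Already for a single prime divisor $E$ with $\psi^{+}_{\tau}\sim\tau E$, one has $a^{\tau}=\tau$ ranging up to the pseudo-effective threshold, which is typically strictly larger than $S_{L}(E)$; so $A_{X}(E)\geq \delta\,S_{L}(E)$ does not give $A_{X}(E)\geq \delta\,a^{\tau}$. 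The ``weights match'' check you flag is precisely where the argument breaks. Separately, you assert that the numerator of $Q$ equals the radial entropy $\mathbf{Ent}(\ell)$; in the paper this equality is only established for Phong--Sturm rays (\cref{cor:eqholds}) and is stated as an open conjecture in general (\cref{conj:ent}). What is proved for arbitrary $\mathscr{I}$-model test curves is only the inequality $\Ent^{\NA}(\ell^{\NA})\leq \Ent(\psi^{+}_{\bullet})$, which is exactly what the paper uses.

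\textbf{Part 2.} Your outline has the right shape---pick a divisor $E$ nearly computing $\delta$, build $\psi_{E}$, and show $Q([\psi_{E}])=A_{X}(E)/S_{L}(E)$---but your explanation of the hypothesis $\delta<\frac{n+1}{n}$ is off. In the paper this bound is used to invoke \cite{BLZ19} (for $\delta\le 1$) and \cite{LXZ21} (for $1\le\delta<\frac{n+1}{n}$) to obtain an \emph{extractable} (hence dreamy) divisor $E$ computing or approximating $\delta$. Extractability is what allows one to produce an $\omega$-psh function $\psi$ whose singularity on a normal $\mathbb{Q}$-factorial model is exactly $E$, so that the filtration of $\psi^{+}$ coincides with $\mathscr{F}_{\ord_{E}}$; the entropy equality then comes from \cref{thm:DFdreamy} (single irreducible component case), not from a Ding-functional identity, and $\tilde{\mathbf{J}}(\psi^{+}_{\bullet})=S_{L}(E)$ from \cref{cor:Jtildeslope}. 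Your claim that $\delta<\frac{n+1}{n}$ is what ``forces the maximal extension into closed form'' is not the mechanism; without extractability you cannot even construct the competitor $\psi_{E}\in\PSH(X,\omega)$, and an arbitrary prime divisor over $X$ need not arise from any $\omega$-psh function in this way.
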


We recall that in the Fano setting, when $\delta\leq 1$, $\delta$ is also equal to the greatest Ricci lower bound, see \cref{subsec:min} for details. We remark that although our theorem concerns only K\"ahler geometry, our proof relies essentially on the non-Archimedean tools developed by Boucksom--Jonsson, as we recall later.

As a corollary,
\begin{corollary}\label{cor:introdelta1}
Assume that $X$ is Fano and $L=-K_X$. Then 
\begin{enumerate}
    \item $\delta_{\mathrm{pp}}\geq 1$ if{f} $X$ is K-semistable.
    \item $\delta_{\mathrm{pp}}> 1$ if{f} $X$ is uniformly K-stable.
\end{enumerate}
\end{corollary}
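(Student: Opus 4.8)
The plan is to deduce \cref{cor:introdelta1} as an essentially formal consequence of \cref{thm:introdelta1}, combined with the now-standard valuative characterization of K-stability in the Fano case. Recall that for $X$ Fano with $L=-K_X$, the results of Fujita--Odaka, Li and Blum--Jonsson (\cite{Fuj19}, \cite{FO18}, \cite{Li17}, \cite{BJ18}, \cite{BJ20}) give that $X$ is K-semistable if and only if $\delta\geq 1$, and $X$ is uniformly K-stable if and only if $\delta>1$. Thus it suffices to prove the two equivalences $\delta_{\mathrm{pp}}\geq 1 \Leftrightarrow \delta\geq 1$ and $\delta_{\mathrm{pp}}>1 \Leftrightarrow \delta>1$.

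For the implications ``$\delta\geq 1 \Rightarrow \delta_{\mathrm{pp}}\geq 1$'' and ``$\delta>1\Rightarrow \delta_{\mathrm{pp}}>1$'' I would simply invoke the inequality $\delta_{\mathrm{pp}}\geq \delta$ from \cref{thm:introdelta1}, which holds unconditionally. This already yields that K-semistability implies $\delta_{\mathrm{pp}}\geq 1$ and that uniform K-stability implies $\delta_{\mathrm{pp}}>1$.

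For the converse implications, the key input is the equality part of \cref{thm:introdelta1}, valid when $\delta<\frac{n+1}{n}$. I would argue by contraposition. If $X$ is not K-semistable, then $\delta<1<\frac{n+1}{n}$, so \cref{thm:introdelta1} gives $\delta_{\mathrm{pp}}=\delta<1$, contradicting $\delta_{\mathrm{pp}}\geq 1$; hence $\delta_{\mathrm{pp}}\geq 1$ forces $\delta\geq 1$, i.e.\ K-semistability. Similarly, if $X$ is not uniformly K-stable, then $\delta\leq 1<\frac{n+1}{n}$, so $\delta_{\mathrm{pp}}=\delta\leq 1$, contradicting $\delta_{\mathrm{pp}}>1$; hence $\delta_{\mathrm{pp}}>1$ forces $\delta>1$, i.e.\ uniform K-stability.

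There is no genuine obstacle in this argument, since it is purely a matter of bookkeeping on top of \cref{thm:introdelta1}. The only points requiring care are: first, citing the correct form of the valuative criterion for K-(semi)stability in the Fano polarization $L=-K_X$; and second, checking that the hypothesis $\delta<\frac{n+1}{n}$ needed to apply the equality in \cref{thm:introdelta1} is automatic in every case where it is used, which holds because those cases all occur under the assumption $\delta\leq 1<\frac{n+1}{n}$.
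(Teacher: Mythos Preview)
Your proposal is correct and follows exactly the approach the paper intends: the corollary is stated without proof in the paper because it is an immediate consequence of \cref{thm:introdelta1} together with the valuative criterion $\delta\geq 1\Leftrightarrow$ K-semistable and $\delta>1\Leftrightarrow$ uniformly K-stable. Your contrapositive bookkeeping, and the observation that the hypothesis $\delta<\tfrac{n+1}{n}$ is automatic whenever $\delta\leq 1$, are precisely what is needed.
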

\cref{cor:introdelta1} integrates into the program of characterizing K-stability in terms of some more explicit data dating back to  \cite{RT07}. 
In \cite{RT07}, Ross--Thomas introduced the notion of slope stability in terms of test configurations associated to deformation to the normal cone, which gives a necessary condition for K-stability. 
Later on, this theory was extended in \cite{Oda13} using flag ideals, in \cite{WN12}, \cite{Sze15} using filtrations.
In \cite{Fuj19} and \cite{Li17}, Fujita and Li give a characterization of K-stability in terms of all divisorial valuations.
Our result gives a different characterization in terms of psh singularity types. Our approach can also be seen as a generalization of those of \cite{RT07} in the sense that our definition of $\delta_{\mathrm{pp}}$ is based on a generalization of deformation to the normal cone.
We also notice that very recently in \cite{DL22}, Dervan--Legendre have partially extended Fujita's work to general polarizations and studied valuative stability. 

When the stability threshold is less than $1$, it is interesting to understand its minimizers. 
We propose the following conjecture:
\begin{conjecture}When $\delta_{\mathrm{pp}}\leq 1$, there is always a minimizer of $\delta_{\mathrm{pp}}$. 

When $X$ is a Fano manifold, $L=-K_X$ and when $\delta<1$, the pluricomplex Green function $G$ in the sense of \cite{McCT19} is a minimizer of $\delta_{\mathrm{pp}}$. 
\end{conjecture}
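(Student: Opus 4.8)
\medskip
\noindent\emph{Sketch of a strategy.} Using the rescaling invariance of the quotient in \eqref{eq:deltanew}, one runs the infimum over the set of non-trivial singularity types $[\psi]$ of $\omega$-psh functions normalized by $\sup_X\psi=0$, and writes $\delta_{\mathrm{pp}}=\inf_{[\psi]}\mathcal{Q}([\psi])$ with $\mathcal{Q}([\psi])=\mathcal{N}([\psi])/\mathcal{D}([\psi])$, where $\mathcal{N}([\psi])=\int_{-\infty}^{\infty}\Ent([\psi^+_{\tau}])\,\mathrm{d}\tau$ is the entropy integral and $\mathcal{D}([\psi])=nV^{-1}\int_{-\infty}^{\infty}\bigl(\int_X\omega\wedge\omega_{\psi^+_{\tau}}^{n-1}-\int_X\omega_{\psi^+_{\tau}}^n\bigr)\,\mathrm{d}\tau$ the energy integral. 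For the existence statement (1) I would run the direct method of the calculus of variations: take a minimizing sequence $[\psi_k]$, extract a limiting singularity type $[\psi_\infty]$, prove that $\mathcal{N}$ is lower semicontinuous and $\mathcal{D}$ continuous along this limit, and show that $[\psi_\infty]$ still carries a non-zero Lelong number, hence is an admissible competitor realizing the infimum.

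For the extraction, the natural framework is the metric $d_S$ on singularity types of Darvas--Di Nezza--Lu: one first replaces the $\psi_k$ by their model potentials, which changes neither $\mathcal{N}$ nor $\mathcal{D}$ since both depend only on the singularity type, then extracts a $d_S$-convergent subsequence, which yields a suitable convergence of the maximal extension test curves $\psi^+_{k,\bullet}$ (\cref{subsec:extdef}). Lower semicontinuity of $\mathcal{N}$ should follow from the lower semicontinuity of the entropy invariant $\Ent[\bullet]$ (\cref{def:naentgeneral}) in the singularity type together with Fatou's lemma in $\tau$, while continuity of $\mathcal{D}$ should follow from continuity of non-pluripolar masses along $d_S$-convergent families. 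The delicate point is non-degeneration: a priori $[\psi_k]$ could converge to the trivial type with $\mathcal{N}([\psi_k])\to 0$ and $\mathcal{D}([\psi_k])\to 0$ simultaneously, and one must rule out that the ratio is driven down to $\delta_{\mathrm{pp}}$ in this way. I would attempt a coercivity estimate --- that $\mathcal{Q}([\psi])$ stays bounded away from $\delta_{\mathrm{pp}}$ when $[\psi]$ is $d_S$-close to the trivial type --- which is the pluripotential analogue of the fact that, when $\delta\leq 1$, the infimum in \eqref{eq:delta} is not approached along valuations of vanishing expected order of vanishing; in the Fano case this could be imported from \cref{thm:introdelta1} together with the existence of a minimizing valuation for $\delta$ (\cite{BJ20} and subsequent work), but for general polarizations it would be new.

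For part (2), assume $X$ Fano, $L=-K_X$ and $\delta<1$, so a fortiori $\delta<\tfrac{n+1}{n}$ and hence $\delta_{\mathrm{pp}}=\delta$ by \cref{thm:introdelta1}. When $\delta\leq 1$ the infimum defining $\delta$ is computed by a quasi-monomial valuation $v$ over $X$ (\cite{BJ20} and subsequent work), which may be taken to be the valuation of the optimal degeneration associated to the greatest Ricci lower bound (\cref{subsec:min}). The proof of \cref{thm:introdelta1} supplies a correspondence under which $v$ produces a singularity type $[\psi_v]$ --- the maximal extension of the deformation-to-the-normal-cone test curve built from the filtration of $v$ --- satisfying $\mathcal{Q}([\psi_v])=A_X(v)/S_L(v)=\delta=\delta_{\mathrm{pp}}$; thus $[\psi_v]$ is already a minimizer. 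It then remains to identify $[\psi_v]$ with $[G]$. Here I would invoke the variational description of $G$ from \cite{McCT19}, where it arises through a degenerate complex Monge--Ampère problem with prescribed singularities (equivalently, as a limit along the continuity path at parameter $\delta$): one shows that the $\omega$-psh envelope built from $\psi_v$ solves the same problem, and concludes by uniqueness of its solution. Equivalently, one can argue in the opposite direction, showing that the maximal extension test curve of the geodesic ray issuing from $G$ is the deformation-to-the-normal-cone test curve of $v$, so that the computation in the proof of \cref{thm:introdelta1} directly yields $\mathcal{Q}([G])=\delta$.

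In both parts the main obstacle is the interface between the pluripotential object and the valuative minimizer. For (1) beyond the Fano case it is the coercivity/non-degeneration of $\mathcal{Q}$ near the trivial singularity type, for which no uniform estimate is currently available and which would require genuinely new a priori bounds. For (2) it is the precise matching of McCleerey--Tosatti's Green function --- defined through its own extremal Monge--Ampère problem --- with the singularity type produced by the correspondence underlying \cref{thm:introdelta1}; this should come from a uniqueness statement linking the two variational problems, but controlling the full singularity type of $G$, and not merely its Lelong numbers, is the crux.
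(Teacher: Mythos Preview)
The statement you are addressing is a \emph{conjecture} in the paper, not a theorem; the paper offers no proof. What the paper does claim (in the paragraph immediately following the conjecture) is only that the first assertion---existence of a minimizer---holds in the Fano case as a byproduct of the proof of \cref{thm:deltacomp}: when $\delta<\tfrac{n+1}{n}$ a divisor $E$ computing $\delta$ exists by \cite{BLZ19}, \cite{LXZ21}, it is extractable, and the potential $\psi$ with analytic singularities built from an extraction of $E$ yields $\mathcal{Q}([\psi])=A_X(E)/S_L(E)=\delta=\delta_{\mathrm{pp}}$ via \cref{thm:DFdreamy} and \cref{cor:Jtildeslope} (see also the remark following \cref{thm:deltacomp}). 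This is an \emph{explicit construction}, not a compactness argument. Your direct-method scheme for general polarizations is far more ambitious than anything the paper asserts, and the coercivity step you flag is a genuine gap: without the Fano MMP input producing a divisorial minimizer of $\delta$, there is currently no mechanism preventing a minimizing sequence for $\delta_{\mathrm{pp}}$ from degenerating to the trivial singularity type, nor any established lower semicontinuity of the invariant $\Ent([\bullet])$ in $d_S$.

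For part (2) your outline correctly reduces the question to identifying the $\mathscr{I}$-singularity type of $G$ with that of the $\psi$ built from a minimizing divisor, but the proposed route via ``uniqueness of the Monge--Amp\`ere problem'' is not substantiated. The Green function of \cite{McCT19} arises only as a subsequential $L^1$-limit along Aubin's continuity path and has zero non-pluripolar mass (\cref{subsec:min}); it is not known to be unique, and there is no established variational characterization tying its singularity type to the filtration of any particular valuation. Your alternative claim that the maximal extension test curve of the ray issuing from $G$ coincides with the deformation-to-the-normal-cone test curve of the optimal valuation is essentially a restatement of the conjecture itself. The paper treats this part as fully open.
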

The first part in the Fano case follows from our proof of the main theorem. For general polarization, it seems difficult.

There are some similar results for $\delta$ in the more general log Fano variety setting: there is always a quasi-monomial valuation that computes $\delta$ (\cite{BLX19}). In the smooth Fano setting, there is a divisor computing $\delta$ (\cite{DS20}, see \cite[Theorem~6.7]{BLZ19} for details). The same holds in the log Fano setting by the  recent breakthrough \cite{LXZ21}.

The $\delta_{\mathrm{pp}}$-invariant is closely related to $\delta$ in the following manner: take an extractable (\cref{def:ext}) divisor $E$. 
One can prove that in this case, there is always an $\omega$-psh function $\psi$ with analytic singularities such that on a suitable birational model $\pi:Y\rightarrow X$, the singularities of $\pi^*\psi$ are just the hyperplane singularity along $E$.
Recall that $E$ induces a test configuration $(\mathcal{X},\mathcal{L})$. 
Now one can make explicit computations to express various functionals of $(\mathcal{X},\mathcal{L})$ in terms of $\psi_{\bullet}$, the result turns out to be of the form of \eqref{eq:deltanew}. More precisely, we prove that for a test curve induced by a general (semi-ample) test configuration, the non-Archimedean entropy and $\tilde{J}^{\NA}$-functionals (the latter is more frequently denoted by $I^{\NA}-J^{\NA}$ in the literature) are both integrals of some corresponding functionals of psh singularities along the test curve (see \cref{thm:Entcomp2}, \cref{cor:eqholds}, \cref{cor:Jtildeslope}). Conversely, given an $\omega$-psh function $\psi$ with analytic singularities, we can always take a log  resolution so that $\psi$ has singularities along a snc (strictly normal crossing) $\mathbb{Q}$-divisor $D=\sum_{i}a_i D_i$. This divisor then induces a higher rank valuation $(a_i^{-1} \ord_{D_i})_i$ of $\mathbb{C}(X)$. 

As a byproduct of our work, we could also define a slightly different stability threshold:
\begin{equation}\label{eq:t9}
\delta':=\inf_{\psi}\frac{(K_{Y/X}\cdot (-\Div_Y\psi)^{n-1})+n\left(G_{n-1}(L,\Div_Y\psi)\cdot \Redu \Div_Y\psi\right)}{n\int_0^1 \left(\int_X \omega\wedge \omega_{\tau\psi}^{n-1}-\int_X  \omega_{\tau\psi}^{n}\right)\,\mathrm{d}\tau}\,,
\end{equation} 
where $[\psi]$ runs over the set of singularity types of unbounded $\omega$-psh functions with analytic singularities,
$\pi:Y\rightarrow X$ is a log resolution of $\psi$, $G_{n-1}$ is a polynomial defined by \eqref{eq:Gn}, $\Redu$ of a divisor $D$ is the divisor with the same support as $D$ but with all non-zero coefficients of $D$ set to $1$. Note that the quotient in \eqref{eq:t9} is \emph{not} invariant under the rescaling $\psi\mapsto c\psi$ ($c\in \mathbb{Q}_{>0}$), hence $\delta'$ is an invariant of $\omega$-psh functions on $X$, not an invariant of all quasi-psh functions on $X$ as $\delta_{\mathrm{pp}}$ is.
We also prove that
\begin{theorem}\label{thm:introdelta2}
We always have $\delta'\geq \delta$.
\end{theorem}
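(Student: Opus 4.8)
The plan is to rewrite the quotient in \eqref{eq:t9} on a log resolution of $\psi$, to exhibit its numerator and denominator as integrals against one and the same positive measure $\mu_\psi$ on valuation space with integrands $A_X$ and $S_L$, and then to apply the valuative lower bound for $\delta$. Fix $\psi$ in the class over which the infimum in \eqref{eq:t9} runs, with log resolution $\pi\colon Y\to X$ and $-\Div_Y\psi = D = \sum_i a_i D_i$ snc, $a_i>0$. By the standard correspondence between $\omega$-psh functions with analytic singularities and positive intersection theory, for $\tau\in[0,1]$ one has $\int_X\omega_{\tau\psi}^n = \operatorname{vol}_Y(\pi^*L-\tau D)$ and $\int_X\omega\wedge\omega_{\tau\psi}^{n-1} = \langle\pi^*L\cdot P(\pi^*L-\tau D)^{n-1}\rangle$, with $P(\cdot)$ the positive part of the divisorial Zariski decomposition; using the orthogonality of that decomposition together with the differentiability of the volume along $\tau\mapsto\pi^*L-\tau D$, the denominator of \eqref{eq:t9} collapses to $\int_0^1\bigl(\operatorname{vol}_Y(\pi^*L-\tau D)-\operatorname{vol}_Y(\pi^*L-D)\bigr)\,\mathrm{d}\tau$, which is strictly positive because $\pi^*L$ is big and $D\ne 0$. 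In the numerator one substitutes $K_{Y/X}=\sum_i(A_X(D_i)-1)D_i$, and the polynomial $G_{n-1}$ of \eqref{eq:Gn}, paired with $\Redu\Div_Y\psi=\sum_i D_i$, is exactly the device that repackages $\int_0^1 P(\pi^*L-\tau D)^{n-1}\,\mathrm{d}\tau$ --- a priori only piecewise polynomial in $\tau$ --- as a closed expression.

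The heart of the argument is to produce a positive Radon measure $\mu_\psi$ on the space of valuations over $X$, supported on the quasi-monomial valuations monomial with respect to $(D_i)$ (equivalently, on the dual complex of $D$, which is where the higher rank valuation $(a_i^{-1}\ord_{D_i})_i$ enters) and obtained from the slices of $\operatorname{vol}_Y(\pi^*L-\sum_i t_i D_i)$ along the segment $\{\tau(a_1,\dots,a_r)\}_{\tau\in[0,1]}$, such that the numerator of \eqref{eq:t9} equals $\int A_X(v)\,\mathrm{d}\mu_\psi(v)$ and its denominator equals $\int S_L(v)\,\mathrm{d}\mu_\psi(v)$. Here $A_X$ on the dual complex is the linear extension of $D_i\mapsto A_X(D_i)$ (so the $K_{Y/X}$-term produces it) and $S_L(v)=V^{-1}\int_0^\infty\operatorname{vol}\bigl(L;v\ge x\bigr)\,\mathrm{d}x$ is the Fujita integral (so the collapsed denominator produces it after reparametrizing by the level sets of the monomial valuations). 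In the language of the paper, $\mu_\psi$ is the non-Archimedean Monge--Amp\`ere measure of the geodesic ray dual to the deformation-to-the-normal-cone filtration of $\psi$ truncated at $\tau=1$, and the two identities are \cref{thm:Entcomp2}, \cref{cor:eqholds} and \cref{cor:Jtildeslope} (with $\Ent$ as in \cref{def:naentgeneral}) specialized to this filtration. Granting this, the conclusion is immediate: by \cite{BJ20}, $\delta(L)=\inf_v A_X(v)/S_L(v)$ with the infimum over divisorial, equivalently over quasi-monomial, valuations, so $A_X(v)\ge\delta(L)\,S_L(v)$ for every $v$ in the support of $\mu_\psi$; integrating this against the positive measure $\mu_\psi$ and using the two identities shows that the numerator of \eqref{eq:t9} is at least $\delta(L)$ times its positive denominator, i.e.\ the quotient is $\ge\delta$. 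Taking the infimum over $\psi$ gives $\delta'\ge\delta$.

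The main obstacle is the construction of $\mu_\psi$ and the verification of the two identities, i.e.\ checking that $G_{n-1}$ and the $\Redu$-term account exactly for the moving positive parts $P(\pi^*L-\tau D)$ and for the gap between $D$ and $\Redu D$. The difficulty is that $P(\pi^*L-\tau D)$ varies only piecewise polynomially in $\tau$ and in general lives on models higher than $Y$, so \eqref{eq:t9} cannot be handled by naive multilinearity of intersection numbers. I would establish the identities first on the locus where $\pi^*L-\tau D$ is nef for all $\tau\in[0,1]$, where they reduce to a polynomial identity that one may take as the definition of $G_{n-1}$, and then split $[0,1]$ into the finitely many Zariski chambers, pass to a common higher model on each, and glue using continuity and $\mathcal C^1$-regularity of the volume. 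The parallel statement on the numerator side --- that $\Ent$ of an analytic singularity type equals the stated relative-canonical intersection number and integrates over $[0,1]$ to the $K_{Y/X}$-term of \eqref{eq:t9} --- is supplied by the computations underlying \cref{thm:Entcomp2}; the remaining steps, namely the pointwise bound $A_X\ge\delta\,S_L$ and the integration against $\mu_\psi$, are soft.
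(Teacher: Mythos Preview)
Your overall architecture is that of the paper: associate to $\psi$ the non-Archimedean Monge--Amp\`ere measure $\mu_\psi := \MA(\ell^{\NA})$ of the deformation-to-normal-cone ray, bound the numerator by $\int A_X\,\mathrm{d}\mu_\psi$ via \cref{thm:Entcomp2}, identify the denominator via \cref{cor:Jtildeslope}, and compare with $\delta$. The gap is in how you close the comparison.

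You claim the denominator of \eqref{eq:t9} equals $\int S_L(v)\,\mathrm{d}\mu_\psi(v)$ and then integrate the pointwise bound $A_X \ge \delta\, S_L$. But what \cref{cor:Jtildeslope} actually yields is that the denominator equals $V\cdot\tilde J^{\NA}(\ell^{\NA}) = V\cdot E^*(\mu_\psi)$ (via \eqref{eq:EstJt}), and $E^*(\mu)$ is \emph{not} the linear functional $\mu\mapsto \int S_L\,\mathrm{d}\mu$: the former is the Legendre dual of the Monge--Amp\`ere energy and is convex in $\mu$, the latter is linear. Your ``reparametrizing by level sets of monomial valuations'' is not an argument that bridges these two functionals. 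The paper therefore does not integrate $A_X\ge \delta\,S_L$ against $\mu_\psi$; it invokes directly the Boucksom--Jonsson characterization \eqref{eq:de1}, namely $\delta = \inf_\mu \Ent^{\NA}(\mu)/E^*(\mu)$, at $\mu = \mu_\psi$. Combined with \cref{thm:Entcomp2} for the numerator and \cref{cor:Jtildeslope} together with \eqref{eq:EstJt} for the denominator, this finishes the proof in one line.

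Two smaller points. First, your Zariski-chamber analysis is unnecessary here: since $\psi\in\PSH(X,\omega)$, \cref{lma:diffnab} gives that $\pi^*L - D$ is nef, so $\pi^*L - \tau D$ is nef for every $\tau\in[0,1]$ as a convex combination of nef classes; the $G_{n-1}$ formula is then literally the polynomial identity \eqref{eq:trivialint}, with no piecewise behaviour and no passage to higher models. Second, for the numerator you need only the inequality of \cref{thm:Entcomp2}, not the equality of \cref{cor:eqholds}; the latter applies to genuine test configurations, which the generalized deformation to the normal cone gives only when $\pi^*L-D$ is semi-ample rather than merely nef.
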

In general, we do not expect $\delta$ and $\delta'$ to be equal even if $\delta\leq 1$. The invariant $\delta'$ restricts the possible singularities of an $\omega$-psh function. Although $\delta'$ does not seem to have direct applications in K-stability, it may play some interesting roles in pluripotential theory.

\textbf{Philosophy behind the theorems}

Before discussing the proofs, let us explain the philosophy behind these theorems. 

We regard the global pluripotential theory of singular metrics on a compact Kähler manifold as a differential version of the theory of geodesic rays in the space of Kähler potentials. 
 In fancier terms, we could roughly regard the space of quasi-psh singularity types as the boundary at infinity of the space of geodesic rays: on one hand, each  quasi-psh singularity type induces a geodesic ray; on the other hand, each geodesic ray degenerates to a quasi-psh singularity type at infinity.
 As in the finite dimensional picture between the space of rays in $\mathbb{R}^n$ and the sphere at infinity $S^{n-1}$, it is natural to expect a closer relation between these spaces.
One of the justifications is given by \cref{thm:LegLk} (namely, \cite[Theorem~1.1]{DX22}). Similarly, by the computations in this paper and in \cite{DX22}, a number of radial invariants of geodesic rays are in fact an integral along the corresponding test curves of some corresponding invariants defined by quasi-psh functions. See \cref{tbl:Comp} for more examples.

Classically, K-stability of a polarized manifold is detected by test configurations, valuations, filtrations and non-Archimedean potentials, which can all be embedded in the space of geodesic rays. By our philosophy, there should be a pluripotential-theoretic counterpart, which leads to the present paper. 
Note that our previous work \cite{DX22} already followed this philosophy.

We do not expect \cref{thm:introdelta1}, \cref{thm:introdelta2} to be useful when trying to find new examples of K-stable varieties. However, from the pluripotential-theoretic point of view, these results provide strong restrictions on the possible singularity types of quasi-psh functions using global geometric conditions. To the best of my knowledge, this kind of results has never been studied before.

\textbf{Strategy of the proof}

In the discussion, we fix a maximal geodesic ray in the sense of \cite{BBJ15} and its Legendre transform $\psi=\hat{\ell}$.

As discussed above, we need to express various energy functionals of $\ell$ in terms of $\psi_{\bullet}$.

The part for $\tilde{\mathbf{J}}$ follows from the strategy introduced in \cite{RWN14} and further developed in \cite{DX22}. See the proof of \cref{thm:LegEalpha} for details.

The corresponding result for the entropy functional is the main new feature in this paper. As the variation of the entropy functional is not easily controlled, we try to tackle the non-Archimedean counterpart of the entropy at first, namely the non-Archimedean entropy:
\[
\Ent^{\NA}(\phi):=\frac{1}{V}\int_{X^{\An}} A_X\,\MA(\phi)\,,\quad \phi\in \mathcal{E}^{1,\NA}\,.
\]
In \cite{Li20}, Li showed that $\Ent^{\NA}(\phi)$ is dominated by the slope at infinity of the usual entropy functional $\Ent$ along $\ell$, where $\phi$ is the non-Archimedean potential induced by $\ell$.
In \cite{DX22}, we have expressed the non-Archimedean Monge--Amp\`ere energy in terms of the test curves. Since the non-Archimedean Monge--Amp\`ere energy is nothing but the primitive function of the Chambert-Loir measure $\MA(\phi)$, we get \emph{a fortiori} a good understanding of $\MA(\phi)$. We make use of this description to compute the non-Archimedean entropy functional. The result turns out to be an integral of the variation of volumes along $\psi_{\bullet}$.

Recall the potentials $\psi_{\tau}$ are all $\mathscr{I}$-model in the sense of \cite{DX22}.
In order to compute the variation of volumes of an $\mathscr{I}$-model potential, we need to express this volume algebraically. We prove that the volume of an $\mathscr{I}$-model potential can be realized as certain (movable) intersection number of a b-divisor (in the sense of Shokurov) associated to the singularities of the potential, if the intersection number is properly defined (see \cref{thm:volIm}). Now we can carry out a purely algebraic computation to get a formula for the non-Archimedean entropy (See \cref{thm:Entcomp2}, \cref{cor:eqholds}).

Now it comes to \cref{thm:introdelta1} and \cref{thm:introdelta2}. That these new invariants dominate $\delta$ is an easy consequence of the formulae of $\tilde{\mathbf{J}}$ and $\Ent^{\NA}$. We simply embed the set of $\omega$-psh functions into the set of maximal geodesic rays using the deformation to the normal cone like construction. This kind of embedding was already studied in \cite{DDNLmetric}.
For the equality $\delta=\delta_{\mathrm{pp}}$ when $\delta\leq 1$. We make use of the results from \cite{BLZ19}, which says that when $\delta\leq 1$, $\delta$ can be computed by a divisor $E$. Then \cite{BCHM10} allows us to extract the divisor. We can therefore construct an $\omega$-psh $\psi$ whose singularities are exactly given by $E$. Then $\psi$ minimizes $\delta_{\mathrm{pp}}$ as well and we conclude that $\delta=\delta_{\mathrm{pp}}$.

We remark that although we have carried out our computations only on smooth complex varieties, it is easy to generalize most results to normal Kähler varieties. However we decide to limit ourselves to the smooth setting in order to keep the present paper at a readable length.

\textbf{Organization of the paper}

In \cref{sec:setup}, we present a few results necessary for understanding the definition of $\delta_{\mathrm{pp}}$.

In \cref{sec:pre} and \cref{sec:rwn}, we recall some basic notions in Kähler geometry and pluripotential theory.

In \cref{sec:bdiv}, we recall the notion of Shokurov's b-divisors and apply it to define the entropy of qpsh singularities.

In \cref{sec:for} and \cref{sec:vBer}, we express several functionals on the space of geodesic rays in terms of the corresponding test curves.


In \cref{sec:delta}, we relate the new $\delta$-invariants to the classical $\delta$-invariant.

In \cref{sec:pro}, we propose several further problems.

\textbf{Conventions}

In this paper, all Monge--Ampère type operators are taken in the non-pluripolar sense (see \cite{BEGZ10}). The functional $\tilde{J}$ defined in \eqref{eq:Jtil} is usually written as $I-J$ in the literature.
Our definition of \emph{test curves} in \cref{def:testcurve} corresponds to \emph{maximal test curves} in the literature (see \cite{RWN14} for example). The $\mathrm{d}^{\mathrm{c}}$ operator is normalized so that $\ddc=\frac{i}{2\pi}\partial \bp$. The definition of a birational model in \cref{def:BM} requires that the model be smooth, hence stronger than the usual definition. When $\omega$ is a Kähler form, we adopt the convention that $\omega_{-\infty}=\omega+\ddc(-\infty)=0$. A snc divisor is always assumed to be effective.
By a valuation of a field, we refer to real valuations unless otherwise specified. We adopt the additive convention for valuations. We allow ($\mathbb{Q}$-)Weil divisors to have countably many components.

We do not distinguish a holomorphic line bundle and the corresponding invertible sheaf in the \emph{analytic} category. We use interchangeably additive and multiplicative notations for tensor products of invertible sheaves.

We make use of the results of \cite{BHJ19} in an essential way. We only refer to the latest version on arXiv \cite{BHJ16} with errata instead of the journal version.

\textbf{Acknowledgements}

The author would like to thank Robert Berman, Chen Jiang, Kewei Zhang for discussions and Tamás Darvas, Sébastien Boucksom, Mattias Jonsson, Yuchen Liu, Ruadhaí Dervan, Yaxiong Liu for remarks and suggestions on early versions of this paper. 
The author also wants to thank the referees for many valuable suggestions.

Part of the paper is used as assignments of the course \emph{GFOK035 Academic Writing} at Chalmers Tekniska H\"ogskola. The author would like to thank his classmates Mohammad Farsi and Víctor López Juan for their (non-mathematical) suggestions.

\section{Setup}\label{sec:setup}
In this section, we present a minimal amount of preliminaries necessary to understand the definition of the new delta invariant \eqref{eq:deltanew}.

\subsection{Log resolution of analytic singularities}
Let $X$ be a projective manifold of dimension $n$. Let $L$ be a big and semi-ample line bundle on $X$. Let $h$ be a smooth non-negatively curved Hermitian metric on $L$. Let $\omega=c_1(L,h)$. Let $\PSH(X,\omega)$ denote the set of all $\omega$-psh functions on $X$, namely the set of usc functions $\varphi:X\rightarrow [-\infty,\infty)$ such that $\omega+\ddc \varphi\geq 0$ as currents. See \cite{GZ17} for more details.

\begin{definition}\label{def:ana}
 A potential $\varphi\in \PSH(X,\omega)$ is said to have \emph{analytic singularities} if for each $x\in X$, there is a neighbourhood $U_x\subseteq X$ of $x$ in the Euclidean topology, such that on $U_x$,
 \[
 \varphi=c\log\left(\sum_{j=1}^{N_x}|f_j|^2\right)+\psi\,,
 \]
 where $c\in \mathbb{Q}_{\geq 0}$, $f_j$ are analytic functions on $U_x$, $N_x\in \mathbb{Z}_{>0}$ is an integer depending on $x$, $\psi\in C^{\infty}(U_x)$.
\end{definition}

\begin{definition}\label{def:anaD}
Let $D$ be an effective snc $\mathbb{R}$-divisor on $X$.   Let $D=\sum_i a_i D_i$ with $D_i$ being prime divisors and $a_i\in \mathbb{R}_{>0}$. We say that 
 $\varphi\in \PSH(X,\omega)$ has \emph{analytic singularities along $D$} if locally (in the Euclidean topology), 
 \[
 \varphi=\sum_i a_i\log|s_i|_h^2+\psi\,,
 \]
 where $s_i$ is a local section of $L$ that defines $D_i$, $\psi$ is a smooth function.
\end{definition} 
 Note that a potential with analytic singularities along a snc $\mathbb{Q}$-divisor has analytic singularities in the sense of \cref{def:ana}. 
 
\begin{definition}\label{def:BM}
 A \emph{birational model} of $X$ is a projective birational morphism $\pi:Y\rightarrow X$ from a \emph{smooth} projective variety $Y$ to $X$.
\end{definition}
\begin{definition}
Let $\varphi\in \PSH(X,\omega)$ be a potential with analytic singularities. Then there is a birational model $\pi:Y\rightarrow X$ of $X$, such that $\pi^*\varphi$ has analytic singularities along a snc $\mathbb{Q}$-divisor (see \cite[Lemma~2.3.19]{MM07}).
We call any such $\pi$ a \emph{log resolution} of $\varphi$.
\end{definition}

\subsection{\texorpdfstring{$\mathscr{I}$}{I}-model potentials}\label{subsec:Imod}
Let $X$ be a compact Kähler manifold of dimension $n$. Let $L$ be an ample line bundle. Let $\omega\in c_1(L)$ be a K\"ahler form.   For any quasi-psh function $\varphi$ on $X$, let $\mathscr{I}(\varphi)$ denote Nadel's multiplier ideal sheaf of $\varphi$, namely, the coherent ideal sheaf on $X$ locally generated by holomorphic functions $f$ such that $\int |f|^2 e^{-\varphi}\omega^n<\infty$.

The concept of $\mathscr{I}$-model potential is developed in \cite{DX22}. 

\begin{definition}\label{def:qeqa}
 Let $\varphi\in \PSH(X,\omega)$. A \emph{quasi-equisingular approximation} of $\varphi$ is a sequence $\varphi^j$ of potentials in $\PSH(X,\omega)$ with analytic singularities, such that
 \begin{enumerate}
     \item $\varphi^j$ converges to $\varphi$ in $L^1$.
     \item The singularity types of $\varphi^j$ are decreasing.
     \item For any $\delta>0$, $k>0$, we can find $j_0=j_0(\delta,k)>0$, so that for $j\geq j_0$,
     \[
     \mathscr{I}((1+\delta)k\varphi^j)\subseteq \mathscr{I}(k\varphi)\,.
     \]
 \end{enumerate}
\end{definition}
Recall that quasi-equisingular approximations always exist (\cite[Lemma~3.2]{Cao14}, \cite[Theorem~2.2.1]{DPS01}).

Recall that a potential $\varphi\in \PSH(X,\omega)$ is said to be \emph{$\mathscr{I}$-model} if
\[
\varphi=P[\varphi]_{\mathscr{I}}:=\sups\left\{\,\psi\in \PSH(X,\omega): \psi\leq 0,\psi^{\An}\leq \mathscr{I}(k\psi)\subseteq \mathscr{I}(k\varphi) \text{ for all }k\in \mathbb{N} \,\right\}\,.
\]
We use the notation $\PSH^{\Mdl}_{\mathscr{I}}(X,\omega)$ to denote the set of $\mathscr{I}$-model potentials in $\PSH(X,\omega)$.

\begin{theorem}[{\cite[Theorem~1.4]{DX22}}]\label{thm:DX1-4}
	Let $\varphi\in \PSH^{\Mdl}(X,\omega)$, $\int_X \omega_{\varphi}^n>0$. Then the following are equivalent:
	\begin{enumerate}
		\item $\varphi\in \PSH^{\Mdl}_{\mathscr{I}}(X,\omega)$.
		\item 
		\[
			\varphi=\sups\left\{\,\psi\in \PSH(X,\omega): \psi\leq 0,\mathscr{I}(k\psi)\subseteq \mathscr{I}(k\varphi)\text{ for any }k\in \mathbb{R}_{>0} \,\right\}\,.
		\]
		\item 
		\[
		\lim_{k\to\infty}\frac{n!}{k^n}h^0(X,K_X\otimes L^k\otimes \mathscr{I}(k\varphi))=\int_X \omega_{\varphi}^n\,.
		\]
		\item For one (or equivalently any) quasi-equisingular approximation $\varphi^j$ of $\varphi$,
		\[
		\lim_{j\to\infty} \int_X \omega_{\varphi^j}^n=\int_X \omega_{\varphi}^n\,.
		\]
	\end{enumerate}
\end{theorem}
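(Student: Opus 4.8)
The plan is to route the whole equivalence through the \emph{multiplier ideal volume}
\[
\widehat{\mathrm{vol}}(\varphi):=\lim_{k\to\infty}\frac{n!}{k^n}\,h^0\bigl(X,K_X\otimes L^k\otimes\mathscr{I}(k\varphi)\bigr),
\]
which is well defined because $\bigoplus_k H^0(X,L^k\otimes\mathscr{I}(k\varphi))$ is a graded linear series containing an ample series (here $\int_X\omega_\varphi^n>0$ enters). Since $\widehat{\mathrm{vol}}$ depends only on the ideals $\{\mathscr{I}(k\varphi)\}_{k\in\mathbb N}$, it equals $\widehat{\mathrm{vol}}(P[\varphi]_{\mathscr{I}})$, and the pivot of the argument is the identity $\widehat{\mathrm{vol}}(\varphi)=\int_X\omega_{P[\varphi]_{\mathscr{I}}}^n$. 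To establish it I would first record the analytic case: if $\psi$ has analytic singularities along a snc $\mathbb{Q}$-divisor $D$ on a log resolution $\pi\colon Y\to X$, then $\mathscr{I}(k\psi)=\pi_*\mathscr{O}_Y(K_{Y/X}-\lfloor kD\rfloor)$, so $h^0(X,K_X\otimes L^k\otimes\mathscr{I}(k\psi))=h^0(Y,K_Y+k(\pi^*L-D)+\{kD\})$; as $\pi^*L-D$ is big (its volume being $\int_X\omega_\psi^n>0$) and the remaining divisors are bounded, this yields $\widehat{\mathrm{vol}}(\psi)=\mathrm{vol}_Y(\pi^*L-D)=\int_X\omega_\psi^n$. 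It then remains to treat an $\mathscr{I}$-model potential $w$ and prove $\widehat{\mathrm{vol}}(w)=\int_X\omega_w^n$. For ``$\ge$'', approximate $w$ from below by an increasing sequence of analytic potentials still lying in the defining family of $w$ (obtained from slightly rescaled Demailly regularizations of competitors): by the analytic case and continuity of the non-pluripolar Monge--Amp\`ere along increasing sequences, $\int_X\omega_w^n=\lim_i\int_X\omega_{\psi_i}^n=\lim_i\widehat{\mathrm{vol}}(\psi_i)\le\widehat{\mathrm{vol}}(w)$. For ``$\le$'', use the analytic potentials $\phi_k$ determined by bases of $H^0(X,K_X\otimes L^{k+c}\otimes\mathscr{I}(kw))$ for a fixed $c\gg0$, which are globally generated (Nadel vanishing, Castelnuovo--Mumford regularity): along a divisibility chain, subadditivity of multiplier ideals makes the $\phi_k$ decrease to $w$, Fujita approximation for graded linear series gives $\int_X\omega_{\phi_k}^n\to\widehat{\mathrm{vol}}(w)$, and the fact that these potentials are calibrated exactly to $\mathscr{I}(kw)$ forces the limit to be $\int_X\omega_w^n$ (no mass loss).

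Granting the identity, the rest is essentially formal. Since $P[\varphi]_{\mathscr{I}}\ge\varphi$ and both are model potentials, the monotonicity and rigidity of non-pluripolar masses --- comparable model potentials of equal positive mass coincide --- give
\[
(3)\ \Longleftrightarrow\ \int_X\omega_{P[\varphi]_{\mathscr{I}}}^n=\int_X\omega_\varphi^n\ \Longleftrightarrow\ P[\varphi]_{\mathscr{I}}=\varphi\ \Longleftrightarrow\ (1).
\]
For $(4)\Leftrightarrow(1)$: if $\varphi^j$ is a quasi-equisingular approximation, then $\int_X\omega_{\varphi^j}^n=\widehat{\mathrm{vol}}(\varphi^j)$ (analytic case) and $\widehat{\mathrm{vol}}(\varphi^j)\ge\widehat{\mathrm{vol}}(\varphi)$ (since $\mathscr{I}(k\varphi)\subseteq\mathscr{I}(k\varphi^j)$); combining this with the third defining property of a quasi-equisingular approximation through a diagonal argument --- letting the slack $\delta\to0$ and invoking continuity of the divisorial volume --- shows $\lim_j\int_X\omega_{\varphi^j}^n=\widehat{\mathrm{vol}}(\varphi)$ independently of the chosen approximation, which yields both the ``one or equivalently any'' clause and the equivalence with (3). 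Finally $(1)\Leftrightarrow(2)$: the $\mathbb{R}_{>0}$-envelope of (2) lies between $\varphi$ and $P[\varphi]_{\mathscr{I}}$, hence has the same multiplier ideals at all integers and, by the same volume computation performed with real parameters, the same non-pluripolar mass $\widehat{\mathrm{vol}}(\varphi)$; rigidity then forces it to equal $P[\varphi]_{\mathscr{I}}$, so (2) is precisely the statement $\varphi=P[\varphi]_{\mathscr{I}}$.

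The main obstacle is the pivotal identity, and within it the ``$\le$'' half for $\mathscr{I}$-model potentials: that the non-pluripolar Monge--Amp\`ere does not lose mass along the canonical analytic approximation by the potentials $\phi_k$ attached to $\mathscr{I}(kw)$. This is where the genuine input sits --- Demailly's regularization theorem, subadditivity of multiplier ideals, Ohsawa--Takegoshi extension and the strong openness property --- whereas the ``$\ge$'' half, the analytic case, the rigidity deductions, and the quasi-equisingular bookkeeping are comparatively routine.
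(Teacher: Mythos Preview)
The paper does not contain a proof of this statement: it is quoted verbatim from \cite[Theorem~1.4]{DX22} as an external input, with no argument supplied. So there is nothing in the present paper to compare your proposal against.

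That said, your outline is a plausible reconstruction of how such a result is proved. The architecture --- reducing everything to the identity $\widehat{\mathrm{vol}}(\varphi)=\int_X\omega_{P[\varphi]_{\mathscr I}}^n$ and then invoking mass rigidity for model potentials --- is the natural one, and indeed the paper elsewhere cites \cite[Theorem~1.1]{DX22} for exactly this volume identity in the $\mathscr I$-model case. You correctly locate the genuine difficulty in the ``$\le$'' half (no mass loss along the partial Bergman approximation $\phi_k$), and the tools you list (subadditivity, Nadel vanishing, strong openness, Ohsawa--Takegoshi) are the right ones. One point to tighten: in the ``$\ge$'' direction your claim that one can approximate an $\mathscr I$-model potential $w$ \emph{from below} by analytic potentials still in the defining family is doing real work and deserves a precise construction; the phrase ``slightly rescaled Demailly regularizations of competitors'' is gesturing at the right idea but is not yet an argument. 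Similarly, in the $(4)$ step the ``diagonal argument'' passing $\delta\to 0$ requires some care with the order of limits. These are refinements rather than gaps; the overall plan is sound.
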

In terms of the function $\varphi^{\An}$ introduced below, these conditions are also equivalent to 
\[
	\varphi=\sups\left\{\,\psi\in \PSH(X,\omega): \psi\leq 0,\psi^{\An}\leq \varphi^{\An}\,\right\}\,.
\]

Here and in the whole paper, products like $\omega_{\varphi}^n$ are taken in the non-pluripolar sense, see \cite{BEGZ10}.

For the definition of model potentials, we refer to \cite{DDNL18mono}. The set of model potentials in $\PSH(X,\omega)$ is denoted  by $\PSH^{\Mdl}(X,\omega)$.
Recall that a model potential with analytic singularities is $\mathscr{I}$-model (\cite{Bon98}).

Let $X^{\Div}_{\mathbb{Q}}$ denote the set of all $\mathbb{Q}$-divisorial geometric valuation on $X$. Namely, elements of $X^{\Div}_{\mathbb{Q}}$ are $c\ord_E$, where $c\in \mathbb{Q}_{>0}$, $E$ is a prime divisor over $X$ (i.e. a prime divisor on a birational model of $X$).
Let $\psi\in \PSH(X,\omega)$, recall that $\psi^{\An}$ is a function on  $X^{\Div}_{\mathbb{Q}}$ defined as follows: let $v\in X^{\Div}_{\mathbb{Q}}$, then set
\begin{equation}\label{eq:psian}
-v(\psi)=\psi^{\An}(v):=-\lim_{k\to\infty} \frac{1}{k} v\left(\mathscr{I}(k\psi)\right)\,.
\end{equation}

\begin{lemma}\label{lma:qesana}
Let $\varphi\in \PSH(X,\omega)$. Let $\varphi^j$ be a quasi-equisingular approximation of $\varphi^j$. Then $\varphi^{j,\An}\to \varphi^{\An}$ pointwisely on $X^{\Div}_{\mathbb{Q}}$ as $j\to\infty$.
\end{lemma}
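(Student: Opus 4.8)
The statement asserts that if $\varphi^j$ is a quasi-equisingular approximation of $\varphi\in\PSH(X,\omega)$, then $\varphi^{j,\An}\to\varphi^{\An}$ pointwise on $X^{\Div}_{\mathbb{Q}}$. Fix a valuation $v\in X^{\Div}_{\mathbb{Q}}$, say $v=c\ord_E$ with $E$ a prime divisor over $X$. The plan is to exploit the two structural features of a quasi-equisingular approximation: monotonicity of singularity types (condition (2) in \cref{def:qeqa}) and the multiplier-ideal comparison (condition (3)).

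First, from condition (2) the sequence $\varphi^j$ has decreasing singularity type; since for each $k$ the assignment $\psi\mapsto v(\mathscr{I}(k\psi))$ is monotone in the singularity type, the sequence $\varphi^{j,\An}(v)$ is decreasing in $j$. Moreover, because $\varphi^j\to\varphi$ in $L^1$ with decreasing singularities, the limit is less singular than $\varphi$, so $\varphi^{j,\An}(v)\geq\varphi^{\An}(v)$ for all $j$; hence $\lim_j\varphi^{j,\An}(v)$ exists and is $\geq\varphi^{\An}(v)$. It remains to prove the reverse inequality $\lim_j\varphi^{j,\An}(v)\leq\varphi^{\An}(v)$. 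For this I would use condition (3): given $\delta>0$ and $k\in\mathbb{N}$, for $j\geq j_0(\delta,k)$ we have $\mathscr{I}((1+\delta)k\varphi^j)\subseteq\mathscr{I}(k\varphi)$, and applying $v$ (which reverses inclusions of ideals in the sense that a smaller ideal has larger value) gives
\[
v\bigl(\mathscr{I}(k\varphi)\bigr)\leq v\bigl(\mathscr{I}((1+\delta)k\varphi^j)\bigr)\,.
\]
Dividing by $k$ and using that $\varphi^j$ has analytic singularities — so that $\tfrac1k v(\mathscr{I}(k\varphi^j))$ converges to $v(\varphi^j):=-\varphi^{j,\An}(v)$ and in fact, by homogeneity of analytic singularities along a snc divisor on a log resolution, $\tfrac1k v(\mathscr{I}((1+\delta)k\varphi^j))\to (1+\delta)\,v(\varphi^j)$ as $k\to\infty$ — I would pass to the limit $k\to\infty$ (keeping $\delta$, and a suitable $j=j(k)\geq j_0(\delta,k)$ chosen so that $\varphi^{j,\An}(v)$ is as close to its limit as desired) to obtain
\[
-\varphi^{\An}(v)\leq (1+\delta)\bigl(-\lim_{j\to\infty}\varphi^{j,\An}(v)\bigr)\,.
\]
Letting $\delta\to 0$ yields $\varphi^{\An}(v)\geq\lim_j\varphi^{j,\An}(v)$, completing the argument. (One must be mildly careful about signs: since $\varphi^{\An}\leq 0$, the inequality $-\varphi^{\An}(v)\leq(1+\delta)(-\lim\varphi^{j,\An}(v))$ does give the desired bound after $\delta\to 0$; if some value vanishes the conclusion is immediate from the first half.)

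The main obstacle I anticipate is the interchange of the limits in $k$ and $j$, together with making rigorous the claim that $\tfrac1k v(\mathscr{I}((1+\delta)k\varphi^j))\to(1+\delta)v(\varphi^j)$. For a potential with analytic singularities along a snc $\mathbb{Q}$-divisor $D=\sum_i a_i D_i$ on a log resolution $\pi$, the multiplier ideal $\mathscr{I}(t\,\varphi^j)$ is computed (by the snc formula for multiplier ideals, e.g.\ \cite[Lemma~2.3.19]{MM07} and the standard snc computation) as $\pi_*\mathscr{O}_Y\bigl(\sum_i\lfloor\text{(something linear in }t)\rfloor\bigr)$, so $\tfrac1k v(\mathscr{I}(tk\,\varphi^j))\to t\,v(\varphi^j)$ and the floor corrections are $O(1/k)$, uniformly once $\varphi^j$ (hence $D$ and the model) is fixed. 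This is why it is essential to fix $j$ before sending $k\to\infty$. With the diagonal choice $j=j(k)$ one then needs that $\varphi^{j(k),\An}(v)$ still converges to $\lim_j\varphi^{j,\An}(v)$, which holds since the sequence in $j$ is monotone and $j(k)\to\infty$. I expect the write-up to be short once these bookkeeping points are organized.
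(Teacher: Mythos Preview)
Your overall strategy matches the paper's: both use condition~(3) of \cref{def:qeqa} to get $v(\mathscr{I}(k\varphi))\leq v(\mathscr{I}((1+\delta)k\varphi^j))$ for $j\geq j_0(\delta,k)$, then extract the inequality $-\varphi^{\An}(v)\leq(1+\delta)\bigl(-\lim_j\varphi^{j,\An}(v)\bigr)$ and let $\delta\to 0$. The ``trivial'' direction you spell out is exactly the one the paper dismisses in a word.

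The difference, and the gap in your plan, is in how you pass from $\tfrac{1}{k}v(\mathscr{I}((1+\delta)k\varphi^j))$ to $(1+\delta)(-\varphi^{j,\An}(v))$. You propose to use the explicit snc formula for multiplier ideals of analytic singularities, which for \emph{fixed} $j$ gives $\tfrac{1}{k}v(\mathscr{I}((1+\delta)k\varphi^j))=(1+\delta)(-\varphi^{j,\An}(v))+O_j(1/k)$, and then run a diagonal $j=j(k)\geq j_0(\delta,k)$. But the implied constant in $O_j(1/k)$ depends on the log resolution of $\varphi^j$ and can grow with $j$; your final paragraph checks only that $\varphi^{j(k),\An}(v)\to\lim_j\varphi^{j,\An}(v)$, not that the floor-function error stays under control along the diagonal. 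As written, the interchange of limits you flag as the ``main obstacle'' is not actually resolved.

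The paper avoids this entirely with a one-line observation you do not use: by the Demailly--Ein--Lazarsfeld subadditivity $\mathscr{I}((k+m)\varphi^j)\subseteq\mathscr{I}(k\varphi^j)\cdot\mathscr{I}(m\varphi^j)$, the sequence $k\mapsto v(\mathscr{I}(k\varphi^j))$ is superadditive, so Fekete's lemma gives
\[
-\varphi^{j,\An}(v)=\sup_{k\in\mathbb{Z}_{>0}}\tfrac{1}{k}\,v(\mathscr{I}(k\varphi^j))\,.
\]
Hence $\tfrac{1}{k}v(\mathscr{I}((1+\delta)k\varphi^j))\leq(1+\delta)(-\varphi^{j,\An}(v))$ holds for \emph{every} $k$, not just asymptotically. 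Plugging this in for $j\geq j_0(\delta,k)$ and then taking $\sup$ over $k$ (using monotonicity of $-\varphi^{j,\An}(v)$ in $j$ to pass to the limit) finishes the proof with no diagonal argument and no appeal to the snc structure of $\varphi^j$. Replacing your limit-in-$k$ step with this Fekete bound is the only change needed to make your write-up complete.
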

\begin{proof}
It suffices to prove that for any prime divisor $E$ over $X$, $\varphi^{j,\An}(\ord_E)\to \varphi^{\An}(\ord_E)$.
Fix $k\in \mathbb{Z}_{>0}$, $\delta\in \mathbb{Q}_{>0}$, take $j_0>0$, so that when $j>j_0$, $\mathscr{I}((1+\delta)k\varphi^j)\subseteq \mathscr{I}(k\varphi)$.
When $j>j_0$, we get
\[
\frac{1}{k}\ord_E(\mathscr{I}(k\varphi))\leq \frac{1}{k}\ord_E(\mathscr{I}((1+\delta)k\varphi^j))\,.
\]
By Fekete's lemma, 
\[
-\varphi^{j,\An}(\ord_E)=\sup_{k\in \mathbb{Z}_{>0}}\frac{1}{k}\ord_E(\mathscr{I}(k\varphi^j))\,.
\]
So
\[
\frac{1}{k}\ord_E(\mathscr{I}(k\varphi))\leq (1+\delta)(-\varphi^{j,\An}(\ord_E))\,.
\]
Take sup with respect to $k\in \mathbb{Z}_{>0}$, we get
\[
-\varphi^{\An}(\ord_E)\leq (1+\delta)(-\varphi^{j,\An}(\ord_E))\,.
\]
Let $\delta\to 0+$, we get
\[
\varphi^{\An}(\ord_E)\geq \lim_{j\to\infty}\varphi^{j,\An}(\ord_E)\,.
\]
The converse is trivial.
\end{proof}
\begin{remark}
For readers familiar with the non-Archimedean language of \cite{BJ18b}, our proof in fact implies the following stronger result: $\varphi^{\An}$ extends uniquely to a function in $\PSH^{\NA}(L)$ and $\varphi^{j,\An}\to \varphi^{\An}$ in $\PSH^{\NA}(L)$. See \cite[Theorem~4.28, Corollary~4.58]{BJ18b}.
\end{remark}

\subsection{Singularity divisors}
Let $X$ be a projective manifold of dimension $n$. Let $L$ be a semi-ample line bundle with a smooth non-negatively curved Hermitian metric $h$. Let $\omega=c_1(L,h)$. 

\begin{definition}\label{def:sdd}
Let $\psi\in \PSH(X,\omega)$.
 Let $\pi:Y\rightarrow X$ be a birational morphism from a normal $\mathbb{Q}$-factorial projective variety. Define the \emph{singularity divisor} of $\psi$ on $Y$ as
 \[
 \Div_Y \psi:=\sum_{E} \nu_E(\psi)\,E\,,
 \]
 where $E$ runs over the set of prime divisors on $Y$, $\nu_E(\psi)$ is the generic Lelong number of $\pi^*\psi$ along $E$. Note that this is a countable sum by Siu's semi-continuity theorem.
 
 Let $D$ be an effective $\mathbb{R}$-divisor on $Y$. We say that the singularities of $\psi$ are \emph{determined} on $Y$ by $D$ if for any birational model $\Pi:Z\rightarrow Y$, $\Div_Z \psi=\Pi^*D$.
\end{definition}
We can regard $\Div_Y\psi$ as the divisorial part of Siu's decomposition of $\ddc\pi^*\psi$. 

\begin{remark}\label{rmk:divNS}
In general, a divisor with countably many components does not define a class in the N\'eron--Severi group, but in the case of $\Div_Y \psi$, this can be easily defined. In fact, write $\Div_Y \psi=\sum_{i=1}^{\infty} a_iD_i$. Here we allow $a_i$ to be $0$. Clearly, $\pi^*L-\sum_{i=1}^r a_i D_i$ is pseudo-effective. It follows that $\sum_{i=1}^{\infty} a_i D_i$ converges as a sum in the N\'eron--Severi group $\mathrm{NS}^1(Y)\otimes \mathbb{R}$, see \cite[Proposition~1.3]{BFJ09}. 
In particular, we can talk about the intersection between $\Div_Y \psi$ and divisors.
\end{remark}

As a consequence of resolution of singularities, any potential with analytic singularity admits a model where its singularities are determined (\cite[Lemma~2.3.19]{MM07}).

\begin{definition}\label{def:ext}
Let $E$ be a prime divisor over $X$. An \emph{extraction} of $E$ is a proper birational morphism $\pi:Y\rightarrow X$ from a \emph{normal $\mathbb{Q}$-factorial variety} $Y$, such that $E$ is a prime divisor on $Y$ and that $-E$ is $\pi$-ample.

If there is an extraction of $E$, we call $E$ an \emph{extractable} divisor.
\end{definition}
Observe that when $X$ is Fano, an extractable divisor $E$ is dreamy in the sense that the doubly graded algebra
\begin{equation}\label{eq:dreamydef}
\bigoplus_{m\in \mathbb{Z}_{\geq 0}}\bigoplus_{p\in \mathbb{Z}}H^0(Y,-m\pi^*K_X-pE)
\end{equation}
is finitely generated.

In general, when the log discrepancy of $E$ is well-behaved, one can run a suitable MMP to extract $E$. See \cite[Corollary~1.4.3]{BCHM10}, \cite[Section~1.4]{Kol13} for details.

Assume that $L$ is ample.
Let $F$ be an extractable divisor. Let $\pi:Y\rightarrow X$ be an extraction of $F$. We can take $A\in \mathbb{Q}_{>0}$ large enough, so that $A\pi^*L-F$ is semi-ample. In particular, take $B$ large enough, so that $B(A\pi^*L-F)$ is base-point free. Take a basis $s_1,\ldots,s_N$ of $H^0(X,B(A\pi^*L-F))$. Let
\[
\psi=\frac{1}{AB}\log \max_{i=1,\ldots,N} |s_j|_{h^{AB}}^2\,.
\]
Then the singularities of $\psi$ are determined on $Y$ by $A^{-1}F$ (see \cref{def:sdd}).

\subsection{Quasi-analytic singularities}
Let $X$ be a projective manifold of dimension $n$. Let $L$ be a big and semi-ample line bundle on $X$. Let $h$ be a smooth non-negatively curved Hermitian metric on $L$. Let $\omega=c_1(L,h)$.

\begin{definition}\label{def:qasing}
 We say a potential $\varphi\in \PSH(X,\omega)$ has \emph{quasi-analytic singularities} if there is a birational model $\pi:Y\rightarrow X$, a snc $\mathbb{R}$-divisor $D$ on $Y$, such that the singularities of $\psi$ are determined on $Y$ by $D$ (see \cref{def:sdd}). In this case, we also say that $\varphi$ has quasi-analytic singularities along $D$.
\end{definition}

\begin{lemma}\label{lma:qamis}
Let $\varphi\in \PSH(X,\omega)$ be a potential with quasi-analytic singularities along a snc $\mathbb{Q}$-divisor $D$ on a birational model $\pi:Y\rightarrow X$, then
\[
\mathscr{I}(k\pi^*\varphi)=\mathcal{O}_Y(-\floor{kD})
\]
for any $k\in \mathbb{Q}_{>0}$.
\end{lemma}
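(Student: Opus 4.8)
The plan is to reduce the statement to a purely local computation on $Y$, comparing the multiplier ideal of a quasi-analytic singularity with the round-down of the associated $\mathbb{Q}$-divisor. Since the singularities of $\varphi$ are determined on $Y$ by the snc $\mathbb{Q}$-divisor $D = \sum_i a_i D_i$ (with $a_i \in \mathbb{Q}_{>0}$), the generic Lelong number of $\pi^*\varphi$ along each $D_i$ equals $a_i$. The first step is to pass to a further log resolution $\Pi : Z \to Y$ on which $\pi^*\varphi$ has \emph{analytic} singularities along a snc $\mathbb{Q}$-divisor; this exists by \cite[Lemma~2.3.19]{MM07}. By the defining property of "determined" (\cref{def:sdd}), we have $\Div_Z \varphi = \Pi^* D$, so the analytic singularities of $(\pi\circ\Pi)^*\varphi$ are precisely along $\Pi^* D$, which is again snc since $\Pi$ was chosen to be a log resolution adapted to $D$. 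On $Z$ the multiplier ideal of a potential with analytic singularities along a snc $\mathbb{Q}$-divisor is classical: $\mathscr{I}(k(\pi\circ\Pi)^*\varphi) = \mathcal{O}_Z(-\floor{k\,\Pi^* D})$ for all $k \in \mathbb{Q}_{>0}$, by the standard local model computation (locally $\varphi$ is $\sum_j a_j \log|z_j|^2$ up to a smooth term, and $\int |f|^2 \prod |z_j|^{-2a_j}$ converges iff $f \in (z^{\floor{ka_1}}_1, \dots)$ — i.e. the log-canonical threshold computation for monomials).

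The second step is to push forward. We want $\mathscr{I}(k\pi^*\varphi) = \mathcal{O}_Y(-\floor{kD})$ on $Y$. The birational transformation rule for multiplier ideals gives
\[
\mathscr{I}(k\pi^*\varphi) = \Pi_*\left(\mathcal{O}_Z(K_{Z/Y}) \otimes \mathscr{I}(k\Pi^*\pi^*\varphi)\right) = \Pi_*\left(\mathcal{O}_Z(K_{Z/Y} - \floor{k\,\Pi^* D})\right).
\]
Here I would invoke the standard functoriality of multiplier ideals under proper birational morphisms between smooth varieties (this is where smoothness of both $Y$ and $Z$, guaranteed by \cref{def:BM}, is used, so $K_{Z/Y}$ is an honest effective exceptional divisor). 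It then remains to check the identity of sheaves $\Pi_*\mathcal{O}_Z(K_{Z/Y} - \floor{k\,\Pi^* D}) = \mathcal{O}_Y(-\floor{kD})$. Writing $\floor{k\,\Pi^* D} = \Pi^*(kD) - \{k\,\Pi^* D\}$ and using $\Pi^*(kD) = \Pi^*(\floor{kD}) + \Pi^*(\{kD\})$, the statement becomes $\Pi_*\mathcal{O}_Z(K_{Z/Y} + \Pi^*(\{kD\}) - \{k\,\Pi^* D\}) = \mathcal{O}_Y$, i.e. the divisor $\lceil K_{Z/Y} + \Pi^*(\{kD\}) - \{k\,\Pi^*D\}\rceil$ — wait, more carefully: one needs $\Pi_*\mathcal{O}_Z(K_{Z/Y} - \floor{\Pi^*(\{kD\})}) = \mathcal{O}_Y$, which follows because $\floor{\Pi^*(\{kD\})} \leq K_{Z/Y}$ fails in general, so instead one uses that $K_{Z/Y} - \floor{k\Pi^*D} + \floor{\Pi^*(kD)}$ has round-up with no $\Pi$-exceptional part of negative coefficient; concretely, for $\Pi$ a log resolution of the snc divisor $D$ the discrepancies along $\Pi$-exceptional divisors dominate the fractional contributions, giving the projection-formula identity $\Pi_* \mathcal{O}_Z(K_{Z/Y} - \floor{k\,\Pi^*D}) = \mathcal{O}_Y(-\floor{kD})$.

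The main obstacle I anticipate is precisely this last bookkeeping of exceptional discrepancies: one must verify that the $\Pi$-exceptional part of $K_{Z/Y} - \floor{k\,\Pi^* D} + \Pi^*\floor{kD}$ has all coefficients $\geq 0$, so that pushing forward kills it and leaves exactly $\mathcal{O}_Y(-\floor{kD})$. Since $\Pi$ may be taken to be a composition of blow-ups along snc centres adapted to $D$, this is a coefficient inequality of the form $a(E; Y) + k\,\mathrm{mult}_E(\Pi^* D) \geq \floor{k\,\mathrm{mult}_E(\Pi^* D)}$ plus an integrality argument — but writing it cleanly requires keeping track of how $\floor{\cdot}$ interacts with pullback along $\Pi$. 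An alternative cleaner route, which I would likely prefer, avoids the further resolution: since $D$ is already snc on $Y$ and $Y$ is smooth, one can compute $\mathscr{I}(k\pi^*\varphi)$ \emph{directly} on $Y$ by the local monomial model, using that $\pi^*\varphi$ differs from $\sum_i a_i \log|s_i|^2$ (with $s_i$ cutting out $D_i$) by an $L^\infty_{\mathrm{loc}}$ function — this boundedness is exactly what "singularities determined by the snc $\mathbb{Q}$-divisor $D$" buys us, because on a suitable resolution the difference pulls back to a smooth function, hence is locally bounded on $Y$ already — and then $\mathscr{I}(k\pi^*\varphi) = \mathscr{I}(\sum_i k a_i \log|s_i|^2) = \mathcal{O}_Y(-\sum_i \floor{k a_i} D_i) = \mathcal{O}_Y(-\floor{kD})$ by the snc monomial computation, with no push-forward needed.
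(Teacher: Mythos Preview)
Your proposal has a genuine gap at its core: both routes you sketch tacitly assume that ``quasi-analytic singularities along $D$'' means $\pi^*\varphi$ is either analytic on some further resolution or $L^\infty_{\mathrm{loc}}$-equivalent to the monomial model $\sum_i a_i\log|s_i|^2$. Neither holds. Look again at \cref{def:sdd} and \cref{def:qasing}: the hypothesis says only that $\Div_Z\varphi=\Pi^*D$ for \emph{every} birational model $\Pi:Z\to Y$, i.e.\ the generic Lelong numbers are prescribed on all models. This is strictly weaker than analytic singularities. A function like $\log|z|^2-\sqrt{-\log|z|^2}$ near the origin has the same Lelong numbers as $\log|z|^2$ on every blow-up, yet the difference is unbounded; so your claimed $L^\infty$ bound fails, and there is no log resolution on which $\pi^*\varphi$ becomes analytic. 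Consequently the citation of \cite[Lemma~2.3.19]{MM07} in your first route is unjustified, and the ``direct local monomial computation'' in your second route does not apply to $\pi^*\varphi$ itself.

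The paper closes this gap by a different mechanism. It constructs an auxiliary potential $\psi$ on $Y$ with \emph{genuine} analytic singularities along $D$, by taking a sufficiently ample line bundle $H$ on $Y$ with $H-D$ globally generated and setting $\psi=\tfrac{1}{m}\log\max_i|s_i|^2_{h^m}$ for a basis of sections; for this $\psi$ the classical snc computation gives $\mathscr{I}(k\psi)=\mathcal{O}_Y(-\floor{kD})$. The point is then that $\pi^*\varphi$ and $\psi$, viewed as $\omega'$-psh functions for $\omega'=c_1(H,h)$, are $\mathscr{I}$-equivalent: they have identical Lelong numbers along every divisor over $Y$ (both are determined by $D$), and multiplier ideal sheaves depend only on this valuative data. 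That last fact --- the valuative determination of $\mathscr{I}(k\varphi)$ --- is exactly the missing ingredient in your argument; once you have it, no $L^\infty$ comparison and no further resolution is needed.
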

\begin{proof}
Without loss of generality, we take $k=1$.
Recall that we have assumed that the model is projective. Take a sufficiently ample line bundle $H$ on $Y$, so that $H-D$ is semi-ample and $H-\pi^*L$ is ample. Take a $m\in \mathbb{Z}_{>0}$ so that $m(H-D)$ is globally generated. Fix a smooth positively curved metric $h$ on $H$. Let $\omega':=c_1(H,h)$, we may assume that $\omega'>\pi^*\omega$.
Take a basis $s_1,\ldots,s_N$ of $H^0(Y,m(H-D))$. Let
\[
\psi=\frac{1}{m}\log \max_{i=1,\ldots,N} |s_i|^2_{h^m}\,.
\]
Then we know that
\[
\mathscr{I}(\psi)=\mathcal{O}_Y\left(-\floor{D}\right)\,.
\]
But we know that $\pi^*\varphi\sim_{\mathscr{I}}\psi$ as $\omega'$-psh functions, so we conclude. 
\end{proof}
\begin{remark}
    We rephrase the proof of \cref{lma:qamis} in fancier terms: Let
    \[
    \PSH^{\Mdl}(X):=\varinjlim_{\omega}\PSH^{\Mdl}(X,\omega)\,,
    \]
    where $\omega$ runs over all K\"ahler forms on $X$, when $\omega\leq \omega'$, the map $\PSH^{\Mdl}(X,\omega)\rightarrow \PSH^{\Mdl}(X,\omega')$ is given by the $P_{\omega'}[\bullet]$. We take the filtered colimit in the category of sets. We define a class $[\varphi]\in \PSH^{\Mdl}(X)$ to be analytic if some representative is analytic. Now the proof of \cref{lma:qamis} says that when $\varphi\in\PSH^{\Mdl}(X,\omega)$ is quasi-analytic, the class $[\varphi]\in \PSH^{\Mdl}(X)$ is analytic.
\end{remark}

\begin{lemma}\label{lma:diffnab}
Let $\varphi\in \PSH(X,\omega)$ be a potential with quasi-analytic singularities along a snc $\mathbb{R}$-divisor $D$ on $X$, then $L-D$ is nef. If moreover $\int_X \omega_{\varphi}^n>0$, then $L-D$ is big and nef.
\end{lemma}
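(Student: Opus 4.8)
The plan is to reduce the statement to a concrete positivity assertion about the singularity divisor, then exploit the quasi-equisingular approximation. First I would choose a quasi-equisingular approximation $\varphi^j$ of $\varphi$ by potentials with analytic singularities; after replacing each $\varphi^j$ by a suitable birational model we may assume (by \cref{lma:qamis} and the discussion of singularity divisors) that $\varphi^j$ has quasi-analytic singularities along a snc $\mathbb{Q}$-divisor $D_j$ on a model $\pi_j\colon Y_j\to X$, with $\pi_{j*}D_j$ increasing and $D_j$ converging to $D$ in the sense of generic Lelong numbers. By \cref{lma:qesana}, $\varphi^{j,\An}\to\varphi^{\An}$ pointwise on $X^{\Div}_{\mathbb{Q}}$, so the coefficients of the $D_j$ converge to those of $D$ along every prime divisor that appears. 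It therefore suffices to prove the nefness statement for a single potential with analytic singularities along a snc $\mathbb{Q}$-divisor, and then pass to the limit, using that the nef cone is closed.

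So assume $\varphi$ has analytic singularities along a snc $\mathbb{Q}$-divisor $D$ on $X$ itself (pulling back to a model changes nothing since $L-D$ on $X$ is nef iff $\pi^*(L-D) = \pi^*L - \Div_Y\psi$ is nef on $Y$, using $\Div_Y\psi = \pi^*D$ by the definition of ``determined''). Locally $\varphi = \sum_i a_i \log|s_i|_h^2 + (\text{smooth})$, where $s_i$ cuts out $D_i$. The key observation is that $\omega + \ddc\varphi$, computed in the non-pluripolar sense, represents the class $L - D$ and is a \emph{positive} closed current with bounded potentials away from $\mathrm{Supp}\,D$; more precisely, writing $u = \varphi - \sum_i a_i \log|s_i|_h^2$ locally, $u$ is a local potential for a smooth semipositive form in the class $L - D$ wherever it makes sense, and globally $\omega_\varphi = \pi_* (\text{smooth semipositive form on a log resolution})$. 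Concretely I would pass to a log resolution where $D$ becomes relatively nice, and use that the residual current $\omega + \ddc\varphi$ after removing the divisorial part of the Siu decomposition is a closed positive current in $c_1(L-D)$; a class containing a closed positive current whose local potentials can be taken bounded outside a divisor, and which has vanishing Lelong numbers, is nef — this is exactly the statement that a modified nef class (nef in codimension one, after subtracting the divisorial Zariski part) with no remaining singularities is genuinely nef. Alternatively, and perhaps more cleanly, one tests nefness against curves: for any irreducible curve $C$ not contained in $\mathrm{Supp}\,D$, $(L-D)\cdot C = \int_C (\omega + \ddc\varphi) \geq 0$ since the current restricts to a positive measure on $C$ (the potential is not identically $-\infty$ on $C$, being bounded there up to the smooth part), and for curves $C \subseteq \mathrm{Supp}\,D$ one handles them by a limiting/perturbation argument, perturbing $D$ inside its support or moving to a model. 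I expect the curves contained in the support to be the one genuinely delicate point.

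For the second assertion, assume in addition $\int_X \omega_\varphi^n > 0$. Since $L-D$ is nef, bigness is equivalent to the positivity of its top self-intersection, or to the positivity of its volume. The point is that the non-pluripolar Monge--Amp\`ere mass $\int_X \omega_\varphi^n$ is precisely a lower bound for $\mathrm{vol}(L-D)$: on a log resolution where $\varphi$ has analytic singularities along a snc divisor, $\omega_\varphi$ is (the pushforward of) a smooth semipositive form in $c_1(L-D)$, so $\int_X \omega_\varphi^n = \int_Y (\pi^*L - \Div_Y\psi)^n = ((L-D)^n)$ by the projection formula. Hence $((L-D)^n) = \int_X \omega_\varphi^n > 0$, and a nef class with positive top self-intersection is big. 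In the general quasi-analytic case one replaces this identity by the inequality $\int_X\omega_\varphi^n \le \mathrm{vol}(L-D)$ obtained by approximation (comparing masses along the $\varphi^j$, using monotonicity of non-pluripolar products under the decreasing/increasing singularity types and semicontinuity of volume), which still forces $\mathrm{vol}(L-D) > 0$.

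The main obstacle, as noted, is the treatment of curves (or more generally subvarieties) contained in the support of $D$ when verifying nefness directly, and the bookkeeping needed to make the approximation argument interact correctly with the non-pluripolar mass and the convergence of singularity divisors; I would expect to lean on Siu's semicontinuity theorem, the closedness of the nef cone, and the continuity properties of non-pluripolar products under quasi-equisingular approximations established in the references to push the snc $\mathbb{Q}$-divisor case to the general snc $\mathbb{R}$-divisor case.
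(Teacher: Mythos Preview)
Your bigness argument is fine and matches the paper: once $L-D$ is nef, $\int_X\omega_\varphi^n=(L-D)^n>0$ gives bigness by Demailly--P\u aun.

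For nefness, however, you bury the right idea and then abandon it for an approach with a real gap. You correctly observe that the residual current $T:=\omega_\varphi-[D]$ is a closed positive $(1,1)$-current in $c_1(L-D)$ with vanishing Lelong numbers (this is exactly what ``singularities determined by $D$'' buys you: on every birational model the generic Lelong numbers of $\pi^*\varphi$ are $\pi^*D$, so after subtracting $[D]$ nothing remains). But you do not use this. The paper applies a quasi-equisingular approximation directly to $T$ (Cao, or Demailly regularization): the approximants $T_j$ have analytic singularities and Lelong numbers bounded by those of $T$, i.e.\ zero, hence their local potentials are bounded. A class admitting closed positive currents with bounded potentials is nef. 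That is the entire argument --- three lines, no curves, no preliminary reduction to $\mathbb{Q}$-coefficients.

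Your alternative route via intersecting with curves genuinely does not close. For $C\subseteq\Supp D$ there is no obvious way to make sense of $\int_C(\omega+\ddc\varphi)$ or to perturb $C$ out of the support, and the ``limiting/perturbation'' you gesture at would amount to already knowing nefness. Likewise the opening reduction (approximate $\varphi$ by $\varphi^j$, push the divisors $D_j$ to $D$, use closedness of the nef cone) is workable in principle but entirely unnecessary once you regularize $T$ itself rather than $\varphi$.
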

\begin{proof}
Consider the positive current $\omega_{\varphi}-[D]$ in $c_1(L-D)$. Take a quasi-equisingular approximation $h_j$ of $\omega_{\varphi}-[D]$ (\cite{Cao14}). The Lelong number condition and the fact that $h_j$ has analytic singularities show that its local potential is in fact bounded. Hence $L-D$ is nef. Now the assumption $\int_X \omega_{\varphi}^n>0$ implies that $(L-D)^n>0$, hence $L-D$ is big (\cite{DP04}). 
\end{proof}

\subsection{Non-archimedean envelopes}
Let $X$ be a compact Kähler manifold of dimension $n$. Let $L$ be an ample line bundle with a smooth strictly positively curved metric $h$. Let $\omega=c_1(L,h)$.

Let $\mathbf{v}=(v_1,\ldots,v_m)$ be a valuation of $\mathbb{C}(X)$ with value in $\mathbb{R}^m$. We assume for simplicity that each $v_i$ is divisorial.

\begin{definition}
 Let $\mathbf{a}=(a_1,\ldots,a_m)\in \mathbb{R}_{\geq 0}^m$. Define a potential in $\PSH(X,\omega)\cup\{-\infty\}$:
 \begin{equation}\label{eq:psigeqa1}
     \psi_{\mathbf{v}\geq \mathbf{a}}:=\sups\left\{\,\psi\in \PSH(X,\omega):\psi\leq 0, v_i(\psi)\geq a_i  \text{ for }i=1,\ldots,m \, \right\}\,.
 \end{equation}
 We also define
 \begin{equation}\label{eq:psigeqa}
     \psi'_{\mathbf{v}\geq \mathbf{a}}:=\sups_{k\in \mathbb{Z}_{>0}} \frac{1}{k}\sups\left\{\,\log|s|_{h^k}^2:s\in H^0(X,L^k),\sup_X |s|_{h^k}\leq 1, v_i(s)\geq ka_i \text{ for }i=1,\ldots,m \,\right\}\,.
 \end{equation}
\end{definition}
Observe that $\psi_{\mathbf{v}\geq \mathbf{a}}$ itself is a candidate in the sup in \eqref{eq:psigeqa1}, provided that $\psi_{\mathbf{v}\geq \mathbf{a}}\neq -\infty$. Obviously, $\psi_{\mathbf{v}\geq \mathbf{a}}$ is either $\mathscr{I}$-model or $-\infty$. 

\begin{lemma}\label{lma:IMDL}Assume that $\psi_{\mathbf{v}\geq \mathbf{a}}$ has positive mass, then $P[\psi'_{\mathbf{v}\geq \mathbf{a}}]_{\mathscr{I}}= \psi_{\mathbf{v}\geq \mathbf{a}}$.
\end{lemma}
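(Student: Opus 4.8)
The plan is to prove the two inequalities $P[\psi'_{\mathbf{v}\geq\mathbf{a}}]_{\mathscr{I}}\leq \psi_{\mathbf{v}\geq\mathbf{a}}$ and $P[\psi'_{\mathbf{v}\geq\mathbf{a}}]_{\mathscr{I}}\geq \psi_{\mathbf{v}\geq\mathbf{a}}$ separately. For the first, note that every section $s\in H^0(X,L^k)$ with $\sup_X|s|_{h^k}\leq 1$ and $v_i(s)\geq ka_i$ gives $\frac1k\log|s|_{h^k}^2\in\PSH(X,\omega)$, which is $\leq 0$ and satisfies $v_i(\tfrac1k\log|s|_{h^k}^2)\geq a_i$; hence it is a competitor in \eqref{eq:psigeqa1}, so $\psi'_{\mathbf{v}\geq\mathbf{a}}\leq\psi_{\mathbf{v}\geq\mathbf{a}}$. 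Since $\psi_{\mathbf{v}\geq\mathbf{a}}$ is $\mathscr{I}$-model (being of positive mass), applying the monotone operator $P[\cdot]_{\mathscr{I}}$, which is order-preserving and idempotent on $\mathscr{I}$-model potentials, yields $P[\psi'_{\mathbf{v}\geq\mathbf{a}}]_{\mathscr{I}}\leq P[\psi_{\mathbf{v}\geq\mathbf{a}}]_{\mathscr{I}}=\psi_{\mathbf{v}\geq\mathbf{a}}$.

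For the reverse inequality, the natural route is to go through the non-Archimedean envelope description afforded by \cref{thm:DX1-4}: it suffices to show that $\psi_{\mathbf{v}\geq\mathbf{a}}^{\An}\leq (\psi'_{\mathbf{v}\geq\mathbf{a}})^{\An}$ on $X^{\Div}_{\mathbb{Q}}$, or equivalently that $\mathscr{I}(k\psi_{\mathbf{v}\geq\mathbf{a}})\subseteq\mathscr{I}(k\psi'_{\mathbf{v}\geq\mathbf{a}})$ (asymptotically in $k$), because $\psi'_{\mathbf{v}\geq\mathbf{a}}$ has positive mass (it lies between $P[\psi'_{\mathbf{v}\geq\mathbf{a}}]_{\mathscr{I}}\leq\psi_{\mathbf{v}\geq\mathbf{a}}$ from above and, as I will argue, close to it from below) and then $P[\psi'_{\mathbf{v}\geq\mathbf{a}}]_{\mathscr{I}}\geq\psi_{\mathbf{v}\geq\mathbf{a}}$ follows from the envelope characterization. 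Concretely, I would pass to an extraction/log resolution $\pi:Y\to X$ on which $\mathbf{v}=(c_i\ord_{E_i})_i$ is realized by prime divisors $E_i$. There $\psi_{\mathbf{v}\geq\mathbf{a}}$ is the $P[\cdot]_{\mathscr{I}}$-envelope of the model potential cut out by the $\mathbb{Q}$-divisor condition $\mathrm{ord}_{E_i}\geq a_i/c_i$, and its multiplier ideals are computed by pushing forward $\mathcal{O}_Y(-\lfloor kD\rfloor)$ for the appropriate divisor $D$, as in \cref{lma:qamis}. Then one uses that $L$ is ample to produce, for each $k$ and each local generator, a global section $s\in H^0(X,L^{mk})$ (after twisting by a large multiple $m$) with $\sup_X|s|\leq 1$ and the prescribed vanishing along the $E_i$; this is where one invokes global generation of $L^{mk}\otimes\mathscr{I}$ for $m\gg0$ together with the fact that $\psi_{\mathbf{v}\geq\mathbf{a}}$ being $\mathscr{I}$-model of positive mass forces the asymptotic dimension count in item (3) of \cref{thm:DX1-4} to match $\int_X\omega_{\psi_{\mathbf{v}\geq\mathbf{a}}}^n$. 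Consequently $\psi'_{\mathbf{v}\geq\mathbf{a}}$ has asymptotically the same multiplier ideals, hence the same $\mathscr{I}$-model envelope.

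The main obstacle will be the comparison of multiplier ideals in the last step: one must control the discrepancy between the ideals $\mathscr{I}(k\psi_{\mathbf{v}\geq\mathbf{a}})$ and the base-ideals $\mathfrak{b}_k$ generated by the sections appearing in \eqref{eq:psigeqa}, and show the loss is subleading as $k\to\infty$. The subtlety is that \eqref{eq:psigeqa} imposes $\sup_X|s|\leq 1$, so one cannot simply multiply sections; one needs a uniform bound relating $\sup$-norms to the $L^2$ / multiplier-ideal constraints, typically obtained from the Ohsawa--Takegoshi extension theorem or from the Noetherian property of the graded family together with ampleness, to saturate the vanishing conditions without destroying the positive mass. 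Once this asymptotic equality of multiplier ideals (equivalently, $\psi_{\mathbf{v}\geq\mathbf{a}}^{\An}=(\psi'_{\mathbf{v}\geq\mathbf{a}})^{\An}$) is in hand, \cref{thm:DX1-4} immediately gives $P[\psi'_{\mathbf{v}\geq\mathbf{a}}]_{\mathscr{I}}=P[\psi_{\mathbf{v}\geq\mathbf{a}}]_{\mathscr{I}}=\psi_{\mathbf{v}\geq\mathbf{a}}$, completing the proof.
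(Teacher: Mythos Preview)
Your reduction is the same as the paper's: the inequality $P[\psi'_{\mathbf{v}\geq\mathbf{a}}]_{\mathscr{I}}\leq \psi_{\mathbf{v}\geq\mathbf{a}}$ is declared trivial, and for the reverse one shows $(\psi'_{\mathbf{v}\geq\mathbf{a}})^{\An}\geq \psi_{\mathbf{v}\geq\mathbf{a}}^{\An}$ on $X^{\Div}_{\mathbb{Q}}$, i.e.\ for each divisorial $v$ and each $\epsilon>0$ one produces a section $s\in H^0(X,L^k)$ with $k^{-1}v_i(s)\geq a_i$ and $k^{-1}v(s)\leq v(\psi_{\mathbf{v}\geq\mathbf{a}})+\epsilon$. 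Where your proposal diverges from the paper, and where it has a genuine gap, is in the mechanism for producing such sections.

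You propose to pass to a model $Y$ on which the $v_i$ are realised by divisors $E_i$ and then assert that the multiplier ideals of $\psi_{\mathbf{v}\geq\mathbf{a}}$ are computed by $\pi_*\mathcal{O}_Y(-\lfloor kD\rfloor)$ ``as in \cref{lma:qamis}''. This step is not justified: \cref{lma:qamis} applies to potentials with \emph{quasi-analytic} singularities, meaning the singularities are determined on a single model by a fixed divisor. The envelope $\psi_{\mathbf{v}\geq\mathbf{a}}$ has no reason to be quasi-analytic; imposing finitely many Lelong-number conditions on $X$ does not pin down the singularity divisor on higher models, and in general $\Div_Z\psi_{\mathbf{v}\geq\mathbf{a}}$ will acquire new components on further blow-ups. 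Consequently your multiplier-ideal formula, and with it the global-generation argument built on it, does not go through. The fallback you mention (Ohsawa--Takegoshi, Noetherianity of a graded family) is not fleshed out enough to see how it would control $v(s)$ for an \emph{arbitrary} auxiliary valuation $v$.

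The paper bypasses any description of $\mathscr{I}(k\psi_{\mathbf{v}\geq\mathbf{a}})$ entirely. Writing $\psi=\psi_{\mathbf{v}\geq\mathbf{a}}$ and $b_i=v_i(\psi)$, it first invokes \cite[Lemma~4.4]{DDNLmetric} to perturb $\psi$ to a more singular $\psi'$ with strict inequalities $v_i(\psi')>b_i$ while $v(\psi')\leq v(\psi)+\epsilon$. Then, viewing $\psi'$ as a metric on $(1+\delta)L$ for small rational $\delta$, it applies Demailly's approximation \cite[Corollary~13.23]{Dem12} to produce sections $s_k\in H^0(X,L^k)$ with $(1+\delta)k^{-1}[\Div s_k]\to \delta\omega+\omega_{\psi'}$; the strict inequalities survive the limit and give $k^{-1}v_i(s_k)>b_i\geq a_i$ together with $k^{-1}v(s_k)\leq v(\psi)+\epsilon$. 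The two ingredients you are missing are thus the perturbation lemma (to turn $\geq$ into $>$ so that a small loss in the regularisation can be absorbed) and Demailly's Bergman-kernel approximation (to manufacture sections directly from the psh potential rather than from an explicit ideal).
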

\begin{proof}
That $P[\psi'_{\mathbf{v}\geq \mathbf{a}}]_{\mathscr{I}}\leq \psi_{\mathbf{v}\geq \mathbf{a}}$ is trivial, we prove the converse. Write $\psi=\psi_{\mathbf{v}\geq \mathbf{a}}$.
It suffices to show that for any small enough $\epsilon>0$, any fixed divisorial valuation $v$ of $\mathbb{C}(X)$, we can construct a section $s\in H^0(X,L^k)$ for some large $k$, so that $k^{-1}v_i(s)\geq  v_i(\psi)$ for all $i$ and $k^{-1}v(s)\leq v(\psi)+\epsilon$. We may assume that $a_i>0$ for all $i$. Let $b_i=v_i(\psi)$. Then $b_i\geq a_i$. Let $b=v(\psi)$. 

By \cite[Lemma~4.4]{DDNLmetric}, we can construct a potential $\psi'$, more singular than $\psi$, such that
\[
v_i(\psi')>b_i\,,\quad v(\psi')\leq v(\psi)+\epsilon\,.
\]

Take a small enough $\delta\in \mathbb{Q}_{>0}$, such that \[
(1+\delta)^{-1}v_i(\psi')> b_i
\]
for all $i$.
Regarding $\psi'$ as a metric on $(1+\delta)L$ and applying \cite[Corollary~13.23]{Dem12} and its proof, we find a sequence of sections $s_k\in H^0(X,L^k)$ for some sequence $k$ increasing to $\infty$, such that
\[
\frac{1+\delta}{k}[\Div s_k]\to \delta\omega+\omega_{\psi'}\,,\quad \frac{1+\delta}{k}v_i(s_k)\to v_i(\psi')\,.
\]
Thus for $k$ large enough, 
\[
\frac{1}{k}v(s_k)\leq b+\epsilon\,,\quad  \frac{1}{k}v_i(s_k)>b_i\,.
\]
\end{proof}

As a particular case, let $v_i=c_i\ord_{F_i}$ be a $\mathbb{Q}$-divisorial valuation, $a_i\in \mathbb{R}$ ($i=1,\ldots,m$). We have
\[
\psi'_{\mathbf{v}\geq \mathbf{a}}=\sups_{k\in \mathbb{Z}_{>0}\text{ sufficiently divisible}}\frac{1}{k}\sups\Bigg\{\,\log|s|_{h^k}^2: s\in H^0(X,kL-\sum_{i=1}^mk a_i c_i^{-1}F_i),
\sup_X |s|_{h^k}\leq 1\,\Bigg\}\,.
\]

Let $D$ be an effective $\mathbb{Q}$-divisor on $X$. In this paper, $\mathbb{Q}$-divisor are allowed to have countably many components.
If $D$ has finitely many irreducible components, say $D=\sum_{i=1}^r a_i D_i$, we define
\begin{equation}
\psi_{\geq D}:=\psi_{(\ord_{D_i})\geq (a_i)}\,.
\end{equation}
In general, if $D$ has countably many components, say $D=\sum_{i=1}^{\infty} a_i D_i$, we just let
\[
\psi_{\geq D}:=\inf_{j=1,\ldots,\infty} \psi_{\geq \sum_{i=1}^{j} a_i D_i}\,.
\]

\subsection{Extended deformation to the normal cone}\label{subsec:extdef}
Let $\psi\in \PSH(X,\omega)$ be a potential with analytic singularities. Let $\pi:Y\rightarrow X$ be a log resolution of $\psi$. Let $\Psef(\psi)$ be the pseudo-effective threshold of $\psi$. Namely,
\[
\Psef(\psi)=\sup\left\{\,t\geq 0: \pi^*L-t\Div_Y\psi \text{ is pseudo-effective}\,\right\}\,.
\]
We define a test curve $\psi^+_{\bullet}$ as follows (see \cref{sec:rwn} for the general theory of test curves):
\[
\psi^+_{\tau}:=
\left\{
\begin{aligned}
0\,,&\quad \tau\leq 0\,,\\
\psi_{\geq \tau\Div_Y\psi}\,,&\quad \tau\in 0<\tau\leq \Psef(\psi)\,,\\
-\infty\,,&\quad \tau>\Psef(\psi)\,.
\end{aligned}
\right.
\]
We call this construction the \emph{extended deformation to the normal cone} with respect to $\psi$. See \cref{lma:genext} for an explanation of this terminology.

This construction can be realized geometrically. 
\begin{definition}\label{def:dre}
 Let $\psi\in \PSH(X,C\omega)$ ($C\in \mathbb{Z}_{>0}$) be a potential with analytic singularities.  Let $\pi:Y\rightarrow X$ be a log resolution. 
Let $A>0$ be an integer so that $A\Div_Y\psi$ is integral. We say $\psi$ is \emph{dreamy} if the double-graded ring
 \[
 R(X,L,\psi):=\bigoplus_{k\in \mathbb{Z}_{\geq 0}} \bigoplus_{s\in \mathbb{Z}_{\geq 0}} H^0(Y,kACL-sA\Div_Y\psi)
 \]
 is finitely generated.
\end{definition}
Note that whether or not $\psi$ is dreamy  does not depend on the choice of $A$. 

Assume that $\psi\in \PSH(X,\omega)$ is dreamy and $L$ is ample.
Let $(\mathcal{X},\mathcal{L})$ be the relative proj of 
\[
\bigoplus_{k\in \mathbb{Z}_{\geq 0}} \bigoplus_{s\in \mathbb{Z}_{\geq 0}}t^{-s} H^0(Y,kAL-sA\Div_Y\psi)
\]
over $\mathbb{C}$. Then $(\mathcal{X},\mathcal{L})$ is a test configuration of $(X,L^A)$. It follows from \cref{lma:IMDL} that the corresponding test curve is just $\psi^+$.

More generally, let $\psi\in \PSH(X,\omega)$. We define
$\Psef(\psi)$ as the sup of $t\geq 0$, such that on each birational model $\pi:Y\rightarrow X$,
$\pi^*L-t\Div_Y\psi$ is pseudo-effective. 
This definition coincides with the previous one when $\psi$ has analytic singularities.

We define the corresponding test curve $\psi^+_{\bullet}$ as follows:
When $\tau\leq 0$, set $\psi^+_{\tau}=0$.
When $0<\tau<\Psef(\psi)$, we define
\[
\psi^+_{\tau}=\lim_{Y}\psi_{\geq \tau \Div_Y\psi}\,,
\]
where the limit is a limit of decreasing net taken over all birational models $\pi:Y\rightarrow X$. Note that the limit is $\mathscr{I}$-model by \cite[Lemma~2.20]{DX22} (Strictly speaking, \cite[Lemma~2.20]{DX22} only deals with decreasing sequences, but the proof works for decreasing nets as well). Define
\[
\psi^+_{\Psef(\psi)}=\lim_{\tau\to \Psef(\psi)-}\psi^+_{\tau}
\]
and
\[
\psi^+_{\tau}=-\infty
\]
if $\tau>\Psef(\psi)$.

\subsection{Generalized deformation to the normal cone}\label{subsec:defn}
Let $\psi\in \PSH(X,\omega)$ be a model potential with analytic singularities. 
We define a test curve (see \cref{sec:rwn} for the precise definition) $\psi_{\bullet}$ by 
\[
\psi_{\tau}:=
\left\{
\begin{aligned}
0\,,&\quad \tau\leq -1\,,\\
P[(1+\tau)\psi]\,,&\quad \tau\in (-1,0]\,,\\
-\infty\,,&\quad \tau>0\,.
\end{aligned}
\right.
\]
The test curve $\psi_{\bullet}$ is a truncated version of $\psi^+_{\bullet}$:
\begin{lemma}\label{lma:genext}
When $\tau\in [0,1]$, $\psi^+_{\tau}=\psi_{\tau-1}$.
\end{lemma}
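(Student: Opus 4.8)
The plan is to unwind both definitions and check they agree term by term on the overlap $\tau\in[0,1]$. On the left-hand side we have the extended deformation to the normal cone applied to the model potential $\psi$: by \cref{subsec:extdef}, since $\psi$ has analytic singularities we may fix a log resolution $\pi:Y\to X$ along which the singularities of $\psi$ are determined by the snc $\mathbb{Q}$-divisor $D:=\Div_Y\psi$, and then $\psi^+_\tau=\psi_{\geq \tau D}$ for $0<\tau\leq \Psef(\psi)$. On the right-hand side, the generalized deformation of \cref{subsec:defn} gives $\psi_{\tau-1}=P[\tau\psi]$ for $\tau\in(0,1]$ (and for $\tau=0$, $\psi_{-1}=0$ while $\psi^+_0=0$, so the endpoint is immediate). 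So the content of the lemma for $0<\tau\leq 1$ is exactly the identity
\[
\psi_{\geq \tau D}=P[\tau\psi]\,.
\]
Note that $\Psef(\psi)\geq 1$ automatically here: $\pi^*L-D$ is pseudo-effective because $\pi^*(\omega+\ddc\psi)-[D]$ is a positive current (its local potentials being those of $\pi^*\psi$ with the divisorial part of its Siu decomposition subtracted), so $\psi^+_\tau$ is genuinely defined on all of $(0,1]$.

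First I would establish $\psi_{\geq \tau D}\leq P[\tau\psi]$. Since $P[\tau\psi]$ is the model envelope of $\tau\psi$, and $\tau\psi$ itself is an $\omega$-psh function with $\nu_E(\tau\psi)=\tau\,\nu_E(\psi)$ for every prime $E$ over $X$, any competitor $\eta\leq 0$ in the sup defining $\psi_{\geq\tau D}$ satisfies $v(\eta)\geq \tau\,v(\psi)$ for all divisorial $v$ (this is what $v_i(\eta)\geq \tau a_i$ says once $D=\sum a_i D_i$ is pulled back to arbitrary models, using that the singularities of $\psi$ are determined by $D$). Hence $\eta$ is more singular than $\tau\psi$ in the sense of Lelong numbers on all models, which by the characterization of $P[\cdot]$ (model envelopes are controlled by generic Lelong numbers, as in \cite{DDNL18mono}) forces $\eta\leq P[\tau\psi]$; taking the sup gives the inequality. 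For the reverse inequality $P[\tau\psi]\leq \psi_{\geq\tau D}$, I would note that $P[\tau\psi]$ is itself a legitimate competitor in the sup defining $\psi_{\geq\tau D}$: it is $\omega$-psh, $\leq 0$, and $v(P[\tau\psi])\geq v(\tau\psi)=\tau\,v(\psi)$ for every divisorial valuation $v$ because passing to the model envelope never decreases generic Lelong numbers along any divisor over $X$. Together these give $\psi_{\geq\tau D}=P[\tau\psi]=\psi_{\tau-1}$ for $\tau\in(0,1]$, and the case $\tau=0$ was already checked.

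The main obstacle is bookkeeping around the multiplier-ideal versus generic-Lelong-number descriptions of these envelopes: one must be careful that "$v_i(\psi)\geq a_i$ on every model" (which is how $\psi_{\geq D}$ is built from $\psi_{\mathbf v\geq\mathbf a}$, see the countable-components definition in \cref{subsec:extdef} and the preceding subsection) really is equivalent to domination in singularity type by $\tau\psi$, and that the relevant envelope $P[\tau\psi]$ coincides with its $\mathscr I$-model counterpart so that \cref{thm:DX1-4} and \cref{lma:IMDL} apply cleanly. Since $\psi$ has analytic singularities, $\tau\psi$ does too for $\tau\in\mathbb{Q}_{>0}$, so $P[\tau\psi]$ is automatically $\mathscr I$-model (a model potential with analytic singularities is $\mathscr I$-model, as recalled after \cref{thm:DX1-4}), which removes the only subtlety; for irrational $\tau$ one passes to the limit in $\tau$ using continuity of both sides along decreasing sequences. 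Once this dictionary is in place the identification is formal.
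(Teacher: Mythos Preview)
The paper states \cref{lma:genext} without proof, so there is nothing to compare against; your argument is essentially the intended one. Unwinding both constructions to the identity $\psi_{\geq \tau D}=P[\tau\psi]$ for $0<\tau\leq 1$ (with $D=\Div_Y\psi$ on a log resolution) and checking the two inclusions of candidate sets is exactly right, and your observation that $\Psef(\psi)\geq 1$ follows from pseudo-effectivity of $\pi^*L-D$ is the correct way to see that the ranges overlap.

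Two small clean-ups. First, the concerns in your final paragraph are red herrings: the generalized deformation in \cref{subsec:defn} is defined using the ordinary model envelope $P[\cdot]$, not $P[\cdot]_{\mathscr I}$, so you never need to invoke \cref{thm:DX1-4} or \cref{lma:IMDL}, and there is no rational/irrational dichotomy to handle by limits --- the Siu-decomposition argument works uniformly for every real $\tau\in(0,1]$. Second, your justification of the inequality $\psi_{\geq\tau D}\leq P[\tau\psi]$ via ``$v(\eta)\geq \tau v(\psi)$ for all divisorial $v$'' is a slightly roundabout phrasing; the direct argument is that if a competitor $\eta$ has $\nu_{D_i}(\pi^*\eta)\geq \tau a_i$ for each component $D_i$ of the snc divisor $D$, then Siu's decomposition gives $\pi^*(\omega+\ddc\eta)\geq \tau[D]$, so $\pi^*\eta-\pi^*(\tau\psi)$ is locally bounded above (since $\pi^*(\tau\psi)=\tau\sum a_i\log|s_i|^2+\text{smooth}$), hence $\eta\leq\tau\psi+C$ and $\eta$ is a candidate for $P[\tau\psi]$. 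This is what your citation of \cite{DDNL18mono} is pointing to, but it is worth saying explicitly since it is the only place analyticity of the singularities of $\psi$ is genuinely used.
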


The test curve $\psi_{\bullet}$ and its associated geodesic ray were studied in \cite{Dar17} and \cite{DDNLmetric}.

The following result due to Darvas (\cite{Da17}) characterizes the geodesic ray induced by $\psi_{\bullet}$. 
\begin{proposition}
	Let $\psi\in \PSH^{\Mdl}(X,\omega)$, then $\check{\psi}_{t}$ ($t\geq 0$) is the increasing limit of $\ell^k_t$, where $(\ell^k_t)_{t\in [0,-E(\max\{-k,\psi\})]}$ is the geodesic from $0$ to $\max\{-k,\psi\}$.  
\end{proposition}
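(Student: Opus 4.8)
The plan is to identify the test curve associated to the candidate geodesic ray $\ell_t := \lim_k \ell^k_t$ (the increasing limit of the finite-energy geodesics $\ell^k$ from $0$ to $\max\{-k,\psi\}$) and show it equals $\psi_\bullet$; the result then follows from the bijective correspondence between maximal test curves and geodesic rays recalled in \cref{sec:rwn}. First I would check that the increasing limit defining $\ell_t$ makes sense: since $\max\{-k-1,\psi\}\le \max\{-k,\psi\}\le 0$, comparison of geodesics with ordered endpoints gives $\ell^k_t\le \ell^{k+1}_t\le 0$ for each fixed $t$, so the limit $\ell_t$ exists in $\PSH(X,\omega)$ (the limit is bounded above by $0$, hence not identically $-\infty$, and is a decreasing-in-$t$, convex-in-$t$ family by passing these properties to the limit; usc regularization is harmless since each $\ell^k$ is already a genuine geodesic). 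This produces a subgeodesic ray, and one must check it is a genuine (maximal) geodesic ray — this is where I would invoke that an increasing limit of geodesic segments with the appropriate convexity is automatically a maximal geodesic ray, a standard fact in the Ross--Witt Nyström / Darvas circle of ideas, also used implicitly in \cite{RWN14}.

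Next I would compute the Legendre transform $\hat\ell_\tau = \inf_{t\ge 0}(\ell_t - t\tau)$ and show it coincides with $\psi_\tau$ as defined in \cref{subsec:defn}. The key point is to commute the Legendre transform with the increasing limit over $k$: since $\ell^k_\bullet\uparrow \ell_\bullet$, one expects $\widehat{\ell^k}_\tau \downarrow \hat\ell_\tau$, at least after the relevant regularization. For each fixed $k$, the geodesic segment $\ell^k$ from $0$ to $\max\{-k,\psi\}$ has a well-understood Legendre transform: its test curve is supported on the interval $[-1,0]$ (reparametrized suitably), with $\widehat{\ell^k}_\tau$ essentially the "truncated" envelope $P[(1+\tau)\max\{-k,\psi\}]$ for $\tau\in(-1,0)$, $0$ for $\tau\le -1$, and $-\infty$ for $\tau>0$ once $k$ is large enough that $\max\{-k,\psi\}$ is genuinely unbounded along the relevant divisors. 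Letting $k\to\infty$ and using that $P[(1+\tau)\max\{-k,\psi\}]\downarrow P[(1+\tau)\psi]$ (monotone convergence of envelopes, together with the fact that $\psi$ is model so that $P[(1+\tau)\psi]$ is the right object), I would conclude $\hat\ell_\tau = \psi_\tau$ for all $\tau$.

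Finally, applying the inverse Legendre transform $\check\psi_t = \sup_\tau(\psi_\tau + t\tau)$ and the fact that $\hat{\check\psi}_\bullet = \psi_\bullet$ recovers $\check\psi_t = \ell_t = \lim_k \ell^k_t$, which is exactly the claim. The main obstacle I anticipate is the interchange of the increasing limit over $k$ with the Legendre transform and with the operator $P[\cdot]$: one needs that $\lim_k P[(1+\tau)\max\{-k,\psi\}]$ is $P[(1+\tau)\psi]$ rather than merely the model envelope of the limit, which relies essentially on $\psi$ being a model potential (so that truncations $\max\{-k,\psi\}$ approximate it well in the relevant sense), and on uniform control of the endpoints so that the Legendre duality recalled in \cref{sec:rwn} applies at the level of the limiting ray rather than only the segments. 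A secondary technical point is justifying that $\ell_\bullet$ is maximal and not just a subgeodesic; I would handle this by directly verifying that its test curve $\psi_\bullet$ is concave and usc in $\tau$ (which is immediate from the explicit formula) and that the associated geodesic ray reconstructed from $\psi_\bullet$ is maximal, as maximality of the ray is equivalent to the test curve being the full concave envelope, which holds here by construction.
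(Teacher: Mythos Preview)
The paper does not prove this proposition; it is quoted from Darvas \cite{Da17} without argument, so there is no in-paper proof to compare against. I will assess your proposal on its own merits.

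There are two substantive gaps. First, the monotonicity step is incorrectly argued. From $\max\{-k-1,\psi\}\le\max\{-k,\psi\}$, comparison of geodesics with ordered endpoints on a \emph{common} time interval gives the opposite inequality: the geodesic to the smaller endpoint lies below. The $\ell^k$ here live on \emph{different} intervals $[0,T_k]$ with $T_k=-E(\max\{-k,\psi\})$ increasing in $k$, so at a fixed $t$ you are comparing two geodesics at different normalized parameter values, and endpoint ordering alone is inconclusive. That $\ell^k_t$ is nevertheless increasing in $k$ is precisely one of the nontrivial points in \cite{Da17}; it is not a one-line consequence of the comparison principle.

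Second, and more seriously, the test-curve computation for the segments $\ell^k$ does not go through. The correspondence in \cref{sec:rwn} is between test curves and geodesic \emph{rays}; a finite segment has no test curve in that framework. More concretely, since $\max\{-k,\psi\}$ is \emph{bounded}, the model-type envelope $P[(1+\tau)\max\{-k,\psi\}]$ is identically $0$ for every $\tau$, so the formula you propose for $\widehat{\ell^k}_\tau$ is the trivial test curve and certainly does not decrease to $\psi_\tau=P[(1+\tau)\psi]$ as $k\to\infty$. (If one honestly takes the partial Legendre transform $\inf_{t\in[0,T_k]}(\ell^k_t-t\tau)$ of the segment, the answer is the rooftop envelope $P\bigl(0,\max\{-k,\psi\}-T_k\tau\bigr)$ --- a different operator, with a different scaling.) The route through ``test curves of the approximating segments'' is therefore not available as written; the argument in \cite{Da17} works directly with the geodesic segments and their limit rather than through Legendre duality at the level of each $\ell^k$.
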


Assume that $\psi$ has analytic singularities along a $\mathbb{Z}$-divisor $\Div_X \psi$ on $X$ and that $L-\Div_X\psi$ is semi-ample.
In this case, let $\mathcal{X}=\Bl_{\Div_X \psi\times\{0\}} X\times \mathbb{C}$ be the deformation to the normal cone. Let $\mathcal{E}$ be the exceptional divisor.
Let $\Pi:\mathcal{X}\rightarrow X\times \mathbb{C}$ be the natural map and let $p_1:X\times \mathbb{C} \rightarrow X$ be the natural projection. Let $\mathcal{L}=\Pi^*p_1^*L\otimes \mathcal{O}_{\mathcal{X}}(-\mathcal{E})$.
Then we have a test configuration $(\mathcal{X},\mathcal{L})$ of $(X,L)$. By \cref{ex:blflag}, the test curve induced by the filtration of this test configuration is exactly $\psi_{\bullet}$. 
Note that $\psi_{\bullet}$ is induced by the filtration in \cref{ex:def}.

\subsection{Entropy and delta invariant}
Let $X$ be a compact Kähler manifold of dimension $n$. Let $L$ be an ample line bundle. Let $\omega\in c_1(L)$ be a K\"ahler form.

We recall that for an $\mathbb{R}$-Weil divisor $D=\sum_{i} a_i D_i$ with $a_i\neq 0$, $D_i$ prime and pairwise distinct, $\Redu D:=\sum_i D_i$.

\begin{definition}\label{def:naentgeneral}
 Let $\psi\in \PSH(X,\omega)$. We define the \emph{entropy} of $[\psi]$ as
 \[ 
    \Ent([\psi]):=\frac{n}{V}\varliminf_{Y} \left(\left\langle\pi^*L-\Div_Y\psi\right\rangle^{n-1} \cdot (K_{Y/X}+\Redu \Div_Y \psi) \right)\in [0,\infty]\,,
 \]
 where $\pi:Y\rightarrow X$ runs over all birational models on $X$.  Here the product $\langle \bullet \rangle$ is the movable intersection in the sense of \cite{BFJ09}, \cite{Bou02}.
 We formally set $\Ent([-\infty])=0$. 
\end{definition}
We observe that $\Ent([\psi])$ depends only on the $\mathscr{I}$-singularity type of $\psi$. To the best of the author's knowledge, this invariant has never been defined in the literature.
\begin{remark}
    The condition $\psi\in \PSH(X,\omega)$ is not essential. We can define the same quantity for any quasi-psh function.
\end{remark}

We can now define our new delta invariant: 
\begin{definition}\label{def:delta2}
 We define the \emph{pluripotential-theoretic $\delta$-invariant} as
 \[
 \delta_{\mathrm{pp}}=\inf_{\psi}\frac{\int_{-\infty}^{\infty}\Ent([\psi^+_{\tau}])\,\mathrm{d}\tau}{nV^{-1}\int_{-\infty}^{\infty} \left(\int_X \omega\wedge \omega_{\psi^+_{\tau}}^{n-1}-\int_X\omega_{\psi^+_{\tau}}^n\right)\,\mathrm{d}\tau}\,,
 \]
 where $V=(L^n)$, $\psi$ runs over the set of $\omega$-psh functions with some non-zero Lelong number on $X$. The quotient depends only on the $\mathscr{I}$-singularity type of $\psi$.
\end{definition}
\begin{remark}\label{rmk:deltappImdoeldep}
We remark that $\psi_{\bullet}^+$ depends only on the $\mathscr{I}$-singularity type of $\psi$, hence in the definition above, it suffices to take $\mathscr{I}$-model potentials $\psi$. 
\end{remark}

For the next definition, we need to introduce some notations.
We define a polynomial
\begin{equation}\label{eq:Gn}
G_{n-1}(A,B)=\sum_{j=0}^{n-1}\frac{1}{j+1}\binom{n-1}{j}(-1)^j \left(A^{n-1-j}\cdot B^j \right)=\frac{1}{nB}\left(A^n-(A-B)^n \right)\,.
\end{equation}
When $A$, $B$ are divisors on $X$, $G_{n-1}(A,B)$ is considered as an element in the Chow ring of $X$.
We observe that when $A,B_1,B_0\in \mathbb{R}$ and if we set $B_t=tB_1+(1-t)B_0$ ($t\in [0,1]$), then
\begin{equation}\label{eq:trivialint}
\int_0^1 (A-B_t)^{n-1}\,\mathrm{d}t=G_{n-1}(A-B_0,B_1-B_0)\,.
\end{equation}

\begin{definition}\label{def:delta}
We define the \emph{$\delta'$-invariant} of $(X,L)$ as
 \[
 \delta':=\inf_{\psi}\frac{(K_{Y/X}\cdot (-\Div_Y\psi)^{n-1})+n\left(G_{n-1}(L,\Div_Y\psi)\cdot \Redu \Div_Y\psi\right)}{n\int_0^1 \left(\int_X \omega\wedge \omega_{\tau\psi}^{n-1}-\int_X  \omega_{\tau\psi}^{n}\right)\,\mathrm{d}\tau}\,,
 \]
 where $\pi:Y\rightarrow X$ is a log resolution of $\psi$.
 Here $\psi$ runs over the set of unbounded $\omega$-psh functions with analytic singularities. The quotient depends only on the singularity type of $\psi$.
\end{definition}

\section{Preliminaries}\label{sec:pre}
Let $X$ be a compact Kähler manifold of dimension $n$. Let $\omega$ be a Kähler form on $X$. We introduce a number of functionals on the space of K\"ahler potentials and on the space of geodesic rays.
\subsection{Archimedean functionals}
In this section, we recall the definitions of several functionals in Kähler geometry. For the definition of $\mathcal{E}^1=\mathcal{E}^1(X,\omega)$, we refer to \cite{Dar19} and references there in. We write $\mathcal{E}^{\infty}(X,\omega)$ for the set of bounded potentials in $\PSH(X,\omega)$.

Define $V=V_{\omega}:=\int_X \omega^n$.
Let $E:\mathcal{E}^1\rightarrow \mathbb{R}$ denote the Monge--Ampère energy functional:
\[
E(\varphi)=\frac{1}{V}\sum_{j=0}^n\int_X \varphi\, \omega_{\varphi}^j \wedge \omega^{n-j}\,.
\]
For $\varphi\in \mathcal{E}^1(X,\omega)$, define
\[
\Ent(\varphi):=
\left\{
\begin{aligned}
\frac{1}{V}\int_X \log\left(\frac{\omega_{\varphi}^n}{\omega^n} \right)\omega_{\varphi}^n &\,,\quad \text{if }\omega_{\varphi^n}\text{ is absolutely continuous with respect to } \omega^n\,,\\
\infty&\,,\quad \text{otherwise}\,.
\end{aligned}
\right.
\]
Let $\alpha$ be a smooth real $(1,1)$-form on $X$. We define the functional $E^{\alpha}:\mathcal{E}^1\rightarrow \mathbb{R}$ by
\[
E^{\alpha}(\varphi):=\frac{1}{nV}\sum_{j=0}^{n-1}\int_X \varphi\,\alpha\wedge \omega_{\varphi}^j\wedge \omega^{n-1-j}\,.
\]
In particular, the Ricci energy is defined as
\[
E_R:=E^{-n\Ric \omega}=-\frac{1}{V}\sum_{j=0}^{n-1}\int_X \varphi\,\Ric \omega\wedge \omega_{\varphi}^j\wedge \omega^{n-1-j}\,.
\]
Define the $\tilde{J}$ functional as  $I-J$, namely
\begin{equation}\label{eq:Jtil}
\tilde{J}(\varphi)=E(\varphi)-\frac{1}{V}\int_X \varphi\,\omega_{\varphi}^n\,.
\end{equation}
Note that
\begin{equation}\label{eq:EomegaandE}
E^{\omega}=E+\frac{1}{n}\tilde{J}\,.
\end{equation}

Let $M:\mathcal{E}^1\rightarrow (-\infty,\infty]$ denote the Mabuchi functional:
\[
M(\varphi)=\bar{S}E(\varphi)+\Ent(\varphi)+E_R(\varphi)\,,
\]
where $\bar{S}$ is the average scalar curvature.
In this paper, it is convenient to use a different normalization of the Mabuchi functional, so we define the \emph{twisted Mabuchi functional} $\tilde{M}:\mathcal{E}^1(X,\omega)\rightarrow (-\infty,\infty]$ as
\[
\tilde{M}:=M-\bar{S}E=\Ent+E_R\,.
\]

Now assume that $[\omega]=c_1(L)$ for some ample line bundle $L$ on $X$. Fix a smooth Hermitian metric $h$ on $L$ with $c_1(L,h)=\omega$.

For any $k\in \mathbb{Z}_{>0}$, the Donaldson's $\mathscr{L}_k$-functional (\cite{Don05}) is defined as
\[
\mathscr{L}_k(\varphi):=-\frac{2}{kV} \log \frac{\det\|\cdot\|_{\Hilb_k(\varphi)}}{\det\|\cdot\|_{\Hilb_k(0)}}\,.
\]
Here $\Hilb_k(\varphi)$ is the norm on $H^0(X,K_X\otimes L^k)$ defined by
\[
\|s\|^2_{\Hilb_k(\varphi)}=\int_X \left(s,\bar{s}\right)_{h^k}e^{-k\varphi}\,.
\]
\begin{definition}
	Here our convention of the determinant follows that in \cite{BE21}, which differs from the convention of \cite{DX22} by a factor of $2$.
\end{definition}

\begin{theorem}\label{thm:Bernconv}
	For each $k\geq 1$, the functional $\mathscr{L}_k$ is convex along finite energy geodesics in $\mathcal{E}^1$.
\end{theorem}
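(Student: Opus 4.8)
The plan is to deduce this from Berndtsson's theorem on the positivity of direct image bundles. First I would translate a finite energy geodesic $(\varphi_t)_{t\in[0,1]}\subseteq \mathcal{E}^1$ into a psh metric on a line bundle over a product. Let $A=\{z\in\mathbb{C}: e^{-1}<|z|<1\}$ and set $\Phi(x,z):=\varphi_{-\log|z|}(x)$ on $X\times A$. By construction $\Phi$ is $S^1$-invariant in $z$, and $p_X^*\omega+\ddc\Phi\geq 0$ on $X\times A$ (this is the definition of a weak geodesic), so $p_X^*(h^k)e^{-k\Phi}$ is a metric with semipositive curvature current on $p_X^*L^k$. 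Since $K_{X\times A/A}=p_X^*K_X$, the direct image $E:=p_*\big(p_X^*(K_X\otimes L^k)\big)$ is the trivial bundle over $A$ with fibre $H^0(X,K_X\otimes L^k)$, and the fibrewise $L^2$-metric induced by $e^{-k\Phi}$ is exactly $\Hilb_k(\varphi_{-\log|z|})$.

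Assume first that the endpoints $\varphi_0,\varphi_1$, hence $\Phi$, are bounded. Then Berndtsson's theorem applies and gives that $E$ with its $L^2$-metric is Nakano — in particular Griffiths — semipositive. Taking the trace of the curvature, the determinant line bundle $\det E$ then carries a semipositively curved metric over $A$. Trivialising $\det E$ by $\sigma:=s_1\wedge\cdots\wedge s_N$ for a fixed basis $(s_i)$ of $H^0(X,K_X\otimes L^k)$, one has $\|\sigma\|^2_z=\det\big(\Hilb_k(\varphi_{-\log|z|})(s_i,s_j)\big)=\det\|\cdot\|_{\Hilb_k(\varphi_{-\log|z|})}$, so $z\mapsto -\log\det\|\cdot\|_{\Hilb_k(\varphi_{-\log|z|})}$ is subharmonic on $A$ and continuous up to $\overline{A}$. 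Being $S^1$-invariant, it is a convex function of $t=-\log|z|$ on $[0,1]$. Since $\mathscr{L}_k(\varphi_t)=-\tfrac{2}{kV}\log\det\|\cdot\|_{\Hilb_k(\varphi_t)}+\tfrac{2}{kV}\log\det\|\cdot\|_{\Hilb_k(0)}$ and the last term is constant, $t\mapsto \mathscr{L}_k(\varphi_t)$ is convex.

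To reach a general finite energy geodesic, I would approximate: set $\varphi_i^j:=\max(\varphi_i,-j)$ for $i=0,1$ and let $(\varphi_t^j)_t$ be the bounded geodesic joining $\varphi_0^j$ and $\varphi_1^j$. By the comparison principle for geodesics, $(\varphi_t^j)_j$ is decreasing in $j$, dominates $\varphi_t$, and decreases to the finite energy geodesic $\varphi_t$ for each $t$. By monotone convergence $\|s\|^2_{\Hilb_k(\varphi_t^j)}\nearrow\|s\|^2_{\Hilb_k(\varphi_t)}$ for every $s$, so the Gram determinants increase and $\mathscr{L}_k(\varphi_t^j)\searrow\mathscr{L}_k(\varphi_t)$ in $[-\infty,\infty)$. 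Each $\mathscr{L}_k(\varphi^j_\bullet)$ is convex on $[0,1]$ by the previous paragraph, and a decreasing limit of convex functions is convex (the chord inequality passes to the limit since the limiting endpoint values are finite), so $\mathscr{L}_k$ is convex along $(\varphi_t)_t$.

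The main obstacle is precisely this finite energy case: Berndtsson's positivity theorem is clean for bounded (or continuous) weights, whereas for the singular weight $e^{-k\Phi}$ of a finite energy geodesic the $L^2$-metric on $E$ may degenerate and the direct image sheaf would a priori have to be twisted by a multiplier ideal $\mathscr{I}(k\Phi)$ whose rank can jump. The truncation in the last paragraph is designed to bypass this entirely, reducing to the bounded case at the cost only of a routine monotone-convergence argument together with the (standard) stability of geodesics under truncation of the endpoints.
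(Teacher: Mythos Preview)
Your argument is correct and is precisely the route the paper has in mind: the paper does not give a proof but simply attributes the result to Berndtsson's positivity theorem and refers to \cite[Proposition~2.12]{DLR20} for the details, which amount to exactly the reduction you wrote (Berndtsson for bounded weights, then truncation $\varphi_i^j=\max(\varphi_i,-j)$ and monotone convergence to reach $\mathcal{E}^1$). One cosmetic remark: the parenthetical about finite endpoint values is unnecessary, since a decreasing limit of convex functions with values in $[-\infty,\infty)$ is convex regardless; in any case, for $\varphi\in\mathcal{E}^1$ all Lelong numbers vanish, so Skoda's integrability theorem guarantees $e^{-k\varphi}\in L^1$ and hence $\mathscr{L}_k(\varphi)>-\infty$.
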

This result is essentially Berndtsson's convexity theorem (\cite{Ber09b}, \cite{Ber09})). See \cite[Proposition~2.12]{DLR20} for details.

\subsection{Radial functionals}
In this section, we assume that the Kähler class $[\omega]$ is in the integral Néron--Severi group. Take an ample line bundle $L$ on $X$ so that $[\omega]=c_1(L)$. Fix a smooth positive metric $h$ on $L$ with $c_1(L,h)=\omega$.

Let $\mathcal{R}^1(X,\omega)$ be the space of $\mathcal{E}^1(X,\omega)$ geodesic rays emanating from $0$. That is, a general element $\ell\in \mathcal{R}^1$ is a map $[0,\infty)\rightarrow \mathcal{E}^1$, such that $\ell_0=0$ and such that $\ell|_{[0,A]}$ is a (finite energy) geodesic in $\mathcal{E}^1$ for any $A>0$. See \cite{DL20} for details. 

We also write $\mathcal{R}^{\infty}(X,\omega)$ for the set of locally bounded geodesic rays emanating from $0$.

For $F=\Ent,E^{\alpha},M,\tilde{M},E,\tilde{J}$, we define a corresponding radial functional $\mathbf{F}$ on $\mathcal{R}^1$ by 
\[
\mathbf{F}(\ell):=\lim_{t\to\infty}\frac{1}{t}F(\ell_t)\,.
\]
For each of them, the limit is well-defined by \cite[Proposition~4.5, Theorem~4.7]{BDL17}.

\subsection{Non-Archimedean functionals}\label{subsec:Xan}
We write $X^{\An}$ for the Berkovich analytification of $X$ with respect to the trivial valuation on $\mathbb{C}$. As a set, $X^{\An}$ consists of all real semi-valuations (up to equivalence) extending the trivial valuation on $\mathbb{C}$. There is a natural topology known as the Berkovich topology on $X^{\An}$. We always endow $X^{\An}$ with this topology. 
There is a continuous morphism of locally ringed spaces from $X^{\An}$ to $X$ with the Zariski topology.
Let $L^{\An}$ be the pull-back of $L$ to $X^{\An}$. See \cite[Section~3.5]{Berk12}. We refer to \cite{BJ18b} for the definition of $\mathcal{E}^{1,\NA}=\mathcal{E}^1(X^{\An},L^{\An})$. For $\ell\in \mathcal{R}^1$, we write $\ell^{\NA}$ for the corresponding potential in $\mathcal{E}^{1,\NA}$ in the sense of \cite{BBJ15}, namely
\[
\ell^{\NA}(v):=-G(v)(\Phi)\,,
\]
where $G(v)$ is the Gauss extension of $v$ and $\Phi$ is the potential on $X\times \Delta$ corresponding to $\ell$. Recall that there is a natural embedding $\mathcal{E}^{1,\NA}\hookrightarrow \mathcal{R}^1$. We will often use this embedding implicitly. Geodesic rays in the image of this embedding are known as \emph{maximal} geodesic rays.

For $F=E,E^{\alpha},\tilde{J}$, we write
\[
F^{\NA}(\ell^{\NA}):=\mathbf{F}(\ell)\,,
\]
when $\ell$ is a maximal geodesic ray.
For explanation of this terminology, see \cite{BHJ16} and \cite[Proposition~2.38]{Li20}. We also remark that $\tilde{J}^{\NA}$-functional appeared already in \cite{Der16} under the name of the minimum norm.

Let $\psi\in \mathcal{E}^{1,\NA}$, we write
\[
\Ent^{\NA}(\psi)=\Ent^{\NA}\left(\MA(\psi)\right):=\frac{1}{V}\int_{X^{\An}} A_X\,\MA(\psi)\,,
\]
where $A_X:X^{\An}\rightarrow [0,\infty]$ denotes the log discrepancy functional (see \cite{JM12}) and $\MA(\psi)$ denotes the Chambert-Loir measure (see \cite{CLD12}, \cite{CL06}, \cite{BJ18b}). 
We also write
\[
\tilde{M}^{\NA}=E_R^{\NA}+\Ent^{\NA}\,.
\]
In the case of $\mathscr{L}_k$ and $\ell$ is maximal, we write 
\begin{equation}\label{eq:defLkr}
\mathscr{L}_k^{\NA}(\ell^{\NA})=\lim_{t\to\infty}\frac{1}{t}\mathscr{L}_k(\ell_t)\,.
\end{equation}

Recall the definition of $\delta$-invariant:
\begin{equation}\label{eq:deltadef}
\delta=\delta([\omega]):=\inf_{v\in \Val_X^*}\frac{A_{X}(v)}{S_L(v)}\,,
\end{equation}
where $\Val_X^*$ denotes the space of non-trivial real valuations of $\mathbb{C}(X)$ (\cite{JM12}) and 
\[
S_L(v):=\int_0^{\infty}\vol(L-tv)\,\mathrm{d}t
\]
and
\[
\vol(L-tv)=\lim_{k\to\infty} \frac{n!}{k^n}h^0(X,L^k\otimes \mathfrak{a}_{tk})
\]
with $\mathfrak{a}_{t}$ being the ideal sheaf defined by the condition that $v\geq t$. Recall that in the Fano setting, there is always a quasi-monomial valuation that achieves the minimum in \eqref{eq:deltadef} (see \cite[Theorem~4.20]{Xu21}, \cite{BLZ19})).
Recall that (\cite[Section~2.9, Theorem~5.16]{BJ18})
\begin{equation}\label{eq:de1}
\delta([\omega])=\inf_{\mu\in \mathcal{M}(X^{\An})}\frac{\Ent^{\NA}(\mu)}{E^*(\mu)}\,,
\end{equation}
where $\mathcal{M}(X^{\An})$ denotes the set of Radon measures on $X^{\An}$ with total mass $V$,
\[
E^*(\mu):=\sup_{\psi\in \mathcal{E}^1(X^{\An},L^{\An})} \left(E(\psi)-\int_{X^{\An}}\psi\,\mathrm{d}\mu\right)\,.
\]
It is easy to see that for $\varphi\in \mathcal{E}^{1,\NA}$,
\begin{equation}\label{eq:EstJt}
E^*(\MA(\varphi))=\tilde{J}^{\NA}(\varphi)\,.
\end{equation}

\subsection{Flag ideals and test configurations}
Let $X$ be a compact Kähler manifold of dimension $n$. Let $L$ be a  big and semi-ample line bundle on $X$. Let $h$ be a smooth, non-negatively curved metric on $L$. Let $\omega=c_1(L,h)$.
\begin{definition}
	A \emph{flag ideal} on $X\times \mathbb{C}$ is a $\mathbb{C}^*$-invariant coherent ideal sheaf of $X\times \mathbb{C}$ that is cosupported on the central fibre. Equivalently, a flag ideal is an ideal of the form
	\begin{equation}
	\mathscr{I}=I_0+I_1t+\cdots+I_{N-1}t^{N-1}+(t^N)\,,
	\end{equation}
	where $I_0\subseteq I_1\subseteq \cdots \subseteq I_{N-1}\subseteq I_N=\mathcal{O}_X$ are coherent ideal sheaves on $X$, $t$ is the variable on $\mathbb{C}$.
\end{definition}

\begin{definition}\label{def:tc}
	A \emph{test configuration} of $(X,L)$ consists of a pair $(\mathcal{X},\mathcal{L})$ consisting of a variety $\mathcal{X}$ and a semi-ample $\mathbb{Q}$-line bundle $\mathcal{L}$ on $\mathcal{X}$,  a morphism $\Pi:\mathcal{X}\rightarrow \mathbb{C}$, a $\mathbb{C}^*$-action on $\mathcal{X},\mathcal{L}$ and an isomorphism $(\mathcal{X}_1,\mathcal{L}|_{\mathcal{X}_1})\cong (X,L)$, so that
	\begin{enumerate}
		\item $\pi$ is $\mathbb{C}^*$-equivariant.
		\item The fibration $\pi$ is equivariantly isomorphic to the trivial fibration $(X\times \mathbb{C}^*,p_1^*L)$ through an isomorphism that extends the given one over $1$. Here $p_1$ denotes the projection to the first factor.
	\end{enumerate}
	A test configuration $(\mathcal{X},\mathcal{L})$ can be compactified by gluing the trivial fibration over $\mathbb{P}^1\setminus\{0\}$. We write $(\bar{\mathcal{X}},\bar{\mathcal{L}})$ for the compactified test configuration. We will frequently omit the bars when we talk about compactified test configurations.
	
\end{definition}

\begin{definition}[Donaldson--Futaki invariant]
Let $(\mathcal{X},\mathcal{L})$ be a test configuration of $(X,L)$. 
Take $r\in \mathbb{Z}_{>0}$ so that $\mathcal{L}^r$ is integral.
For $k\in \mathbb{Z}_{>0}$, define $w(rk)$ as the weight of the $\mathbb{C}^*$-action on $H^0(\mathcal{X}_0,\mathcal{L}^{rk}|_{\mathcal{X}_0})$. By equivariant Riemann--Roch theorem, we can write
\[
w(rk)=a(rk)^{n+1}+b(rk)^n+\mathcal{O}(k^{n-1})\,.
\]
Define the \emph{twisted Donaldson--Futaki invariant} of $(\mathcal{X},\mathcal{L})$ as
\[
\widetilde{\DF}(\mathcal{X},\mathcal{L})=-2b \frac{n!}{V}\,.
\]
\end{definition}
Let $\ell$ be the Phong--Sturm geodesic ray associated to $(\mathcal{X},\mathcal{L})$ and let $\phi=\ell^{\NA}\in \mathcal{H}^{\NA}$ be the non-Archimedean potential defined by $(\mathcal{X},\mathcal{L})$.
\begin{proposition}[{\cite[Proposition~2.8]{BHJ16}},{\cite[Theorem~5.3]{Li20}}]\label{prop:BHJL} Assume that $L$ is ample.
\begin{enumerate}
    \item Let $\ell\in \mathcal{E}^{1,\NA}$, then 
    \[
    \tilde{M}^{\NA}(\ell^{\NA})\leq \tilde{\mathbf{M}}(\ell)\,,\quad \Ent^{\NA}(\ell^{\NA})\leq \mathbf{Ent}(\ell)\,.
    \]
    Equality holds if $\ell$ is the Phong--Sturm geodesic ray of some test configuration.
    \item 
    Let $(\mathcal{X},\mathcal{L})$ be a (not necessarily normal) test configuration of $(X,L)$. Let $p:\tilde{\mathcal{X}}\rightarrow \mathcal{X}$ be the normalization. Let $\tilde{L}=p^*\mathcal{L}$.
    Then
     \begin{equation}\label{eq:tildDF}
        \tilde{\mathbf{M}}(\ell)=\tilde{M}^{\NA}(\phi)=\widetilde{\DF}(\mathcal{X},\mathcal{L})-\frac{1}{V}\left(\left(\tilde{X}_0- \tilde{X}_0^{\mathrm{red}}\right)\cdot \tilde{\mathcal{L}}^n\right)\,.
    \end{equation}
\end{enumerate}
 \end{proposition}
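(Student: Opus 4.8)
The plan is to isolate the entropy contribution. Since $\tilde{M}=\Ent+E_R$ with $E_R$ the functional $E^{\alpha}$ for $\alpha=-n\Ric\omega$, and since $E^{\alpha}$ is linear along geodesics with slope depending only, and affinely, on the underlying test curve, one has $\mathbf{E}_R(\ell)=E_R^{\NA}(\ell^{\NA})$ for \emph{every} $\ell\in\mathcal{E}^{1,\NA}$; this is the radial $E^{\alpha}$-identity, established as in \cref{thm:LegEalpha} (or directly as in \cite{BHJ16}). Consequently $\tilde{\mathbf{M}}(\ell)-\tilde{M}^{\NA}(\ell^{\NA})=\mathbf{Ent}(\ell)-\Ent^{\NA}(\ell^{\NA})$, so it suffices to prove the entropy version of each assertion, together with the $E_R$-part of the identity at the right of \eqref{eq:tildDF}, which is elementary Riemann--Roch bookkeeping.

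For the identity at the right of \eqref{eq:tildDF}, I would pass to a common normal snc model $\bar{\mathcal{Y}}\to\mathbb{P}^1$ dominating the compactified normalization of $(\mathcal{X},\mathcal{L})$ and $X\times\mathbb{P}^1$, writing $\bar{\mathcal{L}}$ for the pullback of $\tilde{\mathcal{L}}$. On one side, $\widetilde{\DF}(\mathcal{X},\mathcal{L})$ is extracted from the equivariant Riemann--Roch expansion of the weights on $H^0(\mathcal{X}_0,\mathcal{L}^{rk})$; applying the intersection-number formula for the Donaldson--Futaki invariant (Odaka, Wang, \cite{BHJ16}) on $\bar{\mathcal{Y}}$ and tracking the passage between the possibly non-reduced $\mathcal{X}_0$ and the reduced central fibre of the normalization produces precisely the correction $\frac1V((\tilde{X}_0-\tilde{X}_0^{\mathrm{red}})\cdot\tilde{\mathcal{L}}^n)$. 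On the other side, $\Ent^{\NA}(\phi)=\frac1V\int_{X^{\An}}A_X\,\MA(\phi)$ is, for a test-configuration potential, the intersection number of the relative log-canonical class $K^{\log}_{\bar{\mathcal{Y}}/\mathbb{P}^1}$ against $\bar{\mathcal{L}}^n$, by the description of $\MA(\phi)$ as a finite combination of Dirac masses at divisorial points and of $A_X$ as log discrepancy, and $E_R^{\NA}(\phi)$ is the analogous intersection number against $-\Ric\omega$. Summing and matching with the Riemann--Roch side yields \eqref{eq:tildDF}; the only subtlety is the correct normalization of the relative log-canonical class and the accounting for the non-reduced fibre.

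For the equalities $\tilde{\mathbf{M}}(\ell)=\tilde{M}^{\NA}(\phi)$ and $\mathbf{Ent}(\ell)=\Ent^{\NA}(\phi)$ when $\ell$ is the Phong--Sturm ray of $(\mathcal{X},\mathcal{L})$, it is enough, by the reduction, to do the entropy case, and I would route it through Donaldson's $\mathscr{L}_k$-functional. By Berndtsson convexity (\cref{thm:Bernconv}), $t\mapsto\mathscr{L}_k(\ell_t)$ is convex, so $\frac1t\mathscr{L}_k(\ell_t)$ has a limit $\mathscr{L}_k^{\NA}(\ell^{\NA})$; along a test-configuration ray the data entering $\mathscr{L}_k(\ell_t)$ are governed by the induced $\mathbb{C}^*$-weights, so $\mathscr{L}_k^{\NA}(\ell^{\NA})$ is again computed by equivariant Riemann--Roch on a model of the test configuration. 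Dividing by $k$, letting $k\to\infty$, and using the quantization of the $\mathscr{L}_k$-functional on both sides---the non-Archimedean asymptotics identify $\lim_k\frac1k\mathscr{L}_k^{\NA}(\ell^{\NA})$ with $\tilde{M}^{\NA}(\phi)$, and the Archimedean asymptotics, i.e. the second-order expansion of the $\Hilb_k$-determinant, monotone in $k$, identify the radial slope of $\frac1k\mathscr{L}_k$ along $\ell$ with $\tilde{\mathbf{M}}(\ell)$ after exchanging the two limits---gives the equalities. This is the content recorded in \cite[Proposition~2.8]{BHJ16}, and also follows from the Paul--Tian / Phong--Ross--Sturm slope formulae for the Mabuchi energy along test-configuration rays.

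The main obstacle is the inequality in (1) for an arbitrary $\ell\in\mathcal{E}^{1,\NA}$, whose $\ell^{\NA}$ need not be a single test-configuration potential; here I would follow Li \cite[Theorem~5.3]{Li20}. Reduce again to $\Ent^{\NA}(\ell^{\NA})\le\mathbf{Ent}(\ell)$, fix $k$, and use that $\mathcal{H}^{\NA}$ is dense in $\mathcal{E}^{1,\NA}$ to approximate $\ell^{\NA}$ from below by test-configuration potentials $\phi^{(j)}$; since $\ell$ is the maximal ray with its non-Archimedean data, the associated Phong--Sturm rays satisfy $\ell^{(j)}\le\ell$, hence $\mathscr{L}_k(\ell^{(j)}_t)\le\mathscr{L}_k(\ell_t)$ by monotonicity of $\mathscr{L}_k$, and passing to slopes and then to the supremum over $j$ gives $\mathscr{L}_k^{\NA}(\ell^{\NA})\le\lim_{t\to\infty}\frac1t\mathscr{L}_k(\ell_t)$. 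Dividing by $k$ and sending $k\to\infty$, the left side tends to $\tilde{M}^{\NA}(\ell^{\NA})$ by the non-Archimedean quantization of Boucksom--Jonsson, while the right side is bounded above, up to an error controlled uniformly in $k$ and $t$, by $\tilde{\mathbf{M}}(\ell)$ via the Archimedean $\mathscr{L}_k$-asymptotics together with lower semicontinuity of $\Ent$; subtracting the already established $E_R$-identity yields $\Ent^{\NA}(\ell^{\NA})\le\mathbf{Ent}(\ell)$. The genuinely delicate points, and where I expect the real difficulty, are: (i) the uniformity in $k$ of the error in the quantization of $\mathscr{L}_k$ along a possibly singular ray; (ii) the lower semicontinuity of $\Ent^{\NA}$ under the non-Archimedean approximation, needed to pass the limit; and (iii) the exchange of the limits in $k$ and $t$, which rests on the monotonicity supplied by Berndtsson convexity. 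Finally, the equality case of (1) is obtained by combining this inequality with the slope formula of the previous paragraph.
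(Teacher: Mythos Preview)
The paper does not give its own proof of this proposition: it is stated as a citation to \cite[Proposition~2.8]{BHJ16} and \cite[Theorem~5.3]{Li20}, followed only by the remark that the intersection-theoretic formulae for the Donaldson--Futaki invariant go back to Odaka and Wang. So there is nothing to compare your argument against within the paper itself.

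That said, a few comments on your sketch. Your reduction to the entropy statement via $\mathbf{E}_R(\ell)=E_R^{\NA}(\ell^{\NA})$ is correct and is exactly how the paper itself organizes things elsewhere (\cref{thm:LegEalpha}). Your treatment of \eqref{eq:tildDF} via intersection theory on a dominating snc model is the standard BHJ/Odaka/Wang approach and is fine at the level of a sketch.

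Your route through the $\mathscr{L}_k$-functional for the inequality in (1), however, diverges from Li's actual argument in \cite{Li20} and introduces precisely the difficulties you flag in (i) and (iii). Li does \emph{not} pass through the quantized functionals to prove $\Ent^{\NA}(\ell^{\NA})\le\mathbf{Ent}(\ell)$; rather, he works directly with the non-Archimedean Monge--Amp\`ere measure and the multiplier-ideal approximation of the maximal ray, using lower semicontinuity of the log discrepancy and weak convergence of the Chambert-Loir measures along the BBJ approximation. Your point (i)---uniform control of the $\mathscr{L}_k$-expansion along a singular ray---is a genuine obstruction to the $\mathscr{L}_k$ route: the Archimedean quantization asymptotics are established for fixed smooth (or bounded) potentials and do not obviously survive the limit in $t$ uniformly in $k$. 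If you want a self-contained argument, it is cleaner to follow Li and avoid $\mathscr{L}_k$ altogether for the inequality, reserving the quantization identities for the equality case on test configurations, where both limits are finite-dimensional equivariant Riemann--Roch computations.
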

The intersection-theoretic formulae of the Donaldson--Futaki invariant were obtained first in \cite{Oda13} and \cite{Wang12}.

\section{The theory of test curves}\label{sec:rwn}
In this section, we review and extend the theory of test curves. 

\subsection{Ross--Witt Nystr\"om correspondence}
Results in this section are contained in \cite{RWN14}, \cite{DDNL18big} and \cite{DX22}. The references work with ample line bundles and Kähler forms, but the readers can readily check that all arguments work for semi-ample line bundles and real semi-positive forms.

Let $X$ be a compact Kähler manifold of dimension $n$. Let $\omega$ be a real semi-positive form on $X$. Assume that $\int_X \omega^n>0$.
Let $\PSH^{\Mdl}(X,\omega)$ denote the set of model potentials in $\PSH(X,\omega)$.

\begin{definition}\label{def:testcurve}
 A \emph{test curve} is a map $\psi=\psi_{\bullet}:\mathbb{R}\to \PSH^{\Mdl}(X,\omega)\cup\{-\infty\}$, such that
\begin{enumerate}
    \item $\psi_{\bullet}$ is concave in $\bullet$.
    \item $\psi$ is usc as a function $\mathbb{R}\times X\rightarrow [-\infty,\infty)$.
    \item $\lim_{\tau\to-\infty}\psi_{\tau}=0$ in $L^1$.
    \item $\psi_{\tau}=-\infty$ for $\tau$ large enough.
\end{enumerate}
Let $\tau^+:=\inf \{\tau\in \mathbb{R}: \psi_{\tau}=-\infty\}$.
We say $\psi$ is \emph{normalized} if $\tau^+=0$.
The test curve is called \emph{bounded} if $\psi_{\tau}=0$ for $\tau$ small enough. Let $\tau^-:=\sup\{\tau\in \mathbb{R}: \psi_{\tau}=0\}$ in this case.

The set of bounded test curves is denoted by $\TC^{\infty}(X,\omega)$.
\end{definition}
\begin{remark}
We remind the readers that our test curves correspond to \emph{maximal} test curves in the literature. 
\end{remark}
\begin{remark}
    In fact, it is more natural to define a test curve only on the interval $(-\infty,\tau^+)$. But we adopt the traditional definition here to facilitate the comparison with the literature.
\end{remark}

\begin{definition}
The \emph{energy} of a test curve $\psi_{\bullet}$ is defined as
\begin{equation}\label{eq:defE}
\mathbf{E}(\psi_{\bullet}):=\tau^++\frac{1}{V}\int_{-\infty}^{\tau^+} \left(\int_X \omega_{\psi_{\tau}}^n-\int_X\omega^n\right)\,\mathrm{d}\tau\,.
\end{equation} 
A test curve $\psi$ is said to be of \emph{finite energy} if $\mathbf{E}(\psi)>-\infty$. We denote the set of finite energy test curves by $\TC^1(X,\omega)$.
\end{definition}

\begin{proposition}\label{prop:tc}
 Let $\psi_{\bullet}$ be a test curve. Then
 \begin{enumerate}
     \item $\tau\mapsto \int_X \omega_{\psi_{\tau}}^n$ is a continuous function for $\tau\in (-\infty,\tau^+)$.
     \item For any $\tau<\tau^+$, $\int_X \omega_{\psi_{\tau}}^n>0$.
     \item The function $\tau \mapsto \log\int_X \omega_{\psi_{\tau}}^n$ is concave for $\tau\in (-\infty,\tau^+)$. 
 \end{enumerate}
\end{proposition}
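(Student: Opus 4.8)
The three statements are of a local nature along the interval $(-\infty,\tau^+)$, so I would fix once and for all a compact subinterval $[\tau_0,\tau_1]\subseteq(-\infty,\tau^+)$ and work there. The basic tool is the Ross--Witt Nystr\"om correspondence already recalled above: the test curve $\psi_\bullet$ is the Legendre transform of a geodesic ray (more precisely, $\check\psi_t$ is the maximal extension), and for a fixed $t$ in the open interval $(\tau_0,\tau_1)$-dual range the potential $\check\psi_t$ is $\mathscr I$-model with positive mass. The key quantitative input is that the total Monge--Amp\`ere mass $\tau\mapsto\int_X\omega_{\psi_\tau}^n$ is, up to normalization, the non-pluripolar volume of an $\mathscr I$-model (equivalently, a model) potential, and these volumes behave well under concave interpolation.

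\textbf{Step 1: positivity (2).} For $\tau<\tau^+$ the potential $\psi_\tau$ is by definition not identically $-\infty$, and it is a model potential. By the defining property of $\tau^+$ and concavity of $\psi_\bullet$ in $\tau$, for any such $\tau$ one can write $\psi_\tau$ as a (nontrivial) convex combination of $\psi_{\tau'}$ with $\tau'<\tau$ (where the mass is bounded below, e.g.\ close to $V$ for $\tau'$ very negative by condition (3)) and some potential with nonnegative mass; monotonicity of non-pluripolar mass under the partial order of singularity types, together with the fact that a model potential of zero mass would force $\psi_\tau\equiv-\infty$ in the model class, gives $\int_X\omega_{\psi_\tau}^n>0$. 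I would phrase this via the inequality $\int_X\omega_{\psi_\tau}^n\ge \big(\text{something}\big)^n\cdot\int_X\omega_{\psi_{\tau'}}^n$ coming from convexity of singularity types, i.e.\ the same mechanism underlying (3).

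\textbf{Step 2: log-concavity (3).} This is the heart of the matter. Fix $\tau_0<\tau_1<\tau^+$ and $s\in(0,1)$, set $\tau_s=(1-s)\tau_0+s\tau_1$. By concavity of the test curve, $\psi_{\tau_s}\ge (1-s)\psi_{\tau_0}+s\psi_{\tau_1}$ pointwise, and since $\psi_{\tau_s}$ is a model potential it dominates the model envelope $P\big[(1-s)\psi_{\tau_0}+s\psi_{\tau_1}\big]$ in the sense of singularity types. Hence $\int_X\omega_{\psi_{\tau_s}}^n\ge \int_X\omega_{P[(1-s)\psi_{\tau_0}+s\psi_{\tau_1}]}^n$. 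Now I invoke the multiplicativity/log-concavity of the non-pluripolar volume under convex combination of (model) potentials: the map $\varphi\mapsto\log\int_X\omega_\varphi^n$ on the space of model potentials of positive mass is concave along the ``linear'' structure given by $(1-s)\varphi_0+s\varphi_1$, i.e.\ $\int_X\omega_{P[(1-s)\varphi_0+s\varphi_1]}^n\ge \big(\int_X\omega_{\varphi_0}^n\big)^{1-s}\big(\int_X\omega_{\varphi_1}^n\big)^{s}$. This is exactly the Witt Nystr\"om / Brunn--Minkowski-type inequality for partial Okounkov bodies or, in this $\mathscr I$-model setting, a consequence of \cite{DX22}; combining the two displays yields $\log\int_X\omega_{\psi_{\tau_s}}^n\ge(1-s)\log\int_X\omega_{\psi_{\tau_0}}^n+s\log\int_X\omega_{\psi_{\tau_1}}^n$, which is (3). (Statement (2) also drops out once we know the mass is bounded below near $-\infty$ and log-concave up to $\tau^+$, ruling out an interior zero.)

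\textbf{Step 3: continuity (1).} A concave function on an open interval of $\mathbb R$ is automatically continuous on that interval; since $\tau\mapsto\log\int_X\omega_{\psi_\tau}^n$ is concave on $(-\infty,\tau^+)$ by Step 2 and takes values in $\mathbb R$ there by Step 1, it is continuous, hence so is $\tau\mapsto\int_X\omega_{\psi_\tau}^n$. One only needs to double-check there is no jump at an endpoint of the domain of finiteness, but on the open interval $(-\infty,\tau^+)$ this cannot happen, so no separate semicontinuity argument (upper via usc of $\psi_\bullet$ in $(\tau,x)$ and a Fatou/Bedford--Taylor argument, lower via the monotone limit) is strictly needed — though I would remark that upper semicontinuity of $\tau\mapsto\int_X\omega_{\psi_\tau}^n$ also follows directly from property (2) of \cref{def:testcurve} and the continuity of the mixed Monge--Amp\`ere mass along decreasing/increasing sequences.

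\textbf{Main obstacle.} The only nontrivial point is the log-concavity inequality for non-pluripolar volumes of model potentials under convex combination, i.e.\ the fact that $\int_X\omega_{P[(1-s)\varphi_0+s\varphi_1]}^n$ dominates the geometric mean of the endpoint masses. Everything else (positivity, continuity) is a soft consequence of it. I expect to extract this from the theory of $\mathscr I$-model potentials and partial Okounkov bodies developed in \cite{DX22}: realize each mass as the Euclidean volume of a convex body attached to the singularity type, check that the body of the model envelope of $(1-s)\varphi_0+s\varphi_1$ contains the Minkowski combination $(1-s)\Delta_0+s\Delta_1$ of the endpoint bodies, and apply the classical Brunn--Minkowski inequality. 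If a direct pluripotential-theoretic proof is preferred, one can instead use the concavity of the Monge--Amp\`ere energy composed with the Legendre/Ross--Witt Nystr\"om dictionary, translating the statement into convexity of the associated geodesic segment, which is standard.
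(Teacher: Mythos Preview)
Your argument is correct, and for part (3) it is essentially the paper's proof: the paper combines the log-concavity inequality for non-pluripolar masses of convex combinations (\cite[Theorem~6.1]{DDNL19log}) with Witt Nystr\"om's monotonicity theorem \cite{WN19}, which is exactly your Step~2. The only discrepancy is bibliographic: the Brunn--Minkowski-type inequality you invoke is \cite[Theorem~6.1]{DDNL19log} rather than a result of \cite{DX22}, and you do not need to pass through partial Okounkov bodies---the pluripotential-theoretic inequality is already available.

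Where you differ from the paper is in the logical organization of (1) and (2). The paper simply cites \cite[Lemma~3.9]{DX22} for both. You instead derive (2) from (3) together with the boundary condition $\int_X\omega_{\psi_\tau}^n\to V$ as $\tau\to-\infty$, and then (1) from the elementary fact that a finite concave function on an open interval is continuous. This is a genuine, if modest, reorganization: your route is more self-contained and makes the dependence transparent (everything flows from the single log-concavity inequality), while the paper's citation of \cite[Lemma~3.9]{DX22} gives slightly more, namely continuity proved directly via semicontinuity of masses along monotone limits, which does not require positivity as an intermediate step. Either way the content is the same.
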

\begin{proof}
Part (1) and Part (2) follow from \cite[Lemma~3.9]{DX22}. Part (3) is a consequence of \cite[Theorem~6.1]{DDNL19log} and the monotonicity theorem \cite{WN19}.
\end{proof}

\begin{definition}
Let $\ell\in \mathcal{R}^1(X,\omega)$. The \emph{Legendre transform} of $\ell$ is defined as
\[
\hat{\ell}_{\tau}:=\inf_{t\geq 0} \left(\ell_t-t\tau\right)\,,\quad \tau \in \mathbb{R}\,.
\]
Let $\psi\in \TC^1(X,\omega)$, the \emph{inverse Legendre transform} of $\psi$ is defined as
\[
\check{\psi}_t:=\sup_{\tau\in \mathbb{R}} \left(\psi_{\tau}+t\tau\right)\,,\quad t\geq 0\,.
\]
\end{definition}

\begin{theorem}[{\cite[Theorem~3.7]{DX22}}]
The Legendre transform and inverse Legendre transform establish a bijection from $\mathcal{R}^1(X,\omega)$ to $\TC^1(X,\omega)$. For $\ell\in \mathcal{R}^1(X,\omega)$,
We have $\sup_X \ell_1=\tau^+$ and
$\mathbf{E}(\ell)=\mathbf{E}(\hat{\ell})$.

Moreover, under this correspondence, $\mathcal{R}^{\infty}$ corresponds to the set of bounded test curves. When $\ell\in \mathcal{R}^{\infty}$,$\inf_X \ell_1=\tau^-$.
\end{theorem}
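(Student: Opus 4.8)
\emph{Overall plan.} The plan is to follow the Ross--Witt Nystr\"om programme (\cite{RWN14}), refined to finite energy, treating the two directions of the correspondence separately and then establishing biduality and the energy identity. First I would show that for $\ell\in\mathcal{R}^1(X,\omega)$ the curve $\hat\ell_\tau=\inf_{t\geq0}(\ell_t-t\tau)$ is a test curve in the sense of \cref{def:testcurve}. Concavity in $\tau$ is automatic, being an infimum of affine functions. For the joint upper semicontinuity of $(\tau,x)\mapsto\hat\ell_\tau(x)$ one uses that $(t,x)\mapsto\ell_t(x)$ is jointly upper semicontinuous and convex in $t$, together with the linear growth of a finite-energy ray: convexity confines the infimum to a compact range of $t$, locally uniformly in $\tau$, so it is attained and inherits upper semicontinuity. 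Since $\ell_0=0$, the quotients $\ell_t/t$ increase, and the maximum principle gives $\sup_X\ell_t=t\sup_X\ell_1$, one reads off $\hat\ell_\tau\leq0$, $\hat\ell_\tau\to0$ in $L^1$ as $\tau\to-\infty$, $\hat\ell_\tau\equiv-\infty$ for $\tau>\sup_X\ell_1$ while $\hat\ell_\tau$ has positive mass for $\tau<\sup_X\ell_1$, whence $\tau^+=\sup_X\ell_1$. It then remains to see that each $\hat\ell_\tau$ is a \emph{model} potential; this is where the geodesic (maximality) hypothesis on $\ell$ is genuinely used, the maximality of the homogeneous Monge--Amp\`ere solution $\Phi$ on $X\times\{\operatorname{Re}s\geq0\}$ attached to $\ell$ turning, under the fibrewise Legendre transform, into $P[\hat\ell_\tau]=\hat\ell_\tau$; here I would invoke the pluripotential results of \cite{RWN14} and \cite{DDNL18mono}.

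\emph{From test curves to rays.} Conversely, given $\psi\in\TC^1(X,\omega)$, I would show that $\check\psi_t=\sup_\tau(\psi_\tau+t\tau)$ (taking the upper semicontinuous regularization in $x$ if needed, harmless by concavity of the curve) is a finite-energy geodesic ray emanating from $0$. Convexity in $t$ and $\omega$-plurisubharmonicity in $x$ are immediate from the sup structure; $\check\psi_0=0$ follows from $\psi_\tau\leq0$ together with $\psi_\tau\to0$; finiteness of $\mathbf{E}(\check\psi)$ is exactly the hypothesis $\mathbf{E}(\psi)>-\infty$. The substantive point, and what I expect to be \textbf{the main obstacle}, is that $\check\psi$ solves the homogeneous complex Monge--Amp\`ere equation, i.e.\ that its lift to $X\times\{\operatorname{Re}s\geq0\}$ is maximal. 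The standard way around this is that the supremum of the affine-in-$s$ pieces $\psi_\tau(x)+\tau\operatorname{Re}s$ realizes the largest $S^1$-invariant $\omega$-psh competitor with the prescribed Legendre data, so maximality of the envelope propagates; one must also verify that the regularization does not move the $\psi_\tau$ upon transforming back, which is precisely where the model property of the members of a test curve enters.

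\emph{Biduality and the bounded case.} With both maps landing in the correct spaces, $\check{\hat\ell}=\ell$ and $\hat{\check\psi}=\psi$ follow from the involutivity of the one-variable Legendre transform applied fibrewise, with no information lost to the regularizations because geodesic rays are already maximal and test curves already consist of model potentials. The same argument that gave $\tau^+=\sup_X\ell_1$, run with $\inf_X$ in place of $\sup_X$, shows that $\ell$ is locally bounded if and only if $\hat\ell_\tau=0$ for $\tau$ small enough, and then $\inf_X\ell_1=\tau^-$, which yields the correspondence $\mathcal{R}^\infty\leftrightarrow\TC^\infty$.

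\emph{The energy identity.} For $\mathbf{E}(\ell)=\mathbf{E}(\hat\ell)$ I would use the nonincreasing function $\nu(\tau):=V^{-1}\int_X\omega_{\hat\ell_\tau}^n$, which by \cref{prop:tc} is continuous on $(-\infty,\tau^+)$, equals $1$ near $-\infty$, and has total variation $1$ on $(-\infty,\tau^+]$. A layer-cake computation, differentiating $t\mapsto E(\ell_t)$ and using that the fibrewise derivative of $\check{\hat\ell}=\ell$ recovers the value of $\tau$ realizing the Legendre supremum, identifies $\mathbf{E}(\ell)=\lim_{t\to\infty}t^{-1}E(\ell_t)$ with the mean $\int_{\mathbb{R}}\tau\,\mathrm{d}\mu_\ell(\tau)$ of the probability measure $\mu_\ell:=-\mathrm{d}\nu$. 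Integrating by parts against $\tau-\tau^+$ and using $\nu(\tau^+)=0$ rewrites this as $\tau^++V^{-1}\int_{-\infty}^{\tau^+}\big(\int_X\omega_{\hat\ell_\tau}^n-\int_X\omega^n\big)\,\mathrm{d}\tau=\mathbf{E}(\hat\ell)$. The only delicate points are the justification of the differentiation under the integral sign and of the boundary behaviour at $\tau^+$, both supplied by the continuity and concavity statements of \cref{prop:tc}.
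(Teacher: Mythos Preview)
The paper does not give a proof of this statement at all: it is quoted verbatim as \cite[Theorem~3.7]{DX22} and used as a black box. There is therefore nothing in the present paper to compare your proposal against.

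That said, your outline is a faithful summary of the argument one finds in the cited source and in the Ross--Witt Nystr\"om programme it extends. The places you flag as delicate---that each $\hat\ell_\tau$ is model, that $\check\psi$ is genuinely a geodesic (maximal) ray, and the passage from the pushforward measure $-\mathrm{d}\nu$ to the integral formula for $\mathbf{E}$---are exactly the substantive steps in \cite{DX22}, and the tools you name (the envelope characterization of model potentials from \cite{DDNL18mono}, the $S^1$-invariant maximal envelope on the strip, and the continuity/concavity of masses from \cref{prop:tc}) are the right ones. One small point: in the finite-energy (unbounded) case the identity $\hat{\check\psi}_\tau=\psi_\tau$ does not follow from one-variable Legendre involutivity alone, since the regularization and the model projection intervene; in \cite{DX22} this is handled by first proving the bounded case and then approximating, using that $P[\,\cdot\,]$ commutes with the relevant limits. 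Your sketch would benefit from making that approximation step explicit.
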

Now assume that $\omega=c_1(L,h)$ for some ample line bundle $L$ and a strictly positively curved smooth Hermitian metric $h$ on $L$. 
\begin{definition}
 An \emph{$\mathscr{I}$-model test curve} is a test curve $\psi_{\bullet}$ such that for every $\tau<\tau^+$, $\psi_{\tau}$ is $\mathscr{I}$-model. The set of $\mathscr{I}$-model test curves of finite energy is denoted by $\TC^1_{\mathscr{I}}(X,\omega)$.
\end{definition}
\begin{theorem}[{\cite[Theorem~3.7]{DX22}}]
The Legendre transform and inverse Legendre transform establish a bijection between $\mathcal{E}^{1,\NA}$ and $\TC^1_{\mathscr{I}}(X,\omega)$.
\end{theorem}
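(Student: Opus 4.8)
The final statement to prove is:

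\begin{quote}
\emph{The Legendre transform and inverse Legendre transform establish a bijection between $\mathcal{E}^{1,\NA}$ and $\TC^1_{\mathscr{I}}(X,\omega)$.}
\end{quote}

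The plan is to leverage the already-established bijection between $\mathcal{R}^1(X,\omega)$ and $\TC^1(X,\omega)$ (the previous theorem) together with the natural embedding $\mathcal{E}^{1,\NA}\hookrightarrow \mathcal{R}^1$ recorded in Section~\ref{subsec:Xan}. Since the Legendre transform is already a bijection at the level of $\mathcal{R}^1 \leftrightarrow \TC^1$, it suffices to show that this bijection restricts to a bijection between the image of $\mathcal{E}^{1,\NA}$ (i.e.\ the set of \emph{maximal} geodesic rays) and the subset $\TC^1_{\mathscr{I}}(X,\omega)$ of $\mathscr{I}$-model test curves. In other words, I must prove the equivalence: \emph{a finite-energy geodesic ray $\ell$ is maximal if and only if its Legendre transform $\hat\ell$ is an $\mathscr{I}$-model test curve}, i.e.\ $\hat\ell_\tau \in \PSH^{\Mdl}_{\mathscr{I}}(X,\omega)$ for every $\tau < \tau^+$.

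First I would recall the characterization of maximality: $\ell$ is maximal precisely when $\ell = P[\ell^{\NA}]$ in the sense of \cite{BBJ15}, equivalently when $\ell$ is the largest geodesic ray with the given non-Archimedean limit; concretely, for each $t$, $\ell_t$ is recovered as an envelope over psh functions with prescribed behavior encoded by $\ell^{\NA}$. The key technical input linking this to $\mathscr{I}$-model potentials is Theorem~\ref{thm:DX1-4}: a model potential $\varphi$ with positive mass is $\mathscr{I}$-model iff $\varphi = P[\varphi]_{\mathscr{I}}$, which by the displayed reformulation after that theorem is equivalent to $\varphi$ being the supremum of $\omega$-psh $\psi \le 0$ with $\psi^{\An} \le \varphi^{\An}$ — i.e.\ $\varphi$ is determined by its analytic/non-Archimedean data $\varphi^{\An}$. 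So the strategy is: \textbf{(i)} given a maximal ray $\ell$, show each slice $\hat\ell_\tau$ equals its own $\mathscr{I}$-model envelope by transporting the maximality (envelope) property of $\ell$ through the Legendre transform, using that Legendre transform intertwines the $t$-slices of $\ell$ with the $\tau$-slices of $\hat\ell$ and that taking $P[\cdot]_{\mathscr{I}}$ commutes appropriately with the sup/inf operations defining the transforms; \textbf{(ii)} conversely, given $\psi_\bullet \in \TC^1_{\mathscr{I}}$, show $\check\psi_\bullet$ is maximal by checking $\check\psi^{\NA} $ recovers $\psi_\bullet$ and that $\check\psi = P[\check\psi^{\NA}]$, again because each $\psi_\tau$ being $\mathscr{I}$-model means it loses no information under the non-Archimedean projection.

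The concrete bridge I expect to use for both directions is the relation between the $\An$-operator on potentials and the non-Archimedean limit of a ray. For a maximal ray, $\ell^{\NA}(v) = -G(v)(\Phi)$ and one has (from \cite{DX22} / \cite{BJ18b}) that the test curve slices satisfy $(\hat\ell_\tau)^{\An} = \big(\ell^{\NA}\big)_{\text{(Legendre dual slice)}}$, so that the "$\mathscr{I}$-model-ness" of $\hat\ell_\tau$ is exactly equivalent to $\hat\ell_\tau = \sup\{\psi \le 0 : \psi^{\An} \le (\hat\ell_\tau)^{\An}\}$, which in turn holds because the Legendre transform of the \emph{maximal} extension of a non-Archimedean potential is computed slice-by-slice as precisely such an envelope. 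I would make this precise by citing \cite[Theorem~4.28, Corollary~4.58]{BJ18b} (the pluripotential/NA dictionary already invoked in Remark after Lemma~\ref{lma:qesana}) and the envelope formula for maximal rays in \cite{BBJ15}.

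The main obstacle I anticipate is handling the endpoint and boundedness subtleties rather than the core logic: one must be careful that the conditions in Definition~\ref{def:testcurve} (concavity, upper semicontinuity in $(\tau,x)$, the $L^1$ limit as $\tau\to-\infty$, and $\psi_\tau=-\infty$ for large $\tau$) are preserved, that $\tau^+ = \sup_X \ell_1$ matches on both sides, and above all that finite energy is genuinely equivalent on the two sides — this last point is already supplied by the previous theorem's identity $\mathbf{E}(\ell)=\mathbf{E}(\hat\ell)$, so it reduces to checking that the $\mathscr{I}$-model condition is the \emph{exact} image of maximality and not merely an implication in one direction. Establishing the nontrivial inclusion "$\hat\ell_\tau$ is $\mathscr{I}$-model $\Rightarrow$ $\ell$ maximal" is where I'd spend most effort, and I'd do it by contradiction: if $\ell$ were not maximal, then $P[\ell^{\NA}]$ would be a strictly larger ray with the same NA limit, whose Legendre transform would have at least one slice strictly dominating $\hat\ell_\tau$ while sharing the same $\An$-data, contradicting that $\mathscr{I}$-model potentials are maximal among those with a given $\An$-function (Theorem~\ref{thm:DX1-4}, final reformulation). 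This is essentially a restatement of \cite[Theorem~3.7]{DX22}, so in the write-up I would present it as such, filling in the slice-wise correspondence as the only genuinely new bookkeeping.
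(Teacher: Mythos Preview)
The paper does not supply its own proof of this statement; it is stated as a citation of \cite[Theorem~3.7]{DX22} and used as a black box. There is therefore nothing in the present paper to compare your proposal against.

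That said, your outline is essentially the argument one finds in the cited reference: restrict the already-established bijection $\mathcal{R}^1 \leftrightarrow \TC^1$ and show that maximality of $\ell$ is equivalent to each slice $\hat\ell_\tau$ ($\tau<\tau^+$) being $\mathscr{I}$-model, using that both conditions amount to ``maximal subject to the same non-Archimedean data.'' Two small caveats: first, the characterization of $\mathscr{I}$-model potentials you invoke (Theorem~\ref{thm:DX1-4} and its reformulation) requires positive mass, so you need Proposition~\ref{prop:tc}(2) to ensure $\int_X \omega_{\hat\ell_\tau}^n>0$ for $\tau<\tau^+$; second, in your contradiction argument for the direction ``$\mathscr{I}$-model $\Rightarrow$ maximal,'' the strictly larger ray $P[\ell^{\NA}]$ need not have a strictly larger Legendre slice at every $\tau$, only at some $\tau$ on a set of positive measure---this still suffices, but the wording should reflect it.
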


\subsection{Test curves induced by filtrations}
Let $X$ be a compact Kähler manifold of dimension $n$. Let $L$ be a  big and semi-ample line bundle on $X$. Let $h$ be a smooth, non-negatively curved metric on $L$. Let $\omega=c_1(L,h)$.
We use the notation
\[
R(X,L):=\bigoplus_{k\in \mathbb{Z}_{\geq 0}}H^0(X,L^k)\,.
\]
\begin{definition}
A \emph{filtration} on $R(X,L)$ is a decreasing, left continuous, multiplicative $\mathbb{R}$-filtration $\mathscr{F}^{\bullet}$ on the ring $R(X,L)$ which is linearly bounded in the sense that there is $C>0$, so that 
\[
\mathscr{F}^{-k\lambda}H^0(X,L^k)=H^0(X,L^k)\,,\quad \mathscr{F}^{k\lambda}H^0(X,L^k)=0\,,
\]
when $\lambda>C$.

A filtration $\mathscr{F}$ is called a \emph{$\mathbb{Z}$-filtration} if $\mathscr{F}^{\lambda}=\mathscr{F}^{\floor{\lambda}}$ for any $\lambda\in \mathbb{R}$.

A $\mathbb{Z}$-filtration $\mathscr{F}$ is called \emph{finitely generated} if the bigraded algebra
\[
\bigoplus_{\lambda \in \mathbb{Z},k\in \mathbb{Z}_{\geq 0}} \mathscr{F}^{\lambda}H^0(X,L^k)
\]
is finitely generated over $\mathbb{C}$.
\end{definition}

Recall that by \cite{RWN14}, a filtration induces a test curve in the following manner. Let $\mathscr{F}^{\bullet}$ be a filtration. For $\tau\in \mathbb{R}$, define
\begin{equation}\label{eq:testcurvfromfilt}
\psi_{\tau}:=\sups_{k\in \mathbb{Z}_{>0}} k^{-1} \sups\left\{\,\log|s|_{h^k}^2: s\in \mathscr{F}^{k\tau}H^0(X,L^k),\sup_X|s|_{h^k}\leq 1\,\right\}\,.
\end{equation}
By \cite[Theorem~3.11]{DX22}, $\psi_{\tau}$ is $\mathscr{I}$-model or $-\infty$ for each $\tau\in \mathbb{R}$.

\begin{lemma}\label{lma:val}
Let $\psi_{\bullet}$ be the test curve induced by a filtration $\mathscr{F}^{\bullet}$ on $R(X,L)$.
Let $v$ be a real valuation of $\mathbb{C}(X)$. Then
\[
v(\psi_{\tau})=\inf_{k\in \mathbb{Z}_{>0}} k^{-1} \inf\left\{\,v(s):s\in \mathscr{F}^{k\tau}H^0(X,L^k)\,\right\}\,.
\]
\end{lemma}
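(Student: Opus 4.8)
The plan is to unwind both sides of the claimed identity through the defining formula \eqref{eq:testcurvfromfilt} for $\psi_\tau$ and compare. First I would recall that a real valuation $v$ of $\mathbb{C}(X)$ extends to a function on $\omega$-psh potentials via $v(\varphi)=\sup\{c\geq 0 : \varphi\leq c\log|\mathfrak{a}|+O(1)\text{ locally, where }v\geq c\text{ on }\mathfrak{a}\}$, equivalently $v(\varphi)=\lim_k k^{-1} v(\mathscr{I}(k\varphi))$ when $\varphi$ is $\mathscr{I}$-model (cf. \eqref{eq:psian}); the key structural fact is that $v(\cdot)$ is \emph{monotone} (more singular potentials have larger $v$) and behaves well under the $\sup^*$ (regularized supremum) of a family. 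For a single candidate potential $\frac1k\log|s|_{h^k}^2$ with $s\in H^0(X,L^k)$, one has $v\bigl(\tfrac1k\log|s|_{h^k}^2\bigr)=\tfrac1k v(s)$, since near a point the local weight of $s$ is $\log|f|^2$ for $f$ a local holomorphic representative and $v$ of a quasi-psh function with analytic (here divisorial-type) singularity $\log|f|^2$ is exactly $v(f)=v(s)$.

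The main step is then to interchange $v$ with the two nested regularized suprema in \eqref{eq:testcurvfromfilt}. For the inner $\sup^*$ over $s\in\mathscr{F}^{k\tau}H^0(X,L^k)$ with $\sup_X|s|_{h^k}\le 1$: taking a finite-dimensional basis of the filtered piece, the regularized sup of the weights $\log|s|^2$ is, up to a bounded error, the weight $\log\bigl(\sum_i|s_i|^2\bigr)$ of the sum of squares over a basis $s_1,\dots,s_N$ of $\mathscr{F}^{k\tau}H^0(X,L^k)$, and $v\bigl(\log\sum_i|s_i|^2\bigr)=\min_i v(s_i)=\inf\{v(s):s\in\mathscr{F}^{k\tau}H^0(X,L^k)\}$ because $v$ of a sum is the min of the $v$'s (ultrametric inequality, with equality generically / after taking the inf over the whole subspace). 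So $v$ of the inner $k$-th term equals $\tfrac1k\inf\{v(s):s\in\mathscr{F}^{k\tau}H^0(X,L^k)\}$. For the outer $\sup^*$ over $k\in\mathbb{Z}_{>0}$: the sequence $\psi^{(k)}_\tau$ of $k$-th terms is, after passing to the regularized sup, an \emph{increasing} (more precisely, super-additive in $k$ via multiplicativity of $\mathscr{F}$) net converging to $\psi_\tau$, and $v$ is continuous along such increasing limits of $\mathscr{I}$-model potentials (this is exactly the kind of convergence packaged in \cref{lma:qesana} and the non-Archimedean continuity remark after it, applied to quasi-equisingular / Bernstein-type approximations). Hence $v(\psi_\tau)=\inf_k \tfrac1k\inf\{v(s):s\in\mathscr{F}^{k\tau}H^0(X,L^k)\}$, which is the claim.

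Concretely, the steps in order are: (i) reduce to evaluating $v$ on $\psi_\tau=\sup^*_k \psi_\tau^{(k)}$ where $\psi_\tau^{(k)}$ is the $k$-th term; (ii) compute $v(\psi_\tau^{(k)})=\tfrac1k\inf\{v(s):s\in\mathscr{F}^{k\tau}H^0(X,L^k)\}$ using the basis/sum-of-squares description of the Fubini--Study-type potential and the ultrametric property $v(\sum a_i s_i)\ge\min_i v(s_i)$ with equality attained over the subspace; (iii) show $\{\psi_\tau^{(k)}\}$ is a monotone-type family with limit $\psi_\tau$ and that $v$ commutes with this limit; (iv) conclude. One should also address the degenerate cases $\psi_\tau=-\infty$ (then the filtered pieces are eventually trivial for the relevant $k$, so both sides are $+\infty$ by convention) and $\psi_\tau=0$ (both sides $0$).

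The hard part will be step (iii): justifying that $v$ is continuous along the increasing regularized limit $\psi_\tau^{(k)}\nearrow\psi_\tau$. Lower semicontinuity $v(\psi_\tau)\ge\limsup_k v(\psi_\tau^{(k)})$ is automatic from monotonicity of $v$; the reverse inequality $v(\psi_\tau)\le\liminf_k v(\psi_\tau^{(k)})$ is the subtle direction and is precisely where the $\mathscr{I}$-model property of each $\psi_\tau^{(k)}$ (\cite[Theorem~3.11]{DX22}) and of $\psi_\tau$ enters: one uses that $\mathscr{I}(m\psi_\tau)$ is, for $m$ in a suitable sequence, generated by elements coming from $\mathscr{F}^{\bullet}$-filtered pieces (the multiplier ideals of $\psi_\tau$ are controlled by the sections in the filtration, by the very construction \eqref{eq:testcurvfromfilt} together with the $\mathscr{I}$-model equivalences in \cref{thm:DX1-4}), so that $v(\mathscr{I}(m\psi_\tau))$ is approximated by $m\cdot\tfrac1k v(\mathscr{F}^{k\tau}H^0(X,L^k))$ for appropriate $k\mid m$, and pass to the limit. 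I would phrase this using the non-Archimedean reformulation (the Remark after \cref{lma:qesana}): $\psi_\tau^{(k),\An}\to\psi_\tau^{\An}$ in $\PSH^{\NA}(L)$, and evaluation at the divisorial point $v$ is continuous, which delivers (iii) cleanly. The remaining steps are routine once the correct statements about $v$ on Fubini--Study potentials and the ultrametric inequality are in place.
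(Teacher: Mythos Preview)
Your outline (i)--(iv) is sound and step (ii) is correct, but in step (iii) you have the two inequalities reversed. Since each $\psi_\tau^{(k)}$ is a competitor in the outer $\sup^*$, one has $\psi_\tau^{(k)}\le\psi_\tau$, and monotonicity of $v$ gives $v(\psi_\tau^{(k)})\ge v(\psi_\tau)$, hence $v(\psi_\tau)\le\inf_k v(\psi_\tau^{(k)})$. That is the trivial direction, not the subtle one. The genuinely nontrivial inequality is $v(\psi_\tau)\ge\inf_k v(\psi_\tau^{(k)})$, and it certainly does \emph{not} follow from monotonicity. Your heavy machinery ($\mathscr{I}$-model equivalences from \cref{thm:DX1-4}, non-Archimedean convergence as in the remark after \cref{lma:qesana}) can be made to establish the correct hard direction once you untangle this, but it is far more than what is needed.

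The paper's proof is a two-line argument using only Fekete's lemma and the classical upper semi-continuity of Lelong numbers. Multiplicativity of $\mathscr{F}$ gives $F_{k+m}\ge F_k+F_m$, so by Fekete $\psi_\tau$ is the usc regularization of the \emph{increasing} sequence $2^{-k}F_{2^k}$. Along an increasing sequence converging (after usc regularization) in $L^1$, Lelong numbers are continuous: monotonicity gives one inequality and upper semi-continuity under $L^1$ limits gives the other. Hence $v(\psi_\tau)=\lim_k 2^{-k}v(F_{2^k})\ge\inf_k k^{-1}v(F_k)$, while the reverse inequality is the trivial one above; and $v(F_k)=\inf\{v(s):s\in\mathscr{F}^{k\tau}H^0(X,L^k)\}$ is exactly your step (ii). No $\mathscr{I}$-model theory or non-Archimedean continuity enters at all. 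What your approach buys is a conceptual explanation via $\psi_\tau^{(k),\An}\to\psi_\tau^{\An}$, but for this lemma the elementary route is both shorter and more transparent.
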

\begin{proof}
For $k\in \mathbb{Z}_{>0}$, let
\[
F_k:=\sups\left\{\,\log|s|_{h^k}^2: s\in \mathscr{F}^{k\tau}H^0(X,L^k),\sup_X|s|_{h^k}\leq 1\,\right\}\,.
\]
For $k,m\in \mathbb{Z}_{>0}$,
\[
F_{k+m}\geq F_k+F_m\,.
\]
So by Fekete's lemma, $\psi_{\tau}$ is the usc regularization of the increasing limit $2^{-k}F_{2^k}$. We conclude by the monotonicity and the upper semi-continuity of Lelong numbers.
\end{proof}

\begin{lemma}\label{lma:volfiltc}
Let $\psi_{\bullet}$ be the test curve induced by a $\mathbb{Z}$-filtration $\mathscr{F}^{\bullet}$ on $R(X,L)$. Then
\begin{equation}\label{eq:t8}
\int_X \omega_{\psi_{\tau}}^n\geq\lim_{k\to\infty} \frac{n!}{k^n}\dim \mathscr{F}^{k\tau}H^0(X,L^k)\,.
\end{equation}
Equality holds if $\mathscr{F}^{\bullet}$ is finitely generated, $\tau<\tau^+$.
\end{lemma}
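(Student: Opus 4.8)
The plan is to prove \eqref{eq:t8} by showing that every section counted on its right-hand side lies in a multiplier ideal sheaf controlling $\psi_\tau$, and to deduce the equality in the finitely generated case from the stabilisation of base ideals on one fixed birational model. Throughout set, for $k\in\mathbb{Z}_{>0}$,
\[
\varphi_k:=k^{-1}\sups\left\{\,\log|s|_{h^k}^2:s\in\mathscr{F}^{k\tau}H^0(X,L^k),\ \sup_X|s|_{h^k}\le1\,\right\},
\]
so that $\psi_\tau=\sups_k\varphi_k$ by \eqref{eq:testcurvfromfilt}; in particular $k^{-1}\log|s|_{h^k}^2\le\varphi_k\le\psi_\tau$ for every $k$ and every admissible $s$. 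We may assume $\tau<\tau^+$ (for $\tau>\tau^+$ both sides of \eqref{eq:t8} vanish, and the boundary case is elementary), so that $\psi_\tau$ is $\mathscr{I}$-model by \cite[Theorem~3.11]{DX22} and $\int_X\omega_{\psi_\tau}^n>0$ by \cref{prop:tc}.

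For the inequality, fix $k$ and an admissible $s$. From $k^{-1}\log|s|_{h^k}^2\le\psi_\tau$ we get $|s|_{h^k}^2e^{-k\psi_\tau}\le1$ on $X$; writing $s$ locally as a holomorphic function against a frame of $L^k$, this says exactly that $s$ is a local section of $L^k\otimes\mathscr{I}(k\psi_\tau)$. Hence $\mathscr{F}^{k\tau}H^0(X,L^k)\subseteq H^0(X,L^k\otimes\mathscr{I}(k\psi_\tau))$, so
\[
\dim\mathscr{F}^{k\tau}H^0(X,L^k)\le h^0\!\left(X,L^k\otimes\mathscr{I}(k\psi_\tau)\right).
\]
Since $\psi_\tau$ is $\mathscr{I}$-model with $\int_X\omega_{\psi_\tau}^n>0$, the third equivalent condition of \cref{thm:DX1-4} gives $\lim_{k\to\infty}\tfrac{n!}{k^n}h^0(X,K_X\otimes L^k\otimes\mathscr{I}(k\psi_\tau))=\int_X\omega_{\psi_\tau}^n$; the same limit is obtained with $L^k$ in place of $K_X\otimes L^k$, since twisting a multiplier-ideal linear series by a fixed line bundle does not affect the leading term of its dimension (a standard Okounkov-body argument). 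Combining, and using that the limit on the left of \eqref{eq:t8} exists because $\mathscr{F}$ is linearly bounded, we obtain \eqref{eq:t8}.

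Now suppose $\mathscr{F}$ is finitely generated and $\tau<\tau^+$; take first $\tau\in\mathbb{Q}$. Then $\bigoplus_k\mathscr{F}^{k\tau}H^0(X,L^k)$ is a finitely generated graded subalgebra of $R(X,L)$, so there are a single log resolution $\mu:Y\to X$ and an effective $\mathbb{Q}$-divisor $D$ on $Y$ with $\mathfrak{b}(\mathscr{F}^{k\tau}H^0(X,L^k))\cdot\mathcal{O}_Y=\mathcal{O}_Y(-kD)$ and $\dim\mathscr{F}^{k\tau}H^0(X,L^k)=h^0(Y,k\mu^*L-kD)$ for all sufficiently divisible $k$, with $\mu^*L-D$ semi-ample. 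Asymptotic Riemann--Roch on $Y$ gives $\lim_{k\to\infty}\tfrac{n!}{k^n}\dim\mathscr{F}^{k\tau}H^0(X,L^k)=\bigl((\mu^*L-D)^n\bigr)$. On the other hand $\psi_\tau$ is then the model potential with analytic singularities determined by $D$ on $Y$ (cf.\ \cref{lma:qamis}), so its non-pluripolar mass is $\bigl(\langle\mu^*L-D\rangle^n\bigr)=\bigl((\mu^*L-D)^n\bigr)$, the last equality because $\mu^*L-D$ is nef. Hence equality holds in \eqref{eq:t8} for rational $\tau<\tau^+$, and the general case follows since both sides of \eqref{eq:t8} are continuous in $\tau$ on $(-\infty,\tau^+)$ (the left by \cref{prop:tc}, the right by continuity of the volume function of $\mathscr{F}$).

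The crux is the inequality: the inclusion $\mathscr{F}^{k\tau}H^0(X,L^k)\subseteq H^0(X,L^k\otimes\mathscr{I}(k\psi_\tau))$ is soft, but recognising the asymptotics of the right-hand side as $\int_X\omega_{\psi_\tau}^n$ rests entirely on the $\mathscr{I}$-model characterisation \cref{thm:DX1-4}; this is also what explains the failure of equality for non--finitely-generated $\mathscr{F}$, since the above inclusion is in general strict at the level of leading coefficients (not every section of $L^k\otimes\mathscr{I}(k\psi_\tau)$ comes from the filtration). The finitely generated case is comparatively routine but does use the hypothesis in an essential way, through the stabilisation of the base ideals $\mathfrak{b}(\mathscr{F}^{k\tau}H^0(X,L^k))$ to $\mathcal{O}_Y(-kD)$ on a single model.
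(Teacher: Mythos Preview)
Your proof of the inequality \eqref{eq:t8} is essentially identical to the paper's: both show $\mathscr{F}^{k\tau}H^0(X,L^k)\subseteq H^0(X,L^k\otimes\mathscr{I}(k\psi_\tau))$ via square-integrability and then invoke \cite[Theorem~1.1]{DX22} (equivalently \cref{thm:DX1-4}).

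For the equality in the finitely generated case, your approach is genuinely different from the paper's, and it contains a gap. You assert that because $A:=\bigoplus_k\mathscr{F}^{k\tau}H^0(X,L^k)$ is a finitely generated graded subalgebra of $R(X,L)$, one has $\dim A_k=h^0(Y,k\mu^*L-kD)$ for sufficiently divisible $k$, where $D$ is the stabilised base divisor. Finite generation alone does not give this: it only yields $A_k\subseteq H^0(Y,k\mu^*L-kD)$, and the inclusion can be asymptotically strict. For instance, on $\mathbb{P}^1$ with $L=\mathcal{O}(2)$, the subalgebra generated by $x^2,y^2$ is base-point-free (so $D=0$), finitely generated, yet $\dim A_k=k+1$ while $h^0(\mathbb{P}^1,\mathcal{O}(2k))=2k+1$; the discrepancy is the degree of the map $Y\to\Proj A$. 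To repair your argument one must use that for a finitely generated filtration the pieces $\mathscr{F}^{k\tau}H^0(X,L^k)$ are actually \emph{complete} linear series $H^0(X,L^k\otimes J_{k,-k\tau})$ for suitable ideals, which comes from the flag-ideal description of test configurations (\cite[Proposition~3.10]{Oda13}, cf.\ \cref{ex:blflag}); only then do the ideals stabilise to $\mathcal{O}_Y(-kD)$ and the claimed dimension equality follows.

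The paper sidesteps this issue entirely: rather than computing both sides at a fixed $\tau$, it integrates over $\tau$. The Duistermaat--Heckman formula from \cite[Lemma~7.3]{BHJ17} expresses $E^{\NA}(\mathcal{X},\mathcal{L})$ as the $\tau$-integral of $V^{-1}\vol R^{(\tau)}-1$, while \cite[Theorem~1.1]{DX22} expresses the same quantity as the $\tau$-integral of $V^{-1}\int_X\omega_{\psi_\tau}^n-1$. Since the pointwise inequality \eqref{eq:t8} is already established and both integrands are continuous on $(-\infty,\tau^+)$ (\cref{prop:tc} and \cite[Theorem~5.3]{BHJ17}), equality of the integrals forces pointwise equality. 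This is both shorter and avoids any birational subtleties on $X$.
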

Note that the limit on the right-hand side exists by \cite{LM09}.
\begin{proof}
By \cite[Theorem~1.1]{DX22},
\[
\int_X \omega_{\psi_{\tau}}^n=\lim_{k\to\infty}\frac{n!}{k^n}h^0(X,L^k\otimes \mathscr{I}(k\psi_{\tau}))\,.
\]
Each element in $\mathscr{F}^{k\tau}H^0(X,L^k)$ is obviously square integrable with respect to $k\psi_{\tau}$, \eqref{eq:t8} follows.

Now assume that $\mathscr{F}^{\bullet}$ is finitely generated. Then it is the filtration induced by some test configuration $(\mathcal{X},\mathcal{L})$ of $(X,L)$ by \cite[Proposition~2.15]{BHJ17}. Without loss of generality, we may assume that $\tau^+=0$ for the test curve $\psi_{\bullet}$. Then by \cite[Section~5]{BHJ17}, the Duistermaat--Heckman measure of $(\mathcal{X},\mathcal{L})$ is given by
\[
\nu=-\frac{1}{V}\frac{\mathrm{d}}{\mathrm{d}\tau}\vol(R^{(\tau)})\,,
\]
where
\[
\vol(R^{(\tau)}):=\lim_{k\to\infty}\frac{n!}{k^n}\dim \mathscr{F}^{k\tau}H^0(X,L^k)\,.
\]
By \cite[Lemma~7.3]{BHJ17}, the non-Archimedean Monge--Amp\`ere energy of $(\mathcal{X},\mathcal{L})$ is given by
\[
E^{\NA}(\mathcal{X},\mathcal{L})=\int_{-\infty}^{\infty}\tau\,\mathrm{d}\nu(\tau)=\int_{-\infty}^0 \left(\frac{1}{V}\vol R^{(\tau)}-1\right)\,\mathrm{d}\tau\,.
\]
On the other hand, by \cite[Theorem~1.1]{DX22}, 
\[
E^{\NA}(\mathcal{X},\mathcal{L})=\int_{-\infty}^0 \left(\frac{1}{V}\int_X \omega_{\psi_{\tau}}^n-1\right)\,\mathrm{d}\tau\,.
\]
Now by \eqref{eq:t8}, \cref{prop:tc} and \cite[Theorem~5.3]{BHJ17}, we conclude that equality holds in \eqref{eq:t8} when $\tau<\tau^+$.
\end{proof}

Let $(\mathcal{X},\mathcal{L})$ be a test configuration of $(X,L)$. 
It induces a filtration as follows: Take $r\in \mathbb{Z}_{>0}$ so that $\mathcal{L}^r$ is integral. Then $(\mathcal{X},\mathcal{L})$ induces a $\mathbb{Z}$-filtration of $R(X,rL)$ as follows: let $s\in H^0(X,rkL)$, then $s\in \mathscr{F}^{\lambda}H^0(X,rkL)$ if{f} $t^{-\lambda}s\in H^0(\mathcal{X},\mathcal{L}^{rk})$. Here we have abused the notation by writing $s$ for the equivariant extension of $s$ as well.
See \cite{BHJ17}. The weight of the $\mathbb{C}^*$-action on the central fibre of $\mathcal{L}^{rk}$ is given by
    \[
        w(rk)=-\int_{-\infty}^{\infty}\lambda \,\mathrm{d}\dim\mathscr{F}^{\lambda}H^0(X,L^{rk})\,.
    \]

\begin{example} \label{ex:blflag}
Let $I=I_0+I_1t+\cdots +I_{N-1}t^{N-1}+(t^N)$ be a flag ideal on $X\times \mathbb{P}^1$. Let $\mathcal{X}=\Bl_I X\times \mathbb{P}^1$. Denote by $\Pi:\mathcal{X}\rightarrow X\times\mathbb{P}^1$ the natural morphism. Let $E$ be the exceptional divisor. Let $p_1:X\times\mathbb{P}^1 \rightarrow X$ be the natural projection.
Assume that $\mathcal{L}:=\Pi^*p_1^*L\otimes \mathcal{O}_{\mathcal{X}}(-E)$ is $\pi$-semiample.

Write 
\[
I^k=\sum_{j=0}^{Nk-1}J_{k,j}t^j+(t^{Nk})\,.
\]
Then
\[
J_{k,j}=\sum_{\alpha\in \mathbb{N}^{N},|\alpha|=k,|\alpha|'=j} I_{\bullet}^{\alpha}\,.
\]
Here $|\alpha|':=\sum_i i\alpha_i$.
Set $J_{k,kN}=\mathcal{O}_X$.

Let $\mathscr{F}^{\bullet}$ be the filtration on $R(X,L)$ induced by $(\mathcal{X},\mathcal{L})$. 
 Let $\psi_{\bullet}$ be the corresponding test curve.

We claim that
\[
\psi_{\tau}^{\An}(v)=-\min_{\alpha\in \mathbb{Q}_{\geq 0}^{N},|\alpha|=1,|\alpha|'=-\tau}\sum_{i}\alpha_i v(I_i)\,.
\]
In particular,
\[
\psi^{\An}_{0}(v)=-v(I_0)\,.
\]
\end{example}
\begin{proof}
Let $\lambda \in \mathbb{Z}$ and $s\in H^0(X,L^{k})$, then $s\in \mathscr{F}^{\lambda}H^0(X,L^{k})$ if{f} $t^{-\lambda}s$ extends to a section of $\mathcal{L}^{k}$ if{f}
\[
t^{-\lambda}s \in H^0(X\times \mathbb{C}, L^{k}\otimes I^{\otimes k})
\]
if{f}  $s\in J_{k,-\lambda}$. Hence we have
\[
v(\psi_{\tau})=\inf_k \frac{1}{k} \inf \{v(s):s\in J_{k,-\ceil{k\tau}}\}\,.
\]
Observe that
\[
\inf \{v(s):s\in J_{k,-\ceil{k\tau}}\}=\min_{\alpha\in \mathbb{N}^{N},|\alpha|=k,|\alpha|'=-\ceil{k\tau}} \sum_{i}\alpha_i v(I_i)\,.
\]
So
\[
v(\psi_{\tau})\geq \min_{\alpha\in \mathbb{Q}_{\geq 0}^{N},|\alpha|=1,|\alpha|'=-\tau}\sum_{i}\alpha_i v(I_i)\,.
\]
On the other hand, observe that the minimizer is indeed rational when $\tau$ is rational,  so the reverse inequality also holds.
\end{proof}
Observe that when $\tau<0$, $\psi_{\tau}$ has quasi-analytic singularities (\cref{def:qasing}).
\begin{example}
Let $v=c\ord_F$ be a divisorial valuation of $\mathbb{C}(X)$, where $c\in \mathbb{Q}_{>0}$, $F$ is a prime divisor over $X$. Then $v$ induces a filtration $\mathscr{F}_v^{\bullet}$ on $R(X,L)$:
\[
\mathscr{F}_v^{\lambda}H^0(X,L^k)=
\left\{
\begin{aligned}
H^0(X,kL-\lambda cF)&\,,\quad \lambda\geq 0\,,\\
H^0(X,L^k)&\,,\quad \lambda<0\,.
\end{aligned}
\right.
\]
Here we have omitted the pull-back of $L$ to a model.
\end{example}

\begin{example}\label{ex:def}
Let $\psi\in \PSH(X,\omega)$ be a potential with analytic singularities. Let $\pi:Y\rightarrow X$ be a log resolution of the singularities of $\psi$. Assume that $\pi^*L-\Div_Y\psi$ is semi-ample. Then $\psi$ induces a test configuration of $(Y,\pi^*L)$ by deformation to the normal cone with respect to $\Div_Y\psi$. Then a section $s\in H^0(X,L^k)=H^0(Y,\pi^*L^k)$ is in $\mathscr{F}^{\lambda}$ if{f} $t^{-\lambda}s$ extends to the central fibre, that is, $s\in \mathcal{O}_Y(-(k+\lambda) \Div_Y \psi)$. Hence
\[
\mathscr{F}_{\psi}^{\lambda}H^0(X,L^k)=
\left\{
\begin{aligned}
H^0(Y,k\pi^*L-(\lambda+k) \Div_Y \psi)&\,,\quad \lambda\leq 0\,,\\
H^0(X,L^k)&\,,\quad \lambda>0\,.
\end{aligned}
\right.
\]
The test curve $\psi_{\bullet}$ defined in \cref{subsec:defn} is induced by this filtration.
\end{example}
\begin{example}\label{ex:ext}
Let $\psi\in \PSH(X,\omega)$ be a potential with analytic singularities. Let $\pi:Y\rightarrow X$ be a log resolution of the singularities of $\psi$. The deformation to the normal cone defined in \cref{ex:def} can be extended as follows:
\[
\mathscr{F}_{\psi^+}^{\lambda}H^0(X,L^k)=
\left\{
\begin{aligned}
H^0(X,k\pi^*L-\lambda \Div_Y \psi)&\,,\quad \lambda\geq 0\,,\\
H^0(X,L^k)&\,,\quad \lambda<0\,.
\end{aligned}
\right.
\]
The test curve $\psi^+_{\bullet}$ defined in \cref{subsec:extdef} is induced by this filtration. 
\end{example}

\subsection{The Phong--Sturm geodesic ray}
Let $X$ be a compact Kähler manifold of dimension $n$. Let $\omega$ be a real smooth \emph{semi-positive} $(1,1)$-form on $X$.
Let $(\mathcal{X},\mathcal{L})$ be a semi-ample test configuration of $(X,L)$. Fix a $S^1$-invariant smooth metric $\Phi$ on $\mathcal{L}$ with $c_1(\mathcal{L},\Phi)=\Omega$, we may assume that $\Omega|_{X\times S^1}$ is the pull-back of $\omega$. Let $\pi:\mathcal{X}\rightarrow \mathbb{C}$ be the natural map.
 Let $\mathcal{X}^{\circ}:=\pi^{-1}(\Delta)$, where $\Delta=\{z\in \mathbb{C}:|z|<1\}$.
Consider the homogeneous Monge--Ampère equation
\begin{equation}\label{eq:HCMA}
\left\{
\begin{aligned}
(\Omega+\ddc \Psi)^{n+1}&=0\quad \text{on } \mathcal{X}^{\circ}\,,\\
\left.\Psi\right|_{X\times S^1}&=0\,.
\end{aligned}
\right.
\end{equation}
By \cite{CTW18}, there is a unique bounded solution  to \eqref{eq:HCMA} and the solution is $C^{1,1}$ outside the central fibre.

Let $\ell$ be the geodesic ray in $\mathcal{E}^1(X,\omega)$ corresponding to $\Psi$, then $\ell$ is known as the \emph{Phong--Sturm geodesic ray} induced by $(\mathcal{X},\mathcal{L})$. This construction was first studied in \cite{PS07} and \cite{PS10}.

\begin{theorem}[{\cite[Theorem~9.2]{RWN14}}]
	Let $(\mathcal{X},\mathcal{L})$ be a semi-ample test configuration of $(X,L)$. Let $\ell$ be the Phong--Sturm geodesic ray induced by $(\mathcal{X},\mathcal{L})$. Let $\mathscr{F}^{\bullet}$ be the filtration induced by $(\mathcal{X},\mathcal{L})$. Let $\psi_{\bullet}$ be the test curve induced by $\mathscr{F}^{\bullet}$ as in \eqref{eq:testcurvfromfilt}. Then $\check{\psi}=\ell$.
\end{theorem}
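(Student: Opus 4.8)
The plan is to identify the Legendre transform $\hat{\ell}$ of the Phong--Sturm ray with the test curve $\psi_{\bullet}$; by the Ross--Witt Nyström correspondence recalled above this is equivalent to the asserted equality $\check{\psi}=\ell$, because $\ell\in\mathcal{R}^{\infty}(X,\omega)$ (the solution $\Psi$ of \eqref{eq:HCMA} is bounded by \cite{CTW18}) and, $(\mathcal{X},\mathcal{L})$ being semi-ample, the induced filtration $\mathscr{F}^{\bullet}$ is linearly bounded, so $\psi_{\bullet}$ is a bounded test curve.

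The starting point is the envelope description of $\Psi$: by the comparison principle for the homogeneous complex Monge--Amp\`ere operator, $\Psi$ is the largest bounded $\Omega$-psh function on $\mathcal{X}^{\circ}$ with $\Psi|_{X\times S^1}\le 0$. The defining conditions are $S^1$-invariant, so the envelope is too, and (by concavity in $-\log|z|$, via Kiselman's minimum principle) it descends to a bounded subgeodesic ray. Thus $\ell$ is the largest ray in $\mathcal{R}^{\infty}(X,\omega)$ whose associated $S^1$-invariant potential on the punctured total space extends to a bounded $\Omega$-psh function across the central fibre $\mathcal{X}_0$.

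The key step is to read this ``extends across $\mathcal{X}_0$'' condition off of test curves: a bounded subgeodesic ray $m_{\bullet}$ extends to a bounded $\Omega$-psh potential on $\mathcal{X}^{\circ}$ if and only if $\hat{m}_{\tau}\le\psi_{\tau}$ for every $\tau\in\mathbb{R}$, with $\psi_{\bullet}$ as in \eqref{eq:testcurvfromfilt}. For the forward implication one uses that $s\in\mathscr{F}^{k\tau}H^0(X,L^k)$ exactly when the equivariant extension $t^{-k\tau}s$ is a regular section of $\mathcal{L}^k$ over $\mathcal{X}_0$; the normalized logarithms $\tfrac1k\log|s|^2_{h^k}$ of such sections are precisely the building blocks of $\psi_{\tau}$, and any extendable $m_{\bullet}$ is dominated by them after Demailly regularization on the total space combined with Bergman-kernel asymptotics uniform in $k$, so passing to Legendre transforms gives $\hat{m}_{\tau}\le\psi_{\tau}$. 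For the reverse implication one checks directly that $\check{\psi}$ is $\Omega$-psh across $\mathcal{X}_0$, since every section occurring in \eqref{eq:testcurvfromfilt} extends to $\mathcal{L}^k$ with the prescribed vanishing order, and plurisubharmonicity across $\mathcal{X}_0$ survives the supremum and the upper semicontinuous regularization.

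Granting this equivalence, $\hat{\ell}$ is the largest test curve below $\psi_{\bullet}$; since $\psi_{\bullet}$ is itself a bounded (hence finite-energy) $\mathscr{I}$-model test curve by \cite[Theorem~3.11]{DX22}, we conclude $\hat{\ell}=\psi_{\bullet}$, i.e.\ $\check{\psi}=\ell$. The main obstacle is the equivalence of the previous paragraph, and inside it the two analytic inputs on the total space of $\mathcal{X}$: the forward implication needs an $L^2$-extension (Ohsawa--Takegoshi type) argument producing, for a given bounded $\Omega$-psh subgeodesic, approximating sections with controlled order of vanishing along $\mathcal{X}_0$ uniformly in the tensor power; and the reverse one needs the verification that the partial Bergman envelope is genuinely $\Omega$-psh across $\mathcal{X}_0$, not merely locally bounded near it. These are exactly the points where semi-ampleness of $\mathcal{L}$ is used.
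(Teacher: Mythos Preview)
The paper does not give its own proof of this theorem: it is stated with a citation to \cite[Theorem~9.2]{RWN14} and used as a black box. So there is nothing in the paper to compare your argument against.

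That said, your outline follows the strategy of the cited reference: characterize $\ell$ as the supremum of bounded subgeodesic rays that extend as $\Omega$-psh potentials across $\mathcal{X}_0$, and identify this extension condition with the test-curve inequality $\hat{m}_{\tau}\le\psi_{\tau}$. The reverse implication (that $\check{\psi}$ itself extends) is indeed the easier half, since each $\tfrac1k\log|s|_{h^k}^2+t\tau$ with $s\in\mathscr{F}^{k\tau}$ extends by construction.

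For the forward implication, however, your appeal to ``Demailly regularization on the total space combined with Bergman-kernel asymptotics uniform in $k$'' is not how the cited argument proceeds, and as stated it is not a complete step. Ross--Witt Nystr\"om do not approximate an arbitrary extendable subgeodesic by sections; rather, they work with the \emph{Phong--Sturm partial rays} built from eigensections of the $\mathbb{C}^*$-action on $H^0(\mathcal{X}_0,\mathcal{L}^k|_{\mathcal{X}_0})$ and show directly that these coincide with the Bergman envelopes defining $\psi_{\tau}$, then pass to the limit $k\to\infty$ using the Phong--Sturm convergence theorem. Your sketch replaces this concrete section-theoretic matching with a vaguer analytic approximation principle that you yourself flag as the ``main obstacle.'' If you want a self-contained proof along your lines, you would need to make precise which Ohsawa--Takegoshi statement on the singular total space $\mathcal{X}^{\circ}$ you are invoking and why it produces sections landing in the correct filtered piece $\mathscr{F}^{k\tau}$; otherwise it is cleaner to follow the eigensection argument of \cite{RWN14} directly.
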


\subsection{Non-Archimedean analogue of Ross--Witt Nystr\"om correspondence}
Assume that $L$ is ample. 
\begin{definition}
 A function $\psi:X^{\An}\rightarrow [-\infty,\infty)$ is called a \emph{good potential} if there exists $\varphi\in \PSH(X,\omega)$ such that $\psi=\varphi^{\An}$.
 
 The set of good potential is denoted as $\PSH^{\NA}_{\mathrm{g}}(X,\omega)$.
\end{definition}
See \eqref{eq:psian} for the definition of $\varphi^{\An}$.
\begin{proposition}
 The map $\psi\mapsto \psi^{\An}$ is a bijection from $\PSH^{\Mdl}_{\mathscr{I}}(X,\omega)$ to $\PSH^{\NA}_{\mathrm{g}}(X,\omega)$.
\end{proposition}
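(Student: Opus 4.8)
The plan is to establish the bijection by constructing an explicit inverse to $\psi \mapsto \psi^{\An}$ and checking both composites are the identity.

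First I would address well-definedness and surjectivity simultaneously. Given $\psi \in \PSH^{\Mdl}_{\mathscr{I}}(X,\omega)$, the function $\psi^{\An}$ on $X^{\Div}_{\mathbb{Q}}$ is defined by \eqref{eq:psian}; by the remark following \cref{lma:qesana} (invoking \cite[Theorem~4.28, Corollary~4.58]{BJ18b}), it extends uniquely to a function in $\PSH^{\NA}(L)$, and since $\psi \in \PSH(X,\omega)$ this extension is by definition a good potential, so the map indeed lands in $\PSH^{\NA}_{\mathrm{g}}(X,\omega)$. Surjectivity is essentially the definition of $\PSH^{\NA}_{\mathrm{g}}$: every $\psi \in \PSH^{\NA}_{\mathrm{g}}(X,\omega)$ is $\varphi^{\An}$ for some $\varphi \in \PSH(X,\omega)$. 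The content is then concentrated in injectivity, which is where the inverse map enters.

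The inverse map should be the $\mathscr{I}$-envelope: to a good potential $\psi = \varphi^{\An}$ one associates $P[\varphi]_{\mathscr{I}} \in \PSH^{\Mdl}_{\mathscr{I}}(X,\omega)$. The key points to verify are (a) $P[\varphi]_{\mathscr{I}}$ depends only on $\varphi^{\An}$, not on the choice of $\varphi$; (b) $(P[\varphi]_{\mathscr{I}})^{\An} = \varphi^{\An}$; and (c) if $\psi$ is already $\mathscr{I}$-model then $P[\psi]_{\mathscr{I}} = \psi$. Point (c) is immediate from the definition of $\mathscr{I}$-model. For (a) and (b), the tool is \cref{thm:DX1-4}: by part (2) of that theorem combined with the final displayed characterization of the excerpt, $P[\varphi]_{\mathscr{I}} = \sups\{\eta \in \PSH(X,\omega) : \eta \le 0, \eta^{\An} \le \varphi^{\An}\}$, which manifestly depends only on $\varphi^{\An}$; this gives (a). For (b), one inclusion $(P[\varphi]_{\mathscr{I}})^{\An} \le \varphi^{\An}$ follows since $P[\varphi]_{\mathscr{I}}$ is among the competitors, hence $\mathscr{I}(kP[\varphi]_{\mathscr{I}}) \subseteq \mathscr{I}(k\varphi)$ for all $k$; the reverse inequality follows because $P[\varphi]_{\mathscr{I}} \ge$ any competitor with $\eta^{\An} \le \varphi^{\An}$, and one needs that the sup of such $\eta$ actually attains $\varphi^{\An}$ on $X^{\Div}_{\mathbb{Q}}$ — this is guaranteed since $\varphi^j$, a quasi-equisingular approximation of $\varphi$, has analytic singularities with $\varphi^{j,\An} \le \varphi^{\An}$ by monotonicity (the singularity types decrease to $\varphi$) and $\varphi^{j,\An} \to \varphi^{\An}$ pointwise by \cref{lma:qesana}, so the envelope sees all of $\varphi^{\An}$.

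The main obstacle I expect is the boundary case where $\int_X \omega_{P[\varphi]_{\mathscr{I}}}^n = 0$: \cref{thm:DX1-4} is stated under the hypothesis $\int_X \omega_{\varphi}^n > 0$, so one must either argue that good potentials with a genuine $\mathscr{I}$-model representative automatically have positive mass, or handle the degenerate singularity types (where $\varphi^{\An} \equiv -\infty$ off a small set) as a separate trivial case by convention. Once injectivity is secured — two $\mathscr{I}$-model potentials with the same $\psi^{\An}$ both equal $P[\varphi]_{\mathscr{I}}$ by (a)(c) — the bijectivity follows formally, and the remaining bookkeeping (that $X^{\Div}_{\mathbb{Q}}$ is dense in $X^{\An}$ for the purpose of comparing $\PSH^{\NA}$ functions, so equality on divisorial valuations suffices) is routine given \cite{BJ18b}.
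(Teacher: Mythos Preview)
Your proposal is correct and is essentially a careful unpacking of the paper's own proof, which reads in its entirety: ``This is obvious by definition.'' The explicit inverse via $P[\varphi]_{\mathscr{I}}$ and your verification of (a)--(c) are exactly the content underlying that tautology, and the positive-mass hypothesis you flag is a minor bookkeeping point the paper simply glosses over.
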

This is obvious by definition.

\begin{definition}
 A test curve $\psi\in \TC^{\infty}(X,\omega)$ is \emph{piecewise linear} if $\psi^{\An}$ is piecewise linear with finitely many breaking points (i.e. non-differentiable points).
\end{definition}

\begin{definition}\label{def:NATC}
 A \emph{non-Archimedean test curve} is a map $\psi:(-\infty,\tau^+)\rightarrow \PSH^{\NA}_{\mathrm{g}}(X,\omega)$ for some $\tau^+\in \mathbb{R}$, such that
 \begin{enumerate}
    \item $\psi$ is concave.
    \item $\lim_{\tau\to-\infty}\psi_{\tau}=0$ in $L^1$.
\end{enumerate}
We define $\tau^-$ as in the Archimedean case.

The non-Archimedean test curve $\psi_{\bullet}$ is \emph{of finite energy} if
\begin{equation}\label{eq:defEpsib}
\mathbf{E}(\psi_{\bullet}):=\tau^++\frac{1}{V}\int_{-\infty}^{\tau^+} \left(\int_X \omega_{\varphi_{\tau}}^n-\int_X\omega^n\right)\,\mathrm{d}\tau>-\infty\,,
\end{equation}
where $\varphi_{\tau}$ is the $\mathscr{I}$-model potential in $\PSH(X,\omega)$ with $\varphi_{\tau}^{\An}=\psi_{\tau}$.

The set of non-Archimedean test-curves of finite energy is denoted by $\TC^{1,\NA}(X,\omega)$.
\end{definition}
\begin{proposition}
 The map $\TC^1_{\mathscr{I}}(X,\omega) \to \TC^{1,\NA}(X,\omega)$ defined by 
 $\psi_{\bullet}\mapsto (\psi^{\An}_{\tau})_{\tau<\tau^+}$
 is a bijection.
\end{proposition}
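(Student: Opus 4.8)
The plan is to reduce everything to the pointwise bijection $P\colon\PSH^{\Mdl}_{\mathscr{I}}(X,\omega)\to\PSH^{\NA}_{\mathrm{g}}(X,\omega)$, $\varphi\mapsto\varphi^{\An}$, recorded just above. First I would observe that $P$ is in fact an \emph{order isomorphism}, and even that
\[
\varphi\ge\chi\iff\varphi^{\An}\ge\chi^{\An}
\]
whenever $\varphi\in\PSH^{\Mdl}_{\mathscr{I}}(X,\omega)$ has positive mass and $\chi\in\PSH(X,\omega)$, $\chi\le 0$, is arbitrary: the implication $\Rightarrow$ is the monotonicity of multiplier ideals built into \eqref{eq:psian}, and $\Leftarrow$ follows from the formula $\varphi=\sups\{\,u\in\PSH(X,\omega):u\le 0,\ u^{\An}\le\varphi^{\An}\,\}$ recorded after \cref{thm:DX1-4}, since $\chi^{\An}\le\varphi^{\An}$ exhibits $\chi$ as a competitor in that supremum. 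Here I use that by \cref{prop:tc}(2) each value $\psi_\tau$ of a test curve with $\tau<\tau^+$ has positive mass and --- for a curve in $\TC^1_{\mathscr{I}}(X,\omega)$ --- is $\mathscr{I}$-model, so $P$ genuinely applies to it; and that each value $\rho_\tau$ of a non-Archimedean test curve lies in $\PSH^{\NA}_{\mathrm{g}}(X,\omega)$, hence $P^{-1}(\rho_\tau)$ is a positive-mass $\mathscr{I}$-model potential.

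\emph{Injectivity.} A test curve $\psi_\bullet\in\TC^1_{\mathscr{I}}(X,\omega)$ is concave, hence (using $\lim_{\tau\to-\infty}\psi_\tau=0$) decreasing in $\tau$; it equals $-\infty$ for $\tau>\tau^+$; and $\psi_{\tau^+}$ is necessarily the upper semicontinuous regularization of $\lim_{\tau\uparrow\tau^+}\psi_\tau$, because upper semicontinuity of $\psi_\bullet$ on $\mathbb{R}\times X$ forces $\psi_{\tau^+}$ to dominate that regularization while monotonicity forces $\psi_{\tau^+}\le\lim_{\tau\uparrow\tau^+}\psi_\tau$. Thus $\psi_\bullet$ is recovered from its restriction to $(-\infty,\tau^+)$, which is recovered from $(\psi^{\An}_\tau)_{\tau<\tau^+}$ via $P$, and $\tau^+$ is the right endpoint of the domain of the non-Archimedean test curve. \emph{Surjectivity (non-concavity axioms).} Given $\rho_\bullet\colon(-\infty,\tau^+)\to\PSH^{\NA}_{\mathrm{g}}(X,\omega)$ of finite energy, set $\psi_\tau:=P^{-1}(\rho_\tau)$ for $\tau<\tau^+$, $\psi_\tau:=-\infty$ for $\tau>\tau^+$, and $\psi_{\tau^+}$ equal to the upper semicontinuous regularization of $\lim_{\tau\uparrow\tau^+}\psi_\tau$. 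Then each $\psi_\tau$ with $\tau<\tau^+$ is $\mathscr{I}$-model of positive mass by construction; $\psi_\tau=-\infty$ for $\tau$ large by fiat; $\lim_{\tau\to-\infty}\psi_\tau=0$ in $L^1$ transfers from \cref{def:NATC}(2); and once concavity in $\tau$ is known, joint upper semicontinuity on $\mathbb{R}\times X$ follows from concavity, monotonicity and upper semicontinuity in $x$ by the elementary argument above. Finite energy is immediate since the integral \eqref{eq:defEpsib} defining $\mathbf{E}(\rho_\bullet)$ is literally the integral \eqref{eq:defE} for $\psi_\bullet$ (the potentials $\varphi_\tau$ appearing in \eqref{eq:defEpsib} are exactly our $\psi_\tau$).

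Thus the one real point --- and the step I expect to be the main obstacle --- is that \emph{concavity of $\tau\mapsto\psi_\tau$ is equivalent to concavity of $\tau\mapsto\psi^{\An}_\tau$}. By the extended order isomorphism of the first paragraph this reduces to showing that $P$ \emph{intertwines convex combinations}: for $u,v\in\PSH^{\Mdl}_{\mathscr{I}}(X,\omega)$ of positive mass and $\lambda\in[0,1]$,
\[
\bigl(\lambda u+(1-\lambda)v\bigr)^{\An}=\lambda u^{\An}+(1-\lambda)v^{\An}.
\]
Granting this, for $\tau_0=\lambda\tau_1+(1-\lambda)\tau_2$ (with $\tau_1,\tau_2<\tau^+$, the only non-trivial case) the inequality $\psi_{\tau_0}\ge\lambda\psi_{\tau_1}+(1-\lambda)\psi_{\tau_2}$ is, by applying $P$ (to the model positive-mass potential $\psi_{\tau_0}$ against the competitor $\lambda\psi_{\tau_1}+(1-\lambda)\psi_{\tau_2}\le 0$) and then the intertwining, equivalent to $\psi^{\An}_{\tau_0}\ge\lambda\psi^{\An}_{\tau_1}+(1-\lambda)\psi^{\An}_{\tau_2}$; so concavity of $\psi_\bullet$ and of $(\psi^{\An}_\tau)_{\tau<\tau^+}$ are equivalent, which is what both directions of the bijection require. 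I would prove the intertwining by passing to quasi-equisingular approximations $u^j,v^j$ of $u,v$ (so that $u^{j,\An}\to u^{\An}$ and $v^{j,\An}\to v^{\An}$ by \cref{lma:qesana}), checking that $\lambda u^j+(1-\lambda)v^j$ is a quasi-equisingular approximation of $\lambda u+(1-\lambda)v$ --- the multiplier-ideal condition \cref{def:qeqa}(3) being the delicate input, handled via Hölder's inequality together with boundedness of global holomorphic sections --- and then using that for potentials with analytic singularities $w^{\An}(\ord_E)=\nu_E(w)$, the generic Lelong number, which is manifestly affine under convex combinations. Alternatively, the intertwining can be read off from the convexity of $\mathcal{E}^{1,\NA}$ inside the $\omega$-psh functions on $X^{\An}$ together with the compatibility statements of \cite{BJ18b} and \cite{DX22}.
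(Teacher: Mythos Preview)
Your proposal is correct and far more detailed than the paper's own treatment, which reads in full: ``This again is immediate by definition.'' The paper has set things up so that $\PSH^{\NA}_{\mathrm{g}}(X,\omega)$ is \emph{by definition} the image of $P$, and the axioms in \cref{def:NATC} are chosen to mirror those of $\TC^1_{\mathscr{I}}$ through $P$; the author regards the matching of the axioms as evident. You have essentially written out the verification the paper leaves implicit, and you correctly isolate the only non-tautological point: that concavity on the Archimedean side is equivalent to concavity on the non-Archimedean side, which you reduce to the order-isomorphism property of $P$ (via the envelope description after \cref{thm:DX1-4}) together with affineness of $(\,\cdot\,)^{\An}$.

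One simplification: your detour through quasi-equisingular approximations to prove $(\lambda u+(1-\lambda)v)^{\An}=\lambda u^{\An}+(1-\lambda)v^{\An}$ is unnecessary. For \emph{any} $\psi\in\PSH(X,\omega)$ and any prime divisor $E$ over $X$ one has $-\psi^{\An}(\ord_E)=\nu_E(\psi)$, the generic Lelong number; this is standard (it underlies the construction in \eqref{eq:psian} and the definition of $\Div_Y\psi$) and does not require analytic singularities. Since Lelong numbers of closed positive currents are linear, the intertwining is immediate. With this in hand, the concavity equivalence is a two-line check and the whole proposition really is ``immediate,'' justifying the paper's brevity. Your handling of the boundary value $\psi_{\tau^+}$ and of joint upper semicontinuity is adequate; note that since $\tau\mapsto\psi_\tau$ is decreasing, $\psi_{\tau^+}=\inf_{\tau<\tau^+}\psi_\tau$ is already usc and lies in $\PSH(X,\omega)\cup\{-\infty\}$, and the paper's own remark after \cref{def:testcurve} acknowledges that the value at $\tau^+$ is a formality.
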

This again is immediate by definition.
\begin{remark}
    In \cref{def:NATC}, we deliberately define $\psi_{\tau}$ only for $\tau<\tau^+$. This is because it is \emph{not} always true that for an Archimedean test curve $\psi_{\bullet}$, 
    \[
    \psi_{\tau^+}^{\An}=\lim_{\tau\to \tau^+-}\psi_{\tau}^{\An}\,.
    \]
\end{remark}
\begin{theorem}[{\cite[Proposition~3.13]{DX22}}]\label{thm:legna}
    The map $\check{}:\TC^{1,\NA}(X,\omega)\to \mathcal{E}^{1,\NA}$ given by
    \[
    \psi_{\bullet}^{\An} \mapsto \sup_{\tau<\tau^+}(\psi_{\tau}^{\An}+\tau)
    \]
    is a bijection. Moreover, when $\psi_{\bullet}\in \TC^1(X,\omega)$, 
    \[
        \left(\psi_{\bullet}^{\An}\right)^{\check{}}=\left(\check{\psi}_{\bullet}\right)^{\NA}\,,
    \]
    namely, the following diagram commutes:
    \[
    \begin{tikzcd}
        \TC^{1} \arrow[r, "\An"] \arrow[d, "\check{}"]
        & \TC^{1,\NA} \arrow[d, "\check{}"] \\ \mathcal{R}^{1} \arrow[r, "\NA"] &\mathcal{E}^{1,\NA} 
    \end{tikzcd}
    \]
\end{theorem}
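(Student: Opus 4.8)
The statement to prove is Theorem \ref{thm:legna}, which asserts that the inverse Legendre transform $\check{}: \TC^{1,\NA}(X,\omega)\to \mathcal{E}^{1,\NA}$, $\psi_{\bullet}^{\An}\mapsto \sup_{\tau<\tau^+}(\psi_\tau^{\An}+\tau)$, is a bijection, and that it intertwines with the Archimedean inverse Legendre transform in the displayed commuting square. Since this is quoted verbatim as \cite[Proposition~3.13]{DX22}, the ``proof'' here should be a short reduction to the Archimedean statement (Theorem of \cite[Theorem~3.7]{DX22}, already recalled above) together with the non-Archimedean dictionary. The plan is to work mostly at the level of the commuting square, deducing bijectivity of the bottom-right-hand side of the square from bijectivity of the other three edges.

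First I would recall the three bijections already available: the Archimedean inverse Legendre transform $\check{}: \TC^1_{\mathscr{I}}(X,\omega)\to \mathcal{E}^{1,\NA}$ (stated just before \cref{def:NATC}, via \cite[Theorem~3.7]{DX22}), the bijection $\psi_\bullet\mapsto(\psi_\tau^{\An})_{\tau<\tau^+}$ from $\TC^1_{\mathscr{I}}(X,\omega)$ to $\TC^{1,\NA}(X,\omega)$ (the Proposition immediately preceding \cref{thm:legna}), and the bijection $\varphi\mapsto\varphi^{\An}$ from $\PSH^{\Mdl}_{\mathscr{I}}(X,\omega)$ to $\PSH^{\NA}_{\mathrm{g}}(X,\omega)$, extended to the identification $\mathcal{E}^{1,\NA}\cong \mathcal{E}^{1,\NA}$ tautologically (the two copies in the square are literally the same set). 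Then the top and left and right edges of the square are known bijections; to conclude that the bottom edge (which in \cref{thm:legna} is the map called $\check{}: \TC^{1,\NA}\to\mathcal{E}^{1,\NA}$) is a bijection, it suffices to establish that the square commutes, i.e. that for $\psi_\bullet\in\TC^1_{\mathscr{I}}(X,\omega)$ one has $(\psi_\bullet^{\An})^{\check{}}=(\check\psi_\bullet)^{\NA}$.

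The heart of the matter is therefore this compatibility identity $(\psi_\bullet^{\An})^{\check{}}=(\check\psi_\bullet)^{\NA}$. I would prove it by evaluating both sides at an arbitrary divisorial valuation $v\in X^{\Div}_{\mathbb{Q}}$ (these are dense, and both sides lie in $\mathcal{E}^{1,\NA}$, so agreement there forces agreement). On one side, $(\check\psi_\bullet)^{\NA}(v)$ is computed from $\check\psi_t=\sup_\tau(\psi_\tau+t\tau)$ via the non-Archimedean potential construction $\ell^{\NA}(v)=-G(v)(\Phi)$, i.e. via $-v(\check\psi_t)$ read off along the ray; the key input is that $v(\,\cdot\,)$ (equivalently $(\,\cdot\,)^{\An}(v)$) commutes with the relevant suprema — this is the monotonicity/upper-semicontinuity of Lelong numbers together with \cref{lma:val}-type arguments and the fact that each $\psi_\tau$ is $\mathscr{I}$-model so that $\psi_\tau^{\An}$ genuinely records $v(\psi_\tau)$. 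On the other side, $(\psi_\bullet^{\An})^{\check{}}(v)=\sup_{\tau<\tau^+}(\psi_\tau^{\An}(v)+\tau)=\sup_{\tau<\tau^+}(-v(\psi_\tau)+\tau)$, which is exactly the Legendre transform of $\tau\mapsto v(\psi_\tau)$. So the identity reduces to: the slope-at-infinity / Gauss-extension reading of the inverse Legendre transform commutes, valuation by valuation, with taking the pointwise Legendre transform in $\tau$ — a statement about commuting two ``concave-duality'' operations, valid because $v$ is order-preserving and continuous along decreasing nets of $\mathscr{I}$-model potentials.

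The main obstacle I anticipate is the boundary behaviour at $\tau=\tau^+$: as flagged in the Remark before \cref{thm:legna}, it is \emph{not} in general true that $\psi_{\tau^+}^{\An}=\lim_{\tau\to\tau^+-}\psi_\tau^{\An}$, which is precisely why a non-Archimedean test curve is defined only on $(-\infty,\tau^+)$. I must therefore be careful that the supremum $\sup_{\tau<\tau^+}(\psi_\tau^{\An}+\tau)$ is taken over the open interval and that this open-interval supremum still reproduces $(\check\psi_\bullet)^{\NA}$; the point is that the ``missing'' value at $\tau^+$ contributes nothing to the Legendre transform of a concave function (the sup over the half-open interval equals the sup over the closed one for the relevant quantities), but verifying this cleanly, and checking that the resulting non-Archimedean test curve indeed has finite energy exactly when $\check\psi_\bullet\in\mathcal{E}^{1,\NA}$ — using the energy formula \eqref{eq:defEpsib} and \cite[Theorem~1.1]{DX22} to identify $\int_X\omega_{\varphi_\tau}^n$ with the volume recorded by $\psi_\tau^{\An}$ — is where the real bookkeeping lies. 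Surjectivity of $\check{}$ on $\TC^{1,\NA}$ then follows by running the three known bijections in the square backwards from a given element of $\mathcal{E}^{1,\NA}$, and injectivity follows from injectivity of the Archimedean $\check{}$ plus injectivity of $\psi_\bullet\mapsto\psi_\bullet^{\An}$.
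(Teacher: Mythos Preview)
The paper gives no proof of this theorem; it is stated with a bare citation to \cite[Proposition~3.13]{DX22} and nothing more. So there is no ``paper's own proof'' to compare your proposal against---the intended reading is simply that the result is imported wholesale from \cite{DX22}.

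Your plan is a reasonable reconstruction of how such a result is established and is likely in the spirit of the argument in \cite{DX22}: deduce bijectivity of the right edge from commutativity of the square together with the three already-known bijections (restricting the top-left corner to $\TC^1_{\mathscr{I}}$ so that all three other edges are genuine bijections). One point you should tighten: the identity $(\check\psi_\bullet)^{\NA}(v)=\sup_{\tau<\tau^+}(-v(\psi_\tau)+\tau)$ is not as immediate as you suggest. The left side is defined via the Gauss extension $G(v)$ applied to the $S^1$-invariant potential $\Phi$ on $X\times\Delta$, not via a slope of Lelong numbers along the ray, and bridging this to the pointwise Legendre transform of $\tau\mapsto v(\psi_\tau)$ requires an explicit computation relating $G(v)(\Phi)$ to the generic Lelong numbers of $\hat\ell_\tau$ (this is where the actual content of \cite[Proposition~3.13]{DX22} lies). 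Your invocation of ``$v$ is order-preserving and continuous along decreasing nets'' is not by itself enough to justify interchanging the sup defining $\check\psi_t$ with the valuation. If you intend to supply an independent proof rather than cite, that step needs to be made precise.
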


\begin{remark}
Ideally when we are considering only maximal geodesic rays, it should be possible to carry out the computations in \cref{sec:for} purely in terms of non-Archimedean test curves, without referring to the machinery of test curves, filtrations and test configurations. However, the difficulty is that we do not have a good understanding of the following non-Archimedean Monge--Ampère measure
\[
\MA\left(\sup_{\tau<\tau^+}(\psi_{\tau}^{\An}+\tau)\right)\,.
\]
It is highly desirable to have a description of this measure in terms of certain real Monge--Ampère measures on some dual complexes. In the non-trivially valued case, a partial result is derived by Vilsmeier (\cite{Vil20}).
\end{remark}

\section{Intersection theory of b-divisors}\label{sec:bdiv}
In this section, we apply the intersection theory of Shokurov's b-divisors to the study of singularities of psh functions. Due to the technical assumptions in \cite{DF20} and \cite{DF20a}, we can not apply Dang--Favre's intersection theory directly. 
Although it seems possible to remove the technical assumptions in Dang--Favre's theory, we do not pursue this most general theory here.\footnote{When the current paper was written, the second version of \cite{DF20} was not available yet, where Dang--Favre developed the general intersection theory of nef b-divisors. Our definition of volumes is essentially the same as the intersection number defined using \cite{DF20}.}

References to this section are \cite{DF20}, \cite{DF20a}, \cite{BFJ09}, \cite{BDPP13}, \cite{KK14}.

Let $X$ be a projective manifold of dimension $n$. 

\subsection{b-divisors}
Recall that the Riemann--Zariski space of $X$ is the locally ringed space defined by
\[
\mathfrak{X}:=\varprojlim_{Y} Y\,,
\]
where $Y$ runs over all birational models of $X$. Here the projective limit is taken in the category of locally ringed spaces. For valuative interpretation of $\mathfrak{X}$, see \cite{Tem11}. We do not make use of the theory of Riemann--Zariski spaces in an essential way in this paper. Instead, we give an \emph{ad hoc} treatment of divisors on $\mathfrak{X}$.

\begin{definition}
 By a \emph{Weil divisor on $\mathfrak{X}$} or a \emph{Weil b-divisor} on $X$, we mean an element in
 \[
 \bWeil(X):=\varprojlim_{Y} \Weil(Y)\,,
 \]
 where $Y$ runs over all (smooth) birational models of $X$ and $\Weil(Y)$ is the set of numerical classes of $\mathbb{R}$-divisors on $Y$.
 
 By a \emph{Cartier divisor on $\mathfrak{X}$} or a \emph{Cartier b-divisor} on $X$, we mean an element in
 \[
 \bCart(X):=\varinjlim_{Y} \Weil(Y)\,,
 \]
 where $Y$ runs over all (smooth) birational models of $X$.
 
 Both the limit and the colimit are taken in the category of topological vector spaces.
\end{definition}
There is a natural continuous injection $\bCart(X) \hookrightarrow \bWeil(X)$.

\subsection{Differentiability of the volume}\label{subsec:diff}
General references of results in this section are \cite{BFJ09}, \cite{DP04}.

Let $X$ be a compact Kähler manifold of dimension $n$.
Let $L$ be a big line bundle on $X$. Recall that the volume of $L$ is defined as
\[
\vol(L):=\lim_{k\to\infty}\frac{n!}{k^n}h^0(X,L^k)\,.
\]
More generally, by requiring
\[
\vol(L^k)=k^n\vol(L)\,,
\]
we extend the definition of volume to all big $\mathbb{Q}$-line bundles. By continuity, this definition further extends to all pseudo-effective $\mathbb{R}$-line bundles.

When $L$ is a nef $\mathbb{R}$-line bundle, we have
\begin{equation}
\vol(L)=(L^n)\,.
\end{equation}
Recall the following basic fact,
\begin{theorem}[{\cite{BFJ09}}]\label{thm:diffv}
The volume function $\vol$ is continuously differentiable in the big cone. Moreover, let $L$ be a big and nef $\mathbb{R}$-line bundle, let $L'$ be a line bundle, then
\begin{equation}
\left.\frac{\mathrm{d}}{\mathrm{d}\epsilon}\right|_{\epsilon=0}\vol(L+\epsilon L')=n \left(L^{n-1}\cdot L'\right)\,.
\end{equation}
\end{theorem}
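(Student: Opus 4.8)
The plan is to follow the argument of Boucksom--Favre--Jonsson. The starting point is that $\vol$ is homogeneous of degree $n$ on the big cone and that $\vol^{1/n}$ is concave there --- this is the Brunn--Minkowski inequality for volumes, obtained by reducing via Fujita approximation to the classical Khovanskii--Teissier inequality for nef classes. Concavity of $\vol^{1/n}$ gives in particular that $\vol$ is locally Lipschitz, hence continuous, on the big cone, and that for every big $L$ and every class $L'$ the one-sided directional derivatives $D^{\pm}\vol_L(L') := \lim_{t\to 0^{\pm}} t^{-1}\bigl(\vol(L+tL') - \vol(L)\bigr)$ exist. It therefore suffices to identify these one-sided derivatives with the linear functional $L' \mapsto n\,\langle L^{n-1}\rangle\cdot L'$, where $\langle\,\cdot\,\rangle$ denotes the positive (movable) intersection product; continuity of $L \mapsto \langle L^{n-1}\rangle$ on the big cone then upgrades the resulting pointwise differentiability to the $C^1$ statement. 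By bilinearity, and by writing an arbitrary $L'$ as $A_1 - A_2$ with $A_i$ ample, one may assume $L'$ ample throughout.

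The technical heart is the \emph{orthogonality estimate}: for a sequence of Fujita approximations $\mu_k\colon X_k \to X$ of a big class $L$, say $\mu_k^{*}L = P_k + E_k$ with $P_k$ ample, $E_k \geq 0$ effective and $(P_k^n) \to \vol(L)$, one has $(P_k^{n-1}\cdot E_k) \to 0$, equivalently $\langle L^{n-1}\rangle\cdot L = \vol(L)$. Granting it, the computation of the derivative proceeds by comparing $\vol(L+tL')$ with the volumes of the ample classes $P_k + t\mu_k^{*}L'$: since $E_k$ is effective one has $\vol(L+tL') \geq \vol\bigl(P_k + t\mu_k^{*}L'\bigr) = (P_k^n) + nt\,(P_k^{n-1}\cdot\mu_k^{*}L') + O_k(t^2)$, and the orthogonality estimate is exactly what ensures that, as the approximation is refined, these estimates become sharp and compute the directional derivative exactly, giving $D^{\pm}\vol_L(L') = n\,\langle L^{n-1}\rangle\cdot L'$. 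This establishes differentiability on the whole big cone.

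To conclude the stated formula, specialize to $L$ big and nef. Then $\vol(L) = (L^n)$ and a nef class admits the trivial Fujita approximation, so the positive intersection product $\langle L^{n-1}\rangle$ coincides with the ordinary numerical class $L^{n-1}$. Substituting into the formula above gives $\left.\frac{\mathrm{d}}{\mathrm{d}\epsilon}\right|_{\epsilon=0}\vol(L+\epsilon L') = n\,(L^{n-1}\cdot L')$, as claimed.

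The main obstacle is the orthogonality estimate $(P_k^{n-1}\cdot E_k) \to 0$. This is the genuinely new ingredient of Boucksom--Favre--Jonsson, and it is proved by an isoperimetric-type argument built on Siu's numerical criterion for bigness, applied to the perturbed classes $P_k + \delta E_k$. Everything else --- concavity of $\vol^{1/n}$, the reduction to one-sided derivatives, and the passage from the derivative formula to the nef case --- is comparatively formal; without the orthogonality estimate one only knows that the one-sided derivatives exist, not that they coincide.
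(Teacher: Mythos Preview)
The paper does not prove this theorem: it is stated with the citation \cite{BFJ09} and used as a black box. So there is no ``paper's own proof'' to compare against. Your proposal is a faithful high-level outline of the Boucksom--Favre--Jonsson argument: concavity of $\vol^{1/n}$ via Fujita approximation and Khovanskii--Teissier, existence of one-sided derivatives, identification of the derivative with $n\langle L^{n-1}\rangle\cdot L'$ through the orthogonality estimate, and the specialization $\langle L^{n-1}\rangle = L^{n-1}$ when $L$ is nef.

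One small caution: the reduction ``write $L' = A_1 - A_2$ with $A_i$ ample and assume $L'$ ample'' is not immediate, since one-sided directional derivatives of a concave function are only superadditive, not linear, before you know differentiability. In the actual BFJ argument this is handled by first proving the derivative formula for $L'$ nef (where the Fujita-approximation lower bound matches the concavity upper bound), and then linearity follows because the resulting functional $n\langle L^{n-1}\rangle\cdot(-)$ is linear. Your sketch has the right ingredients but the order of the logic at that step should be tightened.
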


Now assume that $L$ is  big and semi-ample.
Fix a smooth semi-positive real $(1,1)$-form $\omega\in c_1(L)$. Let $\psi\in \PSH(X,\omega)$ be a potential with quasi-analytic singularities along a snc $\mathbb{R}$-divisor $\Div_X\psi$. Assume that $\psi$ has positive mass. Recall that by \cref{lma:diffnab}, $L-\Div_X\psi$ is nef and big. Let $L'$ be an $\mathbb{R}$-line bundle on $X$.
Now we define 
\begin{equation}\label{eq:voldiff1}
    D_L(\psi,L')=\left.\frac{\mathrm{d}}{\mathrm{d}\epsilon}\right|_{\epsilon=0}\vol(L-\Div_X\psi +\epsilon L')=n\left((L-\Div_X\psi)^{n-1}\cdot L'\right)\,.
\end{equation}
When $\psi\in \PSH(X,\omega)$ has positive mass and there exists a birational model $\pi:Y\rightarrow X$, $\psi$ has quasi-analytic singularities along a snc $\mathbb{R}$-divisor $\Div_Y\psi$, let $L'$ be an $\mathbb{R}$-line bundle on $Y$,
we define
\begin{equation}\label{eq:voldiff2}
    D_L(\psi,L'):=D_{\pi^*L}(\pi^*\psi,L')\,.
\end{equation}
We formally set $D_L(-\infty,L')=0$.

\subsection{Singularity divisors}
Let $L$ be a semi-ample line bundle on $X$. Let $h$ be a non-negatively curved metric on $L$. Let $\omega=c_1(L,h)$.
\begin{definition}
Let $\psi\in \PSH(X,\omega)$. We define the \emph{singularity divisor} of $\psi$ as a Weil b-divisor $\Div_{\mathfrak{X}} \psi\in \bWeil(X)$:
 \[
 \left(\Div_{\mathfrak{X}} \psi\right)_Y=\Div_Y\psi\,.
 \]
 Here we have abused the notation by writing $\Div_Y\psi$ for the numerical class of the corresponding divisor, which makes sense as explained in \cref{rmk:divNS}.
\end{definition}

We set
\[
\vol(L-\Div_{\mathfrak{X}}\psi):=\lim_Y \vol(\pi^*L-\Div_Y\psi)\,,
\]
where $\pi:Y\rightarrow X$ runs over all birational model of $X$. The net is decreasing, hence the limit is well-defined.

\begin{theorem}\label{thm:volIm}
Assume that $\psi$ is $\mathscr{I}$-model and of positive mass, then
\begin{equation}
\int_X \omega_{\psi}^n=\vol\left(L-\Div_{\mathfrak{X}}\psi \right)\,.
\end{equation}
\end{theorem}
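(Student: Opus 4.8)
The plan is to reduce the statement to two ingredients: a quasi-equisingular approximation of $\psi$, and the fact, from \cref{thm:DX1-4}, that the mass $\int_X\omega_\psi^n$ of an $\mathscr I$-model potential is computed by the multiplier ideals $\mathscr I(k\psi)$. First I would take a quasi-equisingular approximation $\psi^j$ of $\psi$; by \cref{lma:qesana} we have $\psi^{j,\An}\to\psi^{\An}$ pointwise on $X^{\Div}_{\mathbb Q}$, and the singularity types $[\psi^j]$ decrease to (an upper bound of) $[\psi]$. Each $\psi^j$ has analytic singularities, so after passing to a log resolution $\pi_j\colon Y_j\to X$ it has quasi-analytic singularities along a snc $\mathbb Q$-divisor $D_j=\Div_{Y_j}\psi^j$, and by \cref{lma:qamis}, $\mathscr I(k\pi_j^*\psi^j)=\mathcal O_{Y_j}(-\lfloor kD_j\rfloor)$. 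This identifies $\int_X\omega_{\psi^j}^n$ with $\vol(\pi_j^*L-D_j)$: indeed $h^0(X,L^k\otimes\mathscr I(k\psi^j))=h^0(Y_j,k\pi_j^*L-\lfloor kD_j\rfloor)$ (using $K_{Y_j/X}$-twist and that $\psi^j$ is a model potential with analytic singularities, hence $\mathscr I$-model by \cite{Bon98}, so the left side of \cref{thm:DX1-4}(3) applies), and the asymptotic Riemann--Roch / Fujita-approximation gives $\lim_k \frac{n!}{k^n}h^0(Y_j,k\pi_j^*L-\lfloor kD_j\rfloor)=\vol(\pi_j^*L-D_j)$. Note $\vol(\pi_j^*L-D_j)\ge\vol(L-\Div_{\mathfrak X}\psi^j)$ with equality not automatic, but since $[\psi^j]$ is analytic we may in fact arrange (after further blow-up, using \cref{def:sdd}) that the singularities of $\psi^j$ are \emph{determined} on $Y_j$, so that $\Div_{\mathfrak X}\psi^j=\overline{D_j}$ (the Cartier b-divisor defined by $D_j$) and hence $\vol(L-\Div_{\mathfrak X}\psi^j)=\vol(\pi_j^*L-D_j)=\int_X\omega_{\psi^j}^n$.

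Next I would pass to the limit $j\to\infty$ on both sides. For the left-hand side: since $\psi$ is $\mathscr I$-model and $\psi^j$ is a quasi-equisingular approximation, \cref{thm:DX1-4}(4) gives $\int_X\omega_{\psi^j}^n\to\int_X\omega_\psi^n$. For the right-hand side I must show $\vol(L-\Div_{\mathfrak X}\psi^j)\to\vol(L-\Div_{\mathfrak X}\psi)$. The definition $\vol(L-\Div_{\mathfrak X}\psi)=\lim_Y\vol(\pi^*L-\Div_Y\psi)$ is a decreasing net over models; fixing a model $Y$, one has $\Div_Y\psi^j\to\Div_Y\psi$ in $\mathrm{NS}^1(Y)_{\mathbb R}$ (this follows from $\psi^{j,\An}\to\psi^{\An}$, i.e. generic Lelong numbers on $Y$ converge, together with \cref{rmk:divNS} controlling the tail of the countable sum uniformly via pseudo-effectivity of $\pi^*L-\Div_Y\psi^j$ — all the $\Div_Y\psi^j$ lie in a fixed bounded set). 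Continuity of $\vol$ on the pseudo-effective cone then gives $\vol(\pi^*L-\Div_Y\psi^j)\to\vol(\pi^*L-\Div_Y\psi)$ for each fixed $Y$. To interchange this with the limit over $Y$, I would use monotonicity: $\vol(\pi^*L-\Div_Y\psi^j)$ is decreasing in $j$ (singularity types decrease, so $\Div_Y\psi^j\le\Div_Y\psi$... actually increase — care needed with signs) and decreasing in $Y$, so a standard $\lim\lim=\lim\lim$ argument for monotone double nets applies, yielding $\lim_j\vol(L-\Div_{\mathfrak X}\psi^j)=\vol(L-\Div_{\mathfrak X}\psi)$.

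Combining, $\int_X\omega_\psi^n=\lim_j\int_X\omega_{\psi^j}^n=\lim_j\vol(L-\Div_{\mathfrak X}\psi^j)=\vol(L-\Div_{\mathfrak X}\psi)$, which is the claim. I expect the main obstacle to be the exchange of limits on the right-hand side: controlling $\vol(\pi^*L-\Div_Y\psi^j)$ \emph{uniformly} in the model $Y$, since a priori the rate of convergence in $j$ could degrade as $Y$ grows. The way around this is to exploit that both nets are monotone and that all the relevant divisor classes stay in a fixed compact region of the (finite-dimensional, for fixed $Y$) Néron--Severi space, so that continuity of $\vol$ together with Dini-type monotone convergence forces the double limit to agree regardless of the order — but making this rigorous, and in particular checking that the inequality $\vol(L-\Div_{\mathfrak X}\psi^j)\ge\int_X\omega_{\psi^j}^n$ is actually an equality once singularities are determined, is where the real work lies. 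A secondary subtlety is the passage from $h^0(X,L^k\otimes\mathscr I(k\psi^j))$ to an honest volume of a divisor on $Y_j$, which requires that $\psi^j$ be not merely analytic but model-with-analytic-singularities; this can be arranged by replacing $\psi^j$ with $P[\psi^j]$, which has the same multiplier ideals and the same $\cdot^{\An}$, hence does not affect either side.
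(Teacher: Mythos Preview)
Your proposal is correct and follows essentially the same route as the paper: take a quasi-equisingular approximation $\psi^j$, use the base case for analytic singularities (where the mass equals the volume on a log resolution), and pass to the limit on both sides via \cref{thm:DX1-4}(4) on the left and convergence of b-divisor volumes on the right. Your ``main obstacle'' dissolves once the signs are sorted out: since the singularity types of $\psi^j$ decrease, $\Div_Y\psi^j$ is \emph{increasing} in $j$, so $\vol(\pi^*L-\Div_Y\psi^j)$ is decreasing in both $j$ and $Y$ and the double limit is simply a double infimum $\inf_j\inf_Y=\inf_Y\inf_j$; the paper packages this as the sandwich $\vol(L-\Div_{\mathfrak X}\psi)\le\vol(L-\Div_{\mathfrak X}\psi^j)\le\vol(\pi^*L-\Div_Y\psi)+\epsilon$ for any fixed model $Y$ and $j$ large, which is exactly the same content.
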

\begin{proof}
Let $\psi^j$ be a quasi-equisingular approximation to $\psi$. By \cite[Theorem~1.4]{DX22}, $\int_X \omega_{\psi^j}^n
\to \int_X \omega_{\psi}^n$. Similarly, the right-hand side converges along $\psi^j$ as follows from \cite[Proof of Theorem~6(3)]{DF20}. To be more precise, it suffices to prove that for any $\epsilon>0$, any model $\pi:Y\rightarrow X$, we can find $j_0>0$, such that for $j\geq j_0$,
\[
\vol\left(L-\Div_{\mathfrak{X}}\psi \right)\leq \vol\left(L-\Div_{\mathfrak{X}}\psi^j\right)\leq \vol\left(\pi^*L-\Div_{Y}\psi \right)+\epsilon\,.
\]
The first inequality is trivial. For the second inequality, observe that by \cref{lma:qesana}, $\Div_Y\psi^j \to \Div_Y\psi$. Fix some $C>0$, depending on $\pi$,
 we may take  $j_0$ large enough, so that when $j\geq j_0$,
\[
\pi^*L-\Div_Y\psi^j\leq  \pi^*L-\Div_Y\psi+C^{-1}\epsilon\pi^*\omega\,.
\]
Then it follows that
\[
\vol\left(\pi^*L-\Div_Y\psi^j\right)\leq \vol\left( \pi^*L-\Div_Y\psi \right)+\epsilon\,.
\]
Hence 
\[
\vol\left(L-\Div_{\mathfrak{X}}\psi^j\right)\leq \vol\left( \pi^*L-\Div_Y\psi \right)+\epsilon\,.
\]
\end{proof}
In particular, this gives an additional characterization of $\mathscr{I}$-model potentials.
\begin{corollary}\label{cor:charImdlint}
Let $\psi\in \PSH^{\Mdl}(X,\omega)$ be a model potential with positive mass. Then $\psi$ is $\mathscr{I}$-model if{f}
\[
    \int_X \omega_{\psi}^n=\vol\left(L-\Div_{\mathfrak{X}}\psi \right)\,.
\]
\end{corollary}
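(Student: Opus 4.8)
The plan is to derive both implications from \cref{thm:volIm}. The implication ``$\psi$ $\mathscr{I}$-model $\Rightarrow$ $\int_X\omega_\psi^n=\vol(L-\Div_{\mathfrak{X}}\psi)$'' is literally the content of \cref{thm:volIm}, since a model potential of positive mass that is moreover $\mathscr{I}$-model satisfies its hypotheses; so the only thing to prove is the converse. For that, set $\phi:=P[\psi]_{\mathscr{I}}$. Then $\phi\geq\psi$, $\phi$ is $\mathscr{I}$-model (hence model), and $\int_X\omega_\phi^n\geq\int_X\omega_\psi^n>0$, so \cref{thm:volIm} applies to $\phi$ as well.

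The first step is to check that $\psi$ and $\phi$ have the same singularity b-divisor. On one hand $\psi\leq\phi$ gives $\mathscr{I}(k\psi)\subseteq\mathscr{I}(k\phi)$ for all $k$; on the other hand, by the defining property of the envelope $P[\cdot]_{\mathscr{I}}$ (see \cite{DX22}), $\mathscr{I}(k\phi)\subseteq\mathscr{I}(k\psi)$. Hence $\psi^{\An}=\phi^{\An}$ on $X^{\Div}_{\mathbb{Q}}$, and since the generic Lelong numbers that define $\Div_Y(\cdot)$ are recovered from the $\An$-function (this is exactly the identification already used in the proof of \cref{thm:volIm} via \cref{lma:qesana}), we get $\Div_{\mathfrak{X}}\psi=\Div_{\mathfrak{X}}\phi$, and in particular $\vol(L-\Div_{\mathfrak{X}}\psi)=\vol(L-\Div_{\mathfrak{X}}\phi)$. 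Applying \cref{thm:volIm} to $\phi$ then gives $\int_X\omega_\phi^n=\vol(L-\Div_{\mathfrak{X}}\phi)=\vol(L-\Div_{\mathfrak{X}}\psi)$, which combined with the hypothesis $\int_X\omega_\psi^n=\vol(L-\Div_{\mathfrak{X}}\psi)$ yields $\int_X\omega_\psi^n=\int_X\omega_\phi^n$. Now $\psi\leq\phi$ are both model potentials with the same positive total mass, so the rigidity/monotonicity statement for model potentials (the equality case of monotonicity of non-pluripolar mass, \cite{WN19}, together with the fact that a model potential is its own model envelope) forces $\psi=\phi=P[\psi]_{\mathscr{I}}$, i.e.\ $\psi$ is $\mathscr{I}$-model.

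The main obstacle I expect is the final rigidity step: that two comparable model potentials with equal positive mass must coincide. I would want to cite this in the precise form needed; it does follow from results already invoked in the paper (e.g.\ in the proof of \cref{prop:tc}), namely the equality case in the monotonicity theorem combined with the characterization of model potentials by their singularity type. A minor point to state carefully is the identification of the coefficients of $\Div_Y(\cdot)$ with the values of the $\An$-function for general quasi-psh functions; but this is the same input already used in \cref{thm:volIm}, so no new argument is required there.
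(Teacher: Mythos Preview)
Your argument is correct. The paper states this as a corollary without proof, so there is nothing literal to compare against; your route via $\phi=P[\psi]_{\mathscr{I}}$, the equality $\Div_{\mathfrak{X}}\psi=\Div_{\mathfrak{X}}\phi$ (from $\psi^{\An}=\phi^{\An}$ and the identification $\nu_E(\psi)=-\psi^{\An}(\ord_E)$), and the rigidity of model potentials with equal positive mass is a clean and valid proof. The rigidity step you flag is indeed standard: $\psi\leq\phi$ with $\int_X\omega_\psi^n=\int_X\omega_\phi^n>0$ forces $P[\psi]=P[\phi]$ by \cite{DDNL18mono}, and both being model gives $\psi=\phi$.

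The route most directly suggested by the paper's placement of the corollary is slightly different and worth noting: take a quasi-equisingular approximation $\psi^j$ of $\psi$. Since each $\psi^j$ has analytic singularities, $\int_X\omega_{\psi^j}^n=\vol(L-\Div_{\mathfrak{X}}\psi^j)$; the proof of \cref{thm:volIm} already shows (independently of the $\mathscr{I}$-model hypothesis) that $\vol(L-\Div_{\mathfrak{X}}\psi^j)\to\vol(L-\Div_{\mathfrak{X}}\psi)$. Combined with the hypothesis this gives $\int_X\omega_{\psi^j}^n\to\int_X\omega_{\psi}^n$, which is exactly condition (4) of \cref{thm:DX1-4}, so $\psi$ is $\mathscr{I}$-model. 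This avoids invoking $P[\psi]_{\mathscr{I}}$ and the rigidity step, at the cost of reopening the proof of \cref{thm:volIm}; your approach, by contrast, uses \cref{thm:volIm} purely as a black box. Either is fine.
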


\begin{remark}
As the techniques of \cite{DX22} have been extended to pseudo-effective line bundles in \cite{DX21}, this corollary and its proof actually work in the setting of big line bundles. In terms of \cite{DF20}, our proof also shows that $L-\Div_{\mathfrak{X}}\psi$ is nef. A special case of this result is also discovered in \cite{BBGHdJ}.
\end{remark}

\section{Radial functionals in terms of Legendre transforms}\label{sec:for}
In this section, let $X$ be a compact Kähler manifold of dimension $n$. 
Let $L$ be a big and semi-ample line bundle on $X$. Let $h$ be a smooth non-negatively curved metric on $L$. Let $\omega=c_1(L,h)$.

From \cref{subsec:MAE} on, we assume that $L$ is an ample line bundle and $h$ is strictly positively curved.

In this section, we study several functionals on the space of geodesic rays and express them in terms of test curves.

\subsection{Functionals on the space of test curves}
Let $\psi_{\bullet}\in \TC^1(X,\omega)$. Recall that $\tau^+:=\inf \{\tau\in \mathbb{R}: \psi_{\tau}=-\infty\}$.

We have already defined the Monge--Ampère energy $\mathbf{E}(\psi_{\bullet})$ in \eqref{eq:defE}.
For any real smooth $(1,1)$-form $\alpha$ on $X$, define the $\alpha$-energy of $\psi_{\bullet}$ as
\begin{equation}
\mathbf{E}^{\alpha}(\psi_{\bullet}):=\tau^+\frac{1}{V}\int_X \alpha\wedge\omega^{n-1}+\frac{1}{V}\int_{-\infty}^{\tau^+} \left(\int_X \alpha\wedge \omega_{\psi_{\tau}}^{n-1}-\int_X\alpha\wedge \omega^{n-1}\right)\,\mathrm{d}\tau\,.
\end{equation}
The \emph{Ricci energy} of $\psi_{\bullet}$ is defined as
\begin{equation}
\mathbf{E}_R(\psi_{\bullet}):=-n\tau^+\frac{1}{V}\int_X \Ric\omega'\wedge\omega^{n-1}-\frac{n}{V}\int_{-\infty}^{\tau^+} \left(\int_X \Ric\omega'\wedge \omega_{\psi_{\tau}}^{n-1}-\int_X\Ric\omega'\wedge \omega^{n-1}\right)\,\mathrm{d}\tau\,,
\end{equation}
where $\omega'$ denotes a Kähler form on $X$.

The \emph{$\tilde{J}$-functional} of $\psi_{\bullet}$ is defined as
\begin{equation}
\tilde{\mathbf{J}}(\psi_{\bullet})=n\mathbf{E}^{\omega}(\psi_{\bullet})-n \mathbf{E}(\psi_{\bullet})=\frac{n}{V}\int_{-\infty}^{\infty} \left(\int_X \omega\wedge \omega_{\psi_{\tau}}^{n-1}-\int_X\omega_{\psi_{\tau}}^n\right)\,\mathrm{d}\tau\,.
\end{equation}

\begin{remark}
    It is interesting to observe that $\mathbf{E}^{\alpha}(\psi_{\bullet})$ depends only on the cohomology class of $\alpha$.
\end{remark}

Assume that $\psi_{\bullet}$ is $\mathscr{I}$-model.
The \emph{non-Archimedean $\mathscr{L}_k$-functional} of $\psi_{\bullet}$ is defined as
\begin{equation}\label{eq:defL}
\mathscr{L}_k^{\NA}(\psi_{\bullet}):=\frac{1}{V}\int_{-\infty}^{\infty} \tau \,\mathrm{d}h^0(X,K_X\otimes L^k\otimes \mathscr{I}(k\psi_{\tau}))\,.
\end{equation}

Assume that $\psi_{\bullet}$ is $\mathscr{I}$-model, the \emph{entropy} of $\psi_{\bullet}$ is defined as
\begin{equation}
\Ent(\psi_{\bullet}):=\int_{-\infty}^{\infty}\Ent([\psi_{\tau}])\,\mathrm{d}\tau\,.
\end{equation}
Recall that $\Ent[\bullet]$ is defined in \cref{def:naentgeneral}.

\begin{definition}\label{def:anatc}
Let $\psi_{\bullet}\in \TC^{\infty}(X,\omega)$. 
We say $\psi_{\bullet}$ is \emph{analytic} if  $\psi_{\tau}$ has quasi-analytic singularities for any $\tau<\tau^+$.

We say $\psi_{\bullet}$  is \emph{piecewise linear} if $\psi_{\bullet}^{\An}$ is piecewise linear with finitely many breaking points (non-differentiable points).
\end{definition}

We need the following observation.
\begin{lemma}\label{lma:exana}
The test curves in \cref{ex:blflag} are analytic and piecewise linear. 
\end{lemma}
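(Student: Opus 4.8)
I want to show that each test curve $\psi_\bullet$ arising in \cref{ex:blflag} from a flag ideal $I = I_0 + I_1 t + \cdots + I_{N-1}t^{N-1} + (t^N)$ is analytic (each $\psi_\tau$ with $\tau < \tau^+$ has quasi-analytic singularities in the sense of \cref{def:qasing}) and piecewise linear (the function $\psi^{\An}_\bullet$ is piecewise linear with finitely many breakpoints). The starting point is the explicit formula computed in \cref{ex:blflag}:
\[
\psi_\tau^{\An}(v) = -\min_{\alpha \in \mathbb{Q}_{\geq 0}^N,\ |\alpha| = 1,\ |\alpha|' = -\tau} \sum_i \alpha_i\, v(I_i)\,.
\]
First I would fix a single birational model $\pi\colon Y \to X$ that simultaneously resolves all the ideals $I_0, \ldots, I_{N-1}$, i.e.\ such that each $I_i \cdot \mathcal{O}_Y = \mathcal{O}_Y(-F_i)$ for an effective divisor $F_i$ with $\bigcup_i \mathrm{Supp}(F_i)$ of simple normal crossings (such a $Y$ exists by resolution of singularities applied to the product $I_0 \cdots I_{N-1}$). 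On $Y$, for a prime divisor $E \subseteq Y$ we have $\mathrm{ord}_E(I_i) = \mathrm{ord}_E(F_i)$, so the inner minimization becomes a finite linear program in $\alpha$ with data $\big(\mathrm{ord}_E(F_i)\big)_i$.

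\textbf{Piecewise linearity.} The key point is that $\tau \mapsto -\min_{|\alpha|=1,\ |\alpha|'=-\tau} \sum_i \alpha_i c_i$ is, for fixed constants $c_i = v(I_i)$, the (upper) boundary of the convex hull of the finitely many points $(i, c_i) \in \mathbb{R}^2$, reparametrized — that is, it is a concave piecewise-linear function of $\tau$ on $[-(N-1), 0]$ with breakpoints only at integers, and the slopes are differences $c_j - c_i$. Since $\psi^{\An}_\tau$ is determined by its values on divisorial valuations, and all the relevant extremal data lives on the fixed model $Y$, I would argue that $\psi^{\An}_\bullet$ — viewed as a curve of non-Archimedean potentials — is piecewise linear in $\tau$ with finitely many breakpoints: the only candidate breakpoints in $\tau$ come from the finitely many "exponents" $0, 1, \ldots, N-1$ that appear in the flag ideal, and between consecutive ones $\psi^{\An}_{\tau}$ is affine in $\tau$ because on each such subinterval a single vertex $\alpha$ of the relevant polytope minimizes uniformly in $v$. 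I would make this precise by showing that on $\tau \in [m, m+1]$ the minimizing $\alpha$ can be taken to interpolate linearly between the standard basis vectors $e_{-m}$ and $e_{-(m+1)}$ weighted appropriately, giving $\psi^{\An}_\tau = (1 + \tau + m)\,\psi^{\An}_{-m} - (\tau + m)\,\psi^{\An}_{-(m+1)}$ — affine in $\tau$.

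\textbf{Analyticity (quasi-analytic singularities).} For each fixed $\tau$ in the open interval, I want to exhibit a birational model on which $\psi_\tau$ has singularities determined by an $\mathbb{R}$-divisor. The natural candidate is $Y$ itself: by \cref{lma:val} applied to the filtration of \cref{ex:blflag}, and the fact that on $Y$ every ideal $I_i$ becomes principal, the generic Lelong numbers $\nu_E(\psi_\tau) = \psi_\tau^{\An}(\mathrm{ord}_E)$ are given by the explicit minimum above, which for each $E$ equals $\max_{\alpha}(\cdots)$ where $\alpha$ ranges over a polytope with rational vertices; I would identify the corresponding $\mathbb{R}$-divisor $D_\tau := \sum_E \nu_E(\psi_\tau)\, E$ on $Y$ and check $\mathrm{Supp}(D_\tau) \subseteq \bigcup_i \mathrm{Supp}(F_i)$ (snc), and then verify the stability condition of \cref{def:sdd}, namely $\mathrm{Div}_Z \psi_\tau = \Pi^* D_\tau$ for every further model $\Pi\colon Z \to Y$. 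This last verification — that the singularities are genuinely \emph{determined}, not merely present — is what I expect to be the main obstacle: it amounts to showing no further blowup creates new Lelong number jumps, which I would handle by invoking that $\psi_\tau$ is $\mathscr{I}$-model (by \cite[Theorem~3.11]{DX22}) together with \cref{lma:qamis}, since a potential with snc $\mathbb{Q}$-divisorial singularities on a model that is $\mathscr{I}$-model automatically has those singularities determined there (model potentials with analytic singularities pull back to the obvious divisor). One subtlety: the formula gives $\mathbb{Q}$-divisorial data only when $\tau$ is rational; for irrational $\tau$ in the interval the divisor $D_\tau$ has real coefficients, so I would phrase the argument for general $\tau$ via an approximation/continuity argument using the piecewise-linearity already established, or simply note that quasi-analytic singularities in \cref{def:qasing} allow snc $\mathbb{R}$-divisors.

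$\hfill\square$
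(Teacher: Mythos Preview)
The paper does not supply a detailed argument for this lemma; it is stated as an observation following immediately from the formula for $\psi_\tau^{\An}$ computed in \cref{ex:blflag}. Your proposal therefore attempts considerably more than the paper does, and the piecewise-linearity half is essentially correct: for each divisorial $v$ the map $\tau\mapsto -\psi_\tau^{\An}(v)$ is the lower convex envelope of the finite point set $\{(i,v(I_i))\}_{i=0}^{N}$ evaluated at $-\tau$, hence piecewise affine with breakpoints contained in $\{0,-1,\dots,-N\}$, and your interpolation formula on each interval $[-(m+1),-m]$ is a valid consequence of that (even though your stated reason, that the minimiser $\alpha$ interpolates between basis vectors $e_m,e_{m+1}$, is not literally true).

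There is, however, a genuine gap in the analyticity half. The model $Y$ resolving all the $I_i$ is \emph{not} in general a model on which the singularities of $\psi_\tau$ are determined. Writing $I_i\cdot\mathcal O_Y=\mathcal O_Y(-F_i)$ with $F_i=\sum_j m_{ij}G_j$ and $x_j=\ord_E(G_j)$, the formula gives
\[
\nu_E(\psi_\tau)=\min_{\alpha\in P_\tau}\sum_j\Big(\sum_i\alpha_i m_{ij}\Big)x_j\,,
\]
a minimum of finitely many linear forms in $x$. This is concave piecewise linear but not linear in $(x_j)$ in general (for a concrete instance take $N=3$ and $F_0=2G_1+2G_2$, $F_1=2G_1+G_2$, $F_2=G_1$, $F_3=0$; at $\tau=-1$ one finds $\nu_E(\psi_{-1})=\min(\tfrac32 x_1+x_2,\ \tfrac43 x_1+\tfrac43 x_2)$, which is not of the form $\ord_E(\Pi^*D)$ for any divisor $D$ on $Y$). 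Your proposed remedy via \cref{lma:qamis} is circular: that lemma \emph{assumes} quasi-analytic singularities to compute multiplier ideals, it does not produce them. What actually closes the gap is a further toroidal modification $Y'\to Y$ adapted to the fan refinement on which the concave PL function above becomes linear on each cone; on $Y'$ the formula then reads $\nu_E(\psi_\tau)=\ord_E(\Pi'^*D_\tau')$ for a genuine $\mathbb R$-divisor $D_\tau'$ supported on the strict transforms and the new exceptional divisors, which is exactly the ``determined'' condition of \cref{def:sdd}.
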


\begin{corollary}\label{cor:tcana}
    The test curve induced by a test configuration is analytic and piecewise linear.
\end{corollary}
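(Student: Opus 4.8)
The plan is to reduce \cref{cor:tcana} to \cref{lma:exana} by expressing the test curve of an arbitrary (semi-ample) test configuration in terms of a flag ideal of the type considered in \cref{ex:blflag}. First I would recall that, after passing to a power $\mathcal{L}^r$ if necessary, any semi-ample test configuration $(\mathcal{X},\mathcal{L})$ of $(X,L)$ can be dominated by (the normalization of) a test configuration obtained by blowing up a flag ideal $I=I_0+I_1t+\cdots+I_{N-1}t^{N-1}+(t^N)$ on $X\times\mathbb{P}^1$, with $\mathcal{L}'=\Pi^*p_1^*L\otimes\mathcal{O}(-E)$ semi-ample; this is the standard presentation used in \cite{BHJ16} and recalled around \cref{ex:blflag}. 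The key point is that the filtration $\mathscr{F}^\bullet$ on $R(X,L)$ induced by $(\mathcal{X},\mathcal{L})$ coincides (up to the rescaling by $r$, which does not affect analyticity or piecewise linearity) with the one induced by this flag-ideal test configuration, because both are read off from the same graded pieces $t^{-\lambda}s\in H^0(\mathcal{X},\mathcal{L}^{rk})$.

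Then I would invoke \cref{lma:exana}: the test curve $\psi_\bullet$ attached to a flag ideal is analytic and piecewise linear. Since the test curve attached to $(\mathcal{X},\mathcal{L})$ is built from the same filtration by formula \eqref{eq:testcurvfromfilt}, it is the same test curve (after the harmless rescaling $\tau\mapsto r\tau$, which sends quasi-analytic singularities to quasi-analytic singularities and piecewise-linear $\psi^{\An}_\bullet$ to piecewise-linear $\psi^{\An}_\bullet$), hence analytic and piecewise linear. I would also note that \cref{lma:val} identifies $v(\psi_\tau)$ with the filtered infimum of $v(s)$, so the rescaling acts on $\psi^{\An}_\bullet$ simply by reparametrizing $\tau$, confirming that the breaking-point structure and the quasi-analyticity of each $\psi_\tau$ are preserved.

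The only genuine content beyond bookkeeping is making sure that every semi-ample test configuration really is captured by a flag-ideal blow-up as in \cref{ex:blflag}. I expect this to be the main (mild) obstacle: one must cite the fact that any test configuration can be dominated by a normalized blow-up of a flag ideal, and that for the purpose of the induced filtration one may replace $(\mathcal{X},\mathcal{L})$ by any such dominating model with the same non-Archimedean potential — equivalently, the filtration depends only on $\phi=\ell^{\NA}$, not on the chosen model. Once this reduction is in place, \cref{cor:tcana} is immediate from \cref{lma:exana}, and I would phrase the proof as: "Let $(\mathcal{X},\mathcal{L})$ be a test configuration of $(X,L)$; after replacing $\mathcal{L}$ by $\mathcal{L}^r$ we may present the induced filtration as the one coming from a flag ideal on $X\times\mathbb{P}^1$ as in \cref{ex:blflag}, so the claim follows from \cref{lma:exana}."

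Alternatively, if one prefers to avoid the flag-ideal reduction, one can argue directly: on a resolution $\mathcal{Y}\to\mathcal{X}\to\mathbb{C}$ dominating the trivial test configuration, $\mathcal{L}$ pulls back to $p_1^*L$ twisted by an effective vertical $\mathbb{Q}$-divisor supported on the central fibre, whose components are prime divisors over $X$; the associated filtration is then a finite intersection of the divisorial filtrations of \cref{ex:def}-type, and \cref{ex:blflag} already records that such test curves are analytic and piecewise linear. Either way the substance is entirely contained in \cref{lma:exana}, and \cref{cor:tcana} is a one-line consequence.
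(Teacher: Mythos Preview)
Your proposal is correct and follows essentially the same approach as the paper: reduce to the flag-ideal situation of \cref{ex:blflag} and then invoke \cref{lma:exana}. The paper's proof is simply the one-line version you anticipated, citing \cite[Proposition~3.10]{Oda13} (rather than \cite{BHJ16}) for the reduction to a flag-ideal blow-up, followed by \cref{lma:exana}.
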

\begin{proof}
This follows from \cite[Proposition~3.10]{Oda13} and \cref{lma:exana}.
\end{proof}
\begin{remark}
The statement of \cite[Proposition~3.10]{Oda13} needs to be corrected as follows: $\mathcal{L}^r(-E)=f^*\mathcal{M}+c\mathcal{B}_0$ for some constant $c\in \mathbb{Q}$. The mistake in the proof is on the fourth line, where we need to make sure that the isomorphism between $h^*\mathcal{M}^{s}$ and $\mathcal{L}^{r}$ extends to the generic point of the central fibre.
\end{remark}

We observe the following obvious lemma.
\begin{lemma}
 Let $\psi_{\bullet}$ be an analytic test curve.  Then
\begin{equation}\label{eq:EntNAtc}
    \Ent(\psi_{\bullet})=\frac{1}{V}\int_{-\infty}^{\infty}D_L(\psi_{\tau},K_{Y_{\tau}/X})\,\mathrm{d}\tau+\frac{1}{V}\int_{-\infty}^{\infty} D_L(\psi_{\tau},\Redu\Div_Y\psi_{\tau})\,\mathrm{d}\tau\,,
\end{equation}
where $\pi_{\tau}:Y_{\tau}\rightarrow X$ is a log resolution of $\psi_{\tau}$.
\end{lemma}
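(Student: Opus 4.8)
The plan is to reduce to a pointwise statement: evaluate $\Ent([\psi_\tau])$ for each fixed $\tau$ and check that it equals $\frac{1}{V}D_L(\psi_\tau,K_{Y_\tau/X})+\frac{1}{V}D_L(\psi_\tau,\Redu\Div_{Y_\tau}\psi_\tau)$, after which integrating over $\tau\in\mathbb{R}$ and recalling $\Ent(\psi_\bullet)=\int_{-\infty}^{\infty}\Ent([\psi_\tau])\,\mathrm{d}\tau$ finishes the proof. For $\tau\geq\tau^+$ we have $\psi_\tau=-\infty$ and both sides vanish by the conventions $\Ent([-\infty])=0$, $D_L(-\infty,\cdot)=0$; where $\psi_\tau=0$ both sides vanish trivially (take $Y_\tau=X$); so only $\tau<\tau^+$ with $\psi_\tau$ unbounded matter. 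For such $\tau$, the test curve being analytic means $\psi_\tau$ has quasi-analytic singularities, so on a suitable model $\pi_\tau\colon Y_\tau\to X$ (the log resolution) the singularities are determined by the snc $\mathbb{R}$-divisor $\Div_{Y_\tau}\psi_\tau$, and $\int_X\omega_{\psi_\tau}^n>0$ by \cref{prop:tc}(2); hence $N_\tau:=\pi_\tau^*L-\Div_{Y_\tau}\psi_\tau$ is big and nef by \cref{lma:diffnab}.

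First I would show that the $\varliminf$ in \cref{def:naentgeneral} stabilizes on models dominating $Y_\tau$. Let $\Pi\colon Z\to Y_\tau$ be birational. Since the singularities of $\psi_\tau$ are determined on $Y_\tau$, we have $\Div_Z\psi_\tau=\Pi^*\Div_{Y_\tau}\psi_\tau$, so $(\pi_\tau\circ\Pi)^*L-\Div_Z\psi_\tau=\Pi^*N_\tau$ is nef, whence the movable self-intersection is the honest one: $\langle\Pi^*N_\tau\rangle^{n-1}=(\Pi^*N_\tau)^{n-1}$. Writing $K_{Z/X}=\Pi^*K_{Y_\tau/X}+K_{Z/Y_\tau}$ with $K_{Z/Y_\tau}$ effective and $\Pi$-exceptional, and noting that $\Redu\Div_Z\psi_\tau$ differs from $\Pi^*\Redu\Div_{Y_\tau}\psi_\tau$ only by $\Pi$-exceptional components, the projection formula ($(\Pi^*N_\tau)^{n-1}\cdot E=0$ for $E$ $\Pi$-exceptional) yields
\[
\left(\Pi^*N_\tau\right)^{n-1}\cdot\left(K_{Z/X}+\Redu\Div_Z\psi_\tau\right)=N_\tau^{n-1}\cdot\left(K_{Y_\tau/X}+\Redu\Div_{Y_\tau}\psi_\tau\right),
\]
independent of $Z$. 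As the models dominating $Y_\tau$ are cofinal among all birational models of $X$ (and the index set is directed), this common value is the $\varliminf$, so $\Ent([\psi_\tau])=\frac{n}{V}\bigl(N_\tau^{n-1}\cdot(K_{Y_\tau/X}+\Redu\Div_{Y_\tau}\psi_\tau)\bigr)$.

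Next I would identify the right-hand side via differentiability of the volume: since $N_\tau$ is big and nef, \eqref{eq:voldiff1}--\eqref{eq:voldiff2} give $D_L(\psi_\tau,K_{Y_\tau/X})=n\,(N_\tau^{n-1}\cdot K_{Y_\tau/X})$ and $D_L(\psi_\tau,\Redu\Div_{Y_\tau}\psi_\tau)=n\,(N_\tau^{n-1}\cdot\Redu\Div_{Y_\tau}\psi_\tau)$. Summing and dividing by $V$ reproduces $\Ent([\psi_\tau])$ exactly. Both integrands are nonnegative ($K_{Y_\tau/X}$ and $\Redu\Div_{Y_\tau}\psi_\tau$ are effective, $N_\tau$ nef), so the pointwise identity integrates to \eqref{eq:EntNAtc} with no convergence subtlety.

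The only non-formal input is the stabilization of the $\varliminf$: it combines nefness of $N_\tau$ (\cref{lma:diffnab}), needed to replace the movable self-intersection by an ordinary one, with the fact that the singularities of $\psi_\tau$ are determined on the log resolution, so that every higher exceptional contribution to $K_{\bullet/X}$ and to $\Redu\Div_\bullet\psi_\tau$ dies against $N_\tau^{n-1}$. The rest is bookkeeping with the definitions.
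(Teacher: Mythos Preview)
Your argument is correct and is exactly the unwinding the paper has in mind; the paper itself states this lemma as an ``obvious'' observation and gives no proof. The only substantive step is the one you isolate: once $N_\tau=\pi_\tau^*L-\Div_{Y_\tau}\psi_\tau$ is nef (so the movable intersection is the ordinary one) and the singularities are determined on $Y_\tau$, the projection formula kills all higher exceptional contributions and the $\varliminf$ stabilizes, after which \eqref{eq:voldiff1}--\eqref{eq:voldiff2} give the pointwise identity.
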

See \cref{subsec:diff} for the definition of $D_L$.

\subsection{Monge--Ampère energy}\label{subsec:MAE}
From this section on, we assume that $L$ is ample and $h$ is strictly positively curved, so that $\omega$ is a K\"ahler form.

\begin{theorem}[{\cite[Theorem~3.7]{DX22}}]\label{thm:LegE}
Let $\ell\in \mathcal{R}^1$. Then
\begin{equation}
\mathbf{E}(\ell)=\mathbf{E}(\hat{\ell})\,.
\end{equation}
\end{theorem}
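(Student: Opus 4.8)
The plan is to verify the identity $\mathbf{E}(\ell) = \mathbf{E}(\hat{\ell})$ by reducing it to a clean computation on the test-curve side, using the Ross--Witt Nyström correspondence and a monotone-approximation argument. First I would recall the two definitions side by side: the radial Monge--Ampère energy $\mathbf{E}(\ell) = \lim_{t\to\infty} t^{-1} E(\ell_t)$ and the test-curve energy $\mathbf{E}(\psi_\bullet) = \tau^+ + V^{-1}\int_{-\infty}^{\tau^+}(\int_X \omega_{\psi_\tau}^n - \int_X \omega^n)\,\mathrm{d}\tau$ applied to $\psi_\bullet = \hat{\ell}$. The key structural fact is that $\ell_t = \check{\hat{\ell}}_t = \sup_{\tau}(\hat{\ell}_\tau + t\tau)$, so for each fixed $t$ the potential $\ell_t$ is the inverse Legendre transform of the concave family $\tau \mapsto \hat{\ell}_\tau$.

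The main step is then to compute $E(\ell_t)$ by integrating in $\tau$. I would use the cocycle / derivative formula for the Monge--Ampère energy together with the fact that $\partial_\tau \check{\psi}_t$ picks out, at a generic point, the maximizing value of $\tau$, which by concavity identifies the sub-level structure of $\ell_t$ with the test curve. Concretely, differentiating $E(\ell_t)$ in $t$ and using $\frac{\mathrm{d}}{\mathrm{d}t}E(\ell_t) = V^{-1}\int_X \dot{\ell}_t\,\omega_{\ell_t}^n$, one rewrites the right-hand side as an integral over $\tau$ of $\int_X \omega_{\psi_\tau}^n$ against an appropriate measure; letting $t\to\infty$ and dividing by $t$ collapses this to the stated formula for $\mathbf{E}(\hat{\ell})$. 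The continuity and positivity of $\tau\mapsto \int_X \omega_{\psi_\tau}^n$ on $(-\infty,\tau^+)$ from \cref{prop:tc}, together with $\sup_X \ell_1 = \tau^+$, are exactly what is needed to make the boundary terms and the $\tau^+$ term come out right.

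To handle the unbounded case cleanly, I would first prove the identity for bounded geodesic rays (equivalently bounded test curves), where all Monge--Ampère masses equal $V$ on a finite interval $[\tau^-,\tau^+]$ and the computation is a genuine finite integration by parts, then pass to the general finite-energy case by truncation: replace $\ell$ by the geodesic ray from $0$ to $\max\{-k,\ell_t\}$ (as in Darvas's proposition recalled before \cref{lma:genext}), whose Legendre transform is the truncated test curve, and use monotone convergence of both sides as $k\to\infty$, invoking the continuity properties in \cref{prop:tc} and the lower semicontinuity of $\mathbf{E}$. The main obstacle I anticipate is justifying the interchange of the $t\to\infty$ limit with the $\tau$-integration — i.e. controlling the contribution of the region $\tau$ near $\tau^+$ where $\int_X \omega_{\psi_\tau}^n$ may drop — and showing that no mass is lost at infinity; this is precisely where one must lean on the finite-energy hypothesis $\mathbf{E}(\hat\ell) > -\infty$ and the concavity of $\tau \mapsto \log\int_X\omega_{\psi_\tau}^n$. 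I note that this statement is quoted from \cite[Theorem~3.7]{DX22}, so the cleanest exposition is simply to cite that result; the sketch above is the argument underlying it.
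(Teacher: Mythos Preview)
The paper does not give its own proof of this statement: it is simply quoted from \cite[Theorem~3.7]{DX22} with no argument supplied, and you correctly note this at the end of your proposal. Your sketch of the underlying argument (Ross--Witt Nystr\"om correspondence, discretizing in $\tau$, the derivative formula for $E$, and truncation to pass from bounded to finite-energy rays) is a faithful outline of the method actually used in \cite{DX22} and its predecessors \cite{RWN14}, \cite{DDNL18big}, so there is nothing to correct.
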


Recall that the right-hand side is defined in \eqref{eq:defE}.
\subsection{Non-archimedean \texorpdfstring{$\mathcal{L}$}{L}-functionals}

\begin{theorem}[{\cite[Theorem~1.1]{DX22}}]\label{thm:LegLk}
Let $\ell\in \mathcal{E}^{1,\NA}$.
For each $k\in \mathbb{Z}_{>0}$,
\begin{equation}
\mathscr{L}_k^{\NA}(\ell)=\mathscr{L}_k^{\NA}(\hat{\ell})\,.
\end{equation}
\end{theorem}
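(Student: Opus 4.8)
The plan is to read both sides of the identity off the degeneration of the Hilbert norms $\Hilb_k(\ell_t)$ on the fixed space $W_k:=H^0(X,K_X\otimes L^k)$ as $t\to\infty$, where $\ell=\check\psi$ is the maximal geodesic ray associated to the $\mathscr{I}$-model test curve $\psi=\hat\ell\in\TC^1_{\mathscr{I}}(X,\omega)$. Writing $\mathscr{L}_k(\ell_t)$ as a log-determinant of the Gram matrix of $\Hilb_k(\ell_t)$ relative to $\Hilb_k(0)$, the first point is that $\mathscr{L}_k^{\NA}(\ell)=\lim_{t\to\infty}t^{-1}\mathscr{L}_k(\ell_t)$ exists and is finite: Berndtsson's convexity theorem (\cref{thm:Bernconv}) makes $t\mapsto\mathscr{L}_k(\ell_t)$ convex, and the translation rule $\mathscr{L}_k(\varphi+c)=\mathscr{L}_k(\varphi)+(\mathrm{const})c$ together with $\sup_X\ell_t=O(t)$ supplies the linear bound. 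Thus everything reduces to computing $\lim_{t\to\infty}t^{-1}\log\det\Hilb_k(\ell_t)$, i.e.\ to understanding the rate at which each section of $W_k$ degenerates along $\ell$.

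The mechanism I would use is a \emph{filtration-matching}. For $s\in W_k$ one has $\|s\|^2_{\Hilb_k(\ell_t)}=\int_X(s,\bar s)_{h^k}e^{-k\check\psi_t}$ and $\check\psi_t=\sup_\tau(\psi_\tau+t\tau)\ge\psi_\tau+t\tau$, hence for every $\tau<\tau^+$ with $s\in H^0(X,K_X\otimes L^k\otimes\mathscr{I}(k\psi_\tau))$ one gets $\|s\|^2_{\Hilb_k(\ell_t)}\le e^{-kt\tau}\int_X(s,\bar s)_{h^k}e^{-k\psi_\tau}$. Choosing a basis of $W_k$ adapted to the decreasing flag $\tau\mapsto H^0(X,K_X\otimes L^k\otimes\mathscr{I}(k\psi_\tau))$ and estimating the Gram determinant by Hadamard's inequality then produces one inequality between $\mathscr{L}_k^{\NA}(\ell)$ and the right-hand side of \eqref{eq:defL}; the content of the theorem is the reverse inequality, namely that no section degenerates strictly faster than its position in this flag predicts.

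To prove the reverse inequality I would first treat the case where $\psi_\bullet$ is analytic and piecewise linear, equivalently (by \cref{cor:tcana} and density of test-configuration rays in $\mathcal{E}^{1,\NA}$) where $\ell$ is the Phong--Sturm ray of a semi-ample test configuration $(\mathcal{X},\mathcal{L})$. Here \cref{lma:qamis} identifies $\mathscr{I}(k\psi_\tau)$ with a round-down $\mathcal{O}_Y(-\lfloor\,\cdot\,\rfloor)$ on a log resolution, so $h^0(X,K_X\otimes L^k\otimes\mathscr{I}(k\psi_\tau))$ equals the dimension of a weight space of the $\mathbb{C}^*$-action on the central fibre (via the filtration of \cref{ex:ext}), and the radial slope of $\mathscr{L}_k$ along such a ray is exactly the corresponding weighted sum — the non-Archimedean limit of a finite-dimensional log-determinant computation in the spirit of Berndtsson's theorem. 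A direct bookkeeping then matches it with \eqref{eq:defL}. The general $\mathscr{I}$-model case is then to be obtained from a quasi-equisingular approximation $\psi^j$ (\cref{def:qeqa}) and the induced rays $\ell^j$: the right-hand side of \eqref{eq:defL} passes to the limit because $\mathscr{I}(k\psi^j_\tau)\supseteq\mathscr{I}(k\psi_\tau)$ and $\mathscr{I}((1+\delta)k\psi^j_\tau)\subseteq\mathscr{I}(k\psi_\tau)$ force $h^0(X,K_X\otimes L^k\otimes\mathscr{I}(k\psi^j_\tau))\to h^0(X,K_X\otimes L^k\otimes\mathscr{I}(k\psi_\tau))$ off a finite set of $\tau$ (using \cref{lma:qesana}), while the left-hand side is controlled by monotonicity of $\mathscr{L}_k$ and the convexity from \cref{thm:Bernconv}.

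The hard part will be the reverse inequality in the filtration-matching, equivalently the interchange of the limits $\lim_{t\to\infty}$ and $\lim_{j\to\infty}$ in the last step: the supremum formula for $\check\psi$ only ever gives \emph{upper} bounds for the Hilbert norms, and a pointwise decreasing limit of convex functions need not have converging asymptotic slopes. Handling this requires exploiting the $\mathscr{I}$-model hypothesis (so that the multiplier ideals of the $\psi_\tau$ capture the full asymptotic dimension, \cref{thm:DX1-4}) together with a quantitative comparison of the test curves $\hat{\ell^j}$ and $\hat\ell$ that upgrades the convexity estimates to genuine slope-continuity. An alternative that avoids the reduction to analytic $\psi$ is to produce the sections realizing the correct degeneration rate directly, by an Ohsawa--Takegoshi type extension from the central fibre of the degeneration family, which yields the matching lower bound on $\det\Hilb_k(\ell_t)$ without passing through approximation.
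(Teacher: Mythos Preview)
The paper does not contain a proof of this theorem: it is simply stated with a citation to \cite[Theorem~1.1]{DX22} and no proof environment follows. There is therefore nothing in the present paper to compare your proposal against.

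For what it is worth, your outline is in the right spirit: the convexity of $\mathscr{L}_k$ along geodesics (\cref{thm:Bernconv}) does give existence of the slope, and the filtration on $W_k=H^0(X,K_X\otimes L^k)$ by the subspaces $H^0(X,K_X\otimes L^k\otimes\mathscr{I}(k\psi_\tau))$ is indeed the object that governs both sides. The honest caveats you flag --- the reverse inequality in the filtration-matching and the interchange of the $t\to\infty$ and approximation limits --- are exactly the places where real work is needed, and your sketch does not resolve them. If you want to see how these are actually handled, you should consult the proof in \cite{DX22} directly rather than this paper.
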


The right-hand side is defined in \eqref{eq:defL} and the left-hand side is defined in \eqref{eq:defLkr}.
\subsection{\texorpdfstring{$\alpha$}{\alpha}-energy}
Let $\alpha$ be a smooth real $(1,1)$-form on $X$. 

\begin{lemma}\label{lma:derMPhi}
Let $\varphi,\psi\in \mathcal{E}^{\infty}$, then
\[
\left.\frac{\mathrm{d}}{\mathrm{d}s}\right|_{s=0}E^{\alpha}(s\psi+(1-s)\varphi)=\frac{1}{V}\int_X (\psi-\varphi)\,\alpha\wedge \omega_{\varphi}^{n-1}\,.
\]
\end{lemma}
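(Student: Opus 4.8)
The plan is to differentiate along the affine segment $u_s:=s\psi+(1-s)\varphi=\varphi+sf$, where $f:=\psi-\varphi$ is bounded (being a difference of two bounded $\omega$-psh functions), and then to collapse the resulting expression by an integration by parts together with a telescoping (Abel summation) argument. This is the standard route for first variations of energy-type functionals; the only genuine content is the bookkeeping.

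First I would record that $\omega_{u_s}=\omega_\varphi+s\,\ddc f$, with $\ddc f=\omega_\psi-\omega_\varphi$ a difference of closed positive $(1,1)$-currents carrying bounded potentials. By multilinearity of the Bedford--Taylor product in bounded potentials, $\omega_{u_s}^j$ expands as a polynomial in $s$ whose coefficients are the mixed products $\omega_\varphi^{j-i}\wedge(\ddc f)^i$; hence each summand $\int_X u_s\,\alpha\wedge\omega_{u_s}^j\wedge\omega^{n-1-j}$ is literally a polynomial in $s$, and term-by-term differentiation at $s=0$ is unproblematic. It yields
\[
\left.\frac{\mathrm{d}}{\mathrm{d}s}\right|_{s=0}E^\alpha(u_s)=\frac{1}{nV}\sum_{j=0}^{n-1}\left(\int_X f\,\alpha\wedge\omega_\varphi^j\wedge\omega^{n-1-j}+j\int_X \varphi\,\alpha\wedge\ddc f\wedge\omega_\varphi^{j-1}\wedge\omega^{n-1-j}\right)\,.
\]

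Next I would integrate by parts in the second family of integrals, moving $\ddc$ from $f$ onto $\varphi$ and using $\ddc\varphi=\omega_\varphi-\omega$, to obtain $\int_X \varphi\,\alpha\wedge\ddc f\wedge\omega_\varphi^{j-1}\wedge\omega^{n-1-j}=a_j-a_{j-1}$, where $a_j:=\int_X f\,\alpha\wedge\omega_\varphi^j\wedge\omega^{n-1-j}$ for $0\le j\le n-1$. The derivative then equals $\tfrac{1}{nV}\big(\sum_{j=0}^{n-1}a_j+\sum_{j=1}^{n-1}j(a_j-a_{j-1})\big)$, and Abel summation gives $\sum_{j=1}^{n-1}j(a_j-a_{j-1})=(n-1)a_{n-1}-\sum_{j=0}^{n-2}a_j$; the two sums telescope to $n\,a_{n-1}$, so the derivative is $\tfrac{1}{V}a_{n-1}=\tfrac{1}{V}\int_X(\psi-\varphi)\,\alpha\wedge\omega_\varphi^{n-1}$, as claimed.

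The only slightly delicate point is the integration by parts for the mixed currents $\alpha\wedge\ddc f\wedge\omega_\varphi^{j-1}\wedge\omega^{n-1-j}$; since $f$ and $\varphi$ are bounded, this is a standard fact of Bedford--Taylor theory (see \cite{GZ17}, \cite{BEGZ10}), but I would state it explicitly before using it. Everything else is purely formal.
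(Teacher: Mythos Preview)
Your argument is correct and is exactly the ``direct computation using integration by parts'' that the paper's own proof invokes by citing \cite{Xia19b}, \cite{Lu21} without writing out the details. One small remark: the integration-by-parts step $\int_X \varphi\,\alpha\wedge\ddc f\wedge\omega_\varphi^{j-1}\wedge\omega^{n-1-j}=\int_X f\,\alpha\wedge\ddc \varphi\wedge\omega_\varphi^{j-1}\wedge\omega^{n-1-j}$ uses that $\alpha\wedge\omega_\varphi^{j-1}\wedge\omega^{n-1-j}$ is closed, hence that $\alpha$ is closed; this is the tacit standing hypothesis throughout the paper (all applications are to K\"ahler or Ricci forms, and the proof of \cref{thm:LegEalpha} immediately reduces to K\"ahler $\alpha$), so it would be cleanest to state it explicitly before invoking the Bedford--Taylor identity.
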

\begin{proof}
This result is well-known when $\psi$ and $\varphi$ are smooth. In general, it follows from a direct computation using integration by parts (\cite{Xia19b}, \cite{Lu21}).
\end{proof}

\begin{theorem}\label{thm:LegEalpha}
Let $\ell\in \mathcal{E}^{1,\NA}$ or $\ell\in \mathcal{R}^{\infty}$. Then
\begin{equation}
\mathbf{E}^{\alpha}(\ell)= \mathbf{E}^{\alpha}(\hat{\ell})\,.
\end{equation}
\end{theorem}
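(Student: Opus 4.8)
\textbf{Proof plan for \cref{thm:LegEalpha}.}
The plan is to reduce the statement to the already-known energy identity \cref{thm:LegE} (i.e. $\mathbf{E}(\ell)=\mathbf{E}(\hat\ell)$) together with the formula \eqref{eq:EomegaandE} relating $E^\omega$ to $E+\frac1n\tilde J$, by first disposing of the case $\alpha=\omega$ and then bootstrapping to a general smooth real $(1,1)$-form $\alpha$ via a linearity/perturbation argument. Observe that by \cref{lma:derMPhi} the map $\varphi\mapsto E^\alpha(\varphi)$ depends linearly (to first order) on $\alpha$ through $\alpha\wedge\omega_\varphi^{n-1}$, and that, as remarked after the definition of $\mathbf{E}^\alpha(\psi_\bullet)$, the test-curve functional $\mathbf{E}^\alpha$ depends only on the cohomology class of $\alpha$; so it suffices to verify the identity for $\alpha$ ranging over a spanning set of $H^{1,1}(X,\mathbb{R})$, and for such a spanning set we may take classes of the form $c_1(L')$ for ample $\mathbb{R}$-line bundles $L'$ (equivalently, Kähler forms), since differences of Kähler classes span.

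First I would treat the bounded case $\ell\in\mathcal{R}^\infty$. Here the argument of \cite{RWN14}, \cite{DX22} applies: write $\ell_t$ in terms of its Legendre transform $\hat\ell=\psi_\bullet\in\TC^\infty$, so $\ell_t=\sup_{\tau}(\psi_\tau+t\tau)$ with $\tau$ ranging over $[\tau^-,\tau^+]$. Differentiating $E^\alpha(\ell_t)$ in $t$ using \cref{lma:derMPhi} (the potentials involved are bounded, so the lemma applies directly) gives $\frac{\mathrm d}{\mathrm dt}E^\alpha(\ell_t)=\frac1V\int_X\dot\ell_t\,\alpha\wedge\omega_{\ell_t}^{n-1}$, and $\dot\ell_t$ is, for a.e.\ $t$, the value of the $\tau$ attaining the supremum. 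One then rewrites this derivative as a Stieltjes integral against $\mathrm d\!\left(\int_X\alpha\wedge\omega_{\ell_t}^{n-1}\right)$ and integrates in $t$, using the standard fact (as in the proof of \cref{thm:LegE} in \cite{DX22}) that the distribution function $\tau\mapsto\int_X\alpha\wedge\omega_{\psi_\tau}^{n-1}$ is exactly the ``derivative data'' read off from the family $\ell_t$. Taking $t\to\infty$ and dividing by $t$ produces $\mathbf{E}^\alpha(\ell)$ on the left and the defining integral of $\mathbf{E}^\alpha(\hat\ell)$ on the right; the boundary term $\tau^+\frac1V\int_X\alpha\wedge\omega^{n-1}$ appears from the $\tau=\tau^+$ endpoint exactly as $\tau^+$ appears in $\mathbf{E}$. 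For $\alpha=\omega$ one can alternatively short-circuit this: by \eqref{eq:EomegaandE}, $\mathbf{E}^\omega(\ell)=\mathbf{E}(\ell)+\frac1n\tilde{\mathbf J}(\ell)$, and $\tilde{\mathbf J}$ has already been computed in terms of test curves (the $\tilde{\mathbf J}$-formula quoted in the strategy section, cf.\ \cref{thm:LegE} for $\mathbf{E}$), so combining gives the $\alpha=\omega$ case for free.

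Next I would pass to $\ell\in\mathcal{E}^{1,\NA}$ (equivalently a maximal geodesic ray of finite energy). The standard device is truncation: approximate $\psi_\bullet=\hat\ell$ by the bounded test curves $\psi^{(k)}_\bullet$ obtained by replacing $\psi_\tau$ with $\max\{\psi_\tau,-k\}$-type truncations at the level of $\tau$ (i.e.\ cutting off $\tau$ below some $-k$), whose inverse Legendre transforms $\ell^{(k)}$ are the bounded geodesic rays from $0$ to the truncated endpoints, as in the Darvas result quoted before \cref{ex:blflag}. For each $k$ the bounded case gives $\mathbf{E}^\alpha(\ell^{(k)})=\mathbf{E}^\alpha(\hat{\ell^{(k)}})$. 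One then lets $k\to\infty$: on the test-curve side, monotone convergence of $\psi^{(k)}_\tau\uparrow\psi_\tau$ together with continuity of $\tau\mapsto\int_X\alpha\wedge\omega_{\psi_\tau}^{n-1}$ (which for $\alpha$ a Kähler class follows from \cref{prop:tc}(1) applied in that class, and in general by writing $\alpha$ as a difference of Kähler classes) gives convergence of $\mathbf{E}^\alpha(\hat{\ell^{(k)}})\to\mathbf{E}^\alpha(\hat\ell)$ by dominated convergence (the integrand is controlled by volumes, which are bounded by $\vol(L')$-type constants); on the radial side, $\mathbf{E}^\alpha(\ell^{(k)})\to\mathbf{E}^\alpha(\ell)$ by the continuity properties of the radial $\alpha$-energy along increasing sequences of rays, e.g.\ via \cite[Proposition~4.5, Theorem~4.7]{BDL17} applied to $E^\alpha$, after writing $\alpha$ as a difference of Kähler forms so that $E^\alpha$ is a difference of genuine $E^{\omega'}$ functionals to which those continuity statements apply.

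The main obstacle I expect is the exchange of limits in the final step for a \emph{general} $(1,1)$-form $\alpha$ rather than a Kähler one: $E^\alpha$ is not monotone in $\varphi$ when $\alpha$ is not semipositive, so one cannot invoke the monotone-convergence machinery of \cite{BDL17} verbatim, and one must instead split $\alpha=\omega_1-\omega_2$ into Kähler pieces, handle each piece (where monotonicity and all the convergence theorems are available), and recombine by linearity — keeping careful track that both $\mathbf{E}^{\omega_i}(\ell)$ and $\mathbf{E}^{\omega_i}(\hat\ell)$ are finite, which uses the finite-energy hypothesis $\ell\in\mathcal{E}^{1,\NA}$ (or $\ell\in\mathcal{R}^\infty$) in an essential way. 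A secondary technical point is justifying \cref{lma:derMPhi} and the Stieltjes rewriting for merely bounded (not smooth) potentials in the bounded case, but this is exactly the integration-by-parts input cited there from \cite{Xia19b}, \cite{Lu21}, so it should go through without new difficulties.
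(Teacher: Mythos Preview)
Your plan for the bounded case $\ell\in\mathcal{R}^\infty$ is on target and essentially coincides with the paper's approach: both follow the Ross--Witt Nystr\"om strategy, and the paper carries out exactly the discrete approximation you allude to (defining $\check\psi_t^{N,M}=\max_{k\le M}(\psi_{k2^{-N}}+tk2^{-N})$, bounding $E^\alpha(\check\psi_t^{N,M+1})-E^\alpha(\check\psi_t^{N,M})$ via \cref{lma:derMPhi} and the comparison principle, summing in $M$, and taking limits $N\to\infty$, $t\to\infty$). Your ``Stieltjes rewriting'' sketch hides precisely this work, but you correctly identify that it is the same machinery as in the proof of \cref{thm:LegE}.

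Two points deserve correction.

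First, your ``short-circuit'' for $\alpha=\omega$ via \eqref{eq:EomegaandE} and the $\tilde J$ formula is circular: in this paper the identity $\tilde{\mathbf J}(\ell)=\tilde{\mathbf J}(\hat\ell)$ is \cref{cor:Jtildeslope}, which is \emph{deduced from} \cref{thm:LegEalpha}. You cannot invoke it here. (Since you offer it only as an alternative, this does not sink your main argument, but it should be deleted.)

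Second, and more seriously, your reduction from $\mathcal{E}^{1,\NA}$ to $\mathcal{R}^\infty$ by ``truncating the test curve at $\tau=-k$'' is not well-formulated. A finite-energy test curve need not satisfy $\psi_{-k}=0$, so cutting off at $\tau=-k$ destroys concavity and does not produce a bounded test curve; and geodesic \emph{rays} have no ``truncated endpoints''. The Darvas proposition you cite concerns the ray associated to a \emph{single} model potential $\psi$ via the construction of \cref{subsec:defn}, approximated by finite geodesic \emph{segments} from $0$ to $\max\{-k,\psi\}$; this does not yield bounded geodesic \emph{rays} approximating a general maximal ray. Likewise, \cite{BDL17} establishes convexity of functionals along geodesics, not the continuity of radial functionals along increasing families of rays that you would need.

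The paper handles the $\mathcal{E}^{1,\NA}$ case differently and more simply: it invokes \cite{BBJ15} to write $\ell$ as a \emph{decreasing} limit of Phong--Sturm geodesic rays $\ell^j\in\mathcal{R}^\infty$, and then passes to the limit using monotone convergence on the test-curve side and \cite[(121)]{Li20} on the radial side. This decreasing approximation by rays induced by test configurations is the standard device here, and you should replace your truncation step with it.
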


The strategy of the proof first appeared in \cite{RWN14}.

\begin{proof}
Without loss of generality, we may assume that $\alpha$ is a Kähler form and $\sup_X \ell_1=0$.

We first assume that $\ell\in \mathcal{R}^{\infty}$.
We fix a few notations.
Let $\psi_{\bullet}$ be the Legendre transform of $\ell$.
Now for each $N\in \mathbb{N}$, $M\in \mathbb{Z}$, $t\geq 0$, we introduce
\[
\check{\psi}_t^{N,M}:=\max_{\substack{k\in \mathbb{Z}\\ k\leq M}} (\psi_{k2^{-N}}+tk2^{-N})\,.
\]
Let
\[
U_t^{N,M}:=\left\{\,x\in X:\check{\psi}_t^{N,M+1}(x)>\check{\psi}_t^{N,M}(x)\,\right\}\,.
\]
Observe that on $U_t^{N,M}$,
\begin{equation}\label{eq:checkpsivar}
\check{\psi}_t^{N,M+1}=\psi_{(M+1)2^{-N}}+t(M+1)2^{-N}\,,\quad \check{\psi}_t^{N,M}=\psi_{M2^{-N}}+tM2^{-N}\,.    
\end{equation}
By \cref{lma:derMPhi},
\[
E^{\alpha}(\check{\psi}_t^{N,M+1})-E^{\alpha}(\check{\psi}_t^{N,M})=\frac{1}{V}\bigintss_{\,\,0}^1 \bigintss_X \left(\check{\psi}_t^{N,M+1}-\check{\psi}_t^{N,M}\right)\alpha\wedge \omega_{s\check{\psi}_t^{N,M+1}+(1-s)\check{\psi}_t^{N,M}}^{n-1} \,\mathrm{d}s\,.
\]
By the comparison principle (\cite[Proposition~3.5]{DDNL18mono}),
\[
\begin{aligned}
\int_{U_t^{N,M}} (\check{\psi}_t^{N,M+1}-\check{\psi}_t^{N,M})\alpha\wedge \omega_{\check{\psi}_t^{N,M+1}}^{n-1}\leq &
\int_X (\check{\psi}_t^{N,M+1}-\check{\psi}_t^{N,M})\alpha\wedge \omega_{s\check{\psi}_t^{N,M+1}+(1-s)\check{\psi}_t^{N,M}}^{n-1}\\
\leq & \int_{U_t^{N,M}} (\check{\psi}_t^{N,M+1}-\check{\psi}_t^{N,M})\alpha\wedge \omega_{\check{\psi}_t^{N,M}}^{n-1}\,.
\end{aligned}
\]
We first deal with the upper bound,
\[
E^{\alpha}(\check{\psi}_t^{N,M+1})-E^{\alpha}(\check{\psi}_t^{N,M})
\leq 2^{-N}V^{-1}t\int_{U_t^{N,M}} \alpha\wedge \omega_{\psi_{M 2^{-N}}}^{n-1}\,.
\]
Set $\tau^-:=\inf_X \ell_1$. 
Take the sum with respect to $M$ from $[\tau^-]2^N$ to $-1$, we get
\[
\begin{aligned}
-t[\tau^-]V^{-1}\int_X \omega^{n-1}\wedge \alpha+E^{\alpha}(\check{\psi}_t^{N,0})\leq & 2^{-N}V^{-1}t\sum_{M=[\tau^-]2^N}^{-1}\int_{U_t^{N,M}} \alpha\wedge \omega_{\psi_{M 2^{-N}}}^{n-1}\\
\leq & 2^{-N}V^{-1}t\sum_{M=[\tau^-]2^N}^{-1}\int_X \alpha\wedge \omega_{\psi_{M 2^{-N}}}^{n-1}\,.
\end{aligned}
\]
Let $N\to\infty$ and then $t\to\infty$, we get
\[
\mathbf{E}^{\alpha}(\ell)\leq \frac{1}{V}\int_{[\tau^-]}^0 \left(\int_X \alpha\wedge \omega_{\psi_{\tau}}^{n-1}-\int_X \alpha\wedge \omega^{n-1}\right)\,\mathrm{d}\tau\,.
\]
Now we deal with the lower bound part. We have
\[
\begin{split}
E^{\alpha}(\check{\psi}_t^{N,M+1})-E^{\alpha}(\check{\psi}_t^{N,M})
\geq & 2^{-N}V^{-1}t\int_{U_t^{N,M}} \alpha\wedge \omega_{\psi_{(M+1) 2^{-N}}}^{n-1}\\
&+V^{-1}\int_{U_t^{N,M}}(\psi_{(M+1)2^{-N}}-\psi_{M 2^{-N}}) \alpha\wedge \omega_{\psi_{(M+1) 2^{-N}}}^{n-1}\,.
\end{split}
\]
Taking summation with respect to $M$ from $[\tau^-]2^N$ to $-1$, we get
\[
\begin{aligned}
 \frac{1}{t}E^{\alpha}(\check{\psi}^{N,0}_t)\geq & 2^{-N}V^{-1}\sum_{M=[\tau^-]2^N}^{-1}\int_{U_t^{N,M}} \alpha\wedge \omega_{\psi_{(M+1) 2^{-N}}}^{n-1}\\
&+V^{-1}t^{-1}\sum_{M=[\tau^-]2^N}^{-1}\int_{U_t^{N,M}}(\psi_{(M+1)2^{-N}}-\psi_{M 2^{-N}}) \alpha\wedge \omega_{\psi_{(M+1) 2^{-N}}}^{n-1}\\
&+[\tau^-]V^{-1}\int_X \omega^{n-1}\wedge \alpha\,.
\end{aligned}
\]
Note that as $t\to\infty$, $\mathds{1}_{U_t^{N,M}}\to 1$ outside a pluripolar set if $M<-1$. Hence
\[
\begin{aligned}
\varliminf_{t\to\infty}\frac{1}{t}E^{\alpha}(\check{\psi}^{N,0}_t)\geq  & 2^{-N}V^{-1}\sum_{M=[\tau^-]2^N}^{-2}\int_X \alpha\wedge \omega_{\psi_{(M+1) 2^{-N}}}^{n-1}\\
&+\varliminf_{t\to\infty}(Vt)^{-1}\sum_{M=[\tau^-]2^N}^{-1}\int_{U_t^{N,M}}(\psi_{(M+1)2^{-N}}-\psi_{M 2^{-N}}) \alpha\wedge \omega_{\psi_{(M+1) 2^{-N}}}^{n-1}\\
&+[\tau^-]V^{-1}\int_X \alpha\wedge\omega^{n-1}\,.
\end{aligned}
\]
Observe that $\check{\psi}_t^{N,0}\leq \ell_t$, so
\[
\varliminf_{t\to\infty}\frac{1}{t}E^{\alpha}(\check{\psi}^{N,0}_t)\leq \lim_{t\to\infty}\frac{1}{t}E^{\alpha}(\ell_t)=\mathbf{E}^{\alpha}(\ell)\,.
\]
Observe that 
\[
U_{t}^{N,M}\setminus S \subseteq \left\{\,2^N(\psi_{(M+1)2^{-N}}-\psi_{M 2^{-N}})>-t\,\right\}\setminus S\,,
\]
where $S$ is the pluripolar set $\{\psi_{M 2^{-N}}=-\infty\}$.

Let
\[
F^N(t):=2^{-N}\sum_{M=[\tau^-]2^N}^{-1}\bigintss_{\left\{2^N(\psi_{(M+1)2^{-N}}-\psi_{M 2^{-N}})>-t\right\}}\alpha\wedge \omega_{\psi_{(M+1) 2^{-N}}}^{n-1}\,.
\]
Then
\[
\begin{split}
&\sum_{M=[\tau^-]2^N}^{-1}\int_{U_t^{N,M}}(\psi_{(M+1)2^{-N}}-\psi_{M 2^{-N}}) \alpha\wedge \omega_{\psi_{(M+1) 2^{-N}}}^{n-1}\\
\geq & 2^{-N}\sum_{M=[\tau^-]2^N}^{-1}\bigintss_{\left\{2^N(\psi_{(M+1)2^{-N}}-\psi_{M 2^{-N}})>-t\right\}}2^N(\psi_{(M+1)2^{-N}}-\psi_{M 2^{-N}})\,\alpha\wedge \omega_{\psi_{(M+1) 2^{-N}}}^{n-1}\\
=& -2^{-N}\sum_{M=[\tau^-]2^N}^{-1}\bigintss_0^t \,\mathrm{d}a \bigintss_{\left\{-a\geq 2^N(\psi_{(M+1)2^{-N}}-\psi_{M 2^{-N}})>-t\right\}}\alpha\wedge \omega_{\psi_{(M+1) 2^{-N}}}^{n-1}\\
=& -\int_0^t \left( F^N(t)-F^N(a) \right) \,\mathrm{d}a
\,.
\end{split}
\]
Observe that $F^N$ is bounded and increasing, so we conclude
\[
\varliminf_{t\to\infty}t^{-1}\sum_{M=[\tau^-]2^N}^{-1}\int_{U_t^{N,M}}(\psi_{(M+1)2^{-N}}-\psi_{M 2^{-N}}) \alpha\wedge \omega_{\psi_{(M+1) 2^{-N}}}^{n-1}\geq 0\,.
\]
We conclude 
\[
\mathbf{E}^{\alpha}(\ell)\geq \frac{1}{V}\int_{[\tau^-]}^0 \left(\int_X \alpha\wedge \omega_{\psi_{\tau}}^{n-1}-\int_X \alpha\wedge \omega^{n-1}\right)\,\mathrm{d}\tau\,.
\]

Now we deal with the case where $\ell\in \mathcal{E}^{1,\NA}$. It suffices to write $\ell$ as a decreasing limit of a sequence of Phong--Sturm geodesic rays $\ell^j\in \mathcal{R}^{\infty}$ as in \cite{BBJ15} and apply the monotone convergence theorem and \cite[(121)]{Li20}.
\end{proof}

\begin{corollary}\label{cor:Jtildeslope}
Let $\ell\in \mathcal{E}^{1,\NA}$, let $\psi=\hat{\ell}$, then
\[
\tilde{\mathbf{J}}(\ell)=\tilde{\mathbf{J}}(\psi_{\bullet})\,.
\]
\end{corollary}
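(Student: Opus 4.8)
The plan is to deduce this formally from the two preceding Legendre-transform identities, combined with the algebraic relation between $\tilde{J}$ and $E^{\omega}-E$. First, recall from \eqref{eq:EomegaandE} that $E^{\omega}=E+\tfrac1n\tilde{J}$ as functionals on $\mathcal{E}^1$, so that $\tilde{J}=n(E^{\omega}-E)$. Evaluating this identity along $\ell_t$, dividing by $t$ and letting $t\to\infty$ — each of the three slopes exists by \cite[Proposition~4.5, Theorem~4.7]{BDL17} — gives $\tilde{\mathbf{J}}(\ell)=n\mathbf{E}^{\omega}(\ell)-n\mathbf{E}(\ell)$.

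Next I would apply \cref{thm:LegEalpha} with $\alpha=\omega$ (a Kähler form, so the hypothesis is satisfied), which yields $\mathbf{E}^{\omega}(\ell)=\mathbf{E}^{\omega}(\hat{\ell})$, and \cref{thm:LegE}, which yields $\mathbf{E}(\ell)=\mathbf{E}(\hat{\ell})$. Setting $\psi_{\bullet}=\hat{\ell}$ and using the defining formula $\tilde{\mathbf{J}}(\psi_{\bullet})=n\mathbf{E}^{\omega}(\psi_{\bullet})-n\mathbf{E}(\psi_{\bullet})$ for the $\tilde{J}$-functional on test curves, we obtain
\[
\tilde{\mathbf{J}}(\ell)=n\mathbf{E}^{\omega}(\hat{\ell})-n\mathbf{E}(\hat{\ell})=\tilde{\mathbf{J}}(\psi_{\bullet})\,.
\]
There is no genuine obstacle here: the only point worth noting is that the relation $\tilde{J}=n(E^{\omega}-E)$ survives passage to the slope at infinity, which is immediate by linearity once all radial functionals involved are known to be well defined. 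The substantive content has already been absorbed into the proofs of \cref{thm:LegE} and \cref{thm:LegEalpha}, so the corollary is purely bookkeeping.
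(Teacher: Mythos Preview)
Your proof is correct and follows exactly the same route as the paper: the paper's proof also just cites \cref{thm:LegEalpha}, \cref{thm:LegE}, and \eqref{eq:EomegaandE}. The only (harmless) redundancy is your remark that $\alpha=\omega$ is K\"ahler---\cref{thm:LegEalpha} is stated for an arbitrary smooth real $(1,1)$-form, so no hypothesis needs checking there.
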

\begin{proof}
This follows from \cref{thm:LegEalpha}, \cref{thm:LegE} and \eqref{eq:EomegaandE}.
\end{proof}

\begin{corollary}
Let $\ell^m\in \mathcal{E}^{1,\NA}$ ($m\in \mathbb{Z}_{>0}$) be a decreasing sequence of maximal geodesic rays. Let $\ell\in \mathcal{R}^1$ be its limit. Then $\mathbf{E}^{\alpha}(\ell^m)\to \mathbf{E}^{\alpha}(\ell)$ as $m\to\infty$.
\end{corollary}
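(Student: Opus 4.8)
The plan is to transport the whole problem to the space of test curves, where $\mathbf{E}^{\alpha}$ is given by an explicit integral that is well-behaved under monotone limits. First I would reduce to the case where $\alpha$ is a Kähler form, which is legitimate since $\mathbf{E}^{\alpha}$ depends only on the cohomology class of $\alpha$ and every class is a difference of Kähler classes. By \cref{thm:LegEalpha} applied to the maximal rays $\ell^m$ we have $\mathbf{E}^{\alpha}(\ell^m)=\mathbf{E}^{\alpha}(\hat{\ell^m})$, the right-hand side being the test-curve functional. Since the Legendre transform is order-preserving and commutes with infima, $\hat{\ell^m}$ decreases to $\hat{\ell}$, that is $(\hat{\ell^m})_{\tau}\searrow(\hat{\ell})_{\tau}$ for every $\tau$; and $\hat{\ell}\in\TC^1(X,\omega)$ by the Ross--Witt Nystr\"om correspondence. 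For $\tau<\tau^+:=\sup_X\ell_1$ the potential $(\hat{\ell})_{\tau}$ has positive mass by \cref{prop:tc}, and it is $\mathscr{I}$-model as a decreasing limit of $\mathscr{I}$-model potentials of positive mass (\cite[Lemma~2.20]{DX22}). Hence $\hat{\ell}\in\TC^1_{\mathscr{I}}(X,\omega)$, so $\ell$ is itself a maximal geodesic ray and \cref{thm:LegEalpha} also yields $\mathbf{E}^{\alpha}(\ell)=\mathbf{E}^{\alpha}(\hat{\ell})$. Thus the statement reduces to the continuity $\mathbf{E}^{\alpha}(\hat{\ell^m})\to\mathbf{E}^{\alpha}(\hat{\ell})$ of the test-curve functional along the decreasing sequence $\hat{\ell^m}$.

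For this I would write out, with $\tau^+_m:=\sup_X\ell^m_1$,
\[
\mathbf{E}^{\alpha}(\hat{\ell^m})=\frac{\tau^+_m}{V}\int_X\alpha\wedge\omega^{n-1}+\frac{1}{V}\int_{-\infty}^{\tau^+_m}\Bigl(\int_X\alpha\wedge\omega_{(\hat{\ell^m})_{\tau}}^{n-1}-\int_X\alpha\wedge\omega^{n-1}\Bigr)\,\mathrm{d}\tau\,.
\]
The sequence $\tau^+_m$ decreases and, by a Dini-type compactness argument using the upper semicontinuity of $\ell^m_1$, $\tau^+_m\to\tau^+$, so the first term converges. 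In the integral the quantities $\int_X\alpha\wedge\omega^{n-1}-\int_X\alpha\wedge\omega_{(\hat{\ell^m})_{\tau}}^{n-1}$ are non-negative and increasing in $m$ by the monotonicity of non-pluripolar products (\cite{WN19}), so monotone convergence applies once their pointwise limit is identified, the change of the upper limit being absorbed by $(\tau^+_m-\tau^+)\to0$. Hence the whole problem comes down to proving that $\int_X\alpha\wedge\omega_{(\hat{\ell^m})_{\tau}}^{n-1}\to\int_X\alpha\wedge\omega_{(\hat{\ell})_{\tau}}^{n-1}$ for almost every $\tau<\tau^+$.

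This last point is the one I expect to be the real obstacle, because mixed non-pluripolar masses need not be continuous along decreasing sequences in general; the way around it is to feed in the affineness of the Monge--Amp\`ere energy. By \cref{thm:LegE}, the affineness of $E$ along geodesics, and the continuity of $E$ along decreasing sequences in $\mathcal{E}^1$, one gets $\mathbf{E}(\hat{\ell^m})=\mathbf{E}(\ell^m)=E(\ell^m_1)\to E(\ell_1)=\mathbf{E}(\ell)=\mathbf{E}(\hat{\ell})$. Inserting this into $\mathbf{E}(\psi_{\bullet})=\tau^++V^{-1}\int_{-\infty}^{\tau^+}(\int_X\omega_{\psi_{\tau}}^n-\int_X\omega^n)\,\mathrm{d}\tau$ and using that the integrands $\int_X\omega^n-\int_X\omega_{(\hat{\ell^m})_{\tau}}^n$ are non-negative, increasing in $m$, and bounded above by $\int_X\omega^n-\int_X\omega_{(\hat{\ell})_{\tau}}^n$, a monotone convergence/squeeze argument forces $\int_X\omega_{(\hat{\ell^m})_{\tau}}^n\to\int_X\omega_{(\hat{\ell})_{\tau}}^n$ for a.e.\ $\tau<\tau^+$. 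Convergence of the top masses along the decreasing sequence $(\hat{\ell^m})_{\tau}\searrow(\hat{\ell})_{\tau}$ then propagates to all the non-pluripolar products (continuity of non-pluripolar products along decreasing sequences once the masses converge, \cite{WN19}), which gives the desired a.e.\ convergence of $\int_X\alpha\wedge\omega_{(\hat{\ell^m})_{\tau}}^{n-1}$ and completes the argument: $\mathbf{E}^{\alpha}(\ell^m)=\mathbf{E}^{\alpha}(\hat{\ell^m})\to\mathbf{E}^{\alpha}(\hat{\ell})=\mathbf{E}^{\alpha}(\ell)$. The reason the interchange of the limits $t\to\infty$ (defining $\mathbf{E}^{\alpha}$) and $m\to\infty$ cannot be carried out directly on $\mathcal{R}^1$ is precisely that $E^{\alpha}$ fails to be affine, or even concave, along geodesics; passing to test curves is what linearises the problem.
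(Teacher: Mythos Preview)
Your argument is correct and follows essentially the same route as the paper: reduce to $\alpha$ K\"ahler, invoke \cref{thm:LegEalpha} on both sides, use the formula for $\mathbf{E}$ together with monotone convergence to get $\int_X\omega_{(\hat{\ell^m})_\tau}^n\to\int_X\omega_{(\hat{\ell})_\tau}^n$ for a.e.\ $\tau$, and then upgrade this to convergence of the mixed masses $\int_X\alpha\wedge\omega_{(\hat{\ell^m})_\tau}^{n-1}$. The only cosmetic differences are that the paper normalises $\sup_X\ell^m_1=0$ instead of tracking $\tau^+_m\to\tau^+$ (your compactness argument for this is fine, since the sup of an usc function on a compact set is attained and the nested closed sets $\{\,\ell^m_1\ge\tau^++\epsilon\,\}$ would otherwise intersect), argues maximality of $\ell$ via completeness of $\mathcal{E}^{1,\NA}$ rather than through \cite[Lemma~2.20]{DX22}, and phrases the final step through $d_{\mathcal{S}}$-convergence \cite[Theorem~1.1]{DDNLmetric} rather than citing \cite{WN19} directly; the latter reference is for monotonicity, while the convergence of mixed masses along decreasing sequences with converging top mass is the content of \cite{DDNL18mono}/\cite{DDNLmetric}, so you may want to adjust that citation.
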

This generalizes \cite[(121)]{Li20}. From our proof, it is easy to drop the condition that $\ell^m$ be decreasing when $\ell\in \mathcal{R}^{\infty}$.
\begin{proof}
Without loss of generality, we may assume that $\alpha$ is a K\"ahler form. We may and do assume that $\ell^m_0=0$, $\sup_X \ell^m_1=0$. 

Observe that $\ell$ is maximal by the completeness of $\mathcal{E}^{1,\NA}$ (see for example \cite[Theorem~1.2]{DX22}, \cite[Example~3.3]{Xia19b}).
By \cref{thm:LegEalpha}, it suffices to prove that
\[
\int_{-\infty}^0 \left(\int_X \alpha\wedge\omega_{\hat{\ell}^j_{\tau}}^{n-1}-\int_X \alpha\wedge\omega^{n-1}\right)\,\mathrm{d}\tau \to \int_{-\infty}^0 \left( \int_X \alpha\wedge\omega_{\hat{\ell}_{\tau}}^{n-1}-\int_X \alpha\wedge\omega^{n-1}\right)\,\mathrm{d}\tau\,.
\]
By monotone convergence theorem, it suffices to prove that for almost all $\tau< 0$,
\[
\int_X \alpha\wedge\omega_{\hat{\ell}^j_{\tau}}^{n-1}\to \int_X \alpha\wedge\omega_{\hat{\ell}_{\tau}}^{n-1}\,.
\]
It suffices to show that $\hat{\ell}_{\tau}$ is the $d_{\mathcal{S}}$-limit (\cite[Theorem~1.1]{DDNLmetric}) of $\hat{\ell}^j_{\tau}$ for almost all $\tau<0$. In turn, it suffices to show that $\int_X \omega_{\hat{\ell}^j_{\tau}}^n\to\int_X \omega_{\hat{\ell}_{\tau}}^n$ for almost all $\tau<0$. This follows from the continuity of $E^{\NA}$ along decreasing sequences and \cite[Theorem~1.1]{DX22}.
\end{proof}

\subsection{Entropy, dreamy quasi-psh function}\label{subsec:dreamy}
Results in this section are special cases of the results of \cref{sec:vBer}, we will be sketchy here.

Let $\psi\in \PSH(X,\omega)$ be a dreamy (\cref{def:dre}) potential with analytic singularities. Let $\pi:Y\rightarrow X$ be a log resolution of $\psi$, which is a composition of blowing-ups with smooth centers. Assume that $\Div_Y\psi$ is integral.
Let $(\mathcal{X},\mathcal{L})$ be the test configuration induced by $\psi$, namely
\[
\mathcal{X}=\Projrel_{\mathbb{C}} \bigoplus_{k\in \mathbb{Z}_{\geq 0}} \bigoplus_{j\in \mathbb{Z}_{\geq 0}} t^{-j} H^0(Y,kAL-j\Div_Y\psi)\,,
\]
where $A\in \mathbb{Z}_{>0}$ is a sufficiently divisible integer such that the algebra is generated in degree $k=1$. Take $\mathcal{L}=\mathcal{O}_{\mathcal{X}}(1)$.
Recall that the test curve induced by $(\mathcal{X},\mathcal{L})$ is $\psi^+_{\bullet}$. By taking further blowing-ups, we may assume that $\pi$ also resolves the singularities of all $\psi_{\tau}$, for $\tau\in \mathbb{Q}$, $\tau<\Psef(\psi)$.
The filtration induced by $(\mathcal{X},\mathcal{L})$ is
\[
\mathscr{F}^{\lambda}H^0(X,L^k)=H^0(Y,k\pi^*L-\lambda \Div_Y\psi)
\]
for $\lambda\geq 0$.

We slightly reformulate \cref{lma:volfiltc} in our setting.
\begin{lemma}\label{lma:eqvol}
For any $\tau<\Psef(\psi)$, we have
\begin{equation}\label{eq:intvol}
    \int_X \omega_{\psi^+_{\tau}}^n=\vol(\pi^*L-\tau\Div_Y\psi)\,.    
\end{equation}
\end{lemma}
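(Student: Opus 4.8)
The plan is to obtain this directly from \cref{lma:volfiltc} once the relevant filtration is recognized as finitely generated. First I would note that the dreamy hypothesis on $\psi$ says precisely that the double-graded ring $R(X,L,\psi)$ is finitely generated, and that this ring is, up to the harmless rescaling by the integer $A$ (which does not affect finite generation), the Rees algebra of the $\mathbb{Z}$-filtration
\[
\mathscr{F}^{\lambda}H^0(X,L^k)=H^0\bigl(Y,k\pi^*L-\lambda\Div_Y\psi\bigr)\qquad(\lambda\geq 0)
\]
on $R(X,L)$. Hence $\mathscr{F}^{\bullet}$ is a finitely generated $\mathbb{Z}$-filtration, and by \cref{ex:ext} the test curve it induces via \eqref{eq:testcurvfromfilt} is exactly $\psi^+_{\bullet}$, whose truncation parameter is $\tau^+=\Psef(\psi)$ by the definition of the extended deformation to the normal cone.

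Next I would fix $\tau<\Psef(\psi)=\tau^+$. By \cref{prop:tc}(2) we have $\int_X\omega_{\psi^+_{\tau}}^n>0$, so the equality clause of \cref{lma:volfiltc} applies and yields
\[
\int_X\omega_{\psi^+_{\tau}}^n=\lim_{k\to\infty}\frac{n!}{k^n}\dim\mathscr{F}^{k\tau}H^0(X,L^k)=\lim_{k\to\infty}\frac{n!}{k^n}h^0\bigl(Y,k\pi^*L-\lfloor k\tau\rfloor\Div_Y\psi\bigr),
\]
the exact rounding of $k\tau$ being immaterial for the limit.

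It then remains to identify this limit with $\vol(\pi^*L-\tau\Div_Y\psi)$. For $0\leq\tau<\Psef(\psi)$ the class $\pi^*L-\tau\Div_Y\psi$ is big, being a convex combination of the big class $\pi^*L$ and the pseudo-effective class $\pi^*L-\Psef(\psi)\Div_Y\psi$; so the identification is the standard continuity of the volume along the ray $t\mapsto\pi^*L-t\Div_Y\psi$ (\cite{LM09}, together with \cref{thm:diffv} for the continuity of $\vol$ in the big cone): for rational $\tau$ it is immediate from the definition of $\vol$ upon choosing $k$ divisible, and for general $\tau$ it follows since $\tfrac1k\lfloor k\tau\rfloor\to\tau$. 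I do not anticipate a real obstacle; the only point needing a little care is the bookkeeping identifying $R(X,L,\psi)$ with the Rees algebra of $\mathscr{F}^{\bullet}$ up to the factor $A$, and confirming — as recorded in \cref{ex:ext} and the surrounding discussion — that $\psi^+_{\bullet}$ is the test curve attached to $\mathscr{F}^{\bullet}$.
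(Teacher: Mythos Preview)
Your proposal is correct and follows exactly the route the paper intends: the paper gives no explicit proof, stating only that the lemma is a slight reformulation of \cref{lma:volfiltc} in the dreamy setting, and you have filled in precisely those details (finite generation of the filtration from the dreamy hypothesis, identification of the induced test curve with $\psi^+_{\bullet}$ via \cref{ex:ext}, and the standard identification of the limit with $\vol(\pi^*L-\tau\Div_Y\psi)$). The invocation of \cref{prop:tc}(2) is harmless but not needed, since the equality clause of \cref{lma:volfiltc} only requires $\tau<\tau^+$ and finite generation.
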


\begin{corollary}\label{cor:dzd}
 For any $0\leq \tau< \Psef(\psi)$, the decomposition
 \[
\pi^*L-\tau \Div_Y\psi=(\pi^*L-\Div_Y\psi^+_{\tau})+N_{\tau}
 \]
 is the divisorial Zariski decomposition (\cite{Bou04}, \cite{Nak04}). More precisely, $(\pi^*L-\Div_Y\psi^+_{\tau})$ is the movable part of $\pi^*L-\tau \Div_Y\psi$.
\end{corollary}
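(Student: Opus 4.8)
The plan is to identify the two ingredients of the divisorial Zariski decomposition — the movable (equivalently, positive/nef in codimension one) part and the negative (rigid, exceptional) part — and match them against the displayed splitting. Recall that for a big $\mathbb{R}$-divisor class $\beta$ on the smooth projective variety $Y$, the divisorial Zariski decomposition $\beta = P(\beta) + N(\beta)$ is uniquely characterized by the two properties: $P(\beta)$ is movable (its divisorial Zariski/negative part vanishes), and $\mathrm{vol}(\beta) = \mathrm{vol}(P(\beta)) = (P(\beta)^{n})$, together with $N(\beta)$ effective and $P(\beta)\cdot N(\beta)$ sitting in the expected position (see \cite{Bou04}, \cite{Nak04}). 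So it suffices to check that $\beta_\tau := \pi^*L - \tau\,\Div_Y\psi$ is big, that $\pi^*L - \Div_Y\psi^+_\tau$ is movable, that $N_\tau := \beta_\tau - (\pi^*L - \Div_Y\psi^+_\tau) = \Div_Y\psi^+_\tau - \tau\,\Div_Y\psi$ is effective, and that the volumes agree.

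First I would record bigness: for $0 \le \tau < \Psef(\psi)$ the class $\pi^*L - \tau\,\Div_Y\psi$ is pseudo-effective by definition of the pseudo-effective threshold, and by \cref{lma:eqvol} we have $\mathrm{vol}(\pi^*L - \tau\,\Div_Y\psi) = \int_X \omega_{\psi^+_\tau}^n$, which is positive for $\tau < \Psef(\psi) = \tau^+$ by \cref{prop:tc}(2) applied to the test curve $\psi^+_\bullet$; hence $\beta_\tau$ is big. Next, effectivity of $N_\tau$: after passing to a model on which $\psi$ is determined (which we may, since the decomposition is compatible with pullback), $\Div_Y\psi^+_\tau$ is the singularity divisor of $\psi^+_\tau = \psi_{\ge \tau \Div_Y\psi}$, and since $\psi^+_\tau$ is by construction more singular than (a potential with Lelong numbers bounded below by) $\tau\,\Div_Y\psi$ — indeed $\tau\,\Div_Y\psi$ is one of the divisorial lower bounds imposed in the envelope defining $\psi_{\ge \tau\Div_Y\psi}$ — we get $\nu_E(\psi^+_\tau) \ge \tau\,\nu_E(\psi)$ for every prime divisor $E$ on $Y$, i.e. $\Div_Y\psi^+_\tau \ge \tau\,\Div_Y\psi$, so $N_\tau \ge 0$.

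The heart of the argument is to show $\pi^*L - \Div_Y\psi^+_\tau$ is movable and that its volume equals $\mathrm{vol}(\beta_\tau)$. For the volume identity, combine \cref{lma:eqvol} with \cref{thm:volIm} (or \cref{cor:charImdlint}): $\psi^+_\tau$ is $\mathscr{I}$-model, so $\int_X \omega_{\psi^+_\tau}^n = \mathrm{vol}(L - \Div_{\mathfrak X}\psi^+_\tau) = \lim_Z \mathrm{vol}(\varpi^*(\pi^*L) - \Div_Z\psi^+_\tau)$ over models $Z \to Y$. On a model where $\psi^+_\tau$ has its singularities determined by $\Div_Y\psi^+_\tau$ (a snc $\mathbb{Q}$-divisor, available since $\psi$ — hence $\psi^+_\tau$ for $\tau$ rational — has analytic singularities and $\pi$ was chosen in \cref{subsec:dreamy} to resolve all $\psi^+_\tau$), the net stabilizes, giving $\mathrm{vol}(\pi^*L - \Div_Y\psi^+_\tau) = \int_X\omega_{\psi^+_\tau}^n = \mathrm{vol}(\beta_\tau)$. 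Finally, movability of $\pi^*L - \Div_Y\psi^+_\tau$: by \cref{lma:diffnab} applied to the $\mathscr{I}$-model potential $\psi^+_\tau$ of positive mass (quasi-analytic on the chosen model), $\pi^*L - \Div_Y\psi^+_\tau$ is nef and big; since $\mathrm{vol}$ of a nef class is its top self-intersection, the volume identity $(\pi^*L - \Div_Y\psi^+_\tau)^n = \mathrm{vol}(\beta_\tau)$ together with nefness forces this class to be the positive part and $N_\tau$ the negative part of the divisorial Zariski decomposition of $\beta_\tau$ — this is exactly the characterization of the decomposition via orthogonality of volume (\cite[Theorem~3.12]{Bou04}). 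In particular $\pi^*L - \Div_Y\psi^+_\tau$ being nef is a fortiori movable, and the decomposition is as claimed.

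The main obstacle I anticipate is the bookkeeping around \emph{which} birational model makes everything simultaneously snc and "determined": one must pass to a common log resolution of $L$, of $\psi$, and of all the $\psi^+_\tau$ for $\tau \in \mathbb{Q} \cap [0,\Psef(\psi))$, check that the limits defining $\mathrm{vol}(L - \Div_{\mathfrak X}\psi^+_\tau)$ and $\psi^+_\tau$ itself stabilize there, and verify that the divisorial Zariski decomposition pulls back correctly under the further blow-ups used implicitly in $\Div_{\mathfrak X}$; the extension to irrational $\tau$ is then a continuity statement, using continuity of $\mathrm{vol}$ in the big cone (\cref{thm:diffv}) and of $\tau \mapsto \int_X\omega_{\psi^+_\tau}^n$ (\cref{prop:tc}(1)) on both sides.
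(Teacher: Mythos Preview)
Your proposal is correct and follows essentially the same strategy as the paper: both arguments rest on (i) effectivity of $N_\tau$, (ii) the volume identity $\int_X \omega_{\psi^+_\tau}^n = \vol(\pi^*L - \tau\Div_Y\psi)$ from \cref{lma:eqvol}, and (iii) a volume-theoretic characterization of the divisorial Zariski decomposition. The paper's proof is terser: after noting $N_\tau\ge 0$, it simply invokes \cref{lma:eqvol} together with \cite{FKL16}, which supplies the needed characterization (effective $N$ with $\vol(D-N)=\vol(D)$ forces $N\le N_\sigma(D)$, with equality when the remainder is movable). You instead make the movability of $\pi^*L-\Div_Y\psi^+_\tau$ explicit by proving it is \emph{nef} via \cref{lma:diffnab}, and then appeal to \cite{Bou04}; this is slightly more self-contained (it uses only results already in the paper plus \cite{Bou04}) at the cost of the extra bookkeeping you flag about simultaneous resolution and the passage from rational to irrational $\tau$. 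Either route works, and your nefness observation is in any case what justifies the intersection-theoretic identities used immediately after the corollary.
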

\begin{proof}
Recall that by our definition of $\psi^+_{\tau}$, $N_{\tau}$ is effective. So the result follows from \cref{lma:eqvol} and \cite{FKL16}.
\end{proof}
\begin{remark}\label{rmk:gen}
Using \cref{lma:volfiltc}, one can easily generalize \cref{cor:dzd} to general test configurations.

We leave the details to the readers.
\end{remark}

In particular, 
\begin{equation}
D_L(\psi_{\tau}^+,\Redu\Div_Y\psi)=D_L(\psi_{\tau}^+,\Redu \Div_Y\psi^+_{\tau})\,.
\end{equation}
See \cref{subsec:diff} for the definition of $D_L$.

\begin{theorem}\label{thm:DFdreamy}
Let $(\mathcal{X},\mathcal{L})$ be as above, then
\[
\widetilde{\DF}(\mathcal{X},\mathcal{L})-\mathbf{E}_R(\psi^+)=\frac{1}{V}\int_{-\infty}^{\infty}D_L(\psi^+_{\tau},K_{Y/X})\,\mathrm{d}\tau+\frac{1}{V}\int_{-\infty}^{\infty} D_L(\psi^+_{\tau},\Div_Y\psi)\,\mathrm{d}\tau\,.
\]
When $\Div_Y\psi$ has a single irreducible component with coefficient $1$,
\[
\Ent^{\NA}(\ell^{\NA})=\mathbf{Ent}(\ell)=\Ent(\psi^+_{\bullet})\,.
\]
\end{theorem}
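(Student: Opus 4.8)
The plan is to prove the identity in two stages, treating the Donaldson--Futaki formula first and then specializing to extract the entropy statement. Since $\psi$ is dreamy with analytic singularities and $\pi: Y \to X$ is a log resolution on which $\Div_Y\psi$ is integral, the test configuration $(\mathcal{X},\mathcal{L})$ is obtained by relative $\Proj$ of the Rees-type algebra, and by \cref{cor:tcana} the induced test curve $\psi^+_{\bullet}$ is analytic and piecewise linear. First I would use the intersection-theoretic formula for $\widetilde{\DF}$ from \cref{prop:BHJL}(2) together with the explicit description of $(\mathcal{X},\mathcal{L})$ as a deformation-to-the-normal-cone--type construction over $Y$; on $Y$ this test configuration is the blow-up of $Y\times\mathbb{C}$ along $\Div_Y\psi \times \{0\}$, so its total space is smooth away from the obvious locus and $\widetilde{\DF}(\mathcal{X},\mathcal{L})$ can be written as a sum of intersection numbers on $Y$. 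The Donaldson--Futaki invariant decomposes via equivariant Riemann--Roch into a ``$K_{Y/X}$-part'' (coming from the relative canonical bundle of the compactified test configuration) and a ``$\Div_Y\psi$-part'' (coming from the polarization $\mathcal{L}$ itself). The key computational input is \cref{lma:eqvol}, which identifies $\int_X \omega_{\psi^+_\tau}^n = \vol(\pi^*L - \tau\Div_Y\psi)$, together with \cref{thm:diffv} giving $\frac{d}{d\epsilon}\vol(\pi^*L - \tau\Div_Y\psi + \epsilon L') = n((\pi^*L - \Div_Y\psi^+_\tau)^{n-1}\cdot L')$ by \cref{cor:dzd} (the movable part is realized honestly as $\pi^*L - \Div_Y\psi^+_\tau$). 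Thus the slices of the DF integrand are exactly $D_L(\psi^+_\tau, K_{Y/X})$ and $D_L(\psi^+_\tau, \Div_Y\psi)$, and subtracting $\mathbf{E}_R(\psi^+)$ removes precisely the ``leading'' Ricci contribution, leaving the two integrals claimed. This is where I would invoke \cref{thm:volIm}/\cref{lma:eqvol} to convert the weight polynomial coefficient $b$ into a derivative of volumes and then into an integral over $\tau$ of intersection numbers.

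For the second statement, specialize to the case where $\Div_Y\psi = E$ has a single prime component with coefficient $1$. Then $\Redu\Div_Y\psi = \Div_Y\psi = E$, so the two terms on the right-hand side of the DF identity combine: the $\Div_Y\psi$-integral equals the $\Redu\Div_Y\psi$-integral that appears in \cref{def:naentgeneral} via \eqref{eq:EntNAtc}. Hence
\[
\widetilde{\DF}(\mathcal{X},\mathcal{L}) - \mathbf{E}_R(\psi^+) = \frac{1}{V}\int_{-\infty}^{\infty} D_L(\psi^+_\tau, K_{Y_\tau/X})\,\mathrm{d}\tau + \frac{1}{V}\int_{-\infty}^{\infty} D_L(\psi^+_\tau, \Redu\Div_Y\psi^+_\tau)\,\mathrm{d}\tau = \Ent(\psi^+_\bullet),
\]
where the last equality is exactly \eqref{eq:EntNAtc} (using that $\pi$ also resolves all $\psi^+_\tau$ for rational $\tau$, and that $D_L(\psi^+_\tau, \Redu\Div_Y\psi) = D_L(\psi^+_\tau, \Redu\Div_Y\psi^+_\tau)$ as noted just before the theorem). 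On the other hand, the left-hand side equals $\tilde{\mathbf{M}}(\ell) - \mathbf{E}_R(\psi^+)$: by \cref{prop:BHJL}(2) we have $\widetilde{\DF}(\mathcal{X},\mathcal{L}) = \tilde{M}^{\NA}(\ell^{\NA}) + \frac{1}{V}((\tilde{X}_0 - \tilde{X}_0^{\mathrm{red}})\cdot\tilde{\mathcal{L}}^n)$, and since $E$ has coefficient $1$ the central fibre of $(\mathcal{X},\mathcal{L})$ over $Y$ is reduced (the strict transform appears with multiplicity $1$ and the exceptional component with multiplicity $1$), so the correction term vanishes and $\widetilde{\DF}(\mathcal{X},\mathcal{L}) = \tilde{M}^{\NA}(\ell^{\NA}) = \tilde{\mathbf{M}}(\ell)$ by \cref{prop:BHJL}(1). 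Subtracting $\mathbf{E}_R(\psi^+) = E_R^{\NA}(\ell^{\NA}) = \mathbf{E}_R(\ell)$ (which holds by \cref{thm:LegEalpha} applied to $\alpha = -n\Ric\omega'$, noting $\psi^+ = \hat\ell$) leaves $\Ent^{\NA}(\ell^{\NA})$ on one side and $\mathbf{Ent}(\ell)$ follows from equality in \cref{prop:BHJL}(1) because $\ell$ is the Phong--Sturm ray of a test configuration. Chaining these gives $\Ent^{\NA}(\ell^{\NA}) = \mathbf{Ent}(\ell) = \Ent(\psi^+_\bullet)$.

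The main obstacle I anticipate is the bookkeeping in the first stage: carefully matching the equivariant Riemann--Roch expansion of $w(rk)$ for $(\mathcal{X},\mathcal{L})$ against the $\tau$-integral of intersection numbers, and in particular verifying that the non-movable (negative) parts $N_\tau$ in the divisorial Zariski decomposition of \cref{cor:dzd} do not contribute to the relevant derivative — this is exactly what \cref{thm:diffv} and the fact that $\vol$ only sees the movable part guarantee, but one must be scrupulous about which model one computes on and about the behaviour at the endpoint $\tau = \Psef(\psi)$, where the volume degenerates. A secondary subtlety is ensuring the relative canonical divisor $K_{Y_\tau/X}$ appearing in \eqref{eq:EntNAtc} is independent (up to the movable intersection with $\pi^*L - \Div_Y\psi^+_\tau$) of the chosen common log resolution, which follows because further blow-ups only add exceptional divisors that are $\pi$-exceptional and hence contribute zero to $D_L(\psi^+_\tau, -)$ once one pulls back; this is the content of the $\varliminf$ being attained in \cref{def:naentgeneral} for analytic test curves. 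Once these points are settled, the remaining steps are the routine identifications via the results already established in \cref{sec:for} and \cref{sec:bdiv}.
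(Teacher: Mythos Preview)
Your proposal is correct and follows essentially the same route as the paper. The paper omits the first part entirely (pointing to the Fujita-type computation in \cite{Fuj19} as elaborated in \cite{DL22}), and your outline---compute $\widetilde{\DF}$ intersection-theoretically, identify the $\tau$-slices via \cref{lma:eqvol}, \cref{cor:dzd}, and \cref{thm:diffv}---is precisely that computation. For the second part, both you and the paper argue that when $\Div_Y\psi=F$ is a single prime divisor the central fibre is reduced and $\mathcal{X}$ is normal, so the correction term in \cref{prop:BHJL}(2) vanishes and one concludes by subtracting $\mathbf{E}_R$.

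One small inaccuracy worth flagging: your parenthetical description of $\mathcal{X}_0$ as having \emph{two} components (``strict transform plus exceptional divisor, each with multiplicity $1$'') is the picture for the blow-up construction of \cref{subsec:defn}, not for the $\Proj$ construction used here. In the dreamy single-divisor case the central fibre of the $\Proj$ is actually \emph{irreducible} and reduced (this is what the paper asserts, citing \cite[Lemma~3.2]{DL22}; it is Fujita's divisorial test configuration). This does not affect your argument, since all you use is that $\mathcal{X}_0$ is reduced and $\mathcal{X}$ is normal, but you should correct the justification. Also be explicit that normality of $\mathcal{X}$ is needed so that no further discrepancy arises when passing to the normalization in \cref{prop:BHJL}(2).
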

\begin{proof}
For the first part, the computation is a generalization of those in \cite{Fuj19}. As this method has been elaborated in  \cite[Section~3]{DL22}, we omit the proof.

As for the second part, assume that $\Div_Y\psi=F$ for some prime divisor $F$ over $X$. Then $\mathcal{X}_0$ is clearly irreducible and reduced and $\mathcal{X}$ is normal (see \cite[Lemma~3.2]{DL22} for example).  Hence we conclude by \cref{prop:BHJL}.
\end{proof}

\section{Variational approach on Berkovich spaces}\label{sec:vBer}
Let $X$ be a compact K\"ahler manifold of dimension $n$. Let $L$ be an ample line bundle on $X$. Fix a smooth strictly positively-curved metric $h$ on $L$. Let $\omega=c_1(L,h)$. 
In this section, we study the variation of the energy functional on the Berkovich space. As a consequence, we prove a comparison theorem of two entropy functionals \cref{thm:Entcomp2}, which is the key inequality used in comparing $\delta$-invariants.

\begin{proposition}\label{prop:NAdiff}
Let $\phi\in \mathcal{E}^{1,\NA}$. We have
\begin{equation}\label{eq:ENAD}
\varliminf_{\epsilon \to 0+} \frac{1}{\epsilon}\left(E^{\NA}(P[\phi+\epsilon A_X])-E^{\NA}(\phi) \right)\geq \frac{1}{V}\int_{X^{\An}} A_X\,\MA(\phi)\,. 
\end{equation}
Here $P[\bullet]$ is usc-regularized supremum of all  elements in $\mathcal{E}^{1,\NA}$ lying below $\bullet$. When $\phi\in \mathcal{H}^{\NA}$, the limit exists and equality holds.
\end{proposition}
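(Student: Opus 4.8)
The plan is to interpret the left-hand side of \eqref{eq:ENAD} as a one-sided derivative of the composite function $\epsilon \mapsto E^{\NA}(P[\phi+\epsilon A_X])$ and to control it by the derivative of the simpler quantity $\epsilon \mapsto E^{\NA}(\phi+\epsilon A_X)$, exploiting that $E^{\NA}$ is monotone and that $P[\cdot]$ only decreases. First I would recall the standard differentiability of $E^{\NA}$: for $\phi,\chi\in\mathcal{E}^{1,\NA}$ one has $\frac{\mathrm d}{\mathrm ds}\big|_{s=0^+}E^{\NA}((1-s)\phi+s\chi)=\frac1V\int_{X^{\An}}(\chi-\phi)\,\MA(\phi)$, which is the non-Archimedean analogue of \cref{lma:derMPhi} and is proved in \cite{BJ18b}. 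Since $A_X$ need not be bounded, I would first truncate, replacing $A_X$ by $A_X\wedge C$ for $C>0$, run the argument there, and then let $C\to\infty$ using monotone convergence for the measure $\MA(\phi)$ on the right and monotonicity of both $E^{\NA}$ and $P[\cdot]$ on the left; the truncated functions $A_X\wedge C$ lie in a suitable class of bounded non-Archimedean functions so that $\phi+\epsilon(A_X\wedge C)\in\mathcal{E}^{1,\NA}$ and $P[\phi+\epsilon(A_X\wedge C)]$ makes sense.

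For the inequality itself, the key point is that $P[\phi+\epsilon A_X]\le \phi+\epsilon A_X$ but they agree $\MA(P[\phi+\epsilon A_X])$-almost everywhere (this is the defining property of the $P[\cdot]$ envelope: the Monge--Amp\`ere measure of the envelope is supported on the contact locus, cf. the orthogonality/domination results in \cite{BJ18b}). Writing $\phi_\epsilon:=P[\phi+\epsilon A_X]$ and using convexity of $E^{\NA}$ along segments together with the derivative formula,
\[
E^{\NA}(\phi_\epsilon)-E^{\NA}(\phi)\ \ge\ \frac1V\int_{X^{\An}}(\phi_\epsilon-\phi)\,\MA(\phi)\ \ge\ \text{(a lower bound we must extract)},
\]
but this naive bound points the wrong way, so instead I would compare at the other endpoint: $E^{\NA}(\phi_\epsilon)-E^{\NA}(\phi)\ge \frac1V\int(\phi_\epsilon-\phi)\,\MA(\phi_\epsilon)=\frac1V\int \big((\phi+\epsilon A_X)-\phi\big)\MA(\phi_\epsilon)=\frac{\epsilon}{V}\int A_X\,\MA(\phi_\epsilon)$, where the second equality uses that $\phi_\epsilon=\phi+\epsilon A_X$ on $\mathrm{supp}\,\MA(\phi_\epsilon)$. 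It then remains to show $\liminf_{\epsilon\to0^+}\int A_X\,\MA(\phi_\epsilon)\ge \int A_X\,\MA(\phi)$, which follows because $\phi_\epsilon\to\phi$ (decreasing as $\epsilon\to0^+$, since $\phi\le\phi_\epsilon$ and $\phi_\epsilon$ decreases to $\phi$) in energy, hence $\MA(\phi_\epsilon)\to\MA(\phi)$ weakly, combined with lower semicontinuity of $A_X$ on $X^{\An}$ and the truncation already in place.

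For the equality assertion when $\phi\in\mathcal{H}^{\NA}$, the point is that $\phi$ is then a Fubini--Study type potential associated to a test configuration, $A_X$ is bounded on the (finite) dual complex where $\MA(\phi)$ is supported, and $P[\phi+\epsilon A_X]$ can be computed explicitly (or at least shown to equal $\phi+\epsilon A_X$ up to an error that is $o(\epsilon)$ uniformly): concretely, for small $\epsilon$ the envelope retracts onto the same dual complex, $\MA(\phi_\epsilon)\to\MA(\phi)$ with no loss of mass, and one gets a matching upper bound $E^{\NA}(\phi_\epsilon)-E^{\NA}(\phi)\le \frac{\epsilon}{V}\int A_X\,\MA(\phi)+o(\epsilon)$ from the derivative formula applied at $\phi$ together with $\phi_\epsilon\le\phi+\epsilon A_X$. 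The main obstacle I anticipate is precisely the unboundedness of $A_X$ and the need to make the truncation-and-limit argument rigorous for the envelope operator $P[\cdot]$ — one must check that $P[\phi+\epsilon(A_X\wedge C)]$ increases to $P[\phi+\epsilon A_X]$ as $C\to\infty$ and that the weak convergence $\MA(\phi_\epsilon^{(C)})\to\MA(\phi)$ can be taken uniformly enough in $C$ to interchange the limits; this is where I would invoke the continuity properties of $E^{\NA}$ and $\MA$ along monotone sequences established in \cite{BJ18b}.
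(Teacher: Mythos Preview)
Your strategy is genuinely different from the paper's and has a real gap at the orthogonality step.

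\medskip
\textbf{The gap.} The heart of your lower bound is the identity
\[
\frac{1}{V}\int_{X^{\An}}(\phi_\epsilon-\phi)\,\MA(\phi_\epsilon)=\frac{\epsilon}{V}\int_{X^{\An}} A_X\,\MA(\phi_\epsilon),
\]
which you justify by claiming $\phi_\epsilon=\phi+\epsilon A_X$ on $\operatorname{supp}\MA(\phi_\epsilon)$. This orthogonality property for psh envelopes is established in \cite{BJ18b} for \emph{continuous} obstacles, but $\phi+\epsilon A_X$ is only lower semicontinuous (and takes the value $+\infty$). Your proposed fix, truncating to $A_X\wedge C$, does not help: the minimum of an lsc function and a constant is still only lsc, not continuous, so you are no closer to the hypotheses under which orthogonality is known. (A minor point: you write ``convexity of $E^{\NA}$''; it is concave, though the inequalities you write are the correct ones for concavity.)

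\medskip
\textbf{How the paper proceeds instead.} The paper avoids orthogonality entirely. It approximates $A_X$ from below by the \emph{continuous} functions $f^{\mathcal{Y}}:=A_X\circ r_{\mathcal{Y}}$, where $r_{\mathcal{Y}}$ is the retraction onto the dual complex of an snc model $\mathcal{Y}$; these form an increasing net converging to $A_X$ (\cite{JM12}). For each such continuous $f^{\mathcal{Y}}$ one has directly from \cite[Corollary~6.32]{BJ18b} that
\[
\varliminf_{\epsilon\to0+}\frac{1}{\epsilon}\Big(E^{\NA}(P[\phi+\epsilon f^{\mathcal{Y}}])-E^{\NA}(\phi)\Big)\ \ge\ \frac{1}{V}\int_{X^{\An}} f^{\mathcal{Y}}\,\MA(\phi),
\]
and since $f^{\mathcal{Y}}\le A_X$ implies $E^{\NA}(P[\phi+\epsilon f^{\mathcal{Y}}])\le E^{\NA}(P[\phi+\epsilon A_X])$, monotone convergence in $\mathcal{Y}$ finishes the lower bound. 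For the equality when $\phi\in\mathcal{H}^{\NA}$, the paper does not argue via weak convergence of $\MA(\phi_\epsilon)$ at all: it shows the exact identity $P[\phi+\epsilon A_X]=P[\phi+\epsilon f^{\mathcal{X}}]$ for the specific model $\mathcal{X}$ carrying $\phi$, using the retraction inequality $P[\phi+\epsilon A_X]\le P[\phi+\epsilon A_X]\circ r_{\mathcal{X}}\le \phi+\epsilon f^{\mathcal{X}}$ (\cite[Theorem~5.29]{BJ18b}, first arXiv version). This reduces everything to the continuous case where \cite[Corollary~6.32]{BJ18b} already gives a genuine derivative.

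\medskip
Your upper bound in the $\mathcal{H}^{\NA}$ case, via concavity at $\phi$ together with $\phi_\epsilon-\phi\le\epsilon A_X$ and finiteness of $\int A_X\,\MA(\phi)$, is correct and arguably simpler than what you wrote; but without a valid lower bound it does not close the argument. If you want to rescue your route, you would need to replace $A_X\wedge C$ by a \emph{continuous} approximant from below --- at which point the natural candidates are exactly the $f^{\mathcal{Y}}$ the paper uses.
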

\begin{remark}
    We expect that equality holds.
\end{remark}
\begin{proof}
Let $\mathcal{Y}$ run over the set of snc models of $X\times_{\mathbb{C}} \mathbb{C}((T))$ (see \cite[Section~1.3]{BJ18b} for the precise definition). Let $r_{\mathcal{Y}}:X^{\An}\rightarrow \Delta_{\mathcal{Y}}$ be the natural retraction. Let $f^{\mathcal{Y}}=A_X\circ r_{\mathcal{Y}}$. Then $(f^{\mathcal{Y}})_{\mathcal{Y}}$ is an increasing net of non-negative continuous functions converging to $A_X$ pointwisely. See \cite{JM12} for details.

Then for any $\epsilon>0$, $\phi+\epsilon f^{\mathcal{Y}}\leq \phi+\epsilon A_X$, so 
\[
E^{\NA}\left(P[\phi+\epsilon f^{\mathcal{Y}}]\right)\leq E^{\NA}\left(P[\phi+\epsilon A_X]\right)\,.
\]
Hence
\[
\varliminf_{\epsilon \to 0+} \frac{1}{\epsilon}\left(E^{\NA}(P[\phi+\epsilon A_X])-E^{\NA}(\phi) \right)\geq \frac{1}{V}\int_{X^{\An}} f^{\mathcal{Y}}\,\MA(\phi) 
\]
by \cite[Corollary~6.32]{BJ18b}.
By monotone convergence theorem (\cite[Proposition~7.12]{Fol99}), we conclude \eqref{eq:ENAD}.

Finally, let us deal with the case where $\phi$ is associated to some test configuration $(\mathcal{X},\mathcal{L})$. We may assume that $\mathcal{X}_0$ is snc. We claim that for any $\epsilon>0$,
\begin{equation}\label{eq:t12}
P[\phi+\epsilon f^{\mathcal{X}}]=P[\phi+\epsilon A_X]\,.
\end{equation}
We only have to prove
\begin{equation}\label{eq:t1}
P[\phi+\epsilon f^{\mathcal{X}}]\geq P[\phi+\epsilon A_X]\,.
\end{equation}
By \cite[Theorem~5.29]{BJ18b} (here we refer to the first version, this theorem does not appear in the final version),
\[
P[\phi+\epsilon A_X]\leq P[\phi+\epsilon A_X]\circ r_{\mathcal{X}}\,.
\]
But observe that
\[
P[\phi+\epsilon A_X]\circ r_{\mathcal{X}}\leq \phi+\epsilon f^{\mathcal{X}}\,.
\]
In fact, it suffices to check this on $\Delta_{\mathcal{X}}$, where the inequality follows from the definition of $P$.
Hence \eqref{eq:t1} follows and \emph{a fortiori} equality holds in \eqref{eq:ENAD}.
\end{proof}

For $\psi\in \PSH(X,\omega)$, each $\epsilon>0$, we define
\begin{equation}\label{eq:t4}
\psi^{\epsilon}:=\sups\left\{\,\varphi\in \PSH(X,\omega): \varphi\leq 0, \varphi^{\An}\leq \psi^{\An}+\epsilon A_X \text{ on } X^{\Div}_{\mathbb{Q}}\,\right\}\,.
\end{equation}
Note that $\psi^{\epsilon}$ is increasing and concave in $\epsilon>0$. By \cite[Theorem~6.1]{DDNL19log}, \cite{WN19}, $\log \int_X \omega_{\psi^{\epsilon}}^n$ is concave in $\epsilon$. When $\psi$ is $\mathscr{I}$-model, the mass $\int_X \omega_{\psi^{\epsilon}}^n$ is right-continuous at $\epsilon=0$.

\begin{lemma}\label{lma:derdiv1}
Let $\psi\in \PSH(X,\omega)$. Then for any birational model $\pi:Y\rightarrow X$,
\[
-\left.\frac{\mathrm{d}}{\mathrm{d}\epsilon}\right|_{\epsilon=0+}\Div_Y\psi^{\epsilon}\leq \sum_{E}A_X(E)E\leq\Redu \Div_Y\psi+K_{Y/X}\,,
\]
where $E$ runs over all irreducible divisors in $\Div_Y\psi$.
\end{lemma}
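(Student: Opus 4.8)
The plan is to prove the two inequalities separately, the second being essentially a matter of unwinding the definition of the log discrepancy.

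\emph{The right inequality.} Write $\Div_Y\psi=\sum_E\nu_E(\psi)E$. If $E$ is the strict transform of a prime divisor on $X$ (equivalently, $E$ is not $\pi$-exceptional), then $A_X(\ord_E)=1$, while $E$ occurs in $\Redu\Div_Y\psi$ with coefficient $1$ and in $K_{Y/X}$ with coefficient $0$; if $E$ is $\pi$-exceptional, then by definition $A_X(\ord_E)=1+\ord_E(K_{Y/X})$, which is the coefficient of $E$ in $\Redu\Div_Y\psi+K_{Y/X}$. In either case the coefficient of $E$ in $\sum_E A_X(E)E$ equals its coefficient in $\Redu\Div_Y\psi+K_{Y/X}$, and the latter divisor may in addition carry non-negative contributions of $K_{Y/X}$ along exceptional divisors disjoint from $\Div_Y\psi$. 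Hence $\sum_E A_X(E)E\le \Redu\Div_Y\psi+K_{Y/X}$.

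\emph{The left inequality.} Recall from the discussion preceding the lemma that $\psi^\epsilon$ is increasing and concave in $\epsilon>0$ and $\mathscr{I}$-model for each $\epsilon>0$, and that $\psi^0:=\lim_{\epsilon\to 0+}\psi^\epsilon=P[\psi]_{\mathscr{I}}$ is $\mathscr{I}$-model with $(\psi^0)^{\An}=\psi^{\An}$ (\cref{thm:DX1-4}). Since $\psi^\epsilon\ge\psi$ we have $\nu_E(\psi^\epsilon)\le\nu_E(\psi)$, so $\Div_Y\psi^\epsilon$ is supported on $\Div_Y\psi$ and $-\frac{\mathrm d}{\mathrm d\epsilon}|_{\epsilon=0+}\Div_Y\psi^\epsilon$ is a genuine divisor supported on $\Div_Y\psi$; it therefore suffices to bound its coefficient along each prime divisor $E$ appearing in $\Div_Y\psi$. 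Next, for an $\mathscr{I}$-model potential $\varphi$ and a prime divisor $E$ on $Y$ one has $\nu_E(\varphi)=-\varphi^{\An}(\ord_E)$: this is immediate for potentials with analytic singularities by \cref{lma:qamis}, and the general case follows by running a quasi-equisingular approximation and invoking \cref{lma:qesana} together with Siu's semicontinuity. Because $\nu_E$ is linear in the potential and $\epsilon\mapsto\psi^\epsilon$ is concave, the function $\epsilon\mapsto\nu_E(\psi^\epsilon)=-\psi^{\epsilon,\An}(\ord_E)$ is convex and non-increasing, so its right derivative at $0$ equals $\sup_{\epsilon>0}\epsilon^{-1}\bigl(\nu_E(\psi^0)-\nu_E(\psi^\epsilon)\bigr)$. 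Consequently the asserted inequality is equivalent to
\[
\psi^{\epsilon,\An}(\ord_E)\le \psi^{\An}(\ord_E)+\epsilon\,A_X(\ord_E)\qquad\text{for all }\epsilon>0 .
\]

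It remains to prove this valuative inequality, which says precisely that the maximal candidate $\psi^\epsilon$ still satisfies, \emph{on divisorial valuations}, the constraint defining it — exactly the kind of assertion made for $\psi_{\mathbf v\ge\mathbf a}$ and $\psi'_{\mathbf v\ge\mathbf a}$ in \cref{sec:setup}. The plan is to argue as in the proof of \cref{lma:IMDL}: fixing $E$, a large sufficiently divisible $k$, and $\delta>0$, one produces a section $s\in H^0(X,L^k)$ whose normalized vanishing orders $k^{-1}v(s)$ along divisorial valuations $v$ respect the bound coming from $\psi^{\An}+\epsilon A_X$ while $k^{-1}\ord_E(s)\le -\psi^{\An}(\ord_E)-\epsilon A_X(\ord_E)+\delta$; such sections are legitimate approximants of $\psi^\epsilon$ by the $\mathscr{I}$-model theory of \cite{DX22}, so letting $k\to\infty$ and $\delta\to0$ gives $\nu_E(\psi^\epsilon)\ge -\psi^{\An}(\ord_E)-\epsilon A_X(\ord_E)$, i.e.\ the displayed inequality. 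Producing the section with the prescribed small vanishing along $E$ while keeping all other valuations under control is the technical heart: it is obtained by the construction of a more singular potential with controlled valuations as in \cite[Lemma~4.4]{DDNLmetric} followed by Demailly's regularization \cite[Corollary~13.23]{Dem12}. This is the main obstacle, since the operation $\varphi\mapsto\varphi^{\An}$ behaves badly under unregularized suprema, so the inequality cannot be read off formally from the definition of $\psi^\epsilon$; it is precisely the same envelope/section mechanism already used in the proof of \cref{lma:IMDL} and in that of \cref{prop:NAdiff}.
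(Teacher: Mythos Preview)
Your treatment of the right inequality and your reduction of the left inequality to the valuative bound
\[
(\psi^{\epsilon})^{\An}(\ord_E)\le \psi^{\An}(\ord_E)+\epsilon\,A_X(\ord_E)
\]
are both fine, and in fact more detailed than the paper, which simply says that both claims follow directly from the definition \eqref{eq:t4}.

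The gap is in your final step. You propose to establish $\nu_E(\psi^\epsilon)\ge \nu_E(\psi)-\epsilon A_X(E)$ by producing sections $s\in H^0(X,L^k)$ satisfying the constraints while having $k^{-1}\ord_E(s)$ close to $\nu_E(\psi)-\epsilon A_X(E)$, as in \cref{lma:IMDL}. But any such section yields a candidate $k^{-1}\log|s|^2_{h^k}\le\psi^\epsilon$, hence $\nu_E(\psi^\epsilon)\le k^{-1}\ord_E(s)$: the section mechanism produces \emph{upper} bounds on $\nu_E(\psi^\epsilon)$, whereas you need a \emph{lower} bound. The analogy with \cref{lma:IMDL} is misleading, since there the section construction is used to push the envelope \emph{up}, which is the opposite of what is required here.

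More importantly, your claim that ``the inequality cannot be read off formally from the definition of $\psi^\epsilon$'' is not correct, and this is exactly why the paper's proof is so short. Every competitor $\varphi$ in \eqref{eq:t4} satisfies $\nu_E(\varphi)=-\varphi^{\An}(\ord_E)\ge \nu_E(\psi)-\epsilon A_X(E)$. By Choquet's lemma one may replace the family by an increasing sequence $(\varphi_j)$ of competitors (finite maxima of competitors are again competitors, since $\nu_E(\max(\varphi_1,\varphi_2))=\min(\nu_E(\varphi_1),\nu_E(\varphi_2))$). For an increasing sequence, $\omega_{\varphi_j}\to\omega_{\psi^\epsilon}$ weakly and each $\omega_{\varphi_j}-c[E]\ge 0$ with $c=\nu_E(\psi)-\epsilon A_X(E)$; weak limits of closed positive currents are closed positive, so $\omega_{\psi^\epsilon}\ge c[E]$, i.e.\ $\nu_E(\psi^\epsilon)\ge c$. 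Thus the envelope is itself a competitor, exactly as in the remark after \eqref{eq:psigeqa1}, and the bound on the derivative follows immediately. No section construction is needed.
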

\begin{proof}
It follows from \eqref{eq:t4} that the only possible components of $-\left.\frac{\mathrm{d}}{\mathrm{d}\epsilon}\right|_{\epsilon=0+}\Div_Y\psi^{\epsilon}$ are components of $\Div_Y\psi$. Also by \eqref{eq:t4}, the multiplicity of each component $E$ is bounded by $A_X(E)$, hence
\[
-\left.\frac{\mathrm{d}}{\mathrm{d}\epsilon}\right|_{\epsilon=0+}\Div_Y\psi^{\epsilon}\leq \sum_{E}A_X(E)E\,.
\]
The second inequality is trivial.
\end{proof}
\begin{lemma}\label{lma:derdiv2}
Assume that $\psi\in \PSH(X,\omega)$ is $\mathscr{I}$-model and has positive mass. Then
\[
\frac{1}{V}\left.\frac{\mathrm{d}}{\mathrm{d}\epsilon}\right|_{\epsilon=0+}\int_X \omega_{\psi^{\epsilon}}^n\leq  \Ent([\psi])\,.
\]
\end{lemma}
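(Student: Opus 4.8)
The plan is to majorize $\int_X\omega_{\psi^\epsilon}^n$ by volumes on birational models, differentiate using the Boucksom--Favre--Jonsson differentiability of the volume, and then control the passage to the inverse limit over models. Since $\psi^\epsilon=P[\psi^\epsilon]_{\mathscr I}$ is $\mathscr I$-model and has positive mass for $\epsilon$ small (because $\psi^\epsilon\ge\psi$), \cref{thm:volIm} gives
\[
\int_X\omega_{\psi^\epsilon}^n=\vol\bigl(L-\Div_{\mathfrak X}\psi^\epsilon\bigr)=\inf_{\pi\colon Y\to X}\vol\bigl(\pi^*L-\Div_Y\psi^\epsilon\bigr),
\]
the infimum over birational models being a decreasing net, and likewise $\int_X\omega_\psi^n=\inf_Y\vol(\pi^*L-\Div_Y\psi)$. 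From the definition \eqref{eq:t4} one has $(\psi^\epsilon)^{\An}\le\psi^{\An}+\epsilon A_X$, hence $\nu_E(\psi^\epsilon)\ge\nu_E(\psi)-\epsilon A_X(E)$ for every prime divisor $E$ over $X$; together with $\Div_Y\psi^\epsilon\le\Div_Y\psi$ (from $\psi^\epsilon\ge\psi$) this yields the uniform estimate
\[
0\le\Div_Y\psi-\Div_Y\psi^\epsilon\le\epsilon\sum_{E}A_X(E)\,E\le\epsilon\bigl(K_{Y/X}+\Redu\Div_Y\psi\bigr),
\]
$E$ running over the components of $\Div_Y\psi$ (of which \cref{lma:derdiv1} is the infinitesimal form). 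By monotonicity of the volume, for every model $Y$,
\[
\int_X\omega_{\psi^\epsilon}^n\le W_Y(\epsilon):=\vol\bigl(\pi^*L-\Div_Y\psi+\epsilon(K_{Y/X}+\Redu\Div_Y\psi)\bigr);
\]
as $\pi^*L-\Div_Y\psi$ is big (its volume is $\ge\int_X\omega_\psi^n>0$), \cref{thm:diffv} shows each $W_Y$ is differentiable at $\epsilon=0+$ with $W_Y'(0+)=nE_Y$, where $E_Y:=\langle(\pi^*L-\Div_Y\psi)^{n-1}\rangle\cdot(K_{Y/X}+\Redu\Div_Y\psi)$, and $V\Ent([\psi])=n\varliminf_Y E_Y$ by \cref{def:naentgeneral}.

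If $\psi$ has analytic singularities this already finishes the proof. Take $\pi\colon Y\to X$ a log resolution, so that $\psi$ is determined on $Y$ and $\pi^*L-\Div_Y\psi$ is big and nef (\cref{lma:diffnab}); then $W_Y(0)=\int_X\omega_\psi^n$, the net defining $\Ent([\psi])$ is constant (use the projection formula and that the positive product of a nef class is the ordinary one), and since $\int_X\omega_{\psi^\epsilon}^n\le W_Y(\epsilon)$ with equality at $\epsilon=0$, differentiating at $\epsilon=0+$ gives $\left.\frac{\mathrm d}{\mathrm d\epsilon}\right|_{\epsilon=0+}\int_X\omega_{\psi^\epsilon}^n\le W_Y'(0+)=nE_Y=V\Ent([\psi])$.

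For a general $\mathscr I$-model $\psi$ the infimum $\int_X\omega_\psi^n=\inf_Y W_Y(0)$ need not be attained on any finite model, so one cannot simply use a single $W_Y$ to bound the right derivative of $h(\epsilon):=\int_X\omega_{\psi^\epsilon}^n$. \textbf{This interchange of the one-sided derivative at $\epsilon=0$ with the inverse limit over models — equivalently, the differentiability of the volume of the b-divisor $\Div_{\mathfrak X}\psi$ — is the main obstacle.} The plan is to handle it with the following ingredients: $h$ is log-concave and right-continuous at $0$, so $h'(0+)$ exists; the net $(W_Y)_Y$ is decreasing and each $W_Y$ is \emph{convex} in $\epsilon$ (monotonicity of positive intersection products under adding an effective class), so $W:=\inf_Y W_Y$ is again convex, $h\le W$ and $h(0)=W(0)$, whence $h'(0+)\le W'(0+)<\infty$; and $E_Y$ is \emph{non-increasing} in $Y$ — under a further blow-up $\rho\colon Y'\to Y$ the positive product $\langle(\pi^*L-\Div_Y\psi)^{n-1}\rangle$ only decreases while the additional terms involve $\rho$-exceptional classes and pair to zero by the projection formula — so $\varliminf_Y E_Y=\inf_Y E_Y$. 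The remaining quantitative point is to extract, from this decreasing net of convex majorants $W_Y$ with $W_Y(0)\downarrow h(0)$ and $W_Y'(0+)=nE_Y\downarrow V\Ent([\psi])$, the bound $W'(0+)\le n\inf_Y E_Y$; this needs a control of $W_Y(\epsilon)-W_Y(0)$ for small $\epsilon$ that is uniform enough in $Y$, and it is where the real work lies. Everything else follows directly from \cref{thm:volIm}, \eqref{eq:t4} and \cref{thm:diffv}.
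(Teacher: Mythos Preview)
You have correctly isolated the main obstacle—exchanging the one-sided derivative at $\epsilon=0$ with the inverse limit over models—and the overall architecture (bound $\int_X\omega_{\psi^\epsilon}^n$ by model volumes, differentiate via \cref{thm:diffv}, pass to the liminf) is the right one. But the specific mechanism you propose for the interchange is flawed.

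The claim that $W_Y(\epsilon)=\vol\bigl(\pi^*L-\Div_Y\psi+\epsilon(K_{Y/X}+\Redu\Div_Y\psi)\bigr)$ is \emph{convex} in $\epsilon$ is false. Already on a surface with $\alpha$ ample and $E$ a $(-1)$-curve, $\vol(\alpha+\epsilon E)=\alpha^2+2\epsilon(\alpha\cdot E)-\epsilon^2$ is strictly concave for small $\epsilon$. Your justification (``monotonicity of positive intersection products under adding an effective class'') does not give convexity: even if $\langle\beta^{n-1}\rangle-\langle\alpha^{n-1}\rangle$ were a pseudoeffective curve class, its pairing with an \emph{effective} divisor $D_Y$ need not be non-negative, so $W_Y'(\epsilon)$ need not be increasing. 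The subsidiary claim that $E_Y$ is non-increasing in $Y$ is likewise not clear: under a further blow-up both factors change, and the extra contribution from $K_{Y'/Y}+\Redu\Div_{Y'}\psi-\rho^*\Redu\Div_Y\psi$ is not purely $\rho$-exceptional in a way that the projection formula kills.

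The paper resolves exactly the obstacle you flagged, but by passing to the \emph{logarithm}. Set $g_Y(\epsilon):=\log\vol(\pi^*L-\Div_Y\psi^\epsilon)$ and $g:=\inf_Y g_Y=\log\int_X\omega_{\psi^\epsilon}^n$. Since $\epsilon\mapsto\psi^\epsilon$ is concave, $\epsilon\mapsto\Div_Y\psi^\epsilon$ is convex, so $\epsilon\mapsto\pi^*L-\Div_Y\psi^\epsilon$ is a concave curve in the big cone; combined with the log-concavity of the volume function, each $g_Y$ is \emph{concave}. Concavity gives the tangent bound $g_Y(\epsilon)\le g_Y(0)+\epsilon\,g_Y'(0+)$, and since $g_Y(0)\downarrow g(0)$ along a cofinal net, one obtains $g'(0+)\le\varliminf_Y g_Y'(0+)$ directly. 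Then \cref{thm:diffv} and \cref{lma:derdiv1} bound $g_Y'(0+)$ by $\frac{n}{\vol(\pi^*L-\Div_Y\psi)}\bigl\langle(\pi^*L-\Div_Y\psi)^{n-1}\bigr\rangle\cdot(K_{Y/X}+\Redu\Div_Y\psi)$, and multiplying back by $\int_X\omega_\psi^n$ yields the lemma. In short: replace your convex majorants by \emph{concave} log-majorants, and the interchange you want becomes a one-line consequence of the tangent inequality.
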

\begin{proof}
By \cref{thm:volIm},
\begin{equation}\label{eq:t10}
\begin{aligned}
\left.\frac{\mathrm{d}}{\mathrm{d}\epsilon}\right|_{\epsilon=0+}\log\int_X \omega_{\psi^{\epsilon}}^n=&\left.\frac{\mathrm{d}}{\mathrm{d}\epsilon}\right|_{\epsilon=0+} \log \vol\left( L-\Div_{\mathfrak{X}}\psi^{\epsilon}\right)\\
\leq & \varliminf_{Y}\left.\frac{\mathrm{d}}{\mathrm{d}\epsilon}\right|_{\epsilon=0+} \log\vol\left( L-\Div_{Y}\psi^{\epsilon}\right)\\
=& \frac{n}{\int_X\omega_{\psi}^n}\varliminf_{Y} \left(\left\langle\pi^*L-\Div_Y\psi)^{n-1}\right\rangle\cdot \left(-\left.\frac{\mathrm{d}}{\mathrm{d}\epsilon}\right|_{\epsilon=0+}\Div_Y\psi^{\epsilon}\right)\right) \\
\leq & \frac{n}{\int_X\omega_{\psi}^n}\varliminf_{Y} \left(\left\langle\pi^*L-\Div_Y\psi)^{n-1}\right\rangle\cdot \left(\Redu \Div_Y\psi+K_{Y/X}\right)\right)\\
= & \frac{V}{\int_X\omega_{\psi}^n}\Ent([\psi])\,.
\end{aligned}
\end{equation}
Here $\pi:Y\rightarrow X$ runs over all birational models of $X$,
the second line follows from the log concavity of the masses of $\psi^{\epsilon}$ in $\epsilon$, the third line follows from \cite[Theorem~A]{BFJ09}, the fourth line follows from \cref{lma:derdiv1}.
\end{proof}

\begin{theorem}\label{thm:Entcomp2}
Let $\psi_{\bullet}\in \TC^1(X,\omega)$ be an $\mathscr{I}$-model test curve. Let $\ell$ be the geodesic ray defined by $\psi_{\bullet}$,
then
    \[
        \Ent^{\NA}(\ell^{\NA})\leq \Ent(\psi_{\bullet})\,.
    \]
\end{theorem}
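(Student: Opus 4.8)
The plan is to reduce \cref{thm:Entcomp2} to the non-Archimedean differentiability statement \cref{prop:NAdiff} together with the pluripotential-theoretic differentiability inequality \cref{lma:derdiv2}, linked through the Ross--Witt Nystr\"om correspondence. Write $\phi=\ell^{\NA}\in\mathcal{E}^{1,\NA}$, so that $\phi=\sup_{\tau<\tau^+}(\psi_\tau^{\An}+\tau)$ by \cref{thm:legna}. The first step is to identify, for each $\epsilon>0$, the Archimedean test curve obtained by perturbing the geodesic ray in the $A_X$-direction: I claim that $P[\phi+\epsilon A_X]$ is the non-Archimedean potential of the test curve $(\psi_\tau^{\epsilon})_\tau$, where $\psi_\tau^\epsilon$ is the envelope in \eqref{eq:t4}. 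Concretely, since $P$ and the supremum defining $\check{}$ essentially commute with adding the fixed function $\epsilon A_X$ at the level of good potentials, one gets $P[\phi+\epsilon A_X]=\sup_{\tau<\tau^+}\big((\psi_\tau^\epsilon)^{\An}+\tau\big)$; this is where one uses that $\psi_\tau^{\epsilon,\An}$ is exactly the $\mathscr{I}$-model regularization of $\psi_\tau^{\An}+\epsilon A_X$.

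The second step is the energy identity. Applying \cref{thm:LegE} (via \cref{thm:LegLk}-type reasoning, i.e.\ $\mathbf{E}$ of a maximal ray equals $\mathbf{E}$ of its Legendre transform) to the perturbed ray, and using the integral formula \eqref{eq:defE} for $\mathbf{E}$ of a test curve, one obtains
\[
E^{\NA}(P[\phi+\epsilon A_X])-E^{\NA}(\phi)=\frac{1}{V}\int_{-\infty}^{\tau^+}\left(\int_X\omega_{\psi_\tau^\epsilon}^n-\int_X\omega_{\psi_\tau}^n\right)\,\mathrm{d}\tau
\]
(the $\tau^+$ terms and the $\epsilon$-shift of $\tau^+$ cancel in the difference, after checking that the pseudo-effective threshold is unaffected to first order, or absorbing the change into the integrand). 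Dividing by $\epsilon$, letting $\epsilon\to0+$, and interchanging the limit with the integral by Fatou's lemma (the integrand is nonnegative since $\psi_\tau^\epsilon\geq\psi_\tau$, hence has larger mass), the left side is bounded below by $\Ent^{\NA}(\phi)$ by \cref{prop:NAdiff} applied to $\phi$, while the right side is bounded above by
\[
\frac{1}{V}\int_{-\infty}^{\tau^+}\varliminf_{\epsilon\to0+}\frac{1}{\epsilon}\left(\int_X\omega_{\psi_\tau^\epsilon}^n-\int_X\omega_{\psi_\tau}^n\right)\,\mathrm{d}\tau\leq\int_{-\infty}^{\tau^+}\Ent([\psi_\tau])\,\mathrm{d}\tau=\Ent(\psi_\bullet)
\]
by \cref{lma:derdiv2} (each $\psi_\tau$, $\tau<\tau^+$, is $\mathscr{I}$-model of positive mass by \cref{prop:tc}). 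Chaining the two bounds gives $\Ent^{\NA}(\ell^{\NA})\leq\Ent(\psi_\bullet)$.

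\textbf{Main obstacle.} The delicate point is Step one: verifying that $P[\phi+\epsilon A_X]$ really is the non-Archimedean potential of the Legendre transform $\tau\mapsto\psi_\tau^\epsilon$, i.e.\ that the operations ``Legendre-transform to a test curve,'' ``add $\epsilon A_X$ fibrewise and take the $\mathscr{I}$-model envelope,'' and ``take $P[\,\cdot\,]$'' are compatible. One direction ($\geq$, using that each $(\psi_\tau^\epsilon)^{\An}+\tau\leq\phi+\epsilon A_X$ pointwise on $X^{\Div}_{\mathbb{Q}}$ so its sup lies below $P[\phi+\epsilon A_X]$) is straightforward; the reverse requires showing that $P[\phi+\epsilon A_X]$, which is an $\mathscr{I}$-model non-Archimedean potential, is dominated by the test-curve construction — this is essentially the statement that the Legendre/inverse-Legendre transforms are order-preserving bijections compatible with the fixed shift $\epsilon A_X$, which should follow from \cref{thm:legna} and the concavity/usc structure, but needs care because $A_X$ is only lsc and takes the value $+\infty$. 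A secondary technical nuisance is the behaviour near $\tau=\tau^+$ and the possible jump of $\Psef$ under perturbation; this can be handled by working on $(-\infty,\tau^+-\eta]$ and letting $\eta\to0+$ at the end, since the entropy integrand is nonnegative.
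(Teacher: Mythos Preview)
Your overall architecture matches the paper's proof: identify $P[\phi+\epsilon A_X]$ as the non-Archimedean potential of the perturbed test curve $(\psi_\tau^\epsilon)_\tau$, use the energy formula \eqref{eq:defE} to convert the derivative of $E^{\NA}$ into an integral over $\tau$, then apply \cref{prop:NAdiff} on the left and \cref{lma:derdiv2} fibrewise on the right. The identification you flag as the ``main obstacle'' is in fact dispatched in one line in the paper via \cref{thm:legna}; it is not where the real difficulty lies.

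The real gap is in your interchange of limit and integral. You write that ``the right side is bounded above by $\frac{1}{V}\int_{-\infty}^{\tau^+}\varliminf_{\epsilon\to0+}\epsilon^{-1}(\int\omega_{\psi_\tau^\epsilon}^n-\int\omega_{\psi_\tau}^n)\,\mathrm{d}\tau$'' and justify this by Fatou. But Fatou's lemma gives the opposite inequality: $\int\varliminf\leq\varliminf\int$. What you need is
\[
\varlimsup_{\epsilon\to0+}\frac{1}{V}\int_{-\infty}^{\tau^+}\epsilon^{-1}\Big(\int_X\omega_{\psi_\tau^\epsilon}^n-\int_X\omega_{\psi_\tau}^n\Big)\,\mathrm{d}\tau\;\leq\;\frac{1}{V}\int_{-\infty}^{\tau^+}\left.\frac{\mathrm{d}}{\mathrm{d}\epsilon}\right|_{\epsilon=0+}\int_X\omega_{\psi_\tau^\epsilon}^n\,\mathrm{d}\tau,
\]
and this is not free: the difference quotients $\epsilon^{-1}(\int\omega_{\psi_\tau^\epsilon}^n-\int\omega_{\psi_\tau}^n)$ need not be monotone in $\epsilon$, nor dominated by an integrable function a priori. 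The paper handles this by exploiting that $\epsilon\mapsto\log\int_X\omega_{\psi_\tau^\epsilon}^n$ is \emph{concave} (from \cite{DDNL19log}): for $0<\epsilon<\epsilon_0$ one bounds
\[
\epsilon^{-1}\Big(\int_X\omega_{\psi_\tau^\epsilon}^n-\int_X\omega_{\psi_\tau}^n\Big)\leq\Big(\int_X\omega_{\psi_\tau^{\epsilon_0}}^n\Big)\cdot\epsilon^{-1}\Big(\log\int_X\omega_{\psi_\tau^\epsilon}^n-\log\int_X\omega_{\psi_\tau}^n\Big),
\]
and the right-hand factor is now \emph{decreasing} in $\epsilon$ by concavity, so monotone convergence applies; one then sends $\epsilon_0\to0+$ by dominated convergence. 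Only after this does \cref{lma:derdiv2} enter. Without this log-concavity step your chain of inequalities breaks at exactly the point where you invoke Fatou. (The paper also uses Fatou, but only for the \emph{lower} bound $\int\varliminf\leq\varliminf\int$, which combined with the above yields that the limit exists and equals the integral of derivatives.) Your truncation near $\tau^+$ is fine and is what the paper does as well, using lower semicontinuity of $\Ent^{\NA}$.
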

\begin{proof}
We may assume that $\Ent(\psi_{\bullet})<\infty$. We first assume that $\psi_{\tau^+}$ has positive mass.

By Fatou's lemma, we always have
\[
\frac{1}{V}\bigintss_{-\infty}^{\infty} \left.\frac{\mathrm{d}}{\mathrm{d}\epsilon}\right|_{\epsilon=0+}\int_X \omega_{\psi^{\epsilon}_{\tau}}^n\,\mathrm{d}\tau\leq \varliminf_{\epsilon\to 0+}\epsilon^{-1}\bigintss_{-\infty}^{\infty} \left(\frac{1}{V}\int_X \omega_{\psi^{\epsilon}_{\tau}}^n-\frac{1}{V}\int_X \omega_{\psi_{\tau}}^n\right)\,\mathrm{d}\tau\,.
\]
On the other hand, since $\log\int_X\omega_{\psi_{\tau}^{\epsilon}}^n$ is concave in $\epsilon\geq 0$, 
fix $\epsilon_0>0$, for $\epsilon\in (0,\epsilon_0)$, we have
\[
V^{-1}\bigintss_{-\infty}^{\infty} \epsilon^{-1}\left(\int_X \omega_{\psi^{\epsilon}_{\tau}}^n-\int_X \omega_{\psi_{\tau}}^n\right)\,\mathrm{d}\tau\leq V^{-1}\bigintss_{-\infty}^{\tau^+} \epsilon^{-1} \left(\int_X\omega_{\psi_{\tau}^{\epsilon_0}}^n\right)\left(\log \int_X \omega_{\psi^{\epsilon}_{\tau}}^n-\log \int_X \omega_{\psi_{\tau}}^n \right)\,\mathrm{d}\tau\,.
\]
Thus by monotone convergence theorem,
\[
\varlimsup_{\epsilon\to 0+}\epsilon^{-1}\bigintss_{-\infty}^{\infty} \left(\frac{1}{V}\int_X \omega_{\psi^{\epsilon}_{\tau}}^n-\frac{1}{V}\int_X \omega_{\psi_{\tau}}^n\right)\,\mathrm{d}\tau\leq 
\frac{1}{V}\bigintss_{-\infty}^{\tau^+}\left(\int_X\omega_{\psi_{\tau}^{\epsilon_0}}^n\right)\left( \int_X\omega_{\psi_{\tau}}^n\right)^{-1} \left.\frac{\mathrm{d}}{\mathrm{d}\epsilon}\right|_{\epsilon=0+}\int_X \omega_{\psi^{\epsilon}_{\tau}}^n\,\mathrm{d}\tau\,.
\]
Let $\epsilon_0\to 0+$, by dominated convergence theorem, we get
\[
\varlimsup_{\epsilon\to 0+}\epsilon^{-1}\bigintss_{-\infty}^{\infty} \left(\frac{1}{V}\int_X \omega_{\psi^{\epsilon}_{\tau}}^n-\frac{1}{V}\int_X \omega_{\psi_{\tau}}^n\right)\,\mathrm{d}\tau\leq \frac{1}{V}\bigintss_{-\infty}^{\infty} \left.\frac{\mathrm{d}}{\mathrm{d}\epsilon}\right|_{\epsilon=0+}\int_X \omega_{\psi^{\epsilon}_{\tau}}^n\,\mathrm{d}\tau\,.
\]
Thus
\[
\begin{split}
\left.\frac{\mathrm{d}}{\mathrm{d}\epsilon}\right|_{\epsilon=0+}E^{\NA}(\psi^{\epsilon}_{\bullet})
=&\left.\frac{\mathrm{d}}{\mathrm{d}\epsilon}\right|_{\epsilon=0+}\bigintss_{-\infty}^{\infty} \left(\frac{1}{V}\int_X \omega_{\psi^{\epsilon}_{\tau}}^n-1\right)\,\mathrm{d}\tau\\
=&\frac{1}{V}\bigintss_{-\infty}^{\infty} \left.\frac{\mathrm{d}}{\mathrm{d}\epsilon}\right|_{\epsilon=0+}\int_X \omega_{\psi^{\epsilon}_{\tau}}^n\,\mathrm{d}\tau\\
\leq & \int_{-\infty}^{\infty}\Ent([\psi_{\tau}])\,\mathrm{d}\tau\,,
\end{split}
\]
where the first equality follows from \cref{thm:LegE}.
By \cref{thm:legna}, the non-Archimedean potential associated to $\psi^{\epsilon}_{\bullet}$ is just $P[\ell^{\NA}+\epsilon A_X]$, hence we can apply \cref{prop:NAdiff} to conclude.

For a general $\psi_{\bullet}$, for each $\delta>0$, we define a new test curve $\psi^{\tau}$ that agrees with $\psi_{\tau}$ when $\tau\leq \tau^+-\delta$ and equals $-\infty$ otherwise. We apply the previous step and the fact that $\Ent^{\NA}(\bullet)$ is lsc. 
\end{proof}

The same proof actually yields equality in the case of test configurations.
\begin{corollary}\label{cor:eqholds}
Let $(\mathcal{X},\mathcal{L})$ be a test configuration of $(X,L)$. Let $\ell$ be the induced Phong--Sturm geodesic ray, let $\psi=\hat{\ell}$. Then
    \[
        \Ent^{\NA}(\ell^{\NA})=\mathbf{Ent}(\ell)= \Ent(\psi_{\bullet})\,.
    \]
\end{corollary}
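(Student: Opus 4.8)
\textbf{Proof proposal for \cref{cor:eqholds}.} The plan is to reuse the mechanism of the proof of \cref{thm:Entcomp2} but to upgrade the crucial one-sided estimate into an equality by exploiting the special structure of test configurations. Recall that the only place where inequality (rather than equality) entered the proof of \cref{thm:Entcomp2} was the chain of estimates in \cref{lma:derdiv1}, \cref{lma:derdiv2} and the application of \cref{prop:NAdiff}. So the first step is to observe that, by \cref{cor:tcana}, the test curve $\psi_{\bullet}=\hat{\ell}$ is analytic and piecewise linear, hence for each $\tau<\tau^+$ the potential $\psi_\tau$ has quasi-analytic singularities on a common log resolution $\pi\colon Y\to X$ (choose $Y$ resolving the finitely many breakpoints simultaneously, as in \cref{subsec:dreamy}). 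On such a model the movable intersection numbers appearing in $\Ent([\psi_\tau])$ become honest intersection numbers on $Y$, and the differentiability of the volume (\cref{thm:diffv}, via \cref{cor:dzd} and \cref{lma:eqvol}) holds with equality rather than the $\varliminf$ used in \cref{lma:derdiv2}.

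Next I would record the Archimedean side. Since $\ell$ is the Phong--Sturm ray of a test configuration, \cref{prop:BHJL}(1) gives $\Ent^{\NA}(\ell^{\NA})=\mathbf{Ent}(\ell)$; this is the first of the two claimed equalities and requires no new work. It remains to prove $\Ent^{\NA}(\ell^{\NA})=\Ent(\psi_{\bullet})$. The inequality $\le$ is \cref{thm:Entcomp2}. For the reverse inequality I would run the variational argument of \cref{thm:Entcomp2} in reverse: set $\phi=\ell^{\NA}$, which now lies in $\mathcal{H}^{\NA}$, so by the last sentence of \cref{prop:NAdiff} the limit $\lim_{\epsilon\to 0+}\epsilon^{-1}(E^{\NA}(P[\phi+\epsilon A_X])-E^{\NA}(\phi))$ \emph{exists} and equals $V^{-1}\int_{X^{\An}}A_X\,\MA(\phi)=\Ent^{\NA}(\ell^{\NA})$. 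By \cref{thm:legna} this derivative equals $\tfrac{d}{d\epsilon}\big|_{\epsilon=0+}E^{\NA}(\psi^{\epsilon}_{\bullet})=V^{-1}\int_{-\infty}^{\infty}\tfrac{d}{d\epsilon}\big|_{\epsilon=0+}\int_X\omega_{\psi^{\epsilon}_{\tau}}^n\,d\tau$ (interchanging derivative and integral is justified as in \cref{thm:Entcomp2} using concavity of $\log\int\omega_{\psi_\tau^\epsilon}^n$). So it suffices to show that for a.e.\ $\tau$,
\[
\frac{1}{V}\left.\frac{\mathrm{d}}{\mathrm{d}\epsilon}\right|_{\epsilon=0+}\int_X \omega_{\psi^{\epsilon}_{\tau}}^n= \Ent([\psi_{\tau}])\,,
\]
i.e.\ that \cref{lma:derdiv1} and \cref{lma:derdiv2} become equalities for the test curves in \cref{ex:blflag}. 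For this I would argue that, because $\psi_\tau$ has analytic singularities along a $\mathbb{Q}$-divisor $D_\tau=\Div_Y\psi_\tau$ on $Y$, the envelope defining $\psi^{\epsilon}_\tau$ in \eqref{eq:t4} is computed by the divisorial valuations $\ord_E$ with $E\subseteq D_\tau$, and for $\epsilon$ small the maximal perturbation of the coefficient of each such $E$ permitted by the constraint $\varphi^{\An}\le\psi_\tau^{\An}+\epsilon A_X$ is \emph{exactly} $\epsilon A_X(E)$ — there is no further constraint coming from other valuations because near $\mathrm{Supp}\,D_\tau$ the potential has the hyperplane-type singularity and $L-D_\tau$ is big and nef (\cref{lma:diffnab}, \cref{cor:dzd}). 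Hence $-\tfrac{d}{d\epsilon}\big|_{0+}\Div_Y\psi_\tau^\epsilon=\sum_{E}A_X(E)E=K_{Y/X}+\Redu\Div_Y\psi_\tau$ on $Y$, which turns the two inequalities of \cref{lma:derdiv1} into equalities, and then the equality case of \cref{thm:diffv} turns \cref{lma:derdiv2} into an equality as well.

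The main obstacle I anticipate is precisely this last point: proving that the envelope $\psi^{\epsilon}_\tau$ achieves the full coefficient drop $\epsilon A_X(E)$ along every component $E$ of $\Div_Y\psi_\tau$, i.e.\ that no "competing" valuation prevents the divisorial coefficients from moving at the maximal rate. One clean way to handle it is to note that the statement is really about the test configuration $(\mathcal{X},\mathcal{L})$ and its twist by $A_X$: the perturbed non-Archimedean potential $P[\phi+\epsilon A_X]$ is again the potential of a (fibrewise) test configuration obtained by blowing up $\mathcal{X}$, and the computation of $E^{\NA}$ of such a twist is the Fujita-type intersection-number calculation already carried out in \cref{thm:DFdreamy} (whose first part is quoted from \cite{Fuj19}, \cite{DL22}); comparing that formula with $\int_{-\infty}^{\infty}\Ent([\psi_\tau])\,d\tau$ via \eqref{eq:EntNAtc} and \eqref{eq:trivialint} yields the desired equality termwise. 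Thus the corollary follows by combining \cref{prop:BHJL}, \cref{thm:Entcomp2}, \cref{prop:NAdiff} (equality case), and the equality version of the volume-derivative computation, which is available here thanks to analyticity of the test curve.
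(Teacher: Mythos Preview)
Your overall architecture matches the paper's: the first equality comes from \cref{prop:BHJL}, the inequality $\Ent^{\NA}\le \Ent(\psi_\bullet)$ from \cref{thm:Entcomp2}, and the reverse inequality comes from differentiating $E^{\NA}(\psi_\bullet^\epsilon)$ at $\epsilon=0$ using the equality case of \cref{prop:NAdiff}. Where your sketch diverges from the paper is in how you justify the pointwise lower bound
\[
\left.\frac{\mathrm{d}}{\mathrm{d}\epsilon}\right|_{\epsilon=0+}\int_X \omega_{\psi_\tau^\epsilon}^n \ \ge\ V\,\Ent([\psi_\tau])\,,
\]
and this is where your argument has a genuine gap.

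First, the identity $\sum_{E\subset\Supp\Div_Y\psi_\tau} A_X(E)E = K_{Y/X}+\Redu\Div_Y\psi_\tau$ is \emph{false} as an equality of divisors: $K_{Y/X}$ can have $\pi$-exceptional components lying outside $\Supp\Div_Y\psi_\tau$, so only $\le$ holds (this is exactly \cref{lma:derdiv1}). What is true, and what the paper uses, is that the two sides have the same intersection number against $(\pi^*L-\Div_Y\psi_\tau)^{n-1}$: the extra piece of $K_{Y/X}$ is effective $\pi$-exceptional and disjoint from $\Div_Y\psi_\tau$, so adding it does not change $h^0$ (hence not the volume, hence not the derivative). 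The paper phrases this as an application of the negativity lemma.

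Second, and more importantly, your claim that the envelope $\psi_\tau^\epsilon$ ``achieves the full coefficient drop $\epsilon A_X(E)$'' is not established by the heuristic that ``no competing valuation prevents it''. The envelope in \eqref{eq:t4} is constrained at \emph{every} divisorial valuation, not only those appearing on $Y$; showing that the constraints away from $\Supp\Div_Y\psi_\tau$ are inactive is exactly the hard point. The paper avoids computing $\Div_Y\psi_\tau^\epsilon$ altogether. Instead it uses \eqref{eq:t12} from the proof of \cref{prop:NAdiff}: since $\phi\in\mathcal{H}^{\NA}$ one has $P[\phi+\epsilon A_X]=P[\phi+\epsilon f^{\mathcal{X}}]$, and via \cite[Corollary~4.12]{BHJ17} and \cite[Theorem~8.5]{BFJ16} this identifies $\psi_\bullet^\epsilon$ with the test curve of the explicit filtration $\mathscr{F}_\epsilon$ of the model $(\mathcal{X},\mathcal{L}+\epsilon K^{\log}_{\mathcal{X}/X\times\mathbb{C}})$, described in \eqref{eq:t11}. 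Then \cref{lma:volfiltc} gives a \emph{lower} bound on $\int_X\omega_{\psi_\tau^\epsilon}^n$ by an honest volume on $Y$, whose derivative at $\epsilon=0$ is computed by \cref{thm:diffv}. This replaces your envelope argument by a clean algebraic computation involving only finitely many valuations.

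Finally, your fallback to \cref{thm:DFdreamy} does not close the gap: that theorem is proved for dreamy $\psi$ with a specific shape, and its second assertion (the entropy equality) is stated only when $\Div_Y\psi$ is a single prime divisor. It is a special case of the present corollary rather than a tool that proves it.
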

\begin{proof}
The first equality follows from \cref{prop:BHJL}. The inequality $\Ent^{\NA}(\ell^{\NA})\leq \Ent(\psi_{\bullet})$ follows from \cref{thm:Entcomp2}. 

Now we prove the converse. Replacing $\mathcal{L}$ by $(1+\delta)\mathcal{L}$ for a small $\delta\in \mathbb{Q}_{>0}$, we may assume that $\psi_{\tau^+}$ has positive mass.
We may assume that $\mathcal{X}_0=\sum b_E E$ is snc and $\mathcal{X}$ dominates $X\times \mathbb{C}$ by a map $\Pi:\mathcal{X}\rightarrow X\times \mathbb{C}$. Let $D$ be a divisor supported on the central fibre and $\mathcal{O}(D)=\mathcal{L}-p_1^*L$, where $p_1:X\times \mathbb{C} \rightarrow X$ is the natural map. 

Observe that $\psi^{\epsilon}_{\bullet}$ is the test curve defined by the (not necessarily finitely generated) $\mathbb{Z}$-filtration $\mathscr{F}_{\epsilon}$ associated to the model $(\mathcal{X},\mathcal{L}+\epsilon K^{\log}_{\mathcal{X}/X\times \mathbb{C}})$. 
In fact, this follows from \cite[Corollary~4.12]{BHJ17}, \cite[Theorem~8.5]{BFJ16} and \eqref{eq:t12} (see also discussions in \cite{Li20} after Definition~2.7).
By \cite[Corollary~4.12]{BHJ17}, \cite[Proof of Lemma~5.17]{BHJ17}, $\mathscr{F}_{\epsilon}$ is given by
\begin{equation}\label{eq:t11}
\mathscr{F}_{\epsilon}^{\lambda}H^0(X,L^k)=\left\{\,s\in H^0(X,L^k): r(\ord_E)(s)+k\ord_E D+k\epsilon A_X(r(\ord_E)) \geq b_E\lambda\,,\forall E \,\right\}\,,
\end{equation}
where $E$ runs over all components of $\mathcal{X}_0$, $r(\ord_E)$ is the restriction of $\ord_E$ to $\mathbb{C}(X)$. Recall that $r(\ord_E)$ is a divisorial valuation (\cite[Section~4.2]{BHJ17}).  Let $\pi:Y\rightarrow X$ be a birational model on which the divisors corresponding to all $r(\ord_E)$ lie and which resolves the singularities of all $\psi_{\tau}$, which is possible by \cref{cor:tcana}. Then $\pi^*L-\Div_Y\psi_{\tau}$ is nef by \cref{lma:diffnab}.
Now by \cref{lma:volfiltc} and \eqref{eq:t11},
\[
\begin{split}
\left.\frac{\mathrm{d}}{\mathrm{d}\epsilon}\right|_{\epsilon=0+}\int_X \omega_{\psi^{\epsilon}_{\tau}}^n
\geq & \left.\frac{\mathrm{d}}{\mathrm{d}\epsilon}\right|_{\epsilon=0+}\vol\left(\pi^*L- \Div_Y\psi_{\tau}+\epsilon\sum_F A_X(F)F \right)\\
=&n\left(\pi^*L- \Div_Y\psi_{\tau}\right)^{n-1}\cdot \sum_F A_X(F)F\\
=&n\left(\pi^*L- \Div_Y\psi_{\tau}\right)^{n-1}\cdot \left(K_{Y/X}+\Redu\Div_Y \psi_{\tau} \right)\,,
\end{split}
\]
where $F$ runs over all irreducible components of $\Div_Y\psi_{\tau}$, the second step follows from \cref{thm:diffv}. In the first and the last step, we applied the negativity lemma (\cite[Lemma~3.39]{KM08}).
We conclude by the same argument as above.
\end{proof}

\section{Stability thresholds}\label{sec:delta}
Let $X$ be a compact Kähler manifold of dimension $n$. Let $L$ be an ample line bundle on $X$. Fix a smooth strictly positively-curved Hermitian metric $h$ on $X$ and let $\omega=c_1(L,h)$.

We will compare various $\delta$ invariants and prove the main theorem of the paper \cref{thm:deltacomp}.

We refer to \cref{def:delta2} and \cref{def:delta} for the definitions of $\delta_{\mathrm{pp}}$ and $\delta'$.

\begin{proposition}\label{prop:deltaineq}
We always have $\delta'\geq \delta$.
\end{proposition}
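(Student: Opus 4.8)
The plan is to prove the statement valuation-by-valuation in disguise: it suffices to show that for every unbounded $\omega$-psh function $\psi$ with analytic singularities the quotient in the definition of $\delta'$ is $\geq\delta$, and then take the infimum over $\psi$. Fix such a $\psi$, a log resolution $\pi\colon Y\to X$, and write $D=\Div_Y\psi$. Applying \cref{lma:diffnab} to $\tau\pi^*\psi\in\PSH(Y,\pi^*\omega)$ (which has analytic singularities along $\tau D$) shows that $\pi^*L-\tau D$ is big and nef on $Y$ for all $\tau\in[0,1]$. Splitting off the divisorial part of the currents $\omega_{\tau\psi}$ and using that the remaining smooth parts lie in nef classes gives $\int_X\omega_{\tau\psi}^n=(\pi^*L-\tau D)^n$ and $\int_X\omega\wedge\omega_{\tau\psi}^{n-1}=\pi^*L\cdot(\pi^*L-\tau D)^{n-1}$, so the denominator of the $\delta'$-quotient equals $n\int_0^1\tau\big((\pi^*L-\tau D)^{n-1}\cdot D\big)\,\mathrm d\tau$, while by \eqref{eq:trivialint} the numerator equals $n\big(G_{n-1}(\pi^*L,D)\cdot(K_{Y/X}+\Redu D)\big)=n\int_0^1\big((\pi^*L-\tau D)^{n-1}\cdot(K_{Y/X}+\Redu D)\big)\,\mathrm d\tau$.

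Next I would interpret these two integrals as functionals of the deformation-to-the-normal-cone test curve $\psi_\bullet$ attached to $\psi$ (\cref{subsec:defn}): its potentials are $\psi_\tau=\psi_{\geq(1+\tau)D}$ for $\tau\in(-1,0]$, which by \cref{lma:IMDL} are $\mathscr I$-model with the same $\mathscr I$-singularity type as $(1+\tau)\psi$. Using \cref{thm:volIm} one checks $\psi_\bullet\in\TC^1_{\mathscr I}(X,\omega)$, that it is analytic with $\Div_Y\psi_\tau=(1+\tau)D$ (again by nefness), and hence — via \eqref{eq:EntNAtc}, \cref{def:naentgeneral}, $\Ent([\psi_\tau])=\Ent([(1+\tau)\psi])$, and $\langle\pi^*L-(1+\tau)D\rangle=\pi^*L-(1+\tau)D$ — that $\tfrac1V\cdot(\text{denominator})=\tilde{\mathbf J}(\psi_\bullet)$ and $\tfrac1V\cdot(\text{numerator})=\Ent(\psi_\bullet)$. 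Let $\ell:=\check\psi_\bullet\in\mathcal E^{1,\NA}$ be the associated maximal geodesic ray. Then \cref{cor:Jtildeslope} together with \eqref{eq:EstJt} gives
\[
\tfrac1V\cdot(\text{denominator})=\tilde{\mathbf J}(\psi_\bullet)=\tilde{\mathbf J}(\ell)=\tilde J^{\NA}(\ell^{\NA})=E^*\big(\MA(\ell^{\NA})\big),
\]
and \cref{thm:Entcomp2} gives $\tfrac1V\cdot(\text{numerator})=\Ent(\psi_\bullet)\geq\Ent^{\NA}(\ell^{\NA})=\Ent^{\NA}\big(\MA(\ell^{\NA})\big)$. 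Since $\psi$ is unbounded, $D\neq0$, so $E^*(\MA(\ell^{\NA}))>0$, and therefore
\[
\frac{\text{numerator}}{\text{denominator}}\ \geq\ \frac{\Ent^{\NA}(\MA(\ell^{\NA}))}{E^*(\MA(\ell^{\NA}))}\ \geq\ \inf_{\mu\in\mathcal M(X^{\An})}\frac{\Ent^{\NA}(\mu)}{E^*(\mu)}\ =\ \delta
\]
by \eqref{eq:de1}; taking the infimum over $\psi$ yields $\delta'\geq\delta$.

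The main obstacle is the identification carried out in the second step: matching the purely intersection-theoretic numerator and denominator of the $\delta'$-quotient with $\Ent(\psi_\bullet)$ and $\tilde{\mathbf J}(\psi_\bullet)$. Establishing $\int_X\omega_{\tau\psi}^n=(\pi^*L-\tau D)^n$ and its mixed analogue, as well as $\Div_Y\psi_\tau=(1+\tau)D$, requires the bigness and nefness of $\pi^*L-\tau D$ and some care with non-pluripolar products of potentials with analytic singularities along an snc divisor; and one must verify that $\psi_\bullet$ is genuinely an analytic, $\mathscr I$-model, finite-energy test curve so that \cref{cor:Jtildeslope} and \cref{thm:Entcomp2} apply. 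This is parallel to the bookkeeping in the proof of \cref{cor:eqholds}, except that here $\psi$ need not be dreamy, so instead of a test configuration one works with the envelope description of $\psi_\tau$ provided by \cref{lma:IMDL}.
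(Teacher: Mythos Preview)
Your proof is essentially the paper's own argument: attach to $\psi$ the generalized deformation-to-the-normal-cone test curve $\psi_\bullet$, identify the numerator and denominator of the $\delta'$-quotient with $V\cdot\Ent(\psi_\bullet)$ and $V\cdot\tilde{\mathbf J}(\psi_\bullet)$ respectively, then invoke \cref{thm:Entcomp2}, \cref{cor:Jtildeslope}, \eqref{eq:EstJt} and \eqref{eq:de1} to bound the quotient below by $\delta$. You spell out more of the intermediate identifications (nefness via \cref{lma:diffnab}, $\mathscr I$-model property via \cref{lma:IMDL}, the non-pluripolar masses as intersection numbers) than the paper does, but the skeleton is identical.
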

\begin{proof}
Let $\psi\in \PSH(X,\omega)$ be an unbounded potential with analytic singularities. 
Let $\ell$ be the geodesic ray induced by the generalized deformation to the normal cone with respect to $\psi$ (see \cref{subsec:defn}).
By \cref{thm:Entcomp2} and \eqref{eq:trivialint},
\begin{equation}\label{eq:ineqentdefnc}
    \Ent^{\NA}(\MA(\ell^{\NA}))\leq \Ent(\psi_{\bullet})=\frac{1}{V}\left(K_{Y/X}\cdot (-\Div_Y\psi)^{n-1}\right)+\frac{n}{V}\left(G_{n-1}(L,\Div_Y\psi)\cdot \Redu \Div_Y\psi\right)\,.
\end{equation}
By \cref{cor:Jtildeslope}, we have
\[
\tilde{\mathbf{J}}(\ell)=\frac{n}{V}\int_0^1 \left(\int_X \omega\wedge \omega_{\tau\psi}^{n-1}-\int_X  \omega_{\tau\psi}^{n}\right)\,\mathrm{d}\tau\,.
\]
By our definition (see also \cite[Proposition~2.38]{Li20}),
\[
\tilde{J}^{\NA}(\ell^{\NA})=\tilde{\mathbf{J}}(\ell)\,.
\]
Hence by \eqref{eq:de1} and \eqref{eq:EstJt},
\[
\delta\leq  \frac{\Ent^{\NA}(\MA(\ell^{\NA}))}{E^*(\MA(\ell^{\NA}))}\leq  \frac{(K_{Y/X}\cdot (-\Div_Y\psi)^{n-1})+n\left(G_{n-1}(L,\Div_Y\psi)\cdot \Redu \Div_Y\psi\right)}{n\int_0^1 \left(\int_X \omega\wedge \omega_{\tau\psi}^{n-1}-\int_X  \omega_{\tau\psi}^{n}\right)\,\mathrm{d}\tau}\,.
\]
\end{proof}

Similarly, we have
\begin{proposition}\label{prop:deltaineq1}
We always have $\delta_{\mathrm{pp}}\geq \delta$.
\end{proposition}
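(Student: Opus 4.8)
The plan is to run the same argument as in the proof of \cref{prop:deltaineq}, with the generalized deformation to the normal cone $\psi_\bullet$ replaced by the extended one $\psi^+_\bullet$ of \cref{subsec:extdef}. First I would fix an $\omega$-psh function $\psi$ with some non-zero Lelong number and, by \cref{rmk:deltappImdoeldep}, reduce to the case where $\psi$ is $\mathscr{I}$-model; then $\psi^+_\bullet$ is an $\mathscr{I}$-model test curve. A short preliminary check is that $\psi^+_\bullet$ has finite energy: since $\psi^+_\tau=0$ for $\tau\leq 0$ and $\psi^+_\tau=-\infty$ for $\tau>\Psef(\psi)$, the integral in \eqref{eq:defE} runs over the bounded interval $[0,\Psef(\psi)]$ with bounded integrand, so $\psi^+_\bullet\in\TC^1_{\mathscr{I}}(X,\omega)$. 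Let $\ell$ be the geodesic ray associated to $\psi^+_\bullet$ by the inverse Legendre transform; then $\ell\in\mathcal{E}^{1,\NA}$ and $\hat{\ell}=\psi^+_\bullet$ by \cite[Theorem~3.7]{DX22}. If $\Ent(\psi^+_\bullet)=\infty$, the corresponding quotient defining $\delta_{\mathrm{pp}}$ is infinite and may be discarded, so I would also assume $\Ent(\psi^+_\bullet)<\infty$.

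Next I would assemble the two formulae at the heart of the argument. From \cref{thm:Entcomp2},
\[
\Ent^{\NA}(\ell^{\NA})\leq \Ent(\psi^+_\bullet)=\int_{-\infty}^{\infty}\Ent([\psi^+_\tau])\,\mathrm{d}\tau\,,
\]
while from \cref{cor:Jtildeslope}, the definition of $\tilde{\mathbf{J}}$ on test curves, and the identification $\tilde{J}^{\NA}(\ell^{\NA})=\tilde{\mathbf{J}}(\ell)$ (cf. \cite[Proposition~2.38]{Li20}),
\[
\tilde{J}^{\NA}(\ell^{\NA})=\tilde{\mathbf{J}}(\psi^+_\bullet)=\frac{n}{V}\int_{-\infty}^{\infty}\left(\int_X\omega\wedge\omega_{\psi^+_\tau}^{n-1}-\int_X\omega_{\psi^+_\tau}^n\right)\mathrm{d}\tau\,.
\]
Since $\psi$ has a non-zero Lelong number, $\psi^+_\tau$ is strictly more singular than $0$ for small $\tau>0$, so the quantity on the right of the second display is strictly positive (if it vanished the relevant quotient would be $+\infty$ and could be discarded).

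Finally I would apply \eqref{eq:de1} to the Radon measure $\mu:=\MA(\ell^{\NA})\in\mathcal{M}(X^{\An})$ and use \eqref{eq:EstJt}, which gives $E^*(\MA(\ell^{\NA}))=\tilde{J}^{\NA}(\ell^{\NA})$, together with $\Ent^{\NA}(\ell^{\NA})=\Ent^{\NA}(\MA(\ell^{\NA}))$, to obtain
\[
\delta\leq \frac{\Ent^{\NA}(\MA(\ell^{\NA}))}{E^*(\MA(\ell^{\NA}))}=\frac{\Ent^{\NA}(\ell^{\NA})}{\tilde{J}^{\NA}(\ell^{\NA})}\leq \frac{\int_{-\infty}^{\infty}\Ent([\psi^+_\tau])\,\mathrm{d}\tau}{nV^{-1}\int_{-\infty}^{\infty}\left(\int_X\omega\wedge\omega_{\psi^+_\tau}^{n-1}-\int_X\omega_{\psi^+_\tau}^n\right)\mathrm{d}\tau}\,.
\]
Taking the infimum over all admissible $\psi$ would then give $\delta\leq\delta_{\mathrm{pp}}$. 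I do not expect any real obstacle beyond bookkeeping: the genuinely hard ingredient, the inequality $\Ent^{\NA}(\ell^{\NA})\leq\Ent(\psi^+_\bullet)$, is exactly \cref{thm:Entcomp2}, which is already established; the only points requiring care are checking that $\psi^+_\bullet$ is a finite-energy $\mathscr{I}$-model test curve (so that \cref{thm:Entcomp2} and \cref{cor:Jtildeslope} apply) and disposing of the degenerate cases where the numerator is infinite or the denominator vanishes by noting that such $\psi$ do not contribute to the infimum.
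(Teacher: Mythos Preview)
Your proposal is correct and follows essentially the same approach as the paper's proof: reduce to $\mathscr{I}$-model $\psi$, apply \cref{thm:Entcomp2} and \cref{cor:Jtildeslope} to the test curve $\psi^+_\bullet$, and then invoke \eqref{eq:de1} together with \eqref{eq:EstJt} to bound $\delta$ by the quotient defining $\delta_{\mathrm{pp}}$. The only difference is that you spell out the finite-energy check and the handling of degenerate quotients, which the paper leaves implicit.
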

\begin{proof}
Let $\psi\in \PSH(X,\omega)$ be an $\mathscr{I}$-model potential. Let $\ell$ be the geodesic ray induced by $\psi^+_{\bullet}$. Then by \cref{thm:Entcomp2},
\[
\Ent^{\NA}(\MA(\ell^{\NA}))\leq  \Ent(\psi^+_{\bullet})\,.
\]
While $\tilde{J}^{\NA}(\ell)=\tilde{\mathbf{J}}(\psi^+_{\bullet})$ as in the previous proof. Hence
\[
\delta\leq  \frac{\Ent^{\NA}(\MA(\ell^{\NA}))}{E^*(\MA(\ell^{\NA}))}\leq \frac{\Ent(\psi^+_{\bullet})}{\tilde{\mathbf{J}}(\psi^+_{\bullet})}\,.
\]
We conclude (c.f. \cref{rmk:deltappImdoeldep}).
\end{proof}

\begin{theorem}\label{thm:deltacomp}
Assume that $X$ is Fano, $L=-K_X$. If $\delta<\frac{n+1}{n}$, then $\delta\geq \delta_{\mathrm{pp}}$. 
\end{theorem}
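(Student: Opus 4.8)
The plan is to deduce the stated inequality from \cref{prop:deltaineq1} (which already gives $\delta_{\mathrm{pp}}\ge\delta$) by producing a \emph{single} potential $\psi$ whose associated ratio in \cref{def:delta2} equals $\delta$; this forces $\delta_{\mathrm{pp}}\le\delta$, hence $\delta=\delta_{\mathrm{pp}}$, and in particular $\delta\ge\delta_{\mathrm{pp}}$. The first step is to invoke the structure theory of the $\delta$-invariant on Fano manifolds: since $X$ is Fano, $L=-K_X$ and $\delta<\frac{n+1}{n}$, the results of \cite{BLZ19} together with \cite{LXZ21} provide a prime divisor $E$ over $X$ computing $\delta$, i.e.\ $A_X(E)=\delta\,S_L(E)$, which is moreover dreamy; in particular $E$ is extractable (\cref{def:ext}) by \cite{BCHM10}. (If the minimiser is only quasi-monomial one argues identically with a potential having analytic singularities along the corresponding snc divisor; I describe the divisorial case.) Fix an extraction $\pi\colon Y\to X$ of $E$ and, following the construction recorded after \cref{def:ext}, an integer $A$ with $A\pi^*L-E$ semi-ample and a base-point-free multiple, producing $\psi\in\PSH(X,\omega)$ with analytic singularities determined on $Y$ by $A^{-1}E$; then $\psi$ is model with analytic singularities, hence $\mathscr{I}$-model (\cite{Bon98}), and dreamy. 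As the ratio defining $\delta_{\mathrm{pp}}$ is scale-invariant in $\psi$, we may and do work with this $\psi$.

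Next I would identify all objects attached to $\psi$. By \cref{ex:ext}, the test curve $\psi^+_\bullet$ is induced by the filtration $\mathscr{F}^\lambda H^0(X,L^k)=H^0(Y,k\pi^*L-\lambda\,\Div_Y\psi)$, so up to the harmless rescaling of $\tau$ it is the test curve of the divisorial valuation $\ord_E$; let $(\mathcal{X},\mathcal{L})$ be the induced (normal, with reduced central fibre) test configuration, $\ell$ its Phong--Sturm ray, so that $\psi^+_\bullet=\hat{\ell}$. By \cref{cor:eqholds} and \cref{cor:Jtildeslope}, $\Ent(\psi^+_\bullet)=\mathbf{Ent}(\ell)=\Ent^{\NA}(\ell^{\NA})$ and $\tilde{\mathbf{J}}(\psi^+_\bullet)=\tilde{\mathbf{J}}(\ell)=\tilde{J}^{\NA}(\ell^{\NA})=E^*(\MA(\ell^{\NA}))$, the last equality by \eqref{eq:EstJt}. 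Since $\tilde{\mathbf{J}}(\psi^+_\bullet)$ is exactly the denominator $nV^{-1}\int(\int_X\omega\wedge\omega^{n-1}_{\psi^+_\tau}-\int_X\omega^n_{\psi^+_\tau})\,\mathrm d\tau$ appearing in \cref{def:delta2}, the ratio attached to $\psi$ equals $\Ent^{\NA}(\MA(\ell^{\NA}))/E^*(\MA(\ell^{\NA}))$.

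It then remains to evaluate this ratio. Here I would use \cref{lma:eqvol} and \cref{cor:dzd} to identify $\int_X\omega^n_{\psi^+_\tau}$ with $\vol(\pi^*L-\tau E)$ and the movable part $\pi^*L-\Div_Y\psi^+_\tau$ with the positive part of the Zariski decomposition of $\pi^*L-\tau E$; dreaminess of $\psi$ guarantees this positive part is semi-ample and the negative part exceptional for the associated morphism, so that $\langle(\pi^*L-\Div_Y\psi^+_\tau)^{n-1}\rangle=\langle(\pi^*L-\tau E)^{n-1}\rangle$ and $\langle(\pi^*L-\tau E)^{n-1}\rangle\cdot(\pi^*L-\tau E)=\vol(\pi^*L-\tau E)$. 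Combining this with the differentiability of the volume (\cref{thm:diffv}, \cite{BFJ09}), the identity $K_{Y/X}+\Redu\Div_Y\psi^+_\tau=A_X(E)\,\Redu E+(\text{exceptional terms with vanishing intersection})$, and an integration by parts (the boundary terms dying because $\vol(\pi^*L-TE)=0$ at the pseudoeffective threshold $T=\Psef(\psi)$), one gets $\Ent(\psi^+_\bullet)=A_X(E)$ and $\tilde{\mathbf{J}}(\psi^+_\bullet)=S_L(E)$ with the normalisation of \eqref{eq:deltadef} --- equivalently, $\MA(\ell^{\NA})$ is the Duistermaat--Heckman type measure carried by the segment of $X^{\An}$ from the trivial valuation to $\ord_E$, and the last identities are the standard ones relating the measure-theoretic and valuative expressions for the $\delta$-contribution of a divisor (\cite{Fuj19}, \cite{Li17}, \cite{BJ18}). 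Hence the ratio attached to $\psi$ equals $A_X(E)/S_L(E)=\delta$, whence $\delta_{\mathrm{pp}}\le\delta$; together with \cref{prop:deltaineq1} this gives $\delta=\delta_{\mathrm{pp}}$, and also the first (Fano) part of the Conjecture in the introduction, $\psi$ being a minimiser of $\delta_{\mathrm{pp}}$.

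The main obstacle is twofold. The deepest external ingredient is the existence, for $\delta<\frac{n+1}{n}$ on a Fano, of a dreamy extractable divisor computing $\delta$: this is precisely where both the Fano hypothesis and the bound $\frac{n+1}{n}$ are used, via \cite{BLZ19}, \cite{LXZ21}, \cite{BCHM10}. The main internal point is the intersection-theoretic computation identifying the numerator and denominator of the $\delta_{\mathrm{pp}}$-ratio of $\psi$ with $A_X(E)$ and $S_L(E)$: one must track carefully the movable intersection numbers and the $\varliminf$ over birational models in \cref{def:naentgeneral}, use \cref{cor:dzd} to recognise $\pi^*L-\Div_Y\psi^+_\tau$ as the \emph{nef}, indeed semi-ample, movable part of $\pi^*L-\tau E$, and check that the combination $K_{Y/X}+\Redu\Div_Y\psi^+_\tau$ contributes exactly the $A_X$-weighted exceptional divisors --- this is the Fujita-type computation underlying \cref{thm:DFdreamy}, to which one reduces here because $\Div_Y\psi$ has a single irreducible component of coefficient $1$.
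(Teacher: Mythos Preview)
Your proposal is correct and follows essentially the same route as the paper's proof: produce an extractable (dreamy) divisor $E$ computing $\delta$ via \cite{BLZ19} and \cite{LXZ21}, construct $\psi$ with singularities along $E$ using the extraction, identify the resulting filtration/test curve $\psi^+_\bullet$ with that of $\ord_E$, and then invoke the Fujita-type computation (packaged in \cref{thm:DFdreamy}) together with \cref{cor:Jtildeslope} to see that the $\delta_{\mathrm{pp}}$-ratio for this $\psi$ equals $A_X(E)/S_L(E)=\delta$. The only cosmetic difference is that the paper splits into the cases $\delta<1$, $\delta=1$, and the general case $\delta<\frac{n+1}{n}$ (using a \emph{sequence} of extractable divisors for $\delta<1$), whereas you go directly to a single minimising divisor via \cite{LXZ21}; and you spell out the intersection-theoretic identification $\Ent(\psi^+_\bullet)=A_X(E)$, $\tilde{\mathbf{J}}(\psi^+_\bullet)=S_L(E)$, which the paper leaves implicit in its reference to \cref{thm:DFdreamy}.
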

\cref{prop:deltaineq1} and \cref{thm:deltacomp} together are just \cref{thm:introdelta1} in the introduction.

\begin{proof}
Assume that $\delta< 1$, by \cite[Proof of Theorem~4.1]{BLZ19}, $\delta$ can be computed by a sequence of extractable divisors, say $E_k$ in the sense that
\[
\delta=\lim_{k\to\infty}\frac{A_X(E_k)}{S_L(E_k)}\,.
\]
Let $\pi_k:(Y_k,\Delta_k)\rightarrow X$ be a dlt (divisorially log terminal) extractions of $E_k$ so that $E_k$ is the only exceptional divisor and $A_X(E_k)\in (0,1)$.
See \cite[Corollary~1.38]{Kol13} for the notion and existence of dlt extraction.
Choose $\epsilon'\in \mathbb{Q}_{>0}$, so that $\pi^*L-\epsilon' E_k$ is semi-ample. Take $m\in \mathbb{Z}_{>0}$, so that $m(\pi^*L-\epsilon' E_k)$ is base-point free. 
Take a basis $s_1,\ldots,s_M$ of $H^0(Y_k,m(\pi^*L-\epsilon' E_k))$, regarded as a subspace of $H^0(X,L^m)$. Let
\[
\psi_k:=\frac{1}{m}\log \max_{i=1,\ldots,M} |s_i|^2_{h^m}\,.
\]
The filtration induced by $\psi_k$ on $R(X,L^m)$ in the sense of \cref{ex:ext} is the same as that defined by $\ord_{E_k}$.
So the geodesic ray $\ell_k$ induced by $\psi_k$ through the extended deformation to the normal cone construction is the same as the geodesic ray induced by the filtration of $\ord_{E_k}$. By \cref{thm:DFdreamy} and \cref{cor:Jtildeslope}, 
\[
\frac{A_X(E_k)}{S_L(E_k)}\geq \delta_{\mathrm{pp}}\,.
\]
Let $k\to\infty$, we conclude.

Now assume that $\delta=1$. By \cite[Theorem~6.7]{BLZ19}, there is a prime divisor $E$ over $X$ computing $\delta(X)$. By \cite[Proof of Theorem~4.5]{BLZ19}, $E$ is extractable. So we can proceed as in the case $\delta<1$.

In general, if $\delta<\frac{n+1}{n}$, it suffices to apply \cite{LXZ21} instead of \cite{BLZ19} and run the same arguments.
\end{proof}

\begin{remark}
By slightly refining the argument, one finds that when $\delta<\frac{n+1}{n}$, there is always a qpsh function with analytic singularities that computes $\delta_{\mathrm{pp}}$.
\end{remark}

\begin{corollary}
Assume that $X$ is Fano and $L=-K_X$. Then $\delta_{\mathrm{pp}}\geq 1$ (resp. $\delta_{\mathrm{pp}}> 1$) if{f} $X$ is K-semistable (resp. uniformly K-stable).
\end{corollary}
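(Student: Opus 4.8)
The plan is to derive this directly from \cref{thm:introdelta1}, using the standard fact that, for $X$ Fano with $L=-K_X$, K-semistability is equivalent to $\delta\geq 1$ and uniform K-stability is equivalent to $\delta>1$ (see \cite{Fuj19}, \cite{Li17}, \cite{FO18}, \cite{BJ18}). Granting this, it is enough to prove the two numerical equivalences $\delta_{\mathrm{pp}}\geq 1\iff \delta\geq 1$ and $\delta_{\mathrm{pp}}>1\iff \delta>1$, which is the whole content of \cref{cor:introdelta1}.

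One implication in each is free of charge: by \cref{prop:deltaineq1} we always have $\delta_{\mathrm{pp}}\geq \delta$, so $\delta\geq 1$ immediately gives $\delta_{\mathrm{pp}}\geq 1$, and $\delta>1$ immediately gives $\delta_{\mathrm{pp}}>1$; in other words K-semistability (resp. uniform K-stability) of $X$ forces $\delta_{\mathrm{pp}}\geq 1$ (resp. $\delta_{\mathrm{pp}}>1$). For the reverse implications I would argue by contraposition, exploiting that the critical value $1$ lies strictly below the threshold $\frac{n+1}{n}$ appearing in \cref{thm:introdelta1}. Concretely: if $\delta<1$ then $\delta<\frac{n+1}{n}$, so \cref{thm:deltacomp} together with \cref{prop:deltaineq1} yields $\delta_{\mathrm{pp}}=\delta<1$; hence $\delta_{\mathrm{pp}}\geq 1$ forces $\delta\geq 1$, i.e. K-semistability. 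Similarly, if $\delta\leq 1$ then again $\delta<\frac{n+1}{n}$, so $\delta_{\mathrm{pp}}=\delta\leq 1$, and therefore $\delta_{\mathrm{pp}}>1$ forces $\delta>1$, i.e. uniform K-stability.

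There is no genuine analytic obstacle here: all the substantive work has been absorbed into \cref{thm:introdelta1} (and ultimately into \cref{thm:Entcomp2}, \cref{cor:Jtildeslope} and the results of \cite{BLZ19}, \cite{LXZ21}), and what remains is the elementary observation that the equality $\delta=\delta_{\mathrm{pp}}$ holds throughout the range $\delta<\frac{n+1}{n}$, which in particular covers the full region $\delta\leq 1$ needed to test K-semistability and uniform K-stability. The only external ingredient is the by-now classical equivalence of these stability notions with $\delta\geq 1$ and $\delta>1$ respectively in the Fano setting.
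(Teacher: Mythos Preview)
Your proposal is correct and is exactly the argument the paper intends: the corollary is stated without proof because it is an immediate consequence of \cref{thm:introdelta1} (equivalently \cref{prop:deltaineq1} and \cref{thm:deltacomp}) together with the standard characterization of K-semistability and uniform K-stability via $\delta\geq 1$ and $\delta>1$. Your contrapositive use of the threshold $\frac{n+1}{n}$ to cover the range $\delta\leq 1$ is precisely the point.
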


\section{Further problems}\label{sec:pro}
\subsection{Minimizers}\label{subsec:min}
Let $X$ be a Fano manifold and $L=-K_X$. We assume that $\delta<1$. Fix a smooth strictly positively-curved Hermitian metric $h$ on $L$. Let $\omega=c_1(L,h)$.
In this case, it is well-known that $\delta$ is equal to the greatest Ricci lower bound $R(X)$:
\[
R(X):=\sup\left\{\,t\in [0,1]: \exists \omega\in c_1(X) \text{ s.t. } \Ric \omega>t\omega\,\right\}\,.
\]
This quantity was first explicitly introduced by Rubinstein in \cite{Rub08}, \cite{Rub09}.
See \cite{Sz11} for further results. This invariant also appears in an implicit form in \cite{Tian92}.
One could always solve Aubin's continuity path (\cite{Aub84}) for $t<R(X)$:
\[
\omega_{\varphi_t}^n=e^{F-t\varphi_t}\omega^n\,,
\]
where $F$ is the Ricci potential of $\omega$: $\Ric\omega-\omega=\ddc F$, $\int_X (\exp(F)-1)\omega^n=0$.

The following are known about $\varphi_t$:
\begin{enumerate}
    \item Blowing-up at the limit time:
    \[
    \lim_{t\to R(X)-} \sup_X \varphi_t=\infty\,.
    \]
    See \cite{Siu88}, \cite{Tian87}.
    \item There is a proper closed subvariety $V\subseteq X$, such that on each compact subset of $X\setminus V$, for any increasing sequence $t_i\to R(X)$ and $t_i<R(X)$, up to passing to a subsequence, $\omega_{\varphi_{t_i}}^n$ converges to $0$ uniformly (\cite{Tos12}).
    \item Tian's partial $C^0$-estimate: let $\beta_{m,t}$ be the $m$-th Bergman kernel defined by $\omega_{\varphi_t}$. Then there exists $m\in \mathbb{Z}_{>0}$ and $C>0$, such that
    \[
    \inf_X \rho_{m,t}\geq C^{-1}
    \]
    for any $t\in [0,R(X))$. See \cite{Sz16}, \cite{LS20},\cite{Zhangc0}, \cite{CW20}, \cite{Bam18}.
\end{enumerate}
It follows from the partial $C^0$-estimate that for any increasing sequence $t_i\to R(X)$, $t_i<R(X)$, up to subtracting a subsequence, there is $G\in \PSH(X,\omega)$, such that
\[
\varphi_{t_i}-\sup_X \varphi_{t_i}\to G
\]
in $L^1$. Moreover, $G$ has the following type of singularities:
\[
\frac{1}{m}\log \sum_{j=1}^N \lambda_j^2 |S_j|_{h^m}^2\,,
\]
where $m\in \mathbb{Z}_{>0}$, $\lambda_j\in (0,1]$, $S_j\in H^0(X,K_X^{-m})$.
Moreover, $\omega_{G}^n=0$.
See \cite{McCT19} for details. The function $G$ is known as a \emph{pluricomplex Green function} of $X$.

We make the following conjecture:
\begin{conjecture}
When $\delta<1$, the pluricomplex Green function $G$ is a minimizer of $\delta_{\mathrm{pp}}$. 
\end{conjecture}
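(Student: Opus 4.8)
The plan is to reduce, using the results already proved, to showing that $G$ has the same $\mathscr{I}$-singularity type as a potential attached to a valuation computing $\delta$. Since $\delta<1<\tfrac{n+1}{n}$, \cref{thm:deltacomp} and \cref{prop:deltaineq1} give $\delta_{\mathrm{pp}}=\delta$, and \cref{prop:deltaineq1} gives $\Ent(G^+_{\bullet})/\tilde{\mathbf{J}}(G^+_{\bullet})\geq\delta$; so it suffices to prove
\[
\frac{\Ent(G^+_{\bullet})}{\tilde{\mathbf{J}}(G^+_{\bullet})}\;\leq\;\delta .
\]
Since $X$ is a smooth Fano with $\delta<1$, by \cite{DS20} (see \cite[Theorem~6.7 and Proof of Theorem~4.5]{BLZ19}) there is a prime divisor $E$ over $X$ with $A_X(E)/S_L(E)=\delta$ which is extractable, hence dreamy. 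Fix an extraction $\pi\colon Y\to X$ of $E$ and let $\psi_E\in\PSH(X,\omega)$ be an $\mathscr{I}$-model potential whose singularities are determined on $Y$ by a positive multiple of $E$, constructed as in \cref{subsec:Imod}. The core of the argument is the claim that $G$ has hyperplane-type singularities along $E$, i.e.
\[
\Div_{\mathfrak{X}}G=c\,\Div_{\mathfrak{X}}\psi_E\qquad\text{as Weil b-divisors, for some }c\in\mathbb{Q}_{>0}.
\]

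Granting the claim, the conjecture follows quickly. Since the quotient defining $\delta_{\mathrm{pp}}$ depends only on the $\mathscr{I}$-singularity type and is unchanged under rescaling $\psi\mapsto c'\psi$, its value at $G$ equals its value at $\psi_E$. By \cref{ex:ext} the filtration induced by $\psi_E$ on the section ring of a suitable power of $L$ coincides with the one induced by $\ord_E$, so the geodesic ray attached to the extended deformation to the normal cone $(\psi_E)^+_{\bullet}$ is the Phong--Sturm ray $\ell_E$ of the semi-ample test configuration produced by $E$ (cf. \cref{subsec:extdef}, \cref{subsec:dreamy}). Then \cref{cor:eqholds} and \cref{cor:Jtildeslope} identify the value of the quotient at $\psi_E$ with $\Ent^{\NA}(\ell_E^{\NA})/\tilde{J}^{\NA}(\ell_E^{\NA})$, and the Fujita-type computation of \cref{thm:DFdreamy}, carried out exactly as in the proof of \cref{thm:deltacomp}, identifies this last ratio with $A_X(E)/S_L(E)=\delta$. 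Hence $\Ent(G^+_{\bullet})/\tilde{\mathbf{J}}(G^+_{\bullet})=\delta$, proving the conjecture modulo the claim.

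To prove the claim I would exploit the analytic origin of $G$, recalling that $G=\lim_i\big(\varphi_{t_i}-\sup_X\varphi_{t_i}\big)$ along Aubin's continuity path and $\omega_G^n=0$. The idea is that the normalized solutions $\varphi_{t_i}$ escape to infinity along the minimizing geodesic ray of the Ding (equivalently twisted Mabuchi) functional, and that the test curve of this ray --- by the partial $C^0$-estimate (\cite{Sz16}, \cite{LS20}, \cite{CW20}, \cite{Bam18}) together with the uniqueness of the optimal degeneration of a smooth Fano and its compatibility with the valuative minimizer of $\delta$ (\cite{BLX19}, \cite{LXZ21}) --- is the deformation-to-the-normal-cone test curve of $E$, up to an affine reparametrization in $\tau$. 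Comparing the generic Lelong numbers of $\pi^*G$ on every birational model with those of $\psi_E$ --- using \cref{lma:qesana}, \cref{thm:volIm}, and $\omega_G^n=0$ to place $\Div_{\mathfrak{X}}G$ on the pseudo-effective boundary --- then gives $\Div_{\mathfrak{X}}G=c\,\Div_{\mathfrak{X}}\psi_E$.

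The main obstacle is precisely this last step: proving that the analytically defined object $G$ and the algebraically defined optimal destabilizer $E$ encode the same degeneration, an ``Archimedean $=$ non-Archimedean optimal destabilizer'' statement. Beyond the partial $C^0$-estimate one needs the finiteness and uniqueness of optimal degenerations (the Hamilton--Tian picture) and its precise compatibility with the minimizing valuation of $\delta$, and then a translation of all of this into the language of test curves and singularity b-divisors used here; I do not see a way to shortcut it. For a general polarization the obstruction is even more basic --- there is no obvious analytic candidate playing the role of $G$ --- which is why the existence half of the conjecture is open there as well, whereas on a Fano with $\delta<1$ the proof of \cref{thm:deltacomp} already produces an explicit minimizer, namely any $\psi_E$ as above.
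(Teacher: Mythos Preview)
The statement you are attempting to prove is explicitly a \emph{conjecture} in the paper, placed in the ``Further problems'' section; the paper offers no proof and indeed singles it out as open. So there is no paper proof to compare against, and the right question is whether your proposal actually closes the gap.

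It does not, and you say so yourself. Your reduction is sound: since $\delta<1$ we have $\delta_{\mathrm{pp}}=\delta$ by \cref{thm:deltacomp} and \cref{prop:deltaineq1}, and the quotient defining $\delta_{\mathrm{pp}}$ depends only on the $\mathscr{I}$-singularity type and is scale-invariant, so if $\Div_{\mathfrak{X}}G=c\,\Div_{\mathfrak{X}}\psi_E$ for some extractable $\delta$-computing divisor $E$ then the computation of \cref{thm:DFdreamy} via \cref{cor:eqholds} and \cref{cor:Jtildeslope} finishes the job exactly as in the proof of \cref{thm:deltacomp}. But the core claim $\Div_{\mathfrak{X}}G=c\,\Div_{\mathfrak{X}}\psi_E$ is precisely the content of the conjecture, and your sketch of its proof is heuristic: you invoke that the continuity path ``escapes along the minimizing geodesic ray'' and that this ray ``is the deformation-to-the-normal-cone test curve of $E$ up to reparametrization,'' neither of which is established in the paper or in the references you cite. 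The partial $C^0$-estimate gives the singularity \emph{shape} of $G$ (log of a finite sum of $|S_j|^2$) but does not by itself identify the support of $\Div_{\mathfrak{X}}G$ with a single prime divisor, let alone with the specific $E$ minimizing $\delta$; the uniqueness/compatibility statements you appeal to (Hamilton--Tian, \cite{BLX19}, \cite{LXZ21}) concern algebraic optimal degenerations and do not, as stated, pin down the b-divisor of the analytically constructed $G$. Your own final paragraph correctly identifies this ``Archimedean $=$ non-Archimedean optimal destabilizer'' step as the obstacle.

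In short: your outline is a plausible strategy and isolates the right difficulty, but it is not a proof, and the paper does not claim one either.
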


\subsection{Moser--Trudinger type inequalities}
Let $X$ be a compact Kähler manifold of dimension $n$. Let $[\omega]$ be a Kähler class on $X$ with a representative Kähler form $\omega$.

\begin{definition}[{\cite[Definition~3.1, Proposition~3.5]{Zhangconti}}]
 We define the \emph{analytic $\delta$-invariant} $\delta^A$ of $[\omega]$ as
 \begin{equation}\label{eq:MT}
\begin{split}
\delta^A([\omega]):=&\sup\left\{\,\lambda>0: \int_X e^{-\lambda\left( \varphi-E(\varphi)\right)}=\mathcal{O}_{\lambda}(1)\text{ for any }\varphi\in \mathcal{H}(X,\omega)\,\right\}\\
=&\sup\left\{\,\lambda>0: \Ent(\varphi)\geq  \lambda \tilde{J}(\varphi)-\mathcal{O}_{\lambda}(1)\text{ for any }\varphi\in \mathcal{H}(X,\omega)\,\right\}\,.
\end{split}
\end{equation}
\end{definition}
Inequalities as in the first line of \eqref{eq:MT} are known as \emph{Moser--Trudinger type inequalities}, they were first studied in \cite{BB11}. See \cite{DNGL21} for recent progress in Moser--Trudinger type inequalities. The equality of two lines in \eqref{eq:MT} follows essentially from \cite[Proposition~4.11]{BBEGZ16}, as explained in \cite[Proposition~3.5]{Zhangconti}.

In \cite{Zhang21}, Zhang proved that $\delta^A([\omega])=\delta(L)$, improving previous partial results in \cite[Proposition~3.11]{Zhangconti}, \cite[Proposition~5.3]{RTZ21}. Hence in this case, $\delta^A\leq \delta_{\mathrm{pp}}$ by \cref{prop:deltaineq1}.
Moreover, both $\delta^A$ and our $\delta_{\mathrm{pp}}$ make sense for a transcendental Kähler class. It is interesting to understand the exact relation between $\delta^A$ and $\delta_{\mathrm{pp}}$.

\subsection{Non-Archimedean entropy in terms of test curves}
Let $X$ be a compact K\"ahler manifold of dimension $n$. Let $\omega$ be a K\"ahler form on $X$.

When $\omega$ is in the first Chern class of an ample $\mathbb{Q}$-line bundle, $\mathcal{E}^{1,\NA}(L)$ makes sense as in \cite{BJ18b}. In general, we define $\mathcal{E}^{1,\NA}([\omega])$ as the subspace of $\mathcal{R}^1$ consisting of $\ell\in \mathcal{R}^1$, such that $\hat{\ell}_{\tau}$ is either $-\infty$ or $\mathscr{I}$-model for all $\tau$. 

\begin{conjecture}
Let $\ell\in \mathcal{R}^1$. Assume that $\mathbf{Ent}(\ell)<\infty$, then $\ell\in \mathcal{E}^{1,\NA}(L)$.
\end{conjecture}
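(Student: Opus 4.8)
The plan is to argue by contradiction: assume $\ell\in\mathcal{R}^1$ has $\mathbf{Ent}(\ell)<\infty$ but $\ell\notin\mathcal{E}^{1,\NA}(L)$, and force a super-linear growth of $\Ent(\ell_t)$.

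\textbf{Step 1 (reduction to the Mabuchi slope).} For every $\ell\in\mathcal{R}^1$ the radial Ricci energy $\mathbf{E}_R(\ell)=\mathbf{E}^{-n\Ric\omega}(\ell)$ is finite, being a radial $\alpha$-energy (\cite{BDL17}). Since $\tilde{\mathbf{M}}=\mathbf{Ent}+\mathbf{E}_R$, the hypothesis $\mathbf{Ent}(\ell)<\infty$ is equivalent to $\tilde{\mathbf{M}}(\ell)<\infty$, so it suffices to show that every geodesic ray of finite twisted Mabuchi slope is maximal.

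\textbf{Step 2 (maximalization and reformulation).} Put $\psi_{\bullet}:=\hat{\ell}_{\bullet}\in\TC^1(X,\omega)$ and $\tilde\psi_\tau:=P[\psi_\tau]_{\mathscr{I}}$ for $\tau<\tau^+$. Because $P[\cdot]_{\mathscr{I}}$ leaves the analytic singularities unchanged, $\tilde\psi_\tau^{\An}=\psi_\tau^{\An}$ on $X^{\Div}_{\mathbb{Q}}$, one checks that $\tilde\psi_{\bullet}\in\TC^1_{\mathscr{I}}(X,\omega)$; let $\ell^{\max}\in\mathcal{E}^{1,\NA}$ be its inverse Legendre transform. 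Then $\ell\le\ell^{\max}$ and, by \cref{thm:legna}, $(\ell^{\max})^{\NA}=\ell^{\NA}$. By \cref{thm:LegE} (applied to $\ell$ and to $\ell^{\max}$) and the monotonicity of the non-pluripolar mass,
\[
\mathbf{E}(\ell^{\max})-\mathbf{E}(\ell)=\frac{1}{V}\int_{-\infty}^{\tau^+}\left(\int_X\omega_{\tilde\psi_\tau}^n-\int_X\omega_{\psi_\tau}^n\right)\mathrm{d}\tau\ \ge\ 0 .
\]
By \cref{thm:DX1-4} the integrand vanishes at $\tau$ exactly when $\psi_\tau$ is $\mathscr{I}$-model, so the left-hand side is $0$ precisely when $\ell$ is maximal. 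Thus non-maximality means $c:=\mathbf{E}(\ell^{\max})-\mathbf{E}(\ell)>0$, i.e.\ there is a positive-measure set $S\subset(-\infty,\tau^+)$ on which $\int_X\omega_{\tilde\psi_\tau}^n>\int_X\omega_{\psi_\tau}^n$: a genuine loss of mass in the test curve. Moreover $0\le E(\ell^{\max}_t)-E(\ell_t)\le\sup_X(\ell^{\max}_t-\ell_t)$, so $c>0$ also forces $\sup_X(\ell^{\max}_t-\ell_t)\to\infty$ as $t\to\infty$.

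\textbf{Step 3 (the crux, and the main obstacle).} It remains to convert this loss of mass into $\Ent(\ell_t)/t\to\infty$, which contradicts the hypothesis. The essential difficulty is that this step \emph{cannot} be handled by the non-Archimedean tools of \cref{sec:vBer}: the potential $\ell^{\NA}$, the entropy $\Ent^{\NA}$, the singularity-type entropy $\Ent([\cdot])$ and Donaldson's $\mathscr{L}_k$ all depend only on the $\mathscr{I}$-singularity types of the $\psi_\tau$, hence are identical for $\ell$ and $\ell^{\max}$ and see nothing of $c$; the argument must be made on the Kähler side. The route I would follow is to analyse the probability measures $\mu_t:=V^{-1}\omega_{\ell_t}^n$ through the contact-set description $\ell_t=\sup_{\tau}(\psi_\tau+t\tau)$, together with the continuity, monotonicity and log-concavity of $\tau\mapsto\int_X\omega_{\psi_\tau}^n$ from \cref{prop:tc}. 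One expects $\mu_t$ to split asymptotically into a \emph{regular} part converging to a normalisation of $\MA(\ell^{\NA})$ and a \emph{lost} part carrying mass bounded below by a constant $c_0>0$ built from $c$ and $S$, and supported on a region $Z_t$ whose $\omega$-volume decays at least exponentially in $t$ (this rate being dictated by the affine growth $\ell_t\sim t\tau$ near the contact sets where the envelope detaches from $\ell^{\max}$). Jensen's inequality on $Z_t$ would then give $\Ent(\ell_t)\ge c_0\log\big(\mathrm{vol}(Z_t)^{-1}\big)-C\gtrsim t$, and — refining the estimate so that $\mathrm{vol}(Z_t)$ decays faster than every fixed exponential — one would reach $\Ent(\ell_t)\gtrsim t\log t$, the desired super-linear growth. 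Making the concentration of the lost mass rigorous, in particular ruling out that it instead \emph{diffuses} over a set of non-negligible $\omega$-volume (in which case its contribution to the entropy would remain linear), is the technical heart of the conjecture and the step I expect to be the true obstacle. A reasonable first target is the case of a bounded, eventually piecewise-linear test curve $\psi_{\bullet}$ (so $\ell\in\mathcal{R}^{\infty}$, with the contact-set geometry governed by finitely many divisorial data and \cref{thm:LegEalpha} already in force), followed by an approximation argument.
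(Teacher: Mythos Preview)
The statement you are addressing is labelled as a \emph{Conjecture} in the paper, and the paper does not supply a proof; it only remarks that in the integral case the result follows from \cite{Li20}. So there is no paper-side argument to compare against: you are attempting to resolve an open problem, not to reconstruct a known proof.

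Your Steps~1--2 correctly reformulate the question. In the paper's transcendental definition, $\ell\in\mathcal{E}^{1,\NA}([\omega])$ means precisely that each $\hat\ell_\tau$ is $\mathscr{I}$-model (or $-\infty$), so the failure of maximality is exactly the existence of a positive-measure set of $\tau$'s where $\psi_\tau$ has strictly smaller mass than $P[\psi_\tau]_{\mathscr{I}}$; your quantity $c>0$ captures this. One caveat: the equivalence ``integrand vanishes $\Leftrightarrow$ $\psi_\tau$ is $\mathscr{I}$-model'' via \cref{thm:DX1-4} is only stated under the positive-mass hypothesis, and equal mass for two model potentials does not by itself force equality of the potentials; you should route this through \cref{cor:charImdlint} (or the mass characterisation of $\mathscr{I}$-model potentials) rather than \cref{thm:DX1-4} alone.

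The genuine gap is your Step~3, and you say so yourself. The heuristic that the ``lost mass'' of $\mu_t=V^{-1}\omega_{\ell_t}^n$ concentrates on sets of super-exponentially small volume is plausible but unsupported: nothing in the contact-set description $\ell_t=\sup_\tau(\psi_\tau+t\tau)$ or in \cref{prop:tc} rules out the diffusion scenario you mention, and the comparison-principle arguments used in \cref{thm:LegEalpha} control mixed energies, not pointwise densities. In the integral case, the result in \cite{Li20} is not obtained by any concentration estimate of this kind; it passes instead through non-Archimedean data (the inequality $\Ent^{\NA}\le\mathbf{Ent}$ and the structure of maximal rays via models), and it is precisely the absence of those tools in the transcendental setting that leaves the conjecture open. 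Until the concentration step is made rigorous, what you have is a strategy, not a proof.
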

When $[\omega]$ is integral, this follows from \cite{Li20}.

\begin{conjecture}\label{conj:ent}
Let $\ell\in \mathcal{E}^{1,\NA}$, let $\psi=\hat{\ell}$, then
\[
\Ent^{\NA}(\ell^{\NA})=\mathbf{Ent}(\ell)=\Ent(\psi_{\bullet})\,.
\]
\end{conjecture}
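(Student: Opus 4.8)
The plan is to split the three-way equality into two separate identities, namely $\Ent^{\NA}(\ell^{\NA})=\Ent(\psi_{\bullet})$ and $\Ent^{\NA}(\ell^{\NA})=\mathbf{Ent}(\ell)$, each of which is already known when $\ell$ is the Phong--Sturm ray of a test configuration (\cref{cor:eqholds}, \cref{prop:BHJL}); the whole point is to remove that hypothesis. For the first identity I would revisit the chain established in the proof of \cref{thm:Entcomp2},
\[
\Ent^{\NA}(\ell^{\NA})\ \le\ \left.\frac{\mathrm{d}}{\mathrm{d}\epsilon}\right|_{\epsilon=0+}E^{\NA}(\psi^{\epsilon}_{\bullet})\ \le\ \Ent(\psi_{\bullet}),
\]
where the first inequality is \cref{prop:NAdiff} (applied to $\phi=\ell^{\NA}$, using that the non-Archimedean potential of $\psi^{\epsilon}_{\bullet}$ is $P[\ell^{\NA}+\epsilon A_X]$) and the second comes from interchanging $\tfrac{\mathrm d}{\mathrm d\epsilon}$ with $\int\mathrm d\tau$ and from \cref{lma:derdiv2}. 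The goal is to upgrade both inequalities to equalities.

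Equality in \cref{prop:NAdiff} is the expectation recorded in the remark there: it is a non-Archimedean orthogonality relation $\tfrac{\mathrm d}{\mathrm d\epsilon}\big|_{0+}E^{\NA}(P[\phi+\epsilon A_X])=\tfrac1V\int_{X^{\An}}A_X\,\MA(\phi)$. I would approach it by writing $A_X=\lim_{\mathcal Y}f^{\mathcal Y}$ as the increasing limit of the continuous functions $f^{\mathcal Y}=A_X\circ r_{\mathcal Y}$, invoking the classical envelope differentiability for the continuous perturbations $f^{\mathcal Y}$, and then justifying the exchange of $\sup_{\mathcal Y}$ with $\lim_{\epsilon\to0+}$; the needed uniformity is a bound $E^{\NA}(P[\phi+\epsilon A_X])-E^{\NA}(P[\phi+\epsilon f^{\mathcal Y}])=o(\epsilon)$, which should follow from a H\"older-type estimate for $E^{\NA}$ together with the fact that $P[\phi+\epsilon A_X]$ and $P[\phi+\epsilon f^{\mathcal Y}]$ agree after retraction to $\Delta_{\mathcal Y}$. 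Equality in \cref{lma:derdiv2} amounts, via \cref{thm:volIm} and \cite[Theorem~A]{BFJ09}, to showing that $-\tfrac{\mathrm d}{\mathrm d\epsilon}\big|_{0+}\Div_Y\psi^{\epsilon}$ equals the full divisor $\sum_E A_X(E)E$ and that $Y$ may be chosen (a plain log resolution, no superfluous blow-ups) so that this coincides with $K_{Y/X}+\Redu\Div_Y\psi$; concretely, the envelope $\psi^{\epsilon}$ must relax the singularity along each component of $\Div_Y\psi$ at the maximal rate the discrepancy allows. This is transparent when $\psi$ has quasi-analytic singularities along $D=\sum_ia_iD_i$, where \eqref{eq:t4} forces $\Div_Y\psi^{\epsilon}=\sum_i\max(0,a_i-\epsilon A_X(\ord_{D_i}))D_i$ for small $\epsilon$; the general $\mathscr I$-model case would then be reached by a quasi-equisingular approximation and a convexity argument controlling one-sided derivatives --- this limiting step is where this half of the argument is most delicate.

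For the second identity I would approximate $\ell$ from above by Phong--Sturm rays $\ell^j\downarrow\ell$ of test configurations, so that $\mathbf{Ent}(\ell^j)=\Ent^{\NA}((\ell^j)^{\NA})=\Ent(\psi^j_{\bullet})$ with $\psi^j=\hat\ell^j\downarrow\psi$, and then try to squeeze using $\Ent^{\NA}(\ell^{\NA})\le\mathbf{Ent}(\ell)$, $\Ent^{\NA}(\ell^{\NA})\le\Ent(\psi_{\bullet})$, the lower semicontinuity of $\mathbf{Ent}$ along decreasing rays, and of $\Ent^{\NA}$. The main obstacle is the matching \emph{upper} bound $\limsup_j\Ent(\psi^j_{\bullet})\le\Ent(\psi_{\bullet})$ (equivalently $\limsup_j\mathbf{Ent}(\ell^j)\le\mathbf{Ent}(\ell)$): the entropy is only lower semicontinuous, it is not monotone in the singularity type, and passing to the limit in this direction is the genuine difficulty --- it is the same phenomenon that makes ``special test configurations suffice'' a hard theorem (\cite{LXZ21}), and I do not expect a soft argument; this is presumably why the statement is only conjectural for general polarizations. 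A potentially cleaner route to the first identity, bypassing \cref{prop:NAdiff}, would be to let $k\to\infty$ in \cref{thm:LegLk}: extracting the subleading term of the asymptotics of $h^0(X,K_X\otimes L^k\otimes\mathscr I(k\psi_{\tau}))$ reproduces, for quasi-analytic $\psi_{\tau}$, exactly the integrand of \cref{def:naentgeneral} by Riemann--Roch on a log resolution via \cref{lma:qamis}; but the absence of a precise two-term asymptotic Riemann--Roch for multiplier ideals of arbitrary singularities is again the sticking point.
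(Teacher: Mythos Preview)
The statement you are attempting to prove is labelled \texttt{conjecture} in the paper, not \texttt{theorem}; the paper does \emph{not} prove it. Immediately after stating the conjecture the author writes: ``Many special cases are known: when $[\omega]$ is integral, we know that $\Ent^{\NA}(\ell^{\NA})\leq \mathbf{Ent}(\ell)$ (\cref{prop:BHJL}), $\Ent^{\NA}(\ell^{\NA})\leq \Ent(\psi_{\bullet})$ (\cref{thm:Entcomp2}). When $\ell$ is the Phong--Sturm geodesic ray of some test configuration, both equalities hold (\cref{cor:eqholds}). When $[\omega]$ is not integral, all three terms are still defined, but very little information is known.'' There is therefore no paper proof to compare your proposal against.

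Your outline is honest about where the genuine gaps lie, and they are real gaps. First, equality in \cref{prop:NAdiff} for general $\phi\in\mathcal{E}^{1,\NA}$ is explicitly left as an expectation in the paper's remark; your proposed $o(\epsilon)$ estimate between $E^{\NA}(P[\phi+\epsilon A_X])$ and $E^{\NA}(P[\phi+\epsilon f^{\mathcal Y}])$ is not known and would require new input, since $A_X-f^{\mathcal Y}$ is unbounded and the usual H\"older-type estimates for $E^{\NA}$ need bounded perturbations. Second, upgrading \cref{lma:derdiv2} to an equality for an arbitrary $\mathscr{I}$-model $\psi$ is not a formality: the quasi-analytic case you describe is fine, but the passage to the limit along a quasi-equisingular approximation requires exactly the kind of continuity of $\Ent([\bullet])$ that the paper singles out in the final paragraph of \cref{sec:pro} as the key missing step toward the variational YTD conjecture. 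Third, your approximation route for $\mathbf{Ent}(\ell)=\Ent^{\NA}(\ell^{\NA})$ runs into the same wall you yourself flag: entropy is only lower semicontinuous along $\ell^j\downarrow\ell$, so $\limsup_j\mathbf{Ent}(\ell^j)\le\mathbf{Ent}(\ell)$ is not available, and this is precisely the regularization conjecture of \cite{BBJ15,Li20}. In short, your proposal correctly locates the obstacles, but each of the three ``would then follow'' steps is an open problem rather than a routine verification; the paper leaves the statement as a conjecture for exactly these reasons.
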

Many special cases are known: when $[\omega]$ is integral, we know that $\Ent^{\NA}(\ell^{\NA})\leq \mathbf{Ent}(\ell)$ (\cref{prop:BHJL}), $\Ent^{\NA}(\ell^{\NA})\leq \Ent(\psi_{\bullet})$ (\cref{thm:Entcomp2}). When $\ell$ is the Phong--Sturm geodesic ray of some test configuration, both equalities hold (\cref{cor:eqholds}). 

When $[\omega]$ is not integral, all three terms are still defined, but very little information is known.

We summarize the information we know so far about various functionals in \cref{tbl:Comp}.
\begin{table}[h]\caption{Comparison of functionals}\label{tbl:Comp}
\begin{center}
\begin{tabular}{ |c | c | c | c| } 
 \hline
 Maximal geodesic rays & NA potentials & Test curves  & Known facts \\ 
 \hline 
 $\mathbf{E}$ & $E^{\NA}$ & $\mathbf{E}$ & All equal \\
 \hline
 $\mathbf{E}_R$ & $E_R^{\NA}$ & $\mathbf{E}_R$ & All equal \\ 
 \hline
 $\mathcal{L}^{\NA}_k$ & ? & $\mathcal{L}^{\NA}_k$ & First=Third\\
 \hline
 $\mathbf{Ent}$ & $\Ent^{\NA}$ & $\Ent$ & \begin{tabular}{@{}c@{}} Second $\leq$ First\\ Second$\leq$ Third\end{tabular} \\
 \hline
\end{tabular}
\end{center}
\end{table}

This missing term in \cref{tbl:Comp} is given by a construction similar to the relative volume in \cite[(0.1)]{BE21} up to an error term. One evidence of this is given by the analogy between \cite[Theorem~1.1]{BGM21} and \cite[Theorem~1.2]{DX22}.
Note that every term on the third column is defined as an integral of some functional of psh singularities along the test curve.

Finally, let us explain the relation between \cref{conj:ent} and the celebrated Yau--Tian--Donaldson (YTD) conjecture. Up to now, it is well-understood that in order to achieve the variational approach of the YTD conjecture, it suffices to show that for a maximal geodesic ray, $\mathbf{Ent}(\ell)$ is continuous along the approximation of Berman--Boucksom--Jonsson  (\cite{BBJ15}, \cite{Li20}, \cite{CC1}, \cite{CC2}, \cite{CC3}). If \cref{conj:ent} holds, up to some technical subtleties, the problem can be reduced to showing that $\Ent([\bullet])$ of a psh singularity is continuous along a suitable quasi-equisingular approximation.

\newpage

\newpage
\printbibliography

\bigskip
  \footnotesize

  Mingchen Xia, \textsc{Department of Mathematics, Chalmers Tekniska Högskola, G\"oteborg}\par\nopagebreak
  \textit{Email address}, \texttt{xiam@chalmers.se}\par\nopagebreak
  \textit{Homepage}, \url{http://www.math.chalmers.se/~xiam/}.
\end{document}